\theoremstyle{definition}
\newtheorem{thm}{Theorem}
\newtheorem{lem}{Lemma}
\newtheorem{cor}{Corollary}
\newtheorem{dfn}{Definition}
\newtheorem{rem}{Remark}
\newtheorem{ex}{Example}
\numberwithin{equation}{section}
\title{Computer-Assisted Search for Differential Equations Corresponding to Optimization Methods and Their Convergence Rates}
\author[1]{Atsushi Tabei}
\author[2]{Ken'ichiro Tanaka}
\affil[1]{Department of Mathematical Informatics, Graduate School of Information Science and Technology, The University of Tokyo, 7-3-1, Hongo, Bunkyo-ku, Tokyo, 113-8656, Japan\\
\texttt{atsushi-tabei2001@g.ecc.u-tokyo.ac.jp}}
\affil[2]{Department of Mathematical and Computing Science, School of Computing, Institute of Science Tokyo, 2-12-1, Ookayama, Meguro-ku, Tokyo, 152-8550, Japan\\
\texttt{kenichiro@comp.isct.ac.jp}}
\date{\today}
\begin{document}
\maketitle

\begin{abstract}
    Let $f:\mathbb{R}^n \to \mathbb{R}$ be a continuously differentiable convex function with its minimizer denoted by $x_*$ and optimal value $f_* = f(x_*)$. 
    Optimization algorithms such as the gradient descent method can often be interpreted in the continuous-time limit as differential equations known as continuous dynamical systems. 
    Analyzing the convergence rate of $f(x) - f_*$ in such systems often relies on constructing appropriate Lyapunov functions. 
    However, these Lyapunov functions have been designed through heuristic reasoning rather than a systematic framework.
    Several studies have addressed this issue. In particular, Suh, Roh, and Ryu (2022) proposed a constructive approach that involves introducing dilated coordinates and applying integration by parts.
    Although this method significantly improves the process of designing Lyapunov functions, it still involves arbitrary choices among many possible options, and thus retains a heuristic nature in identifying Lyapunov functions that yield the best convergence rates.
    In this study, we propose a systematic framework for exploring these choices computationally. 
    More precisely, we propose a brute-force approach using symbolic computation by computer algebra systems to explore every possibility. 
    By formulating the design of Lyapunov functions for continuous dynamical systems as an optimization problem, we aim to optimize the Lyapunov function itself. 
    As a result, our framework successfully reproduces many previously reported results and, in several cases, discovers new convergence rates that have not been shown in the existing studies.
\end{abstract}

\tableofcontents
\clearpage

\section{Introduction}

We consider the unconstrained convex optimization problem
\begin{align}
    \min_{x \in \mathbb{R}^{n}} f(x),
\end{align}
where $f : \mathbb{R}^{n} \to \mathbb{R}$ is a convex function and a minimizer $x_\ast \in \mathbb{R}^{n}$ exists.  
Optimization algorithms such as the gradient descent method and its variants are the most fundamental and widely used in various fields. 
The goal of numerical optimization is to generate a sequence of iterates
\[
    x_{0}, x_{1}, \ldots, x_{k} \to x_{\ast} \quad (k \to \infty),
\]
that converges to the minimizer. 
In general, the update rule can be expressed in the recursive form
\begin{align}
    x_{k+1} = G(x_{k}, x_{k-1}, \ldots, x_{0}),
\end{align}
where the function $G$ specifies how each new iterate is generated.

The simplest instance of such iterative schemes is the gradient descent method:
\begin{align}
    x_{k+1} = x_{k} - h_{k} \nabla f(x_{k})
    \qquad
    (h_{k} > 0).
    \label{eq:plainGD}
\end{align}
Besides this, a variety of other methods have been proposed.  
In recent years, especially in the field of machine learning, 
first-order methods that rely solely on the gradient $\nabla f$ have attracted great attention 
\cite{drori2014performance, 
diakonikolas2019approximate, 
ito2021nearly, 
kim2016optimized, 
kim2021optimizing, 
luo2022differential}.
This is mainly due to their low computational cost in large-scale settings.  
In contrast, second-order methods make use of the Hessian $\nabla^{2} f$; the Newton method is a representative example.

From a practical standpoint, algorithms that achieve faster convergence 
in terms of $x_{k} \to x_{\ast}$ or $f(x_{k}) \to f(x_{\ast})$ are preferable.  
For smooth convex functions $f$, the simple gradient descent~\eqref{eq:plainGD} only guarantees
\[
    f(x_{k}) - f(x_{\ast}) = \mathrm{O}(1/k)
\]
when the step size $h_{k}$ is appropriately chosen \cite[Theorem 2.1.14]{nesterov2018lectures}. 
To improve the convergence rate while remaining within the class of first-order methods, 
various acceleration techniques have been developed, 
among which Nesterov’s accelerated gradient (NAG) method
\cite{nesterov547701288method}
is a celebrated example. Refer to \cite{d2021acceleration} for details on acceleration techniques.

\subsection{Continuous-Time Modeling of Optimization Algorithms}
\label{sec:cont_model_of_opt}

Given the diversity of optimization algorithms, it is natural to seek unifying principles that guide the design of effective ones. 
In the field of continuous optimization, it has been known for decades that such algorithms can be analyzed through continuous dynamical systems obtained as the limit of those algorithms when the step size tends to $0$ 
\cite{saupe1988discrete,
POLYAK19641}.
Convergence analysis of various optimization methods has been conducted via continuous dynamical systems, and as a result, correspondences between the convergence rates of optimization methods and those of continuous dynamical systems have been established
\cite{
su2016differential, 
wilson2021lyapunov, 
schneider2020dynamical, 
saupe1988discrete, 
wibisono2016variational, 
krichene2015mirror,
sanz2021connections,
fazlyab2018analysis,
diakonikolas2021generalized,
franca2018admm,
defazio2019curved,
luo2022differential,
siegel2019accelerated,
ALVAREZ2002747,
sonntag2024fast,
pmlr-v97-muehlebach19a,
muehlebach2021optimization,
pmlr-v202-kim23y,
NEURIPS2023_c7074114,
ascher2019discrete}. 
This is useful as we can predict the convergence rate of an optimization method from the analysis of a relatively tractable dynamical system. 
Furthermore, we can gain inspiration for new optimization methods by discretizing continuous dynamical systems that exhibit favorable convergence rates. 

More precisely, we can get a fruitful perspective by studying 
continuous dynamical systems described by 
ordinary differential equations (ODEs) 
\begin{equation}
    \label{Base_Form_of_ODE}
    F(x,\dot{x},\ddot{x},\nabla f(x), \nabla^2 f(x) \dot{x}, \ldots)=0
\end{equation}
derived from continuous-time limit of optimization algorithms.  
Because such ODEs often describe the asymptotic behavior of the algorithms, this perspective provides a powerful tool for analyzing their convergence. 
Typical correspondences between the algorithms and ODEs include:
\begin{align*}
    \text{Gradient descent} &\ \leftrightarrow\ \dot{x} + \nabla f(x) = 0, \\
    \text{Nesterov (convex case)} &\ \leftrightarrow\ \ddot{x} + \frac{3}{t}\dot{x} + \nabla f(x) = 0, \\
    \text{Nesterov ($\mu$-strongly convex case)} &\ \leftrightarrow\ \ddot{x} + 2\sqrt{\mu}\dot{x} + \nabla f(x) = 0, \\
    \text{Damped Newton} &\ \leftrightarrow\ \nabla^{2} f(x)\dot{x} + \nabla f(x) = 0.
\end{align*}
For example, the recursive formula~\eqref{eq:plainGD} of the gradient descent can become the ODE 
\(
\dot{x} + \nabla f(x) = 0
\)
of the gradient flow if we regard $h_{k}$ as a time step $\Delta t$ and take the limit of 
\(
(x_{k+1} - x_{k})/h_{k}
\)
as $\Delta t \to 0$. 
Furthermore, we can consider the reverse direction of the correspondence; 
discretizing the ODE can yield the recursive formula. 
Such discretization can be viewed as constructing a numerical integrator for an ODE. 
Therefore we can expect that study of such ODEs helps construction of effective optimization algorithms, 
although the aforementioned correspondence is not always one-to-one or trivial 
\cite{ushiyama2023unified,
shi2022understanding}. 

\subsection{Lyapunov Analysis for ODEs}
\label{sec:LyapAnalODEs}

From this perspective, it becomes appealing to search for ODEs whose solutions $x(t)$ yield rapid convergence of $f(x(t)) - f(x_{\ast})$.  
Once such an ODE is identified, an appropriate time discretization that preserves its convergence rate may lead to the development of accelerated optimization algorithms.  
Hence, exploring ODEs with fast convergence properties is a promising approach.

A common analytical tool for quantifying the convergence rate of an ODE is the Lyapunov function (or energy function)
\cite{su2016differential, 
wilson2021lyapunov, 
attouch2016fast, 
attouch2017asymptotic,
attouch2018combining, 
suh2022continuous, 
TomoyaKamijima2024,
sanz2021connections,
bansal2019potential,
karimi2016unifiedconvergenceboundconjugate,
franca2018admm,
du2022lyapunov,
luo2022differential,
siegel2019accelerated}.  
Here a Lyapunov function refers to a function $\mathcal{E}(t)$ associated with an ODE that is monotonically non-increasing, whose precise definition in this context is given by Definition~\ref{dfn:Lyap} later.
When such a Lyapunov function satisfies
\begin{equation}
    \mathcal{E}(t) \geq \mathrm{e}^{\gamma(t)} (f(x(t))-f(x_*))
    \label{condition:Lyap}
\end{equation}
for $t \geq t_{0}$, 
it follows that
\begin{equation}
    \mathrm{e}^{\gamma(t)} (f(x(t))-f(x_*)) \leq \mathcal{E}(t) \leq \mathcal{E}(t_{0}) = \mathrm{const.}
\end{equation}
for $t \geq t_{0}$. 
Hence the convergence rate of $f(x(t))-f(x_*)$ can be bounded as
\begin{equation}
    f(x(t))-f(x_*) = \mathrm{O} \left( \mathrm{e}^{-\gamma(t)} \right)
    \qquad (t \to \infty).
\end{equation}
Therefore it is crucial to find Lyapunov functions satisfying the condition \eqref{condition:Lyap}, especially those for which $\gamma(t)$ grows as rapidly as possible. 

In many studies, however, 
Lyapunov functions have been discovered in a heuristic or ad-hoc manner. 
To address this issue,
several researchers recently study systematic finding of Lyapunov functions
\cite{suh2022continuous, 
TomoyaKamijima2024, 
moucer2023systematic, 
upadhyaya2025automated, 
upadhyaya2025autolyap,
taylor2018lyapunovfirst,
wibisono2016variational
}. 
In particular, Suh, Roh, and Ryu~(2022)~\cite{suh2022continuous} proposed a method to systematically construct Lyapunov functions based on derivation of conservative quantities.
While their method provides a systematic procedure, it still involves arbitrary choices in the derivation. 
Therefore a certain amount of trial and error by humans is necessary for finding Lyapunov functions giving good convergence rates. 

Lyapunov functions can also be used for finding ODEs that achieve fast convergence rate. Kamijima et al.~(2024)~\cite{TomoyaKamijima2024} considered ODEs with the Hessian $\nabla^{2} f$ and undetermined parameters.
That is, they considered a class of systems encompassing multiple instances and attempted to optimize the ODEs so that they achieve as fast convergence rate as possible. This extension made it possible not only to evaluate the convergence rate of specific dynamical systems but also to determine which choices of parameters yield systems with favorable convergence rates. 
However, as shown in Section~\ref{sec:ext_Kamijima}, the new terms including the Hessian matrix do not improve the convergence rate.

\subsection{Motivation and Research Objective}

To address the issues mentioned in Section~\ref{sec:LyapAnalODEs}, 
we aim to
\begin{enumerate}
    \item automate the discovery of Lyapunov functions, and
    \item identify ODEs whose solutions exhibit as fast convergence as possible.
\end{enumerate}
To this end, we propose a systematic approach to find good Lyapunov functions based on the derivation of conservative quantities by Suh, Roh, and Ryu~(2022)~\cite{suh2022continuous}. Moreover, by incorporating the method of Kamijima et al.~(2024)~\cite{TomoyaKamijima2024}, we also optimize ODEs with undetermined coefficients so that they provide good convergence rates. 
More precisely, 
we propose a brute-force approach using symbolic computation by computer algebra systems to explore every possibility for deriving conservative quantities. 
As a result, we obtain the results summarized in Table~\ref{Summary_of_Results}. Note that $\mu,L,k,r$, and $\alpha$ are constants independent of time $t$.

\begin{rem}
This direction is closely related to recent work such as \texttt{AutoLyap} by Upadhyaya et al.~(2025)~
\cite{upadhyaya2025autolyap},  
which seeks to computationally automate the search for Lyapunov functions and stability certificates. They perform numerical computations on discrete dynamical systems, whereas we perform symbolic operations on continuous dynamical systems in this paper.
\end{rem}

\begin{table}[H]
    \centering
    \caption{Summary of the main results of this paper. The best given parameters for each class of dynamical systems are substituted in the table. }
    \label{Summary_of_Results}
    \renewcommand{\arraystretch}{1.5}
    \setlength{\tabcolsep}{8pt}
    \begin{tabular}{|>{\centering\arraybackslash}m{1.4cm}
                  |>{\centering\arraybackslash}m{4.3cm}
                  |>{\centering\arraybackslash}m{1.5cm}
                  |>{\centering\arraybackslash}m{3.5cm}
                  |>{\centering\arraybackslash}m{3.8cm}|}
    \hline
    Theorem & Continuous dynamical system & Objective function & Convergence rate & Remark \\
    \hline
    \ref{Dumped_Newton} & $\nabla^2 f \dot{x} + \nabla f = 0$ & Convex & $\mathrm{O}\left(\mathrm{e}^{-t}\right)$ & Same as \cite{TomoyaKamijima2024} \\
    \hline
    \ref{First_Order} & $\dot{x} - \frac{1}{L} \nabla^2 f \dot{x} + \nabla f = 0$ & Strongly convex, Smooth & $\mathrm{O}\left(\mathrm{e}^{-\frac{\mu}{1-\frac{\mu}{L}}t}\right)$ & Novel convergence rate \\
    \hline
    \ref{Gradient_Descent} & $\dot{x} + \nabla f = 0$ & Strongly convex & $\mathrm{O}\left(\mathrm{e}^{-2\mu t}\right)$ & Same as \cite{attouch1996dynamical} \\
    \hline
    \ref{SC-NAG} & $\ddot{x} + 2\sqrt{\mu} \dot{x} + \nabla f = 0$ & Strongly convex & $\mathrm{O}\left(\mathrm{e}^{-\sqrt{\mu}t}\right)$ & Same as \cite{TomoyaKamijima2024} \\
    \hline
    \ref{Second_Order} & $\ddot{x} + \sqrt{\mu} \dot{x} + \frac{1}{\sqrt{\mu}} \nabla^2 f \dot{x} + \nabla f = 0$ & Strongly convex & $\mathrm{O}\left(\mathrm{e}^{-\sqrt{\mu}t}\right)$ & Achieves the same convergence rate as SC-NAG using a Hessian-based system \\
    \hline
    \ref{NAG_Convex} & $\ddot{x} + \frac{3}{t} \dot{x} + \nabla f = 0$ & Convex & $\mathrm{O}\left(\frac{1}{t^2} \right)$ & Same as \cite{su2016differential} \\
    \hline
    \ref{NAG_strong_log} & $\ddot{x} + \frac{r}{t} \dot{x} + \nabla f = 0 ~ (r>3)$ & Strongly convex & $\mathrm{O}\left(\frac{1}{t^{\frac{1}{2}r+\frac{1}{2}}} \right)$ & Strictly weaker than \cite{su2016differential} \\
    \hline
    \ref{NAG_strong_exp} & $\ddot{x} + \frac{4(k^2 + \mu)}{k^2} \frac{1}{t} \dot{x} + \nabla f = 0$ & Strongly convex & $t \leq \frac{2(k^2 + \mu)}{k^3}$: $\mathrm{O}\left(\mathrm{e}^{-kt} \right)$ & Guarantees exponential rate for NAG within a restricted range; Novel convergence rate with restart scheme \\
    \hline
    \ref{Generalized_NAG} & $\begin{array}{c}
    \ddot{x} + \frac{r}{t^{\alpha}}\dot{x}+\nabla f = 0 \\
    (r>0,~0<\alpha<1)
    \end{array}$ 
    & Strongly convex 
    & $\begin{array}{c}
    \mathrm{O}\!\left(\mathrm{e}^{-\left(\tfrac{2}{3} - \epsilon \right)\tfrac{r}{1-\alpha} t^{1-\alpha}}\right) \\
    (\epsilon>0)
    \end{array}$ 
    & Improved from \cite{cheng2025class} \\
    \hline
  \end{tabular}
\end{table}

\subsection{Organization of the Paper}

In Section~\ref{sec:ConvRatesODEsForOpt}, 
we present mathematical notation and terminology used in this paper. We then explain the connection between optimization problems and ODEs. This gives the fundamental motivation for this study. We also review some related work on convergence rate analysis. In particular, we detail the derivation of convergence rates using Lyapunov functions, which forms the core of our work. 
In Section~\ref{sec:ConstLyap}, 
we review the approaches of Suh, Roh, Ryu~(2022)~\cite{suh2022continuous} and Kamijima et al.~(2024)~\cite{TomoyaKamijima2024} and discuss their limitations. 

In Section~\ref{sec:our_method}, 
we introduce our proposed method to construct Lyapunov functions 
and show how our method addresses the limitations. We also give several notes on the implementation of symbolic computation to execute our method. 
Section~\ref{sec:Results} 
presents the results obtained by the proposed method. After giving an overview, we discuss each class of continuous dynamical systems presented in Table \ref{Summary_of_Results} in separate subsections. In each case, the system is given in a form that includes tunable constants in its differential equation, meaning that the analysis covers a broader scope than the corresponding result in Table~\ref{Summary_of_Results}.  
Section~\ref{sec:restart}
is on a restart scheme which makes use of one of the continuous dynamical systems found within our methods. 
Lastly, Section~\ref{sec:conclusion} 
concludes the paper by summarizing our findings and discussing future directions.  

\section{Convergence Rates of Continuous Dynamical Systems Corresponding to Optimization Problems}
\label{sec:ConvRatesODEsForOpt}

\subsection{Mathematical Preliminaries}
\subsubsection{Notation}
Let 
$x:\mathbb{R}_{\geq 0} \to \mathbb{R}^n$ and
$f: \mathbb{R}^n \to \mathbb{R}$ 
be twice differentiable functions. 
In addition, let 
$\gamma: \mathbb{R}_{\geq 0} \to \mathbb{R}$ be a function. 
The notation used in this paper is summarized in Table \ref{table_of_results}.
\begin{table}[H]
    \centering
    \caption{Notation used in this paper}
    \label{table_of_results}
    \renewcommand{\arraystretch}{1.5}
    \setlength{\tabcolsep}{8pt}
    \begin{tabular}{|>{\centering\arraybackslash}m{3cm}
                  |>{\centering\arraybackslash}m{6cm}|}
    \hline
    Symbol & Meaning \\
    \hline 
    $\mathbb{R}$ & The set of all real numbers \\
    \hline
    $\mathbb{R}_{\geq 0}$ & The set of all non-negative real numbers \\
    \hline
    $I_n$ & The $n \times n$ identity matrix \\
    \hline
    $O_n$ & The $n \times n$ zero matrix \\
    \hline
    $\| \cdot \|$ & Euclidean norm \\
    \hline
    $\langle \cdot, \cdot \rangle$ & Inner product in Euclidean space \\
    \hline
    $\nabla f $ & Gradient of the function $f$ \\
    \hline
    $\nabla^2 f$ & Hessian matrix of the function $f$ \\
    \hline
    $x_*$ & Optimal solution of the function $f$ \\
    \hline
    $f_*$ & Optimal value of the function $f$ (shorthand for $f(x_*)$) \\
    \hline
    $\dot{x}(t),~\dot{x}(s)$ & $\frac{\mathrm{d}}{\mathrm{d}t}x(t),~\frac{\mathrm{d}}{\mathrm{d}s}x(s)$ \\
    \hline
    $\ddot{x}(t),~\ddot{x}(s)$ & $\frac{\mathrm{d^2}}{\mathrm{d}t^2}x(t),~\frac{\mathrm{d^2}}{\mathrm{d}s^2}x(s)$ \\
    \hline
  \end{tabular}
\end{table}
  
For simplicity, 
we sometimes abbreviate 
$x(t)$, $f(x(t))$, $\nabla f(x(t))$, 
$\nabla^2 f(x(t))$, $\dot{x}(t)$, and $\ddot{x}(t)$
as 
$x$, $f$, $\nabla f$, $\nabla^2 f$, $\dot{x}$ and $\ddot{x}$, respectively, 
whenever it causes no confusion.  
In addition, 
we apply the same abbreviation to 
$x(s)$, $f(x(s))$, $\nabla f(x(s))$, $\nabla^2 f(x(s))$, $\dot{x}(s)$ and $\ddot{x}(s)$. 
Furthermore, we use positive real numbers $\mu$ and $L$
for expressing strong convexity and smoothness of $f$. 
If a function $f$ is both $\mu$-strongly convex and $L$-smooth (definitions given later), then $\mu \leq L$ holds.  
Finally, we use asymptotic order notation as follows.
For functions $g, h:\mathbb{R}_{\geq0}\to\mathbb{R}$, if there exist constants $T,C>0$ such that
\begin{equation}
    t \geq T \implies |g(t)| \leq C|h(t)|,
\end{equation}
then we write
\begin{equation}
    g(t) = \mathrm{O}(h(t)).
\end{equation}

\subsubsection{Terminology}
We introduce terminology used in this paper. We begin with the definition of a Lyapunov function.

\begin{dfn}[Lyapunov function]
\label{dfn:Lyap}
	For a continuous dynamical system governed by an ordinary differential equation, 
    a function $\hat{\mathcal{E}}: \mathbb{R}^{n} \times \mathbb{R}_{\geq 0} \to \mathbb{R}$ is a Lyapunov function  
    if $\hat{\mathcal{E}}(x(t), t)$ is monotonically non-increasing and bounded below for any solution trajectory $x(t)$ of that dynamical system.
    For simplicity, we use the abbreviation
    $\mathcal{E}(t) = \hat{\mathcal{E}}(x(t), t)$. 
    We also refer to this $\mathcal{E}$ as a Lyapunov function, even though this is an abuse of terminology.
\end{dfn}

In this paper, we employ Lyapunov functions with specific properties to guarantee convergence rates.
For a function $f$ and a trajectory $x(t)$, if 
\begin{equation}
    f(x(t)) - f(x_*) = \mathrm{O}(g(t))
\end{equation}
holds, we call $\mathrm{O}(g(t))$ a convergence rate.
That is, we evaluate convergence rates in terms of asymptotic order with respect to $t$. For example, if the trajectory $x(t)$ following a certain dynamical system satisfies
\begin{equation}
    f(x(t)) - f(x_*) = \mathrm{O}\left(\mathrm{e}^{-t}\right),
\end{equation}
we say that the dynamical system achieves a convergence rate of $\mathrm{O}\left(\mathrm{e}^{-t}\right)$.

Next, we introduce classes of functions considered in this paper: smooth functions, convex functions, and strongly convex functions. 
In the following, we present their definitions and properties by following standard references such as Nesterov~(2013)~\cite{nesterov2013introductory} and Nesterov~(2018)~\cite{nesterov2018lectures}.

\begin{dfn}[Smooth function {\cite[Equation (1.2.8)]{nesterov2018lectures}}]
    For $L>0$, a differentiable function $f$ is said to be $L$-smooth if
    \begin{equation}	
	\left \| \nabla f(x) - \nabla f(y) \right \| \leq L \left \| x-y \right\|
    \end{equation}
    holds for all $x,y \in \mathbb{R}^n$.
\end{dfn}

\begin{dfn}[Convex function  {\cite[Definition 2.1.1]{nesterov2013introductory}}]
    A continuously differentiable function $f$ is convex if
    \begin{equation}
	f(y) \geq f(x) + \langle \nabla f(x), y-x \rangle
    \end{equation}
    holds for all $x,y \in \mathbb{R}^n$.
\end{dfn}

\begin{dfn}[Strongly convex function {\cite[Definition 2.1.3]{nesterov2018lectures}}]
    For $\mu >0$, a continuously differentiable function $f$ is $\mu$-strongly convex if
    \begin{equation}
	f(y) \geq f(x) + \langle \nabla f(x), y-x \rangle + \frac{\mu}{2} \left \| y-x \right\|^2
    \end{equation}
    holds for all $x,y \in \mathbb{R}^n$.
\end{dfn}

\begin{thm}[\!{\cite[Lemma 1.2.3]{nesterov2018lectures}}]
    A differentiable function $f$ is $L$-smooth if and only if
    \begin{equation}
	f(y) \leq f(x) + \langle \nabla f(x), y-x \rangle + \frac{L}{2} \left \| y-x \right\|^2
    \end{equation}
    holds for all $x,y \in \mathbb{R}^n$.
\end{thm}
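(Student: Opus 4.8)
The statement is the familiar characterization of $L$-smoothness, and the plan is to prove the two implications separately, with the forward one being routine and the converse one carrying the real content. For the forward direction (``$L$-smooth $\Rightarrow$ quadratic upper bound'') I would use the one-dimensional fundamental theorem of calculus along the segment joining $x$ and $y$: since $L$-smoothness makes $\nabla f$ continuous, $s\mapsto f(x+s(y-x))$ is $C^1$ on $[0,1]$, so $f(y)-f(x)=\int_0^1\langle\nabla f(x+s(y-x)),y-x\rangle\,\mathrm{d}s$. Subtracting $\langle\nabla f(x),y-x\rangle$ inside the integral, applying the Cauchy--Schwarz inequality, and then the Lipschitz estimate $\|\nabla f(x+s(y-x))-\nabla f(x)\|\le Ls\|y-x\|$ leaves $\int_0^1 Ls\|y-x\|^2\,\mathrm{d}s=\tfrac{L}{2}\|y-x\|^2$, which is exactly the desired bound. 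No convexity is needed for this half.

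For the converse (``quadratic upper bound $\Rightarrow$ $L$-smooth''), which is where the convexity that is standing in \cite{nesterov2018lectures,nesterov2013introductory} is genuinely used, the plan is the standard ``minimize the quadratic majorant'' argument. Fixing $x,y$, I would introduce $\phi(z)=f(z)-\langle\nabla f(x),z\rangle$; by convexity of $f$ this $\phi$ is convex with $\nabla\phi(x)=0$, hence $x$ is a global minimizer of $\phi$, and $\phi$ inherits the quadratic upper bound because it differs from $f$ by a linear term. Then
\[
 \phi(x)=\min_{z}\phi(z)\le\min_{z}\Bigl(\phi(y)+\langle\nabla\phi(y),z-y\rangle+\tfrac{L}{2}\|z-y\|^2\Bigr)=\phi(y)-\tfrac{1}{2L}\|\nabla\phi(y)\|^2,
\]
the inner minimum being attained at $z=y-\tfrac{1}{L}\nabla\phi(y)$. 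Rewriting in terms of $f$ and $\nabla\phi(y)=\nabla f(y)-\nabla f(x)$ gives $\tfrac{1}{2L}\|\nabla f(y)-\nabla f(x)\|^2\le f(y)-f(x)-\langle\nabla f(x),y-x\rangle$; the same step with $x$ and $y$ exchanged yields the mirror inequality, and adding the two cancels all function values to leave $\tfrac{1}{L}\|\nabla f(x)-\nabla f(y)\|^2\le\langle\nabla f(x)-\nabla f(y),x-y\rangle$. One more application of Cauchy--Schwarz on the right-hand side and division by $\|\nabla f(x)-\nabla f(y)\|$ (the case where it vanishes being trivial) delivers $\|\nabla f(x)-\nabla f(y)\|\le L\|x-y\|$.

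The step I expect to be the crux, and the one worth stating carefully, is precisely the identification of $x$ as a minimizer of $\phi$ in the converse direction, since this is the only place convexity is indispensable. Indeed, the one-sided bound alone does not characterize $L$-smoothness for an arbitrary differentiable $f$: any $C^1$ concave function satisfies $f(y)\le f(x)+\langle\nabla f(x),y-x\rangle+\tfrac{L}{2}\|y-x\|^2$ for every $L>0$ while possibly having a non-Lipschitz gradient. So outside the convex class one would instead work with the two-sided estimate $|f(y)-f(x)-\langle\nabla f(x),y-x\rangle|\le\tfrac{L}{2}\|y-x\|^2$, equivalently with the convexity of both $z\mapsto\tfrac{L}{2}\|z\|^2-f(z)$ and $z\mapsto\tfrac{L}{2}\|z\|^2+f(z)$, and recover the Lipschitz bound from monotonicity of the gradients of these two convex functions; I would note this as the natural reading of the ``if and only if'' in full generality.
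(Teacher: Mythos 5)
The paper does not prove this statement at all: it is quoted as a background fact with a citation to Nesterov's book, so there is no in-paper argument to compare against. Judged on its own, your proof is the standard textbook argument and is correct. The forward direction (Lipschitz gradient $\Rightarrow$ quadratic upper bound) via the fundamental theorem of calculus along the segment, Cauchy--Schwarz, and the bound $\|\nabla f(x+s(y-x))-\nabla f(x)\|\le Ls\|y-x\|$ is exactly right and needs no convexity. Your converse via minimizing the quadratic majorant of $\phi(z)=f(z)-\langle\nabla f(x),z\rangle$, obtaining the cocoercivity inequality $\tfrac{1}{L}\|\nabla f(x)-\nabla f(y)\|^2\le\langle\nabla f(x)-\nabla f(y),x-y\rangle$ and then applying Cauchy--Schwarz, is also correct \emph{under convexity of $f$}.

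Your closing observation is the one substantive point worth recording: as literally stated in the paper, for an arbitrary differentiable $f$ the ``if'' direction is false (any $C^1$ concave function with non-Lipschitz gradient satisfies the one-sided bound for every $L>0$), so the equivalence requires either restricting to convex $f$ (as Nesterov does in the surrounding development, and as the paper implicitly does since all its objective functions are convex) or replacing the displayed inequality by the two-sided estimate $|f(y)-f(x)-\langle\nabla f(x),y-x\rangle|\le\tfrac{L}{2}\|y-x\|^2$. You correctly identify where convexity enters (the identification of $x$ as a global minimizer of $\phi$) and you correctly flag the counterexample; nothing in your argument fails. The only cosmetic caveat is that your proof establishes the reverse implication only in the convex setting, so if one insists on the theorem exactly as worded one should state that restriction explicitly.
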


\begin{thm}[\!{\cite[Lemma.1.2.2]{nesterov2018lectures}}]
    If $f$ is twice continuously differentiable, then $f$ is $L$-smooth if and only if
    \begin{equation}
	-L I_n \preceq\nabla^2 f(x) \preceq L I_n
    \end{equation}
    holds for all $x \in \mathbb{R}^n$.
\end{thm}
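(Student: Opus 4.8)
The plan is to prove the two implications separately. Throughout I will use the elementary fact that for a symmetric matrix $A \in \mathbb{R}^{n \times n}$ the Loewner condition $-L I_n \preceq A \preceq L I_n$ is equivalent to the operator-norm bound $\|A\| \leq L$, because the operator norm of a symmetric matrix equals the largest absolute value among its eigenvalues. The Hessian $\nabla^2 f(x)$ is symmetric since $f$ is twice continuously differentiable, so this equivalence applies to it.

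For the ``if'' direction, I would assume $-L I_n \preceq \nabla^2 f(z) \preceq L I_n$ for all $z \in \mathbb{R}^n$, fix $x, y \in \mathbb{R}^n$, and apply the fundamental theorem of calculus to the $C^1$ map $t \mapsto \nabla f\bigl(y + t(x-y)\bigr)$ on $[0,1]$:
\begin{equation}
    \nabla f(x) - \nabla f(y) = \int_0^1 \nabla^2 f\bigl(y + t(x-y)\bigr)\,(x-y)\,\mathrm{d}t .
\end{equation}
Taking Euclidean norms, passing the norm inside the integral, and using $\|\nabla^2 f(z)\| \leq L$ for every $z$ then yields $\|\nabla f(x) - \nabla f(y)\| \leq L\|x-y\|$, which is exactly $L$-smoothness.

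For the ``only if'' direction, I would assume $f$ is $L$-smooth, fix $x \in \mathbb{R}^n$ and a unit vector $d \in \mathbb{R}^n$, and use that $\nabla f$ is differentiable to write $\nabla^2 f(x)\,d = \lim_{s \to 0} s^{-1}\bigl(\nabla f(x + s d) - \nabla f(x)\bigr)$. The $L$-smoothness bound then gives
\begin{equation}
    \bigl\|\nabla^2 f(x)\,d\bigr\| = \lim_{s \to 0} \frac{\bigl\|\nabla f(x + s d) - \nabla f(x)\bigr\|}{|s|} \leq \lim_{s \to 0} \frac{L\,|s|\,\|d\|}{|s|} = L ,
\end{equation}
so $\|\nabla^2 f(x)\| \leq L$, and hence $-L I_n \preceq \nabla^2 f(x) \preceq L I_n$ by the equivalence above. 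If one prefers to see the two semidefinite inequalities directly, note that $\langle \nabla^2 f(x)\,d, d\rangle \leq \|\nabla^2 f(x)\,d\|\,\|d\| \leq L$ for every unit $d$ gives $\nabla^2 f(x) \preceq L I_n$, and the same estimate applied to $-\nabla^2 f(x)$ gives $-L I_n \preceq \nabla^2 f(x)$.

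I do not expect a genuine obstacle, since this is a classical fact (Nesterov's Lemma~1.2.2); the only point that needs a little care is the passage between the operator-norm bound $\|\nabla^2 f(x)\| \leq L$ and the two-sided Loewner bound, which relies on the symmetry of the Hessian. As an alternative that avoids the integral representation, one could instead derive $\nabla^2 f(x) \preceq L I_n$ and $-L I_n \preceq \nabla^2 f(x)$ by taking second-order Taylor expansions against the quadratic upper bound $f(y) \leq f(x) + \langle \nabla f(x), y - x\rangle + \tfrac{L}{2}\|y-x\|^2$ and its mirror lower bound, both of which are equivalent characterizations of $L$-smoothness; but the calculus-based route above is the cleanest.
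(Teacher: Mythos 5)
The paper does not prove this statement at all; it is quoted as a standard preliminary with a citation to Nesterov's textbook (Lemma~1.2.2), so there is no in-paper argument to compare against. Your proof is correct and is essentially the classical textbook argument: the fundamental theorem of calculus applied to $t \mapsto \nabla f(y+t(x-y))$ for the sufficiency of the Hessian bound, and the difference-quotient characterization of $\nabla^2 f(x)d$ for the necessity, with the symmetry of the Hessian correctly invoked to pass between the operator-norm bound $\|\nabla^2 f(x)\|\leq L$ and the two-sided Loewner inequality $-LI_n \preceq \nabla^2 f(x) \preceq LI_n$. No gaps.
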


\begin{thm}[\!{\cite[Theorem 2.1.4]{nesterov2018lectures}}]
    If $f$ is twice continuously differentiable, then $f$ is convex if and only if
    \begin{equation}
	O_n \preceq \nabla^2 f(x)
    \end{equation}
    holds for all $x \in \mathbb{R}^n$.
\end{thm}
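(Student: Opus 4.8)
The plan is to establish the biconditional by proving the two implications separately, reducing each to a one-dimensional argument along line segments, and using the fact that the preceding ``Convex function'' definition is precisely the first-order inequality $f(y) \ge f(x) + \langle \nabla f(x), y-x\rangle$, so that no separate passage between zeroth- and first-order characterizations is needed.

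First I would prove the ``if'' direction: assume $\nabla^2 f(x) \succeq O_n$ for every $x \in \mathbb{R}^n$. Fix $x, y \in \mathbb{R}^n$ and set $\phi(s) = f\big(x + s(y-x)\big)$ for $s \in [0,1]$. Since $f$ is twice continuously differentiable, so is $\phi$, with $\phi'(s) = \langle \nabla f(x+s(y-x)),\, y-x\rangle$ and $\phi''(s) = \langle \nabla^2 f(x+s(y-x))(y-x),\, y-x\rangle \ge 0$. By Taylor's theorem with the integral form of the remainder,
\[
  \phi(1) = \phi(0) + \phi'(0) + \int_0^1 (1-s)\,\phi''(s)\,\mathrm{d}s \ \ge\ \phi(0) + \phi'(0),
\]
which is exactly $f(y) \ge f(x) + \langle \nabla f(x),\, y-x\rangle$. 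As $x$ and $y$ are arbitrary, $f$ is convex.

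Conversely, assume $f$ is convex, fix $x \in \mathbb{R}^n$ and a direction $d \in \mathbb{R}^n$, and for $t > 0$ apply the first-order inequality to the pair $(x,\, x+td)$ and then to the pair $(x+td,\, x)$; adding the two resulting inequalities yields the monotonicity relation $\langle \nabla f(x+td) - \nabla f(x),\, d\rangle \ge 0$. Dividing by $t$ and letting $t \to 0^+$, differentiability of $\nabla f$ gives $\tfrac{1}{t}\big(\nabla f(x+td) - \nabla f(x)\big) \to \nabla^2 f(x)\,d$, hence $\langle \nabla^2 f(x)\,d,\, d\rangle \ge 0$; since $d$ is arbitrary, $\nabla^2 f(x) \succeq O_n$. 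This is a classical argument with no genuine difficulty; the only points requiring care are the use of the integral form of Taylor's remainder in the ``if'' direction and the limit passage in the ``only if'' direction, and it is exactly this limit that invokes the $C^2$ hypothesis (plain differentiability of $f$ would give monotonicity of $\nabla f$ but not the pointwise Hessian statement).
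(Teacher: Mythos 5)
Your proof is correct: the ``if'' direction via Taylor's theorem with integral remainder along the segment, and the ``only if'' direction via gradient monotonicity followed by the limit $t \to 0^{+}$, is the standard textbook argument, and both directions are carried out without gaps. Note, however, that the paper itself gives no proof of this statement --- it is quoted directly from Nesterov's book as background material --- so there is no in-paper argument to compare against; your proposal simply supplies the classical proof that the cited reference contains.
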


\begin{thm}[\!{\cite[Theorem 2.1.10]{nesterov2018lectures}}]
    A differentiable function $f$ is $\mu$-strongly convex if and only if
    \begin{equation}	
	\left \| \nabla f(x) - \nabla f(y) \right \| \geq \mu \left \| x-y \right\|
    \end{equation}
    holds for all $x,y \in \mathbb{R}^n$.
\end{thm}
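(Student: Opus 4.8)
The plan is to prove the two implications of the equivalence separately: the forward direction in full (it is short), and then the converse, where essentially all of the content lies.

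For the forward direction, assume $f$ is $\mu$-strongly convex. Applying the defining inequality once to the ordered pair $(x,y)$ and once to $(y,x)$ gives
\[
f(y) \ge f(x) + \langle \nabla f(x), y - x\rangle + \tfrac{\mu}{2}\|y-x\|^2, \qquad f(x) \ge f(y) + \langle \nabla f(y), x - y\rangle + \tfrac{\mu}{2}\|x-y\|^2 .
\]
Summing the two and cancelling $f(x)+f(y)$ yields the strong monotonicity bound $\langle \nabla f(x) - \nabla f(y),\, x - y\rangle \ge \mu\|x-y\|^2$. Combining this with Cauchy--Schwarz, $\langle \nabla f(x) - \nabla f(y),\, x-y\rangle \le \|\nabla f(x)-\nabla f(y)\|\,\|x-y\|$, and dividing by $\|x-y\|$ (the case $x=y$ being trivial) gives $\|\nabla f(x) - \nabla f(y)\| \ge \mu\|x-y\|$, as claimed.

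For the converse I would first note that $f$ must be taken convex here: this is part of the standing hypothesis on the differentiable functions in the cited reference, and it is genuinely needed, since $f(x) = -\tfrac{\mu}{2}\|x\|^2$ satisfies $\|\nabla f(x) - \nabla f(y)\| = \mu\|x-y\|$ yet is concave. Granting convexity, the strategy is to upgrade the norm bound to the inner-product bound $\langle \nabla f(x) - \nabla f(y),\, x-y\rangle \ge \mu\|x-y\|^2$ and then integrate: for fixed $x,y$, evaluate this bound at the point $x + t(y-x)$ against $x$, divide by $t$, and integrate over $t \in [0,1]$, using $\int_0^1 t\,\mathrm{d}t = \tfrac12$, to recover exactly the defining inequality of $\mu$-strong convexity. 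When $f$ is in addition twice differentiable the upgrade is immediate: letting $y \to x$ in the norm bound forces the smallest eigenvalue of the symmetric positive semidefinite matrix $\nabla^2 f(x)$ to be at least $\mu$, i.e.\ $\nabla^2 f(x) \succeq \mu I_n$, which is equivalent to $\mu$-strong convexity; for the merely $C^1$ case one applies this to a smooth mollification of $f$ and passes to the limit.

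The main obstacle is precisely this converse upgrade. The inequality $\|\nabla f(x) - \nabla f(y)\| \ge \mu\|x-y\|$ says nothing pointwise about the \emph{direction} of $\nabla f(x) - \nabla f(y)$ relative to $x-y$, so one cannot pass to the inner-product bound by a single application of Cauchy--Schwarz; one must use that $\nabla f$ is simultaneously monotone (by convexity) and conservative. Concretely I expect to route the argument either through the Hessian characterization above under the extra smoothness assumption, or, staying in $C^1$, through the chain of equivalent characterizations of $\mu$-strong convexity — in particular the one phrased via directional derivatives — as in the proof of the cited theorem.
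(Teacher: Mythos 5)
The paper gives no proof of this statement -- it is quoted as background with a citation to Nesterov -- so there is nothing internal to compare against; I will assess your argument on its own terms. Your forward direction is complete and correct: symmetrizing the strong-convexity inequality gives $\langle \nabla f(x)-\nabla f(y),\,x-y\rangle \ge \mu\|x-y\|^2$, and Cauchy--Schwarz finishes. More importantly, your observation that the converse is \emph{false} as stated is correct and is the most valuable part of the proposal: $f(x)=-\tfrac{\mu}{2}\|x\|^2$ satisfies the gradient bound with equality but is concave. Indeed, Nesterov's Theorem 2.1.10 asserts only the one-directional implication (strong convexity $\Rightarrow$ gradient expansiveness); the ``if and only if'' in the paper is a misstatement, and convexity must be added as a hypothesis for the converse. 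Within the paper only the forward implication is ever relevant, so your complete proof of that direction already covers what is used.

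For the converse under the added convexity hypothesis, your $C^2$ argument is sound: dividing $\|\nabla f(x+su)-\nabla f(x)\|\ge \mu s\|u\|$ by $s$ and letting $s\to 0$ gives $\|\nabla^2 f(x)u\|\ge\mu\|u\|$, and since $\nabla^2 f(x)$ is symmetric positive semidefinite its smallest eigenvalue equals its smallest singular value, whence $\nabla^2 f(x)\succeq \mu I_n$. The genuine gap is the reduction from $C^1$ to $C^2$ by mollification. If $f_\epsilon = f * \rho_\epsilon$, then $\nabla f_\epsilon(x)-\nabla f_\epsilon(y)=\int \rho_\epsilon(z)\bigl(\nabla f(x-z)-\nabla f(y-z)\bigr)\,\mathrm{d}z$ is an \emph{average} of vectors each of norm at least $\mu\|x-y\|$, and the norm of an average can be strictly smaller than the common lower bound on the norms (the individual differences may point in different directions). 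So the hypothesis $\|\nabla f_\epsilon(x)-\nabla f_\epsilon(y)\|\ge\mu\|x-y\|$ is not inherited by $f_\epsilon$ for free, and as written this step would fail. A cleaner route that avoids mollification entirely is to work directly with the convex function $g = f - \tfrac{\mu}{2}\|\cdot\|^2$-type reasoning along lines, or to pass through one of the equivalent $C^1$ characterizations in Nesterov's Theorem 2.1.9; either way, the upgrade from the norm bound to the monotonicity bound for merely $C^1$ convex $f$ still needs an actual argument rather than the limiting appeal you sketch.
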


\begin{thm}[\!{\cite[Theorem 2.1.11]{nesterov2018lectures}}]
    \label{Hessian}
    If $f$ is twice continuously differentiable, then $f$ is $\mu$-strongly convex if and only if
    \begin{equation}
	\mu I_n \preceq \nabla^2 f(x)
    \end{equation}
    holds for all $x \in \mathbb{R}^n$.
\end{thm}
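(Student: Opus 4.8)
The plan is to deduce this from the already-stated second-order characterization of convexity (``$f$ convex $\iff O_n \preceq \nabla^2 f(x)$'' for twice continuously differentiable $f$) by an affine shift. Define $g:\mathbb{R}^n \to \mathbb{R}$ by $g(x) = f(x) - \tfrac{\mu}{2}\|x\|^2$. Since $f$ is twice continuously differentiable, so is $g$, with $\nabla g(x) = \nabla f(x) - \mu x$ and $\nabla^2 g(x) = \nabla^2 f(x) - \mu I_n$. The key reduction is the equivalence
\[
    f \text{ is } \mu\text{-strongly convex} \iff g \text{ is convex}.
\]
To verify this I would substitute $g$ into the definition of convexity and expand: the inequality $g(y) \geq g(x) + \langle \nabla g(x), y-x\rangle$ becomes, after cancelling the $\tfrac{\mu}{2}\|\cdot\|^2$ contributions against the $f$-terms, the inequality $f(y) \geq f(x) + \langle \nabla f(x), y-x\rangle + \bigl(\tfrac{\mu}{2}\|y\|^2 - \tfrac{\mu}{2}\|x\|^2 - \mu\langle x, y-x\rangle\bigr)$, and the parenthesised expression simplifies to $\tfrac{\mu}{2}\|y-x\|^2$ by completing the square. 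This is exactly the defining inequality for $\mu$-strong convexity, and since every step is reversible, the equivalence follows.

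Having established this reduction, the proof concludes immediately: applying the cited second-order characterization of convexity to $g \in C^2$ gives ``$g$ convex $\iff O_n \preceq \nabla^2 g(x)$ for all $x$'', i.e. $O_n \preceq \nabla^2 f(x) - \mu I_n$, which is precisely $\mu I_n \preceq \nabla^2 f(x)$. Chaining the two equivalences yields the statement.

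The only point requiring a little care — and it is entirely routine — is the completion-of-square identity together with the correct computation of $\nabla g$ and $\nabla^2 g$, so that the previously stated convexity theorem applies verbatim; there is no genuine analytic obstacle. As an alternative that avoids introducing $g$, one could argue directly in each direction using Taylor's theorem: for ``$\Leftarrow$'', write $f(y) = f(x) + \langle \nabla f(x), y-x\rangle + \int_0^1 (1-\tau)\langle \nabla^2 f(x+\tau(y-x))(y-x), y-x\rangle\,\mathrm{d}\tau$ and bound the Hessian term below by $\mu\|y-x\|^2$; for ``$\Rightarrow$'', put $y = x + tv$ in the strong-convexity inequality, compare with the second-order Taylor expansion of $f(x+tv)$, divide by $t^2$ and let $t \to 0$ to obtain $\langle \nabla^2 f(x)v, v\rangle \geq \mu\|v\|^2$ for every $v \in \mathbb{R}^n$, hence $\mu I_n \preceq \nabla^2 f(x)$.
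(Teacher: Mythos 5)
The paper does not prove this statement at all: it is listed among the background facts in Section~2.1.2 and cited verbatim from Nesterov's textbook, so there is no in-paper argument to compare against. Your proof is correct. The reduction via $g(x) = f(x) - \tfrac{\mu}{2}\|x\|^2$ is sound: the gradient and Hessian computations are right, the completion-of-square identity $\tfrac{\mu}{2}\|y\|^2 - \tfrac{\mu}{2}\|x\|^2 - \mu\langle x, y-x\rangle = \tfrac{\mu}{2}\|y-x\|^2$ checks out, every step of the equivalence is reversible, and applying the paper's stated second-order characterization of convexity to $g$ finishes the argument. The alternative Taylor-expansion route you sketch is also valid and is essentially how Nesterov's book itself proceeds. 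Either way, this is the standard derivation, and nothing is missing.
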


\subsection{Continuous Dynamical Systems Corresponding to Optimization Methods}

As mentioned in Section~\ref{sec:cont_model_of_opt}, 
continuous dynamical systems given by ODEs are useful for analyzing optimization methods. 
In this subsection, we present several examples of such continuous dynamical systems. 

\begin{ex}[Continuous dynamical system corresponding to gradient descent]
    It has long been known that the gradient descent method corresponds to the continuous dynamical system
    \begin{equation}
        \dot{x} + \nabla f = 0
    \end{equation}
    (see, e.g., Attouch and Cominetti~(1996)~\cite{attouch1996dynamical}). Its convergence rate is given as follows; 
    if the objective function $f$ is merely convex, we have
    \begin{equation}
        f - f_* = \mathrm{O}\left( \frac{1}{t} \right)
    \end{equation}
    (Alvarez and Attouch~(2001)~\cite{alvarez2001convergence}),
    while if $f$ is $\mu$-strongly convex, we have
    \begin{equation}
        f - f_* = O\left( \mathrm{e}^{-2\mu t} \right)
    \end{equation}
    (Alvarez and Felipe~(2000)~\cite{alvarez2000minimizing}).
\end{ex}

Here we summarize the correspondence between the gradient descent and its associated continuous dynamical system. With step size $h$, the update rule of the gradient descent is
\begin{equation}
    x_{k+1} = x_k - h \nabla f (x_k).
\end{equation}
Taking the limit $h \to 0$, we obtain the corresponding continuous dynamical system
\begin{equation}
    \dot{x} + \nabla f = 0.
\end{equation}
Thus, optimization methods can be associated with specific continuous dynamical systems by taking the limit as the step size tends to zero. In what follows, we present other examples omitting the details of these correspondences for brevity.

\begin{ex}[Continuous dynamical system corresponding to Nesterov’s accelerated gradient method (NAG)]
    The continuous dynamical system corresponding to Nesterov’s accelerated gradient method (NAG) is
    \begin{equation}
        \ddot{x} + \frac{3}{t} \dot{x} + \nabla f = 0,
    \end{equation}
    whose convergence rate is
    \begin{equation}
        f - f_* = \mathrm{O}\left( \frac{1}{t^2} \right).
    \end{equation}
    See Theorem 7 in Su, Boyd, and Candès~(2016)~\cite{su2016differential} and Section 3.1 in Su, Roh, and Ryu~(2022)~\cite{suh2022continuous}.
\end{ex}

Moreover, an extended form of this dynamical system has also been studied:
\begin{equation}
    \ddot{x} + \frac{r}{t} \dot{x} + \nabla f = 0, 
\end{equation}
where $r \geq 3$. 
For this system, if the objective function $f$ is merely convex, the convergence rate remains
\begin{equation}
    f - f_* = \mathrm{O}\left( \frac{1}{t^2} \right).
\end{equation}
There are several studies focusing on improving the constant term of this rate (Su, Boyd, and Candès~(2016)~\cite{su2016differential}, Attouch et al.~(2018)~\cite{attouch2018fast}). 
If the objective function $f$ is $\mu$-strongly convex, it is shown that
\begin{equation}
    f - f_* = \mathrm{O}\left( \frac{1}{t^{\frac{2}{3}r}} \right).
\end{equation}
See Theorem 8 in Su, Boyd, and Candès~(2016)~\cite{su2016differential}, Theorem 3.3 in Attouch, Chbani, and Riahi~(2019)~\cite{attouch2019rate}, and Section 3.2 in Su, Roh, and Ryu~(2022)~\cite{suh2022continuous}. 

\begin{ex}[Continuous dynamical system corresponding to  Nesterov’s accelerated gradient method for strongly convex functions (SC-NAG)]
    For an $\mu$-strongly convex objective function $f$, the continuous dynamical system corresponding to the strongly convex version of Nesterov’s accelerated gradient method (SC-NAG) is
    \begin{equation}
        \ddot{x} + 2\sqrt{\mu}\dot{x} + \nabla f = 0,
    \end{equation}
    whose convergence rate is
    \begin{equation}
        f - f_* = \mathrm{O}\left( \mathrm{e}^{-\sqrt{\mu} t} \right).
    \end{equation}
    See Proposition 1 in Wilson et al.~(2021)~\cite{wilson2021lyapunov} and Section 3.4 in Su, Roh, and Ryu~(2022)~\cite{suh2022continuous}.
\end{ex}

\begin{ex}[General form of continuous dynamical systems corresponding to Nesterov’s accelerated gradient method]
    Cheng, Liu, and Shang~(2025)~\cite{cheng2025class} proposed 
    a generalized continuous dynamical system 
    \begin{equation}
        \ddot{x} + \frac{r}{t^\alpha} \dot{x} + \nabla f = 0
    \end{equation}
    that includes those corresponding to NAG and SC-NAG, 
    where $r>0$ and $0 \leq \alpha \leq 1$. 
    The case $r=3$ and $\alpha=1$ corresponds to NAG, while $r=2\sqrt{\mu}$ and $\alpha=0$ corresponds to SC-NAG. 
    In the case $0<\alpha<1$, 
    for $\mu$-strongly convex functions $f$, 
    the convergence rate is
    \begin{equation}
        f - f_* = \mathrm{O}\left(\mathrm{e}^{\frac{r}{2(1-\alpha)}t^{1-\alpha}}\right).
    \end{equation}
    See Theorem 1.2 in Cheng, Liu, and Shang~(2025)~\cite{cheng2025class}.
\end{ex}

\begin{ex}[Continuous dynamical system corresponding to the damped Newton method]
    The continuous dynamical system corresponding to the damped Newton method is
    \begin{equation}
        \nabla^2 f \dot{x} + \nabla f = 0.
    \end{equation}
    See Section 2 in Saupe~(1988)~\cite{saupe1988discrete}. For convex objective functions $f$, its convergence rate is
    \begin{equation}
        f - f_* = \mathrm{O}\left( \mathrm{e}^{-t} \right).
    \end{equation}
    See Theorem 8 in Kamijima et al.~(2024)~\cite{TomoyaKamijima2024}.
\end{ex}

The above examples are summarized in Table \ref{List_of_Previous_Results}.

\begin{table}[H]
    \centering
    \caption{Summary of correspondences between optimization methods and continuous dynamical systems with their convergence rates in Examples 1–5.}
    \label{List_of_Previous_Results}
    \renewcommand{\arraystretch}{1.5}
    \setlength{\tabcolsep}{8pt}
    \begin{tabular}{|>{\centering\arraybackslash}m{4cm}
                  |>{\centering\arraybackslash}m{2.6cm}
                  |>{\centering\arraybackslash}m{3.6cm}
                  |>{\centering\arraybackslash}m{4cm}|}
    \hline
    Optimization method & Property of objective function & Corresponding dynamical system & Convergence rate \\
    \hline
    \multirow{2}{*}{Gradient descent} & Convex & \multirow{2}{*}{$\dot{x} + \nabla f = 0$} & $f - f_* = \mathrm{O}\left( \frac{1}{t} \right)$ \\
    \cline{2-2}\cline{4-4}
     & $\mu$-strongly convex & & $f - f_* = \mathrm{O}\left( \mathrm{e}^{-2\mu t} \right)$ \\
    \hline
    NAG & Convex & $\ddot{x} + \frac{3}{t} \dot{x} + \nabla f = 0$ & $f - f_* = \mathrm{O}\left( \frac{1}{t^2} \right)$ \\
    \hline
    \multirow{2}{*}{Extended NAG} & Convex & \multirow{2}{*}{$\ddot{x} + \frac{r}{t} \dot{x} + \nabla f = 0$} & $f - f_* = \mathrm{O}\left( \frac{1}{t^2} \right)$ \\
    \cline{2-2}\cline{4-4}
     & $\mu$-strongly convex & & $f - f_* = \mathrm{O}\left( \frac{1}{t^{\frac{2}{3}r}} \right)$ \\
    \hline
    SC-NAG & $\mu$-strongly convex & $\ddot{x} + 2\sqrt{\mu} \dot{x} + \nabla f = 0$ & $f - f_* = \mathrm{O}\left( \mathrm{e}^{-\sqrt{\mu} t} \right)$ \\
    \hline
    Generalized NAG & $\mu$-strongly convex & $\ddot{x} + \frac{r}{t^\alpha}\dot{x} + \nabla f = 0$ & $f - f_* = \mathrm{O}\left(\mathrm{e}^{\frac{r}{2(1-\alpha)}t^{1-\alpha}}\right)$ \\
    \hline
    Damped Newton method & Convex & $\nabla^2 f \dot{x} + \nabla f = 0$ & $f - f_* = \mathrm{O}\left( \mathrm{e}^{-t} \right)$ \\
    \hline
    \end{tabular}
\end{table}



\section{Construction of Lyapunov Functions}
\label{sec:ConstLyap}

We use the framework of 
Suh, Roh, and Ryu~\cite{suh2022continuous} 
that provides a method for constructing useful Lyapunov functions via conservation laws. 
Furthermore, 
we incorporate the extension of 
the method by Kamijima et al.~(2024)~\cite{TomoyaKamijima2024}, 
which makes it possible to deal with wider classes of dynamical systems.  
Therefore
we describe the method of 
Suh, Roh, and Ryu in Section~\ref{sec:SuhRohRyu_method}, 
summarize the work of Kamijima et al.\  
in Section~\ref{sec:ext_Kamijima}. 


\subsection{Method for Constructing Lyapunov Functions by Suh, Roh, and Ryu~(2022)~\cite{suh2022continuous}}
\label{sec:SuhRohRyu_method}

Suppose that a continuous dynamical system is given by the ODE in~\eqref{Base_Form_of_ODE}:
\begin{equation}
    \label{Base_Form_of_ODE_re}
    F\left(x(s),\dot{x}(s),\ddot{x}(s),\nabla f(x(s)), \nabla^2 f(x(s)) \dot{x}(s), \ldots \right)=0.
\end{equation}
For this ODE,
we prepare a function 
$\gamma: \mathbb{R}_{\geq 0} \to \mathbb{R}$
that is assumed to provide its convergence rate.
If this function is appropriate, 
we can derive a Lyapunov function by using their method 
that consists of the following four steps:
\begin{enumerate}
    \item For a solution $x$ of the ODE, define
        \begin{equation}
            w(s) = \mathrm{e}^{\gamma(s)} \left(x(s)-x_*\right).
            \label{eq:def_func_w}
        \end{equation}
    \item Take the inner product of both sides of the ODE and
        \begin{equation}
            \dot{w}(s) = \mathrm{e}^{\gamma(s)} \left(\dot{x}(s) + \dot{\gamma}(s) (x(s)-x_*)\right).
                    \label{dot_w}
        \end{equation}
    \item Integrate the above inner product in the left-hand side over the interval $[t_{0}, t]$ $(t \geq t_{0})$, and apply integration by parts to certain terms.
    \item Move only the constant term (the conserved quantity) to the right-hand side. 
    A Lyapunov function can be given by
    the expression obtained by removing the monotone non-decreasing terms from the left-hand side.  
\end{enumerate}

Specifically, if the result of the above procedure can be expressed as
\begin{equation}
	\label{PQ}
    p(t) + \mathrm{e}^{\gamma(t)} \left(f(x(t))-f_*\right)  + \int_{t_0}^t q(s) \, \mathrm{d}s = \mathrm{const.},
\end{equation}
then differentiation with respect to $t$ yields
\[
\dot{p}(t) + \frac{\mathrm{d}}{\mathrm{d}t} \left( \mathrm{e}^{\gamma(t)} \left(f(x(t))-f_*\right) \right) = - q(t).
\]
From this identity, 
if we define
\begin{equation}
	\mathcal{E}(t) = p(t) + \mathrm{e}^{\gamma(t)} \left(f(x(t))-f_*\right),
    \label{eq:def_general_E}
\end{equation}
then we have
\begin{equation}
    	\dot{\mathcal{E}}(t) = \dot{p}(t) + \frac{\mathrm{d}}{\mathrm{d}t} \left( \mathrm{e}^{\gamma(t)} \left(f(x(t))-f_*\right) \right) = - q(t).
        \label{eq:expr_dot_E}
\end{equation}
From~\eqref{eq:def_general_E} and \eqref{eq:expr_dot_E}, 
if 
\begin{align}
p(t) \geq 0 \quad \text{and} \quad q(t) \geq 0
\notag
\end{align}
are satisfied for any $t \geq t_0$, 
then $\mathcal{E}(t)$ is a desired Lyapunov function satisfying condition~\eqref{condition:Lyap}.

According to Suh, Roh, and Ryu~(2022)~\cite{suh2022continuous}, 
we show a concrete example of the method by using the dynamical system
\begin{equation}
	\dot{x} + \nabla f = 0
\end{equation}
in the case of $\mu$-strongly convex objective function $f$.
Here we suppose that $t_{0}=0$
and omit argument $s$ inside integrals  and argument $t$ outside integrals for brevity.
\begin{enumerate}
    \item \label{item:step_def_w}
    Letting $\gamma(s) = \mu s$, we define
        \begin{equation}
            w(s) = \mathrm{e}^{\mu s} \left(x(s)-x_*\right).
        \end{equation}
    \item \label{item:step_inn_prod}
    Taking the inner product of both sides of the ODE $\dot{x} + \nabla f = 0$ and 
        \begin{equation}
            \label{parameter_dot_w}
            \dot{w}(s) = \mathrm{e}^{\mu s} \left(\mu (x(s)-x_*) + \dot{x}(s)\right),
        \end{equation}
        we have
        \begin{equation}
            \mathrm{e}^{\mu s} \langle \dot{x} + \nabla f , \mu (x - x_*) + \dot{x}  \rangle = 0.
        \end{equation}
    \item \label{item:step_integral}
    Integrating the above inner product over the interval $[0, t]$ $(t\geq 0)$, we have
    	\begin{equation}
		\int_0^t \left( \mathrm{e}^{\mu s} \langle \dot{x} + \nabla f , \mu (x - x_*) + \dot{x}  \rangle \right) \mathrm{d}s = 0, 
	\end{equation}
	which implies
	\begin{equation}
	\label{exampleGD}
	\begin{split}
		\int_0^t &\left( \mu \mathrm{e}^{\mu s} \langle x-x_*, \dot{x} \rangle + \mathrm{e}^{\mu s} \|\dot{x}\|^2 + \mu \mathrm{e}^{\mu s} \langle x-x_*, \nabla f \rangle + \mathrm{e}^{\mu s} \langle \nabla f, \dot{x} \rangle \right) \mathrm{d}s = 0.
	\end{split}
	\end{equation}
	We apply integration by parts
    to the first and fourth terms in 
    the left-hand side.
    For the first term, 
    letting $x_0 = x(0)$, we have
	\begin{equation}
	\begin{split}
		\int_0^t \left( \mu \mathrm{e}^{\mu s} \langle x-x_*, \dot{x} \rangle \right) \mathrm{d}s 
		&= \int_0^t \left( \mathrm{e}^{\mu s} \left( \frac{\mu}{2} \|x-x_*\|^2 \right)^{\prime} \right) \mathrm{d}s \\
            &= \frac{\mu}{2} \mathrm{e}^{\mu t} \| x-x_*\|^2 - \frac{\mu}{2} \|x_0 - x_*\|^2 - \int_0^t \left( \frac{\mu^2}{2} \mathrm{e}^{\mu s} \|x-x_*\|^2 \right)\mathrm{d}s.
	\end{split}
	\end{equation}
	For the fourth term, 
    letting $f_{0} = f(x(0))$, we obtain
	\begin{equation}
	\begin{split}
		\int_0^t \mathrm{e}^{\mu s} \langle \nabla f, \dot{x} \rangle \mathrm{d}s &= \int_0^t \mathrm{e}^{\mu s} \left( f - f_* \right)^{\prime} \mathrm{d}s \\
        &= \mathrm{e}^{\mu t} \left( f - f_* \right) - \left(f_0 - f_* \right) - \int_0^t \mu \mathrm{e}^{\mu s} \left( f - f_* \right) \mathrm{d}s.
	\end{split}
	\end{equation}
    It follows from these and equation~\eqref{exampleGD} that 
	\begin{equation}
    \label{exampleGD2}
	\begin{split}
		&\frac{\mu}{2} \mathrm{e}^{\mu t} \| x-x_*\|^2 - \frac{\mu}{2} \|x_0 - x_*\|^2 - \int_0^t \left( \frac{\mu^2}{2} \mathrm{e}^{\mu s} \|x-x_*\|^2 \right)\mathrm{d}s \\
            & + \mathrm{e}^{\mu t} \left( f - f_* \right) - \left(f_0 - f_* \right) - \int_0^t \mu \mathrm{e}^{\mu s} \left( f - f_* \right) \mathrm{d}s \\
            &+ \int_0^t \left( \mathrm{e}^{\mu s} \|\dot{x}\|^2 + \mu \mathrm{e}^{\mu s} \langle x-x_*, \nabla f \rangle \right)\mathrm{d}s = 0. 
	\end{split}
	\end{equation}
    \item \label{item:step_moving_terms}
    Moving the constant terms in 
    \eqref{exampleGD2} to the right-hand side, 
    we have
	\begin{equation}
	\begin{split}
		&\frac{\mu}{2} \mathrm{e}^{\mu t} \| x-x_*\|^2 + \mathrm{e}^{\mu t} \left( f - f_* \right) + \int_0^t \left( \mathrm{e}^{\mu s} \|\dot{x}\|^2 + \mu \mathrm{e}^{\mu s} \left( f_* - f - \langle \nabla f, x_* - x \rangle - \frac{\mu}{2} \|x-x_*\|^2 \right) \right)\mathrm{d}s \\
            &= \frac{\mu}{2} \|x_0 - x_*\|^2 + \left(f_0 - f_* \right).
	\end{split}
	\end{equation}
    This equation corresponds 
    to that in \eqref{PQ} with  
    \begin{gather}
	    p(t) = \frac{\mu}{2} \mathrm{e}^{\mu t} \| x-x_*\|^2, \label{eq:ex_of_func_p} \\
	    q(s) = \mathrm{e}^{\mu s} \|\dot{x}\|^2 + \mu \mathrm{e}^{\mu s} \left( f_* - f - \langle \nabla f, x_* - x \rangle - \frac{\mu}{2} \|x-x_*\|^2 \right). \label{eq:ex_of_func_q}
    \end{gather}
    Trivially $p(t) \geq 0$ holds.  
    The inequality $q(s) \geq 0$ also holds because $f$ is $\mu$-strongly convex. Therefore
    \begin{equation}
    	\mathcal{E}(t) = p(t) + \mathrm{e}^{\gamma(t)} \left(f(x(t))-f_*\right)
    \end{equation}
    is a Lyapunov function satisfying condition~\eqref{condition:Lyap}, and the convergence rate
    \begin{equation}
    	f(x(t)) - f(x_*) = \mathrm{O}\left( \mathrm{e}^{-\mu t} \right)
    \end{equation}
    is derived. 
\end{enumerate}

In Step 3, we apply integration by parts
to the first and fourth terms in 
the left-hand side of \eqref{exampleGD}. 
However, this approach is not necessarily the only option.
For example, 
we can apply integration by parts to the second or third terms.
Among such options, 
we need to choose one deriving non-negative $p(t)$ and $q(t)$ that provide $\gamma(t)$ increasing as fast as possible. 
At present, 
the choice of where to apply integration by parts depends on human judgment 
without any theoretical framework.

\subsection{Extension of the Method by Kamijima et al.~(2024)~\cite{TomoyaKamijima2024}}
\label{sec:ext_Kamijima}

In Kamijima et al.~(2024)~\cite{TomoyaKamijima2024}, the method of Suh, Roh, and Ryu~(2022)~\cite{suh2022continuous} was applied to three classes of continuous dynamical systems:
\begin{gather}
	\nabla^2 f \dot{x} + \nabla f = 0, \\
	\dot{x} + c_2(t) \nabla^2 f \dot{x} + \nabla f = 0, \label{eq:dot_x_c2} \\
	\ddot{x} + c_1(t) \dot{x} + c_2(t) \nabla^2 f \dot{x} + \nabla f = 0. \label{eq:ddot_x_c1_c2}
\end{gather}
Here, we point out two major contributions in Kamijima et al.~(2024)~\cite{TomoyaKamijima2024}.
First, unlike the systems considered in Suh, Roh, and Ryu~(2022)~\cite{suh2022continuous}, these dynamical systems include terms involving the Hessian matrix.  
Second, in the second and third classes of systems, the ordinary differential equations contain time-dependent functions $c_1(t)$ and $c_2(t)$.
As mentioned in Section~\ref{sec:LyapAnalODEs}, 
each of them should be regarded not as a single system but as a class of systems encompassing multiple instances.  
The results obtained for each class are summarized in Table~\ref{Results_of_Kamijima}. 

\begin{table}[H]
    \centering
    \caption{Summary of results in Kamijima et al.~(2024)~\cite{TomoyaKamijima2024}}
    \label{Results_of_Kamijima}
    \renewcommand{\arraystretch}{1.5}
    \setlength{\tabcolsep}{8pt}
    \begin{tabular}{|>{\centering\arraybackslash}m{5.5cm}
                  |>{\centering\arraybackslash}m{2cm}
                  |>{\centering\arraybackslash}m{4cm}
                  |>{\centering\arraybackslash}m{2.7cm}|}
    \hline
    Continuous dynamical system & Property of objective function & Best parameter setting & Convergence rate \\
    \hline
    $\nabla^2 f \dot{x} + \nabla f = 0$ & Convex & - & $\mathrm{O} \left( \mathrm{e}^{-t} \right)$ \\
    \hline
    \multirow{2}{*}{$\dot{x} + c_2(t) \nabla^2 f \dot{x} + \nabla f = 0$}
      & Convex & $c_2(t) = 0$ & $\mathrm{O} \left( \frac{1}{t} \right)$ \\
    \cline{2-4}
      & $\mu$-strongly convex & $c_2(t) = 0$ & $\mathrm{O} \left( \mathrm{e}^{- \mu t} \right)$ \\
    \hline
    \multirow{2}{*}{$\ddot{x} + c_1(t) \dot{x} + c_2(t) \nabla^2 f \dot{x} + \nabla f = 0$}
      & Convex & $c_1(t) = \tfrac{3}{t}, ~ c_2(t) = 0$ & $\mathrm{O} \left( \tfrac{1}{t^2} \right)$ \\
    \cline{2-4}
      & $\mu$-strongly convex & $c_1(t) = 2 \sqrt{\mu}, ~ c_2(t) = 0$ & $\mathrm{O} \left( \mathrm{e}^{- \sqrt{\mu} t} \right)$ \\
    \hline
    \end{tabular}
\end{table}

As can be seen in Table \ref{Results_of_Kamijima}, the continuous dynamical system models corresponding to gradient descent, Nesterov’s accelerated gradient method, and its strongly convex variant achieve the best convergence rates within their respective classes.  
In addition, the convergence rate of the continuous dynamical system corresponding to the damped Newton method is also provided. Since $c_2(t)=0$ holds for all cases, the new terms including the Hessian matrix in Kamijima et al.~(2024)~\cite{TomoyaKamijima2024} do not improve the convergence rate. 
However, 
it is unclear whether these results are caused by limitations of each class of systems or by the inability to perform an exhaustive search for Lyapunov functions. 

\section{Proposed Method}
\label{sec:our_method}

As pointed out at the end of Section~\ref{sec:SuhRohRyu_method}, 
the framework of Suh, Roh, and Ryu (2022) \cite{suh2022continuous}
involves arbitrariness in the process of applying integration by parts.
In addition, 
we cannot be sure whether the framework can determine the optimal convergence rate of dynamical systems, as mentioned at the end of Section~\ref{sec:ext_Kamijima}. 

To address these issues, we propose a computer-assisted method of searching for Lyapunov functions that provides the fastest possible convergence rate for each class of dynamical systems. 
For this purpose, 
we introduce the following brute-force procedure.
\begin{enumerate}
    \item \label{item:mat_P_Q}
    To automate the judgment of the non-negativity of $p$ and $q$ in~\eqref{PQ}, we represent them as quadratic forms given by symmetric matrices $P$ and $Q$, respectively. 
    In particular, the integral like \eqref{exampleGD} before applying the integration-by-parts operations can be expressed by an initial pair $(P^{0}, Q^{0})$ of the matrices. 
    Then the positive semi-definiteness of $P$ and $Q$ is a sufficient condition for the non-negativity of $p$ and $q$. 
    \item \label{item:oper_int_by_parts}
    Since the integration-by-parts operations update $p$ and $q$, we can describe the rules how each operation updates the corresponding matrices $P$ and $Q$. Therefore we list the rules exhaustively. 
    \item \label{item:oper_conv_ineq}
    Furthermore, we need to apply several inequalities expressing the smoothness or (strong) convexity of $f$, as illustrated by \eqref{eq:ex_of_func_q}.
    In fact, we can describe the application of such inequalities as 
    update rules of the matrices $P$ and $Q$ by introducing some parameters. 
    Therefore we also list such rules exhaustively. 
    \item \label{item:appl_rules}
    We list all possible combinations of the update rules above. Then we 
    apply them to the initial pair $(P^{0}, Q^{0})$ of the matrices $P$ and $Q$ by symbolic computation on a computer.
    Thus we obtain all possible pairs of matrices $P$ and $Q$. 
    \item \label{item:analysis_P_Q}
    For each pairs, we maximize $\gamma(t)$ subject to $P \succeq O$ and $Q \succeq O$. We also do this by symbolic computation on a computer. If the dynamical system contains undetermined parameters 
    like \eqref{eq:dot_x_c2} and \eqref{eq:ddot_x_c1_c2}, 
    they are determined by this optimization. 
    Finally, we choose a pair $(P,Q)$ that achieves the maximum $\gamma(t)$.
\end{enumerate}

In the following, we detail several parts of the above procedure. 
In Section~\ref{sec:mat_rep_P_Q}, 
we show matrix representations of $p$ and $q$ (Step~\ref{item:mat_P_Q}). 
In Section~\ref{sec:initial_P_Q}, 
we show how we determine the initial pair of the matrices (Step~\ref{item:mat_P_Q}).
In Section~\ref{sec:operations_for_P_Q}, 
we list all the operations mentioned in 
Steps~\ref{item:oper_int_by_parts} and~\ref{item:oper_conv_ineq} above. 
In Section~\ref{sec:analysis_of_P_Q}, 
we detail the optimization of Step~\ref{item:analysis_P_Q}.

\subsection{Matrix Representations of $p$ and $q$}
\label{sec:mat_rep_P_Q}

We suppose that the function $p(t)$ 
is expressed in terms of a quadratic form of the three vectors
\begin{align}
    v_1(t) &= x-x_*, \\
    v_2(t) &= \nabla f, \\
    v_3(t) &= \dot{x}, 
\end{align}
and the function $q(t)$ is expressed in terms of a quadratic form of the five vectors
\begin{align}
    v_1(t) &= x-x_*, \\
    v_2(t) &= \nabla f, \\
    v_3(t) &= \dot{x}, \\
    v_4(t) &= \nabla^2 f \dot{x}, \\
    v_5(t) &= \ddot{x}.
\end{align}
Accordingly, by introducing a $3\times 3$ matrix $P$ and a $5\times 5$ matrix $Q$, they can be expressed as
\begin{align}
    p(t) &= \mathrm{e}^{\gamma} \begin{pmatrix}
            v_1^T & v_2^T & v_3^T
        \end{pmatrix} (P \otimes I_n) \begin{pmatrix}
            v_1\\
            v_2\\
            v_3
        \end{pmatrix}, \\
    q(t) &= \mathrm{e}^{\gamma} \begin{pmatrix}
            v_1^T & v_2^T & v_3^T & v_4^T & v_5^T
        \end{pmatrix} (Q \otimes I_n) \begin{pmatrix}
            v_1\\
            v_2\\
            v_3\\
            v_4\\
            v_5
        \end{pmatrix},
\end{align}
where $v_i$ denotes $v_i(t)$ for brevity.

The condition for $\mathcal{E}(t)$ to serve as a Lyapunov function satisfying \eqref{condition:Lyap} is that
\begin{gather}
    p(t) = \mathrm{e}^{\gamma}\begin{pmatrix}
            v_1^T & v_2^T & v_3^T
        \end{pmatrix} (P \otimes I_n) \begin{pmatrix}
            v_1\\
            v_2\\
            v_3
        \end{pmatrix} \geq 0, \\
    q(t) = \mathrm{e}^{\gamma}\begin{pmatrix}
            v_1^T & v_2^T & v_3^T & v_4^T & v_5^T
        \end{pmatrix} (Q \otimes I_n) \begin{pmatrix}
            v_1\\
            v_2\\
            v_3\\
            v_4\\
            v_5
        \end{pmatrix} \geq 0
\end{gather}
hold for all $t$.
A sufficient condition for this is
\begin{align}
    P \otimes I_n \succeq O_{3n}, \\
    Q \otimes I_n \succeq O_{5n}.
\end{align}
Since the identity matrix $I_n$ is positive semi-definite, this sufficient condition reduces to
\begin{align}
    P \succeq O_{3}, \\
    Q \succeq O_{5}.
\end{align}

Henceforth, we search for pairs $P$ and $Q$ that are positive semi-definite and provides rapidly growing $\gamma(t)$. Since the pair $(P, Q)$ is updated by the operations described later, we denote the pair after the $k$-th operation as $(P^k, Q^k)$. Each operation is defined to preserve the symmetry of $P^k$ and $Q^k$. 

\subsection{Determination of $P^0$ and $Q^0$}
\label{sec:initial_P_Q}

In this subsection, we describe the procedure for determining the initial matrices $P^0$ and $Q^0$. 
Since the left-hand side of the differential equation \eqref{Base_Form_of_ODE_re}
and the function $\dot{w}$ in \eqref{dot_w}
can be represented as linear combinations of the vectors $v_1,v_2,v_3,v_4,v_5$, they can be expressed using symmetric matrices $P^0$ and $Q^0$. At this stage, no integration by parts has been applied, and the entire inner product remains inside the integral in Step~\ref{item:step_integral} in Section~\ref{sec:SuhRohRyu_method}. 
Thus the initial matrix $P^0$ is the zero matrix.  

As an example, consider the first-order dynamical system
\begin{equation}
    \dot{x} + b\nabla^2 f\dot{x} + \nabla f = 0,
\end{equation}
and let us determine the initial matrices $P^0$ and $Q^0$.
Taking
\(
    \dot{w} = \mathrm{e}^{\gamma} \left( \dot{x} + \dot{\gamma} (x-x_*) \right)
\)
and forming the inner product with both sides of the above dynamical system, 
we have 
\begin{equation}
    \mathrm{e}^{\gamma} \left\langle \dot{x} + b\nabla^2 f\dot{x} + \nabla f, \dot{x} + \dot{\gamma} (x-x_*) \right\rangle = 0.
\end{equation}
Expanding this expression gives
\begin{equation}
    \mathrm{e}^{\gamma} \left( \|\dot{x}\|^2 + \dot{\gamma} \langle x-x_*, \dot{x} \rangle + b \langle \nabla^2 f \dot{x}, \dot{x} \rangle + b \dot{\gamma} \langle \nabla^2 f \dot{x}, x-x_* \rangle + \langle \nabla f , \dot{x} \rangle + \dot{\gamma} \langle \nabla f, x-x_* \rangle \right) = 0,
\end{equation}
whose left-hand side is written in the form
\begin{equation}
    \mathrm{e}^{\gamma} \begin{pmatrix}
            v_1^T & v_2^T & v_3^T & v_4^T & v_5^T
        \end{pmatrix} 
        \left( \frac{1}{2}
        \begin{pmatrix}
            0 & \dot{\gamma} & \dot{\gamma} & b\dot{\gamma} & 0 \\
            \dot{\gamma} & 0 & 1 & 0 & 0 \\
            \dot{\gamma} & 1 & 2 & b & 0 \\
            b\dot{\gamma} & 0 & b & 0 & 0 \\
            0 & 0 & 0 & 0 & 0
        \end{pmatrix} \otimes I_n \right) 
        \begin{pmatrix}
            v_1\\
            v_2\\
            v_3\\
            v_4\\
            v_5
        \end{pmatrix}.
\end{equation}
Accordingly, the initial pair of the matrices for this dynamical system is given by
\begin{equation}
    P^0 = O_3, \quad Q^0 = \frac{1}{2}
        \begin{pmatrix}
            0 & \dot{\gamma} & \dot{\gamma} & b\dot{\gamma} & 0 \\
            \dot{\gamma} & 0 & 1 & 0 & 0 \\
            \dot{\gamma} & 1 & 2 & b & 0 \\
            b\dot{\gamma} & 0 & b & 0 & 0 \\
            0 & 0 & 0 & 0 & 0
        \end{pmatrix}.
\end{equation}

\subsection{Operations on Matrices $P$ and $Q$ and Their Combinations}
\label{sec:operations_for_P_Q}


We systematically track how the pair $(P,Q)$ is changed by the operations mentioned in Steps~\ref{item:oper_int_by_parts} and~\ref{item:oper_conv_ineq} at the beginning of Section~\ref{sec:our_method}.
They eliminate specific off-diagonal entries of $Q$ and change others of $P$ and $Q$.
Indeed, there are 13 operations classified into the following two types.
\begin{itemize}
    \item Operations of integration by parts (Step~\ref{item:oper_int_by_parts}). 
    They consist of Operations 
    A1, B1--B3, C1, and D1--D4, 
    which are classified into Groups A, B, C, and D. 
    Operation A1 is used just once at the beginning for extracting the term $\mathrm{e}^{\gamma} (f - f_{\ast})$ as explained below.
    The operations in Groups B, C, and D are related to the specific entries of $P$ and $Q$ shown in the following expressions, where the underline $\uline{\quad }$ indicates elements that will be eliminated by a certain operation in each group.
    \begin{itemize}
        \item Group B:
            \begin{equation}
    P= \begin{pmatrix}
        P_{11} & \phantom{P_{11}} & P_{13} \\
         &  & \\
        P_{13} & & P_{33} 
    \end{pmatrix}
    ,\quad 
    Q=
    \begin{pmatrix}
        Q_{11} & \phantom{Q_{11}} & \uline{Q_{13}} & \phantom{Q_{11}} & \uline{Q_{15}} \\
         & & & & \\
        \uline{Q_{13}} & & Q_{33} & & \uline{Q_{35}} \\
         & & & & \\
        \uline{Q_{15}} & & \uline{Q_{35}} & &  
    \end{pmatrix}.
    \label{eq:comp_of_P_Q_by_B}
    \end{equation}

        \item Group C:
            \begin{equation}
    P= \begin{pmatrix}
         \phantom{P_{11}} & \phantom{P_{11}} & \phantom{P_{11}} \\
         & P_{22} &  \\
         & &  
    \end{pmatrix}
    ,\quad 
    Q=
    \begin{pmatrix}
         \phantom{Q_{11}} & \phantom{Q_{11}} & \phantom{Q_{11}} & \phantom{Q_{11}} & \phantom{Q_{11}} \\
         & Q_{22} &  & \uline{Q_{24}} &  \\
         & & & & \\
         & \uline{Q_{24}} &  & & \\
         & & & & 
    \end{pmatrix}.
    \label{eq:comp_of_P_Q_by_C}
    \end{equation}

        \item Group D:
    \begin{equation}
    P= \begin{pmatrix}
         & P_{12} & \\
        P_{12} & & P_{23} \\
         & P_{23} & 
    \end{pmatrix}
    ,\quad 
    Q=
    \begin{pmatrix}
         & Q_{12} & & \uline{Q_{14}} & \\
        Q_{12} & & \uline{Q_{23}} & & \uline{Q_{25}} \\
         & \uline{Q_{23}} & & \uline{Q_{34}} & \\
        \uline{Q_{14}} & & \uline{Q_{34}} & & \\
         & \uline{Q_{25}} & & &  
    \end{pmatrix}.
    \label{eq:comp_of_P_Q_by_D}
    \end{equation}
    \end{itemize}
    
    \item Operations of applying inequalities for smoothness and convexity (Step~\ref{item:oper_conv_ineq}). 
    They consist of Operations E1 and F1.
    Operation E1 is related to inequalities for $\nabla f$ and Operation F1 is related to those for $\nabla^{2}f$.
    The operations in Groups E and F are related to the specific entries of $P$ and $Q$ shown in the following expressions, where the underline $\uline{\quad }$ indicates elements that will be eliminated by a certain operation in each group.
    \begin{itemize}
        \item Group E:
    \begin{equation}
    P= \begin{pmatrix}
         P_{11} & &  \\
         \phantom{P_{11}} & \phantom{P_{22}} & \phantom{P_{11}} \\
         & &  
    \end{pmatrix}
    ,\quad 
    Q=
    \begin{pmatrix}
        Q_{11} &  &  & \uline{Q_{14}} & \\
         \phantom{Q_{11}} & \phantom{Q_{22}} & \phantom{Q_{11}} & \phantom{Q_{11}} & \phantom{Q_{11}} \\
         & & & & \\
         \uline{Q_{14}} & &  & & \\
         & & & & 
    \end{pmatrix}.
    \label{eq:comp_of_P_Q_by_E}
    \end{equation}

    \item Group F:
        \begin{equation}
    P= \begin{pmatrix}
          & &  \\
         \phantom{P_{11}} & \phantom{P_{11}} & \phantom{P_{11}} \\
         & &  
    \end{pmatrix}
    ,\quad 
    Q=
    \begin{pmatrix}
        \phantom{Q_{11}} & \phantom{Q_{11}} & \phantom{Q_{11}} & \phantom{Q_{11}} & \phantom{Q_{11}} \\
         & & & & \\
         & & Q_{33} & \uline{Q_{34}} & \\
         & & \uline{Q_{34}} & & \\
         & & & & 
    \end{pmatrix}.
    \label{eq:comp_of_P_Q_by_F}
    \end{equation}

    \end{itemize}
\end{itemize}

\begin{rem}
We note that the underlined entries become zero due to a certain operation, but they may become non-zero again due to subsequent other operations.
For example, 
Operation D3 eliminates $Q_{23}$ and adds a certain term to $Q_{14}$, whereas 
Operation E1 eliminates $Q_{14}$ and adds a certain term to $Q_{11}$. 
Therefore applying Operations E1 and D3 in this order does not necessarily result in a zero value for $Q_{14}$. 
\end{rem}

In this subsection, 
for readability, 
we describe only three of the operations and how they can be systematically combined. 
We provide detailed discussions on all the operations and their combinations in Appendices~\ref{sec:list_of_oper} and~\ref{sec:list_of_combi}. 
In the following, 
the $(i,j)$ entry of $P^{k}$ and $Q^{k}$ are denoted as $P_{ij}^k$ and $Q_{ij}^k$, respectively.
In addition, 
we use ``const." to represent constants appearing in integration by parts throughout this paper.

\subsubsection{Operations of Integration by Parts}

We show Operation A1 that creates
\begin{equation}
    \mathrm{e}^\gamma (f-f_*)
\end{equation}
outside of the integral. This operation is always performed first, so it is applied to $(P^0,Q^0)$ to obtain $(P^1,Q^1)$.

\begin{itemize}
    \item Operation A1: Extracting $\mathrm{e}^\gamma(f-f_*)$
        \begin{equation}
            \begin{split}
                &\int_{t_0}^t \mathrm{e}^\gamma \langle v_2, v_3 \rangle \mathrm{d}s \\
                &= \int_{t_0}^t \mathrm{e}^\gamma \langle \nabla f, \dot{x} \rangle \mathrm{d}s \\
                &= \int_{t_0}^t \mathrm{e}^\gamma (f-f_*)^{\prime} \mathrm{d}s \\
                &= \mathrm{e}^\gamma (f-f_*) - \int_{t_0}^t \mathrm{e}^\gamma (f - f_*) \mathrm{d}s + \mathrm{const.}\\
                &= \mathrm{e}^\gamma(f-f_*) + \int_{t_0}^t \mathrm{e}^\gamma \cdot \dot{\gamma} (f_* - f - \langle \nabla f , x_* - x \rangle) \mathrm{d}s\\
                &\phantom{=} - \int_{t_0}^t \mathrm{e}^\gamma \cdot \dot{\gamma} \langle \nabla f, x-x_* \rangle \mathrm{d}s + \mathrm{const.}
            \end{split}
        \end{equation}
        Here, when $f(x)$ is ($\mu$-strongly) convex and $L$-smooth, introducing a time-dependent parameter $\lambda \in [\mu, L]$ gives
        \begin{equation}
            f_* - f - \langle \nabla f, x_* - x \rangle = \frac{\lambda}{2} \|x-x_*\|^2,
        \end{equation}
        which leads to
        \begin{equation}
            \begin{split}
                &\int_{t_0}^t \mathrm{e}^\gamma \langle v_2, v_3 \rangle \mathrm{d}s \\
                &= \mathrm{e}^\gamma (f-f_*) + \int_{t_0}^t \mathrm{e}^\gamma \cdot \frac{\lambda}{2} \dot{\gamma} \|v_1\|^2 \mathrm{d}s - \int_{t_0}^t \mathrm{e}^\gamma \cdot \dot{\gamma} \langle v_1, v_2 \rangle \mathrm{d}s.
            \end{split}
        \end{equation}
        Hence, applying Operation A1 to $(P^0,Q^0)$ yields
        \begin{gather}
            P^1 = P^0, \\
            Q^1 = Q^0 - \frac{1}{2} \begin{pmatrix}
                - \lambda \dot{\gamma} & \dot{\gamma}  & 0 & 0 & 0 \\
                \dot{\gamma} & 0 & 1 & 0 & 0 \\
                0 & 1 & 0 & 0 & 0 \\
                0 & 0 & 0 & 0 & 0 \\
                0 & 0 & 0 & 0 & 0
            \end{pmatrix}.
        \end{gather}
\end{itemize}

Similarly, 
eight other integration-by-parts operations can be performed: B1, B2, B3, C1, D1, D2, D3, and D4. 
Unlike Operation A1, 
all of these operations are expressed only by the entries of $P$ and $Q$ and can be applied multiple times. 
Here we show operation B1:

\begin{itemize}
    \item Operation B1: Integration by parts of $\langle v_3,v_5 \rangle$
        \begin{equation}
            \begin{split}
                &\int_{t_0}^t \mathrm{e}^\gamma \cdot 2 Q_{35}^k \langle v_3, v_5 \rangle \mathrm{d}s \\
                &= \int_{t_0}^t \mathrm{e}^\gamma \cdot Q_{35}^k (2\langle \dot{x}, \ddot{x} \rangle) \mathrm{d}s \\
                &= \int_{t_0}^t \mathrm{e}^\gamma \cdot Q_{35}^k ( \|\dot{x}\|^2)^{\prime} \mathrm{d}s \\
                &= \mathrm{e}^\gamma \cdot Q_{35}^k \|\dot{x}\|^2 - \int_{t_0}^t \mathrm{e}^\gamma \cdot (\dot{\gamma} Q_{35}^k + \dot{Q_{35}^k}) \|\dot{x}\|^2 \mathrm{d}s + \mathrm{const.} \\
                &= \mathrm{e}^\gamma \cdot Q_{35}^k \|v_3\|^2 - \int_{t_0}^t \mathrm{e}^\gamma \cdot (\dot{\gamma} Q_{35}^k + \dot{Q_{35}^k}) \|v_3\|^2 \mathrm{d}s + \mathrm{const.},
            \end{split}
        \end{equation}
        resulting in
        \begin{gather}
            P^{k+1} = P^k + \begin{pmatrix}
                0 & 0 & 0 \\
                0 & 0 & 0 \\
                0 & 0 & Q_{35}^k
            \end{pmatrix}, \\
            Q^{k+1} = Q^k - \begin{pmatrix}
                0 & 0 & 0 & 0 & 0 \\
                0 & 0 & 0 & 0 & 0 \\
                0 & 0 & \dot{\gamma} Q_{35}^k + \dot{Q_{35}^k} & 0 & Q_{35}^k \\
                0 & 0 & 0 & 0 & 0 \\
                0 & 0 & Q_{35}^k & 0 & 0
            \end{pmatrix}.
        \end{gather}
\end{itemize}

In this way, 
Operation B1 depends on the $(3,5)$-entry $Q_{35}^k$ of $Q^k$. 
Once operation B1 is applied, $Q_{35}^{k+1} = 0$ holds. 
Therefore, 
Operation B1 is an idempotent operation. 
As shown later, 
the other operations in Group B are also idempotent and 
this property becomes a key when considering combinations of operations. The remaining seven operations based on integration by parts are described in 
Appendices 
\ref{sec:oper_Group_B}, 
\ref{sec:oper_Group_C}, and
\ref{sec:oper_Group_D}.

\subsubsection{Operations of Applying Inequalities for Smoothness and Convexity}

We show operations of applying inequalities for smoothness and convexity of the objective function $f$, 
which are introduced by Kamijima et al.~(2024)~\cite{TomoyaKamijima2024}. 
Here, 
we show a representative operation, Operation E1.

\begin{itemize}
    \item Operation E1: Transfers $\langle v_1, v_4 \rangle$ to $\|x-x_*\|^2$ using the strong convexity and smoothness properties
        \begin{equation}
            \begin{split}
                &\int_{t_0}^t \mathrm{e}^\gamma \cdot 2Q_{14}^k \langle v_1, v_4 \rangle \, \mathrm{d}s \\
                &= \int_{t_0}^t \mathrm{e}^\gamma \cdot 2Q_{14}^k \langle x-x_*, \nabla^2 f \dot{x} \rangle \, \mathrm{d}s \\
                &= \int_{t_0}^t \mathrm{e}^\gamma \cdot 2Q_{14}^k (\langle x-x_*, \nabla f \rangle)^\prime \, \mathrm{d}s - \int_{t_0}^t \mathrm{e}^\gamma \cdot 2Q_{14}^k \langle \nabla f, \dot{x} \rangle \, \mathrm{d}s \\
                &= \mathrm{e}^\gamma \cdot 2Q_{14}^k \langle x-x_*, \nabla f \rangle - \int_{t_0}^t \mathrm{e}^\gamma \cdot 2(\dot{\gamma}Q_{14}^k + \dot{Q_{14}^k}) \langle x-x_*, \nabla f \rangle \, \mathrm{d}s \\
                &\phantom{=} - \int_{t_0}^t \mathrm{e}^\gamma \cdot 2Q_{14}^k (f-f_*)^\prime \, \mathrm{d}s + \mathrm{const.}\\
                &= \mathrm{e}^\gamma \cdot 2Q_{14}^k \langle x-x_*, \nabla f \rangle - \int_{t_0}^t \mathrm{e}^\gamma \cdot 2(\dot{\gamma}Q_{14}^k + \dot{Q_{14}^k}) \langle x-x_*, \nabla f \rangle \, \mathrm{d}s \\
                &\phantom{=} - \mathrm{e}^\gamma \cdot 2Q_{14}^k (f-f_*) + \int_{t_0}^t \mathrm{e}^\gamma \cdot 2(\dot{\gamma}Q_{14}^k + \dot{Q_{14}^k}) (f-f_*) \, \mathrm{d}s + \mathrm{const.}\\
                &= \mathrm{e}^\gamma \cdot 2Q_{14}^k (f_* - f - \langle \nabla f, x_*-x \rangle) \\
                &\phantom{=} - \int_{t_0}^t \mathrm{e}^\gamma \cdot 2(\dot{\gamma}Q_{14}^k + \dot{Q_{14}^k}) (f_* - f - \langle \nabla f, x_*-x \rangle) \, \mathrm{d}s + \mathrm{const.}.
            \end{split}
        \end{equation}

Here, 
if the objective function $f(x)$ is ($\mu$-strongly) convex and $L$-smooth, 
we use the parameter $\lambda \in [\mu, L]$ which is the same parameter that appears in Operation A1:
\begin{equation}
    f_* - f - \langle \nabla f, x_* - x \rangle = \frac{\lambda}{2} \|x-x_*\|^2.
\end{equation}
Then we have
\begin{equation}
    \begin{split}
        &\int_{t_0}^t \mathrm{e}^\gamma \cdot 2Q_{14}^k \langle v_1, v_4 \rangle \, \mathrm{d}s \\
        &= \mathrm{e}^\gamma \cdot \lambda Q_{14}^k \|x-x_*\|^2 - \int_{t_0}^t \mathrm{e}^\gamma \cdot \lambda (\dot{\gamma}Q_{14}^k + \dot{Q_{14}^k}) \|x-x_*\|^2 \, \mathrm{d}s + \mathrm{const.}\\
        &= \mathrm{e}^\gamma \cdot \lambda Q_{14}^k \|v_1\|^2 - \int_{t_0}^t \mathrm{e}^\gamma \cdot \lambda (\dot{\gamma}Q_{14}^k + \dot{Q_{14}^k}) \|v_1\|^2 \, \mathrm{d}s+ \mathrm{const.},
    \end{split}
\end{equation}
which results in
\begin{gather}
    P^{k+1} = P^k + \begin{pmatrix}
        \lambda Q_{14}^k & 0 & 0 \\
        0 & 0 & 0 \\
        0 & 0 & 0
    \end{pmatrix},\\
    Q^{k+1} = Q^k - \begin{pmatrix}
        \lambda (\dot{\gamma}Q_{14}^k+\dot{Q_{14}^k}) & 0 & 0 & Q_{14}^k & 0 \\
        0 & 0 & 0 & 0 & 0 \\
        0 & 0 & 0 & 0 & 0 \\
        Q_{14}^k & 0 & 0 & 0 & 0 \\
        0 & 0 & 0 & 0 & 0
    \end{pmatrix}.
\end{gather}
\end{itemize}

The other operations in this category are described in Sections~\ref{sec:oper_Group_E} and~\ref{sec:oper_Group_F}. 
Their notable feature is that they involve parameters (like $\lambda$) that vary within the range $[\mu, L]$. 
Indeed, we use parameter $\theta$ in the range $[\mu, L]$ that satisfy
\begin{equation}
    \langle \nabla^2 f \dot{x}, \dot{x} \rangle = \theta \|\dot{x}\|^2
\end{equation}
for the other operations in this category 
in the following manner.
\begin{itemize}
\item Operation E1: the parameter $\lambda \in [\mu, L]$
\item Operation F1: the parameter $\theta \in [\mu, L]$
\end{itemize}
We discuss how to deal with these parameters in Section~\ref{sec:param_in_mu_L}. 

\subsubsection{Combination of the Operations}


We aim to find pairs of positive semi-definite matrices $P$ and $Q$ among those generated by combinations of the above 13 operations. 
To this end, 
we derive and examine all possible pairs of matrices $P$ and $Q$ that can be generated by combining the 13 operations in arbitrary order.
At first glance, it may seem that the number of their possible combinations is infinite, since each of the 13 operations besides Operation A1 can be applied any number of times and in any order, and the operations are not necessarily commutative. 
In reality, however, the number of such possibilities is finite. 
We outline the overall idea to show this fact below and detail it in Appendix~\ref{sec:list_of_combi}.


First, we consider possible combinations of the operations within each group. As previously explained, Operation A1 is always applied exactly once at the beginning. Therefore there is only one possible case for Group A. 
The operations in Group B are idempotent as mentioned earlier, and they set specific entries of the matrix $Q$ to zero. 
Therefore possible combinations are limited to the following ten cases:
\begin{enumerate}
\item Do nothing
\item B1
\item B2
\item B1→B2
\item B3
\item B1→B3
\item B2→B3
\item B1→B2→B3
\item B3→B2
\item B1→B3→B2
\end{enumerate}
By investigating possible combinations of the operations in this way for each group, we can conclude that their numbers are finite as summarized below:
\begin{enumerate}
\item Group A: 1 combination
\item Group B: 10 combinations
\item Group C: 2 combinations
\item Group D: 13 combinations
\item Group E: 2 combinations
\item Group F: 2 combinations
\end{enumerate}

Next, 
we consider combinations of the operations across different groups. 
Since Operation A1 is applied only once at the beginning, we do not need to consider Group A further. 
The operations in Groups B, C, and D act on the entries of $P$ and $Q$ different from those of the other group as shown in~\eqref{eq:comp_of_P_Q_by_B}, \eqref{eq:comp_of_P_Q_by_C}, and~\eqref{eq:comp_of_P_Q_by_D}.
Therefore they commute with all the operations in the other group. 
Furthermore, 
as can be seen from~\eqref{eq:comp_of_P_Q_by_B}, \eqref{eq:comp_of_P_Q_by_C}, \eqref{eq:comp_of_P_Q_by_E}, and \eqref{eq:comp_of_P_Q_by_F}, 
the entries eliminated by Groups B and C are different from those eliminated by Groups E and F.
Accordingly, 
it suffices to consider combinations among the operations in Groups D, E, and F. 
Since the operations in Groups E and F are also commutative, it is enough to consider their relationships with Group D. 

The operations in Group D are not necessarily commutative with those in Groups E and F. However, as shown in Appendix~\ref{sec:comb_acoss_groups}, it suffices to consider the case that we apply the operations in Group D and those of Groups E and F in order, and then apply those of Group D again. 

In summary, 
all possible combinations of the operations are given as follows:
\begin{itemize}
\item Group A: 1 combination
\item Group B: 10 combinations
\item Group C: 2 combinations
\item First application of Group D: 13 combinations
\item Groups E, F, and second application of Group D: 7 combinations
\item Third application of Group D: 13 combinations
\end{itemize}
Since each stage is independent, the total number of the possible combinations is $1 \times 10 \times 2 \times 13 \times 7 \times 13 = 23660$, which is finite. We then have only to perform the semi-definite analysis on these at most 23660 possible pairs of matrices $P$ and $Q$.

\subsection{Analysis of the Obtained Matrix Pairs}
\label{sec:analysis_of_P_Q}

For the obtained matrix pairs $(P,Q)$, we consider the positive semidefinite condition. When this semidefiniteness is guaranteed, the convergence rate derived via a Lyapunov function can be expressed as
\begin{equation}
    f(x) - f(x_*) = \mathrm{O}\left(\mathrm{e}^{-\gamma (t)}\right).
\end{equation}
Since faster convergence is preferable, a faster growth of $\gamma(t)$ is desirable. From this observation, the key question becomes: 

\begin{center}
    which $\gamma(t)$ exhibits the fastest growth among those for which $P$ and $Q$ are positive semidefinite?
\end{center}

To address this problem, we maximize $\gamma$ subject to the semi-definite constraints for each of the 23660 matrix pairs $(P,Q)$, and identify one yielding the fastest growth of $\gamma$. 
Hence, we consider the optimization problem
\begin{equation}
    \mathop{\text{``max''}} \ \gamma(t) \quad \mathrm{s.t.} \quad P \succeq O_3, \quad Q \succeq O_5,
    \label{eq:max_gamma_st_P_Q_PSD}
\end{equation}
where ``max'' indicates the fastest function. 
The entries of the matrices $P$ and $Q$ under consideration include:
\begin{itemize}
    \item the time variable $t$,
    \item functions such as $c_{1}(t)$ and $c_{2}(t)$ appearing in the continuous-time dynamical systems in~\eqref{eq:dot_x_c2} and~\eqref{eq:ddot_x_c1_c2},
    \item parameters $\lambda, \theta \in [\mu, L]$ introduced by the operations E1, F1, and
    \item the objective function $\gamma(t)$.
\end{itemize}
Here we discuss how these components are treated. Regarding the time $t$, we need to guarantee positive semi-definiteness of $P$ and $Q$ for all $t>0$ (or for all $t\ge T$ for some sufficiently large $T>0$). Next, 
in the same manner as the approach of Kamijima et al.~(2024)~\cite{TomoyaKamijima2024}, 
the functions $c_{1}(t)$ and $c_{2}(t)$ are also subject to optimization and can be freely selected.
Finally, 
the constraints in \eqref{eq:max_gamma_st_P_Q_PSD} 
must be satisfied for any parameters $\lambda, \theta \in [\mu, L]$. 
Therefore the problem in \eqref{eq:max_gamma_st_P_Q_PSD} is refined as follows: 
\begin{equation}
    \mathop{\text{``max''}}_{c_{1}(t), c_{2}(t)} \ \gamma(t) 
    \quad \mathrm{s.t.} \quad 
    \forall t>0,~ \forall (\lambda, \theta) \in [\mu,L]^2,~ P \succeq O_3, \ Q \succeq O_5.
    \label{eq:max_gamma_st_P_Q_PSD_refined}
\end{equation}
If this problem is feasible, 
its solution gives the fastest convergence rate achievable by the Lyapunov function corresponding to a given matrix pair $(P,Q)$. 
By evaluating the rate for each possible $(P,Q)$ pair, the overall fastest convergence rate can be determined.

\subsection{Implementation Details}
\label{sec:implementation_details}

We use Wolfram Mathematica to perform the operations for the matrices and derive the convergence rates by symbolic computation. 
For implementation,
we need three considerations described below. 

\subsubsection{Implementation of ``$\max$''}
First, we discuss the implementation of the ``$\max$'' operation in \eqref{eq:max_gamma_st_P_Q_PSD_refined} finding the fastest-growing $\gamma(t)$. For example, if there are candidates $\gamma(t) = \log t$ and $\gamma(t) = t$, the latter should be selected. However, to the best of our knowledge, there is no command in Wolfram Mathematica that can directly produce such an output. Therefore, we assume a specific form for $\gamma(t)$ and maximize its coefficient. 
Based on previous studies, 
we assume the plural forms
\[
\gamma(t) = kt, \quad 
\gamma(t) = k\log t, \quad 
\gamma(t) = k \frac{r}{1-\alpha}t^{1-\alpha}  
\]
for a continuous-time dynamical system and maximize the parameter $k$, 
where the last is considered for the dynamical system for the generalized NAG with a fixed parameter $\alpha$ discussed in Section~\ref{sec:generalized_NAG}. 
Similarly, for the functions $c_{1}(t)$ and $c_{2}(t)$ that are free parameters for maximization, we assume specific forms using parameters $a,b, r \in \mathbb{R}$ and regard them as free parameters in $\mathbb{R}$.

\subsubsection{Treatment of Parameters $(\lambda, \theta) \in [\mu,L]^2$}
\label{sec:param_in_mu_L}

Next, we consider the three parameters that cannot be freely selected. These parameters appear in the following entries as a result of the relevant operations:
\begin{itemize}
    \item Parameter $\lambda$: appears in $P_{11}, Q_{11}$
    \item Parameter $\theta$: appears in $Q_{33}$
\end{itemize}
From these, it is clear that each parameter only affects the diagonal elements (see Appendices~\ref{sec:oper_Group_E} and~\ref{sec:oper_Group_F} for details). Moreover, the diagonal elements are monotone functions of each parameter. Therefore it suffices to consider the cases that each parameter is $\mu$ or $L$. Then, regarding  
\begin{equation}
    P = P(\lambda, \theta) \quad \text{and} \quad Q = Q(\lambda, \theta)
\end{equation}
as functions of $\lambda$ and $\theta$,
we have only to check the positive semi-definiteness of the following matrices:
\begin{gather}
    P(\mu, \mu),~ Q(\mu, \mu), \\
    P(\mu, L),~ Q(\mu, L), \\
    P(L, \mu),~ Q(L, \mu), \\
    P(L, L),~ Q(L, L).
\end{gather}

\subsubsection{Reduction of Computational Load via Grouping of Matrix Pairs}

Finally, we describe an implementation technique to reduce computational cost. As mentioned earlier, there are at most 23660 matrix pairs $(P,Q)$.  However, all of them are not necessarily distinct. Moreover, the process of ``maximizing'' $\gamma(t)$ for a given matrix pair is far more computationally intensive than generating the matrices via the operations. Therefore we adopt the following procedure:
\begin{enumerate}
    \item Compute up to 23660 matrix pairs $(P,Q)$.
    \item Group together matrix pairs that are identical in both $P$ and $Q$.
    \item For each resulting group, ``maximize'' $\gamma(t)$.
\end{enumerate}
This approach significantly reduces the computational cost. 

\subsection{Summary of the Proposed Algorithm}
\label{sec:algorithm}

A pseudo-code for our proposed algorithm is shown in Algorithm \ref{alg:lyapunov_construction}. 
The Mathematica codes implementing this algorithm can be found at \url{https://github.com/kentanakadpp/symb_comp_Lyap}. 

\begin{algorithm}[H]
\caption{Procedure for Constructing and Evaluating Lyapunov Function Candidates}
\label{alg:lyapunov_construction}
\begin{algorithmic}[1]
\State \textbf{Input:} A given continuous dynamical system
\State \textbf{Output:} The best combination of matrices yielding the fastest convergence rate
\State Generate initial matrices $P^{0}$ and $Q^{0}$ from the given dynamical system.
\State Apply up to 23660 possible combinations of operations to $P^{0}$ and $Q^{0}$ to obtain candidate pairs $(P, Q)$.
\State Group identical pairs $(P, Q)$ into the same category.
\State For each group of $(P, Q)$ pairs, derive the corresponding convergence rate.
\State Identify and inspect the group that achieves the largest convergence rate.
\end{algorithmic}
\end{algorithm}

\begin{rem}[Cases in which the machine computation does not terminate]
In Step~6 of Algorithm \ref{alg:lyapunov_construction}, there were several cases in which the convergence rate could not be obtained through machine computation.
For such cases, we terminated the computation after a fixed period of time, and inspected the corresponding pairs of matrices manually to derive the convergence rates.
The results presented in the subsequent sections of this paper include both those for which the convergence rate was explicitly obtained by machine computation and those that were manually derived in this manner.
\end{rem}

\section{Results}
\label{sec:Results}

\subsection{Summary of Results}

The results obtained in this study are summarized in Table~\ref{tab:results_of_this_paper}. Here, $a, b, r$, and $\alpha$ are constants independent of time $t$.

\begin{table}[H]
    \centering
    \caption{Summary of Results in This Study}
    \label{tab:results_of_this_paper}
    \renewcommand{\arraystretch}{1.5}
    \setlength{\tabcolsep}{8pt}
    \begin{tabular}{|>{\centering\arraybackslash}m{4.8cm}
                  |>{\centering\arraybackslash}m{2.7cm}
                  |>{\centering\arraybackslash}m{3.5cm}
                  |>{\centering\arraybackslash}m{3.5cm}|}
    \hline
    Class of Continuous-time Dynamical Systems & Properties of Objective Function & Optimal Parameter Settings & Convergence Rate \\
    \hline
    $\nabla^2 f \dot{x} + \nabla f = 0$ & Convex & - & $\mathrm{O} \left( \mathrm{e}^{-t} \right)$ \\
    \hline
    \multirow{2}{*}{$\dot{x} + b \nabla^2 f \dot{x} + \nabla f = 0$}
      & $\mu$-strongly convex, $L$-smooth & $b = -\frac{1}{L}$ & $\mathrm{O} \left( \mathrm{e}^{- \frac{\mu}{1-\frac{\mu}{L}} t} \right)$ \\
      \cline{2-4}
      & $\mu$-strongly convex, $L$-smooth & $b = 0$ & $\mathrm{O} \left( \mathrm{e}^{- 2 \mu t} \right)$ \\
    \hline
    \multirow{2}{*}{$\ddot{x} + a \dot{x} + b \nabla^2 f \dot{x} + \nabla f = 0$}
      & $\mu$-strongly convex & $a = 2\sqrt{\mu} , ~ b = 0$ & $\mathrm{O} \left( \mathrm{e}^{-\sqrt{\mu}t} \right)$ \\
      \cline{2-4}
      & $\mu$-strongly convex & $a = \sqrt{\mu} , ~ b = \frac{1}{\sqrt{\mu}}$ & $\mathrm{O} \left( \mathrm{e}^{-\sqrt{\mu}t} \right)$ \\
    \hline
    \multirow{3}{*}{$\ddot{x} + \frac{r}{t} \dot{x} + \nabla f = 0$}
      & Convex & $r = 3$ & $\mathrm{O} \left( \frac{1}{t^2} \right)$ \\
      \cline{2-4}
      & $\mu$-strongly convex & $r > 3$ & $\mathrm{O} \left( \frac{1}{t^{\frac{1}{2}r+\frac{1}{2}}} \right)$ \\
      \cline{2-4}
      & $\mu$-strongly convex & $r = \frac{4(k^2 + \mu)}{k^2}$ & $\mathrm{O} \left( \mathrm{e}^{-kt} \right)$ (valid only for $t \leq \frac{2(k^2 + \mu)}{k^3}$) \\
    \hline
    $\ddot{x} + \frac{r}{t^\alpha}\dot{x} + \nabla f = 0$ 
      & $\mu$-strongly convex & $r>0,~0<\alpha<1$ & $\mathrm{O}\left( \mathrm{e}^{-\left(\frac{2}{3}-\epsilon\right)\frac{r}{1-\alpha}t^{1-\alpha}} \right)$ (for any positive constant $\epsilon$) \\
    \hline
  \end{tabular}
\end{table}

In the following sections, we detail the results and evaluations obtained by applying the proposed method to each class of continuous-time dynamical systems.

\subsection{Continuous Dynamical System $\nabla^2 f \dot{x} + \nabla f = 0$}
The dynamical system
\begin{equation}
    \nabla^2 f \dot{x} + \nabla f = 0
\end{equation}
corresponds to the discretized damped Newton method. Applying the proposed method, the initial matrices $P^0$ and $Q^0$ are given by
\begin{equation}
    P^0 = O_3,~ Q^0 = \begin{pmatrix}
        0 & \frac{\dot{\gamma}}{2} & 0 & \frac{\dot{\gamma}}{2} & 0 \\
        \frac{\dot{\gamma}}{2} & 0 & \frac{1}{2} & 0 & 0 \\
        0 & \frac{1}{2} & 0 & \frac{1}{2} & 0 \\
        \frac{\dot{\gamma}}{2} & 0 & \frac{1}{2} & 0 & 0 \\
        0 & 0 & 0 & 0 & 0 \\
    \end{pmatrix}.
\end{equation}

By performing an exhaustive search using the proposed method, a total of 21 pairs of matrices $(P, Q)$ are obtained. For these matrices, the objective function is assumed to be simply convex (corresponding to $\mu = 0$), and the convergence rate is assumed to be linear:
\begin{equation}
    f - f_* = \mathrm{O} \left( \mathrm{e}^{-kt} \right), ~ \gamma = kt,
\end{equation}
where $k$ is a positive constant. Among the 21 pairs, only one pair of matrices guarantees a convergence rate with $k>0$:
\begin{equation}
    P = \begin{pmatrix}
        \frac{\lambda}{2} \dot{\gamma} & 0 & 0 \\
        0 & 0 & 0 \\
        0 & 0 & 0
    \end{pmatrix}, ~ Q = \begin{pmatrix}
        \frac{\lambda}{2} \left( \dot{\gamma} - \dot{\gamma}^2 -\ddot{\gamma} \right) & 0 & 0 & 0 & 0 \\
        0 & 0 & 0 & 0 & 0 \\
        0 & 0 & \theta & 0 & 0 \\
        0 & 0 & 0 & 0 & 0 \\
        0 & 0 & 0 & 0 & 0
    \end{pmatrix},
\end{equation}
which we denote as $P^{DN}$ and $Q^{DN}$, respectively. The maximum achievable value of $k$ in this case is $k=1$, yielding the following convergence rate.

\begin{thm}[Convergence Rate of $\nabla^2 f \dot{x} + \nabla f = 0$]
    \label{Dumped_Newton}
    For convex functions $f$, the continuous-time dynamical system $\nabla^2 f \dot{x} + \nabla f = 0$ exhibits the convergence rate
    \begin{equation}
        f(x(t)) - f_* = \mathrm{O} \left( \mathrm{e}^{-t} \right).
    \end{equation}
\end{thm}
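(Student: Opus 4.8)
The plan is to verify directly that the matrix pair $(P^{DN}, Q^{DN})$ with $\gamma(t) = t$ yields a valid Lyapunov function via the construction of Section~\ref{sec:SuhRohRyu_method}, and that this forces the claimed convergence rate through condition~\eqref{condition:Lyap}. First I would substitute $\gamma(t) = t$, so that $\dot\gamma = 1$ and $\ddot\gamma = 0$, into the matrices $P^{DN}$ and $Q^{DN}$. For $P^{DN}$ the only nonzero entry becomes $P_{11} = \tfrac{\lambda}{2}$, which is nonnegative since $\lambda \in [\mu, L]$ and $\mu \ge 0$ (here $\mu = 0$ in the merely convex case, so $\lambda \in [0, L]$); hence $P^{DN} \succeq O_3$. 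For $Q^{DN}$ the diagonal entries become $Q_{11} = \tfrac{\lambda}{2}(\dot\gamma - \dot\gamma^2 - \ddot\gamma) = \tfrac{\lambda}{2}(1 - 1 - 0) = 0$ and $Q_{33} = \theta \ge 0$, with all off-diagonal entries zero; hence $Q^{DN} \succeq O_5$. This is the crux: the choice $k = 1$ is exactly what makes $Q_{11}$ vanish rather than go negative, and any $k > 1$ would make $\dot\gamma - \dot\gamma^2 - \ddot\gamma = k - k^2 < 0$, breaking positive semidefiniteness — so $k = 1$ is optimal, consistent with the exhaustive search.

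Next I would translate the semidefiniteness back into the quantities $p(t)$ and $q(t)$. By the matrix representation of Section~\ref{sec:mat_rep_P_Q}, with $\gamma(t) = t$ we get $p(t) = \mathrm{e}^{t}\,\tfrac{\lambda}{2}\|x - x_*\|^2 \ge 0$ and $q(t) = \mathrm{e}^{t}\,\theta\|\dot x\|^2 \ge 0$, where $\theta \in [\mu, L]$ satisfies $\langle \nabla^2 f\,\dot x, \dot x\rangle = \theta\|\dot x\|^2$ (which exists and is nonnegative because $f$ is convex, so $\nabla^2 f \succeq O_n$). I would then record the underlying identity that the construction produces: starting from $0 = \int_{t_0}^t \mathrm{e}^{s}\langle \nabla^2 f\,\dot x + \nabla f,\ \dot x + (x - x_*)\rangle\,\mathrm{d}s$, expanding, and applying Operation A1 (to extract $\mathrm{e}^{t}(f - f_*)$) together with the integration-by-parts / convexity operations recorded in the unique surviving combination, one arrives at the conserved-quantity form~\eqref{PQ}:
\begin{equation}
    p(t) + \mathrm{e}^{t}(f(x(t)) - f_*) + \int_{t_0}^t q(s)\,\mathrm{d}s = \mathrm{const.}
    \notag
\end{equation}
with the $p$, $q$ above. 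Since $p, q \ge 0$, the function $\mathcal{E}(t) = p(t) + \mathrm{e}^{t}(f(x(t)) - f_*)$ is monotonically non-increasing and bounded below, i.e.\ a Lyapunov function, and it satisfies $\mathcal{E}(t) \ge \mathrm{e}^{t}(f(x(t)) - f_*)$, which is precisely~\eqref{condition:Lyap} with $\gamma(t) = t$.

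Finally, from $\mathrm{e}^{t}(f(x(t)) - f_*) \le \mathcal{E}(t) \le \mathcal{E}(t_0)$ for all $t \ge t_0$ I would conclude $f(x(t)) - f_* = \mathrm{O}(\mathrm{e}^{-t})$ as $t \to \infty$, which is the statement of Theorem~\ref{Dumped_Newton}. The main obstacle — though it is bookkeeping rather than conceptual — is verifying that the sequence of operations recorded in the surviving $(P,Q)$ pair actually reconstructs a correct conserved identity of the form~\eqref{PQ}; this requires carefully retracing which of the thirteen operations were applied (here the relevant ones are the initial $\dot w$ inner product, Operation A1, and an $E1$-type transfer of a Hessian term to $\|x-x_*\|^2$ together with an integration-by-parts moving a $\|\dot x\|^2$ term), checking that the boundary ("const.") terms are genuinely time-independent, and confirming that no positive integrand term was discarded in a way that would invalidate monotonicity. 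Once that accounting is done, the positive-semidefiniteness check above is immediate and the convergence rate follows with no further estimation.
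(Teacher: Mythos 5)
Your proposal is correct and follows essentially the same route as the paper: the main-text proof substitutes $\gamma = kt$ into $(P^{DN}, Q^{DN})$, reduces positive semidefiniteness to $k \geq 0$ and $k - k^2 \geq 0$ using $\lambda, \theta \geq 0$ from convexity, and takes $k=1$; your identification of $p(t) = \mathrm{e}^{t}\tfrac{\lambda}{2}\|x-x_*\|^2$ and $q(t) = \mathrm{e}^{t}\theta\|\dot{x}\|^2$ matches the explicit Lyapunov-function verification the paper carries out in Appendix~\ref{sec:pr_thm_d_Newton}. No gaps.
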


\begin{proof}
Substituting $\gamma = kt$ into the matrices $P^{DN}$ and $Q^{DN}$ yields
\begin{equation}
    \label{Lyapunov_Dumped_Newton}
    P^{DN} = \begin{pmatrix}
        \frac{\lambda k}{2} & 0 & 0 \\
        0 & 0 & 0 \\
        0 & 0 & 0
    \end{pmatrix}, ~ Q^{DN} = \begin{pmatrix}
        \frac{\lambda}{2} \left( k - k^2 \right) & 0 & 0 & 0 & 0 \\
        0 & 0 & 0 & 0 & 0 \\
        0 & 0 & \theta & 0 & 0 \\
        0 & 0 & 0 & 0 & 0 \\
        0 & 0 & 0 & 0 & 0
    \end{pmatrix}.
\end{equation}
The necessary and sufficient conditions for these matrices to be positive semi-definite are
\begin{gather}
    \frac{\lambda k}{2} \geq 0, \\
    \frac{\lambda}{2} \left( k - k^2 \right) \geq 0, \\
    \theta \geq 0.
\end{gather}
Since $f$ is assumed to be convex, $\lambda \geq 0$ and $\theta \geq 0$ hold. Therefore these conditions reduce to
\begin{gather}
    k \geq 0, \\
    k - k^2 \geq 0,
\end{gather}
which is satisfied for
\begin{equation}
    0 \leq k \leq 1.
\end{equation}
Thus, choosing $k=1$ ensures positive semi-definiteness, guaranteeing the convergence rate
\begin{equation}
    f(x(t)) - f_* = \mathrm{O} \left( \mathrm{e}^{-t} \right).
\end{equation}
\end{proof}

The convergence rate obtained here is identical to that reported by Kamijima et al.~(2024)~\cite{TomoyaKamijima2024}. However, unlike their heuristic approach, the present result is obtained through an exhaustive exploration of the operations, which demonstrates the advantage of the proposed method. Details of the convergence rate proof using the explicit representation of the Lyapunov function are provided in Appendix~\ref{sec:pr_thm_d_Newton}.

\subsection{Continuous Dynamical System $\dot{x} + b \nabla^2 f \dot{x} + \nabla f = 0$}

For the continuous dynamical system
\begin{equation}
    \dot{x} + b \nabla^2 f \dot{x} + \nabla f = 0,
\end{equation}
when applying the proposed method, the initial matrices $P^0, Q^0$ are given by
\begin{equation}
    P^0 = O_3,~ Q^0 = \begin{pmatrix}
        0 & \frac{\dot{\gamma}}{2} & \frac{\dot{\gamma}}{2} & \frac{b\dot{\gamma}}{2} & 0 \\
        \frac{\dot{\gamma}}{2} & 0 & \frac{1}{2} & 0 & 0 \\
        \frac{\dot{\gamma}}{2} & \frac{1}{2} & 1 & \frac{b}{2} & 0 \\
        \frac{b\dot{\gamma}}{2} & 0 & \frac{b}{2} & 0 & 0 \\
        0 & 0 & 0 & 0 & 0 \\
    \end{pmatrix}.
\end{equation}

By performing exhaustive search with the proposed method, a total of 42 pairs of matrices $P,Q$ are obtained.  
Here, the objective function is assumed to be $\mu$-strongly convex and $L$-smooth ($0 < \mu < L$), and the convergence rate is assumed to be first-order,
\begin{equation}
    f - f_* = \mathrm{O} \left( \mathrm{e}^{-kt} \right), ~ \gamma = kt,
\end{equation}
where $k$ is a positive constant. Under this assumption, all 42 pairs guarantee a convergence rate with $k > 0$, which are categorized as follows:
\begin{itemize}
    \item $k=\frac{\mu}{1-\frac{\mu}{L}}$: 1 pair (obtained only under the assumption of $\mu$-strong convexity and $L$-smoothness ($0 < \mu < L$), with $b=-\frac{1}{L}$),
    \item $k=2\mu$: 21 pairs (The same result holds even if only $\mu$-strong convexity is assumed. All coincide when $b=0$),
    \item $k=\mu$: 20 pairs (the same result holds even if only $\mu$-strong convexity is assumed).
\end{itemize}

Among these, we focus on the pair that gives $k=\frac{\mu}{1-\frac{\mu}{L}}$, and the pair after substituting $b=0$ that gives $k=2\mu$.  
First, the pair that yields $k=\frac{\mu}{1-\frac{\mu}{L}}$ is
\begin{equation}
    P = \begin{pmatrix}
        \frac{1}{2}(1+b\lambda)\dot{\gamma} & 0 & 0 \\
        0 & 0 & 0 \\
        0 & 0 & 0
    \end{pmatrix}, ~ Q = \begin{pmatrix}
        \frac{1}{2} \left( \lambda \dot{\gamma} - (1+b\lambda)\dot{\gamma}^2 - (1+b\lambda)\ddot{\gamma} \right) & 0 & 0 & 0 & 0 \\
        0 & 0 & 0 & 0 & 0 \\
        0 & 0 & 1+b\theta & 0 & 0 \\
        0 & 0 & 0 & 0 & 0 \\
        0 & 0 & 0 & 0 & 0
    \end{pmatrix},
\end{equation}
which we refer to as $P^{FO},Q^{FO}$.  
When $b=-\frac{1}{L}$, the maximum value $k=\frac{\mu}{1-\frac{\mu}{L}}$ is obtained.  
The convergence rate guaranteed by this pair is given by the following theorem.

\begin{thm}[Convergence rate of the continuous dynamical system $\dot{x} -\frac{1}{L} \nabla^2 f \dot{x} + \nabla f = 0$]
    \label{First_Order}
     For an $\mu$-strongly convex and $L$-smooth function $f$ ($0 < \mu < L$) the continuous dynamical system $\dot{x} -\frac{1}{L} \nabla^2 f \dot{x} + \nabla f = 0$ admits, the convergence rate
    \begin{equation}
        f(x(t)) - f_* = \mathrm{O} \left( \mathrm{e}^{- \frac{\mu}{1-\frac{\mu}{L}} t} \right).
    \end{equation}
\end{thm}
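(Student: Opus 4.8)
The plan is to follow the general recipe of Section~\ref{sec:our_method} specialized to this ODE, using the specific matrix pair $(P^{FO}, Q^{FO})$ that the exhaustive search produced, and to verify directly that this pair certifies the claimed rate. First I would substitute $b = -\tfrac{1}{L}$ and $\gamma(t) = kt$ (so $\dot\gamma = k$, $\ddot\gamma = 0$) into $P^{FO}$ and $Q^{FO}$. This reduces the matrices to diagonal form: the $(1,1)$ entry of $P^{FO}$ becomes $\tfrac{1}{2}\bigl(1-\tfrac{\lambda}{L}\bigr)k$, the $(1,1)$ entry of $Q^{FO}$ becomes $\tfrac{1}{2}\bigl(\lambda k - (1-\tfrac{\lambda}{L})k^2\bigr)$, and the $(3,3)$ entry of $Q^{FO}$ becomes $1 - \tfrac{\theta}{L}$, with all other entries zero. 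Hence positive semi-definiteness of $(P^{FO},Q^{FO})$ is equivalent to the three scalar inequalities
\[
\tfrac{1}{2}\bigl(1-\tfrac{\lambda}{L}\bigr)k \ge 0,\qquad
\tfrac{1}{2}\Bigl(\lambda k - \bigl(1-\tfrac{\lambda}{L}\bigr)k^2\Bigr) \ge 0,\qquad
1 - \tfrac{\theta}{L} \ge 0.
\]

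Next I would dispatch these inequalities using the hypotheses $\mu \le \lambda \le L$ and $\mu \le \theta \le L$ coming from Operations E1 and F1 (recall that $\lambda,\theta$ must be handled for all values in $[\mu,L]$, but by the monotonicity remark in Section~\ref{sec:param_in_mu_L} it suffices to check the endpoints). The third inequality holds because $\theta \le L$. The first holds because $\lambda \le L$ forces $1 - \tfrac{\lambda}{L} \ge 0$, and $k > 0$. For the second inequality, factor out $k$: it is equivalent to $\lambda \ge \bigl(1 - \tfrac{\lambda}{L}\bigr) k$, i.e. $k \le \tfrac{\lambda}{1 - \lambda/L}$. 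The right-hand side is increasing in $\lambda$ on $[\mu, L)$, so the binding constraint is at $\lambda = \mu$, giving $k \le \tfrac{\mu}{1 - \mu/L}$. Taking $k = \tfrac{\mu}{1-\mu/L}$ therefore makes all three inequalities hold simultaneously for every admissible $(\lambda,\theta)$, so $(P^{FO},Q^{FO})$ is positive semi-definite.

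With positive semi-definiteness established, the convergence rate follows from the general machinery already set up in Sections~\ref{sec:SuhRohRyu_method} and~\ref{sec:mat_rep_P_Q}: the matrices $P^{FO}, Q^{FO}$ encode nonnegative functions $p(t), q(t)$ via $p = \mathrm{e}^\gamma v^{T}(P^{FO}\otimes I_n)v$ and likewise for $q$, so $\mathcal{E}(t) = p(t) + \mathrm{e}^{\gamma(t)}(f(x(t)) - f_*)$ is a Lyapunov function satisfying~\eqref{condition:Lyap} with $\gamma(t) = \tfrac{\mu}{1-\mu/L}\,t$. Hence $f(x(t)) - f_* = \mathrm{O}\bigl(\mathrm{e}^{-\frac{\mu}{1-\mu/L}t}\bigr)$, as claimed.

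The main obstacle, and the only place where genuine care is needed, is the treatment of the parameters $\lambda$ and $\theta$: the certificate must be valid \emph{for all} $(\lambda,\theta) \in [\mu,L]^2$, since these are not free to choose but arise from applying the (strong) convexity and smoothness inequalities along the trajectory. One must confirm that the relevant entries are monotone in each parameter so that checking the endpoints suffices, and then recognize that the worst case for the crucial second inequality is $\lambda = \mu$ — which is precisely what pins down the optimal exponent $k = \tfrac{\mu}{1-\mu/L}$. Everything else is a routine diagonalization and sign check. (A self-contained derivation writing out $\mathcal{E}(t)$ explicitly and differentiating it can be relegated to an appendix, paralleling the treatment of Theorem~\ref{Dumped_Newton}.)
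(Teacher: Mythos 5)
Your proposal is correct and follows essentially the same route as the paper's proof: substitute $\gamma=kt$ and $b=-1/L$ into $(P^{FO},Q^{FO})$, reduce positive semi-definiteness to the three scalar inequalities, use $\mu\le\lambda,\theta\le L$ (with the worst case $\lambda=\mu$ for the binding constraint) to obtain $0\le k\le \frac{\mu}{1-\mu/L}$, and conclude via the general Lyapunov machinery. The paper also relegates the explicit differentiation of $\mathcal{E}(t)$ to Appendix~\ref{sec:pr_thm_1st_order}, exactly as you suggest.
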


\begin{proof}
    Substituting $\gamma = kt, b = -\frac{1}{L}$ into $P^{FO},~ Q^{FO}$ yields
    \begin{equation}
        \label{Lyapunov_First_Order}
        P^{FO} = \begin{pmatrix}
        \frac{1}{2}\left(1 - \frac{\lambda}{L}\right)k & 0 & 0 \\
        0 & 0 & 0 \\
        0 & 0 & 0
        \end{pmatrix}, ~ Q^{FO} = \begin{pmatrix}
        \frac{1}{2} \left( \lambda k - \left(1 - \frac{\lambda}{L}\right)k^2  \right) & 0 & 0 & 0 & 0 \\
        0 & 0 & 0 & 0 & 0 \\
        0 & 0 & \left(1 - \frac{\theta}{L}\right) & 0 & 0 \\
        0 & 0 & 0 & 0 & 0 \\
        0 & 0 & 0 & 0 & 0
        \end{pmatrix}.
    \end{equation}
    The necessary and sufficient conditions for these matrices to be positive semi-definite are
    \begin{gather}
        \frac{1}{2}\left(1 - \frac{\lambda}{L}\right)k \geq 0, \\
        \frac{1}{2} \left( \lambda k - \left(1 - \frac{\lambda}{L}\right)k^2  \right) \geq 0, \\
        1 - \frac{\theta}{L} \geq 0.
    \end{gather}
    Since the objective function $f$ is assumed to be $\mu$-strongly convex and $L$-smooth ($0 < \mu < L$), it follows that $\mu \leq \lambda \leq L$ and $\mu \leq \theta \leq L$. Thus the conditions reduce to
    \begin{gather}
        k \geq 0, \\
        \mu k - \left(1 - \frac{\mu}{L}\right)k^2 \geq 0.
    \end{gather}
    The solutions are
    \begin{equation}
        0 \leq k \leq \frac{\mu}{1-\frac{\mu}{L}},
    \end{equation}
    and therefore, for $k=\frac{\mu}{1-\frac{\mu}{L}}$, positive semidefiniteness holds and the convergence rate
    \begin{equation}
        f(x(t)) - f_* = \mathrm{O} \left( \mathrm{e}^{-\frac{\mu}{1-\frac{\mu}{L}}t} \right)
    \end{equation}
    is guaranteed.
\end{proof}

When $\frac{1}{2} < \frac{\mu}{L} < 1$ holds, the convergence rate obtained here surpasses that of the gradient flow (corresponding to gradient descent)
\begin{equation}
    f(x(t)) - f_* = \mathrm{O} \left( \mathrm{e}^{-2\mu t} \right),
\end{equation}
which will be discussed later.
This suggests the possibility of accelerating
corresponding optimization methods
by incorporating the Hessian. 
The detailed proof of the convergence rate using Lyapunov functions derived from the matrix representation is given in Appendix~\ref{sec:pr_thm_1st_order}.


Next, we turn to the matrices corresponding to $k=2\mu$.  
In this case, 
as long as $f$ is $\mu$-strongly convex ($0 < \mu$), a total of 21 pairs are found even without assuming $L$-smoothness of $f$. 
All of these required $b=0$ to ensure the positive semi-definiteness of $P,Q$, and they coincide by the substitution of $b=0$.  
We show them here:
\begin{equation}
    P = \begin{pmatrix}
        0 & 0 & 0 \\
        0 & 0 & 0 \\
        0 & 0 & 0
    \end{pmatrix}, ~ Q = \begin{pmatrix}
        \frac{\lambda}{2}\dot{\gamma} & 0 & \frac{\dot{\gamma}}{2} & 0 & 0 \\
        0 & 0 & 0 & 0 & 0 \\
        \frac{\dot{\gamma}}{2} & 0 & 1 & 0 & 0 \\
        0 & 0 & 0 & 0 & 0 \\
        0 & 0 & 0 & 0 & 0
    \end{pmatrix},
\end{equation}
which we denote as $P^{GD},Q^{GD}$.  The convergence rate guaranteed by this pair is given in the following theorem.

\begin{thm}[Convergence rate of the continuous dynamical system $\dot{x}+ \nabla f = 0$]
    \label{Gradient_Descent}
    For an $\mu$-strongly convex function $f$ ($0 < \mu$), the continuous dynamical system $\dot{x}+ \nabla f = 0$ admits the convergence rate
    \begin{equation}
        f(x(t)) - f_* = \mathrm{O} \left( \mathrm{e}^{- 2\mu t} \right).
    \end{equation}
\end{thm}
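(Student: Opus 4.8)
The plan is to reuse, almost verbatim, the argument used for Theorem~\ref{First_Order}: substitute the ansatz $\gamma(t) = kt$ into the distinguished matrix pair $(P^{GD}, Q^{GD})$ produced by the search, translate the positive semi-definiteness requirements into scalar inequalities in $k$ and in the free parameter $\lambda$ introduced by Operations~A1 and~E1, and then select the largest admissible $k$.

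First I would set $\dot\gamma = k$. This leaves $P^{GD} = O_3$, which is trivially positive semi-definite, and turns $Q^{GD}$ into a $5\times 5$ symmetric matrix whose only nonzero entries lie in the rows and columns indexed by $1$ and $3$. Hence $Q^{GD} \succeq O_5$ is equivalent to positive semi-definiteness of the $2\times 2$ principal submatrix
\[
M(k,\lambda) = \begin{pmatrix} \tfrac{\lambda k}{2} & \tfrac{k}{2} \\ \tfrac{k}{2} & 1 \end{pmatrix},
\]
whose defining conditions are $\tfrac{\lambda k}{2} \ge 0$, $1 \ge 0$, and $\det M(k,\lambda) = \tfrac{\lambda k}{2} - \tfrac{k^{2}}{4} \ge 0$. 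The first two hold automatically since $\lambda \ge \mu > 0$ and $k > 0$, and the determinant condition is (after dividing by $k/4 > 0$) equivalent to $k \le 2\lambda$. No parameter $\theta$ enters here, consistent with the observation in the preceding case analysis that $L$-smoothness is not needed for this rate.

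Next, as recalled in Section~\ref{sec:param_in_mu_L}, the constraint must hold for every admissible value of $\lambda$, and since the upper bound $2\lambda$ on $k$ is smallest when $\lambda$ is smallest, the binding case is $\lambda = \mu$, giving $k \le 2\mu$. Choosing $k = 2\mu$ thus makes both $P^{GD}$ and $Q^{GD}$ positive semi-definite, so by the construction in Section~\ref{sec:mat_rep_P_Q} the function $\mathcal{E}(t) = p(t) + \mathrm{e}^{\gamma(t)}(f(x(t)) - f_*)$ with $\gamma(t) = 2\mu t$ is a Lyapunov function satisfying~\eqref{condition:Lyap}, and therefore $f(x(t)) - f_* = \mathrm{O}(\mathrm{e}^{-2\mu t})$. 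The only point needing any care is confirming that $\lambda = \mu$ really is the worst case, i.e.\ that the feasible set of $k$ shrinks as $\lambda$ decreases; beyond that the computation is entirely mechanical, and the resulting rate coincides with the classical one of Attouch and Cominetti~(1996)~\cite{attouch1996dynamical} and Alvarez and Felipe~(2000)~\cite{alvarez2000minimizing}.
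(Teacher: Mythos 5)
Your proposal is correct and follows essentially the same route as the paper's proof: substitute $\gamma = kt$ into $(P^{GD},Q^{GD})$, reduce positive semi-definiteness of $Q^{GD}$ to the conditions $\tfrac{\lambda k}{2}\ge 0$ and $\tfrac{\lambda k}{2}-\tfrac{k^2}{4}\ge 0$, use $\lambda\ge\mu$ to identify the binding case, and conclude $0\le k\le 2\mu$. The only cosmetic difference is that you make the reduction to the $2\times 2$ principal submatrix and the worst-case argument in $\lambda$ explicit, which the paper handles implicitly by substituting $\mu$ for $\lambda$.
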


\begin{proof}
    Substituting $\gamma = kt$ into $P^{GD},~ Q^{GD}$ yields
    \begin{equation}
        \label{Lyapunov_Gradient_Descent}
        P^{GD} = \begin{pmatrix}
        0 & 0 & 0 \\
        0 & 0 & 0 \\
        0 & 0 & 0
        \end{pmatrix}, ~ Q^{GD} = \begin{pmatrix}
        \frac{\lambda k}{2} & 0 & \frac{k}{2} & 0 & 0 \\
        0 & 0 & 0 & 0 & 0 \\
        \frac{k}{2} & 0 & 1 & 0 & 0 \\
        0 & 0 & 0 & 0 & 0 \\
        0 & 0 & 0 & 0 & 0
        \end{pmatrix}.
    \end{equation}
    The necessary and sufficient conditions for these matrices to be positive semi-definite are
    \begin{gather}
        \frac{\lambda k}{2} \geq 0, \\
        \frac{\lambda k}{2} \cdot 1 - \left( \frac{k}{2} \right)^2 \geq 0.
    \end{gather}
    Since the objective function $f$ is assumed to be $\mu$-strongly convex ($0 < \mu$), it follows that $\mu \leq \lambda$, and thus the conditions reduce to
    \begin{gather}
        k \geq 0, \\
        \frac{1}{4} \left( 2\mu k - k^2 \right) \geq 0.
    \end{gather}
    The solutions are
    \begin{equation}
        0 \leq k \leq 2 \mu,
    \end{equation}
    and therefore, for $k=2\mu$, positive semi-definiteness holds and the convergence rate
    \begin{equation}
        f(x(t)) - f_* = \mathrm{O} \left( \mathrm{e}^{-2\mu t} \right)
    \end{equation}
    is guaranteed.
\end{proof}

This corresponds to the gradient flow, which is the continuous dynamical system associated with gradient descent.  
Although this convergence rate is already known, it strictly improves upon the rate obtained by Kamijima et al.~(2024)~\cite{TomoyaKamijima2024},
\begin{equation}
    f(x(t)) - f_* = \mathrm{O} \left( \mathrm{e}^{-\mu t} \right),
\end{equation}
demonstrating that our proposed method can reveal previously overlooked Lyapunov functions.  
The detailed proof of the convergence rate using Lyapunov functions derived from the matrix representation is given in Appendix~\ref{sec:pr_thm_GD}.

\subsection{Continuous Dynamical System $\ddot{x} + a \dot{x}+b \nabla^2 f \dot{x} + \nabla f = 0$}

For the system
\begin{equation}
    \ddot{x} + a\dot{x}+ b \nabla^2 f \dot{x} + \nabla f = 0,
\end{equation}
when the proposed method is applied, the initial matrices $P^0, Q^0$ are given by
\begin{equation}
    P^0 = O_3,~ Q^0 = \begin{pmatrix}
        0 & \frac{\dot{\gamma}}{2} & \frac{a\dot{\gamma}}{2} & \frac{b\dot{\gamma}}{2} & \frac{\dot{\gamma}}{2} \\
        \frac{\dot{\gamma}}{2} & 0 & \frac{1}{2} & 0 & 0 \\
        \frac{a\dot{\gamma}}{2} & \frac{1}{2} & a & \frac{b}{2} & \frac{1}{2} \\
        \frac{b\dot{\gamma}}{2} & 0 & \frac{b}{2} & 0 & 0 \\
        \frac{\dot{\gamma}}{2} & 0 & \frac{1}{2} & 0 & 0 \\
    \end{pmatrix}.
\end{equation}
Applying the proposed method with exhaustive search then yields a total of 210 pairs of matrices $(P,Q)$.  
For these, assuming that the objective function is $\mu$-strongly convex ($0 < \mu$), the convergence rate is assumed to be first-order,
\begin{equation}
    f - f_* = \mathrm{O} \left( \mathrm{e}^{-kt} \right), ~ \gamma = kt,
\end{equation}
where $k$ is a positive constant. Among the 210 pairs, 43 pairs are found to guarantee convergence with $k>0$. Their breakdown is as follows:
\begin{itemize}
    \item $k=\sqrt{\mu}$: 22 pairs
    \item $k=(2-\sqrt{2})\sqrt{\mu}$: 21 pairs
\end{itemize}

Since $\sqrt{\mu} > (2-\sqrt{2})\sqrt{\mu}$, we focus on the 22 pairs that yield $k=\sqrt{\mu}$. These 22 pairs can be further divided into two groups:
\begin{itemize}
    \item 21 pairs in which the condition $b=0$ is required for $P,Q$ to be positive semidefinite, and which yield $k=\sqrt{\mu}$ when $a=2\sqrt{\mu}, ~b=0$.
    \item 1 pair that yields $k=\sqrt{\mu}$ when $a=\sqrt{\mu}, ~b=\frac{1}{\sqrt{\mu}}$.
\end{itemize}
Although both cases yield $k=\sqrt{\mu}$, they correspond to different continuous dynamical systems, and thus we present them separately.

We first discuss the first group. For $\mu$-strongly convex ($0<\mu$) objective functions, 21 pairs were found in total, all of which yield $k=\sqrt{\mu}$ under $a=2\sqrt{\mu},~b=0$. In every case, the condition $b=0$ was necessary for $P,Q$ to be positive semidefinite, and substituting $b=0$ resulted in the same pair of matrices. Therefore, we present here the matrices after substituting $b=0$:
\begin{equation}
    P = \frac{1}{2} \begin{pmatrix}
        a \dot{\gamma} - \dot{\gamma}^2 - \ddot{\gamma} & 0 & \dot{\gamma} \\
        0 & 0 & 0 \\
        \dot{\gamma} & 0 & 1
    \end{pmatrix}, ~ Q = \frac{1}{2} \begin{pmatrix}
        -a \dot{\gamma}^2 + \dot{\gamma}^3 - a \ddot{\gamma} + \lambda \dot{\gamma} + 3 \dot{\gamma}\ddot{\gamma} + \dddot{\gamma} & 0 & 0 & 0 & 0 \\
        0 & 0 & 0 & 0 & 0 \\
        0 & 0 & 2a - 3\dot{\gamma} & 0 & 0 \\
        0 & 0 & 0 & 0 & 0 \\
        0 & 0 & 0 & 0 & 0
    \end{pmatrix}.
\end{equation}
We hereafter denote these as $P^{SCNAG}, Q^{SCNAG}$.  
The convergence rate guaranteed by this pair is given by the following theorem.

\begin{thm}[Convergence rate of the continuous dynamical system $\ddot{x} + 2\sqrt{\mu} \dot{x} +\nabla f = 0$]
    \label{SC-NAG}
    For an $\mu$-strongly convex ($0<\mu$) function $f$, the continuous dynamical system
    \begin{equation}
        \ddot{x} + 2\sqrt{\mu} \dot{x} +\nabla f = 0
    \end{equation}
    has the convergence rate
    \begin{equation}
        f(x(t)) - f_* = \mathrm{O} \left( \mathrm{e}^{- \sqrt{\mu} t} \right).
    \end{equation}
\end{thm}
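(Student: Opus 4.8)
The plan is to follow the same template as the proofs of Theorems~\ref{Dumped_Newton}, \ref{First_Order}, and~\ref{Gradient_Descent}: substitute the ansatz $\gamma(t) = kt$ together with the parameter choices $a = 2\sqrt{\mu}$ and $b = 0$ into the matrices $P^{SCNAG}$ and $Q^{SCNAG}$, work out the positive semi-definiteness conditions, and read off the largest admissible $k$. Concretely, with $\gamma = kt$ one has $\dot\gamma = k$ and $\ddot\gamma = \dddot\gamma = 0$, so $P^{SCNAG}$ becomes $\tfrac12$ times the matrix whose only nonzero entries are $(1,1) = 2\sqrt{\mu}\,k - k^2$, $(1,3) = (3,1) = k$, and $(3,3) = 1$, while $Q^{SCNAG}$ becomes $\tfrac12$ times the matrix whose only nonzero entries are $(1,1) = k^3 - 2\sqrt{\mu}\,k^2 + \lambda k$ and $(3,3) = 4\sqrt{\mu} - 3k$.

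Next I would analyze the two semidefiniteness requirements. In $P^{SCNAG}$ the second row and column vanish, so positive semidefiniteness reduces to that of the $2\times 2$ submatrix on indices $\{1,3\}$; its nonnegativity amounts to $2\sqrt{\mu}\,k - k^2 \ge 0$ together with the determinant condition $(2\sqrt{\mu}\,k - k^2) - k^2 = 2k(\sqrt{\mu} - k) \ge 0$, which for $k>0$ forces $k \le \sqrt{\mu}$. For $Q^{SCNAG}$, since $f$ is $\mu$-strongly convex we have $\lambda \ge \mu$, and the $(1,1)$ entry is increasing in $\lambda$, so its worst case is $\lambda = \mu$, which gives $k^3 - 2\sqrt{\mu}\,k^2 + \mu k = k(k - \sqrt{\mu})^2 \ge 0$ automatically; the $(3,3)$ entry $4\sqrt{\mu} - 3k \ge 0$ holds whenever $k \le \tfrac{4}{3}\sqrt{\mu}$, which is implied by $k \le \sqrt{\mu}$. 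Hence the binding constraint is $0 \le k \le \sqrt{\mu}$, and choosing $k = \sqrt{\mu}$ makes both matrices positive semidefinite, so the corresponding $\mathcal{E}(t)$ is a Lyapunov function satisfying~\eqref{condition:Lyap} with $\gamma(t) = \sqrt{\mu}\,t$, giving $f(x(t)) - f_* = \mathrm{O}(\mathrm{e}^{-\sqrt{\mu}\,t})$.

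The main point to get right — and the reason the rate comes out as $\sqrt{\mu}$ rather than the larger $2\sqrt{\mu}$ one might naively hope for — is the determinant (Schur-complement) condition arising from the off-diagonal $(1,3)$ coupling in $P^{SCNAG}$: a bound using only the $(1,1)$ entry would spuriously permit $k$ up to $2\sqrt{\mu}$. The only other subtlety is the treatment of the parameter $\lambda \in [\mu, L]$, which, as in the earlier proofs, is handled by monotonicity of the relevant diagonal entry so that it suffices to take $\lambda = \mu$; no use of $L$-smoothness is needed here. I do not expect any genuine obstacle beyond carefully carrying out these two reductions, and the explicit Lyapunov-function form can then be exhibited in an appendix in parallel with the earlier cases.
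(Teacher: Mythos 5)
Your proposal is correct and follows essentially the same route as the paper's proof: substituting $\gamma = kt$ and $a = 2\sqrt{\mu}$ (with $b=0$) into $P^{SCNAG}, Q^{SCNAG}$, reducing positive semi-definiteness of $P$ to the $2\times 2$ determinant condition $2k(\sqrt{\mu}-k)\ge 0$, and observing that the $Q_{11}$ condition becomes $k(k-\sqrt{\mu})^2 + k(\lambda-\mu) \ge 0$, which holds automatically by strong convexity. The binding constraint $0 \le k \le \sqrt{\mu}$ and the choice $k=\sqrt{\mu}$ match the paper exactly.
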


\begin{proof}
    Substituting $\gamma=kt$ and $a=2\sqrt{\mu}$ into the pair $P^{SCNAG}, Q^{SCNAG}$ yields
    \begin{equation}
        \label{Lyapunov_SC-NAG}
        P^{SCNAG} = \frac{1}{2} \begin{pmatrix}
        2\sqrt{\mu} k - k^2 & 0 & k \\
        0 & 0 & 0 \\
        k & 0 & 1
        \end{pmatrix}, ~ Q^{SCNAG} = \frac{1}{2} \begin{pmatrix}
        -2\sqrt{\mu} k^2 + k^3 + \lambda k & 0 & 0 & 0 & 0 \\
        0 & 0 & 0 & 0 & 0 \\
        0 & 0 & 4\sqrt{\mu} - 3k & 0 & 0 \\
        0 & 0 & 0 & 0 & 0 \\
        0 & 0 & 0 & 0 & 0
        \end{pmatrix}.
    \end{equation}
    The necessary and sufficient conditions for these matrices to be positive semi-definite are
    \begin{gather}
        2\sqrt{\mu} k - k^2 \geq 0 \\
        (2\sqrt{\mu} k - k^2) \cdot 1 - k^2 \geq 0 \\
        -2\sqrt{\mu} k^2 + k^3 + \lambda k \geq 0 \\
        4\sqrt{\mu} - 3k \geq 0.
    \end{gather}
    These simplify to
    \begin{gather}
        k (2\sqrt{\mu} - k) \geq 0 \\
        2k(\sqrt{\mu}-k) \geq 0 \\
        k(k^2 - 2\sqrt{\mu}k + \lambda) \geq 0 \\
        -3\left( k - \frac{4}{3}\sqrt{\mu} \right) \geq 0.
    \end{gather}
    Combining these conditions, we obtain
    \begin{gather}
        0 \leq k \leq \sqrt{\mu} \\
        (k - \sqrt{\mu})^2 + \lambda - \mu \geq 0.
    \end{gather}
    Since $f$ is assumed to be $\mu$-strongly convex ($0<\mu$), we have $\mu \leq \lambda$. Thus, the feasible condition is
    \begin{equation}
        0 \leq k \leq \sqrt{\mu}.
    \end{equation}
    Therefore, positive semi-definiteness is satisfied at $k=\sqrt{\mu}$, guaranteeing the convergence rate
    \begin{equation}
        f(x(t)) - f_* = \mathrm{O} \left( \mathrm{e}^{-\sqrt{\mu} t} \right).
    \end{equation}
\end{proof}

The continuous dynamical system obtained here corresponds to the strongly convex version of Nesterov’s accelerated gradient method (SC-NAG), and the convergence rate also coincides with that of the SC-NAG. While the dynamical system and its rate are not novel discoveries themselves, this illustrates that the proposed method can systematically guarantee the already-known continuous dynamical systems and its rate. The detailed proofs via Lyapunov functions written in matrix form are provided in Appendix~\ref{sec:pr_thm_SC-NAG}.

Next, we turn to the other group. This case does not include the condition $b=0$ for $P,Q$ to be positive semi-definite, and yields $k=\sqrt{\mu}$ under $a=\sqrt{\mu},~b=\frac{1}{\sqrt{\mu}}$. There is only one such pair: 
\begin{equation}
    P = \frac{1}{2} \begin{pmatrix}
        b \lambda \dot{\gamma} & 0 & \dot{\gamma} \\
        0 & 0 & 0 \\
        \dot{\gamma} & 0 & 1
    \end{pmatrix}, ~ Q = \frac{1}{2} \begin{pmatrix}
        \lambda (\dot{\gamma} - b \dot{\gamma}^2 - b \ddot{\gamma}) & 0 & a \dot{\gamma} - \dot{\gamma}^2 - \ddot{\gamma} & 0 & 0 \\
        0 & 0 & 0 & 0 & 0 \\
        a \dot{\gamma} - \dot{\gamma}^2 - \ddot{\gamma} & 0 & 2a + 2b\theta - 3 \dot{\gamma} & 0 & 0 \\
        0 & 0 & 0 & 0 & 0 \\
        0 & 0 & 0 & 0 & 0
    \end{pmatrix}.
\end{equation}
We hereafter denote these by $P^{SO}, Q^{SO}$. The convergence rate guaranteed by this pair is given by the following theorem.

\begin{thm}[Convergence rate of the continuous dynamical system $\ddot{x} + \sqrt{\mu} \dot{x} + \frac{1}{\sqrt{\mu}} \nabla^2 f \dot{x} + \nabla f = 0$]
    \label{Second_Order}
    For an $\mu$-strongly convex ($0<\mu$) function $f$, the continuous dynamical system $\ddot{x} + \sqrt{\mu} \dot{x} + \frac{1}{\sqrt{\mu}} \nabla^2 f \dot{x} + \nabla f = 0$ has the convergence rate
    \begin{equation}
        f(x(t)) - f_* = \mathrm{O} \left( \mathrm{e}^{- \sqrt{\mu} t} \right).
    \end{equation}
\end{thm}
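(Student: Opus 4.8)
The plan is to follow the template already used in the proofs of Theorems~\ref{Dumped_Newton}--\ref{SC-NAG}: start from the pair $(P^{SO},Q^{SO})$ produced by the exhaustive search, substitute the ansatz $\gamma=kt$ (so that $\dot\gamma=k$ and $\ddot\gamma=\dddot\gamma=0$) together with the parameter choice $a=\sqrt{\mu}$, $b=\tfrac{1}{\sqrt{\mu}}$, and then determine the largest $k>0$ for which both matrices remain positive semi-definite for all admissible values of the free parameters $\lambda,\theta$. Since the theorem assumes only $\mu$-strong convexity (no $L$-smoothness), the relevant constraints on these parameters are $\lambda\ge\mu$ and, via Theorem~\ref{Hessian}, $\theta\ge\mu$.

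After the substitution, every entry of $P^{SO}$ and $Q^{SO}$ lying in row or column $2$ vanishes, so positive semi-definiteness of each matrix reduces to that of the $2\times2$ block supported on the indices $\{1,3\}$. For $P^{SO}$ the diagonal entries of this block become $\tfrac{\lambda k}{2\sqrt{\mu}}$ and $\tfrac12$ with off-diagonal entry $\tfrac{k}{2}$, giving the conditions $k\ge0$ and $\tfrac{\lambda k}{\sqrt{\mu}}-k^2\ge0$. For $Q^{SO}$ the diagonal entries become $\tfrac{\lambda}{2}\bigl(k-\tfrac{k^2}{\sqrt{\mu}}\bigr)$ and $\tfrac12\bigl(2\sqrt{\mu}+\tfrac{2\theta}{\sqrt{\mu}}-3k\bigr)$ with off-diagonal entry $\tfrac12(\sqrt{\mu}k-k^2)$, giving the three conditions that both diagonal entries and the $2\times2$ determinant be non-negative.

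The decisive step is the factorization of the $Q$-block determinant. Using $\sqrt{\mu}k-k^2=k(\sqrt{\mu}-k)$ and $k-\tfrac{k^2}{\sqrt{\mu}}=\tfrac{k}{\sqrt{\mu}}(\sqrt{\mu}-k)$, the determinant is a positive multiple of $k(\sqrt{\mu}-k)\bigl[\tfrac{\lambda}{\sqrt{\mu}}\bigl(2\sqrt{\mu}+\tfrac{2\theta}{\sqrt{\mu}}-3k\bigr)-k(\sqrt{\mu}-k)\bigr]$. The $(1,1)$ entry of the $Q$-block already forces $0\le k\le\sqrt{\mu}$, on which interval $k(\sqrt{\mu}-k)\ge0$, so only the bracketed factor needs to be controlled. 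Because $b=\tfrac{1}{\sqrt{\mu}}>0$, the parameter $\theta$ enters every inequality with a positive coefficient, and $\lambda$ likewise enters the binding inequalities with a non-negative coefficient on $0\le k\le\sqrt{\mu}$; hence it suffices to verify all conditions at the worst case $\lambda=\theta=\mu$. Putting $k=\sqrt{\mu}$ and $\lambda=\theta=\mu$ makes every one of the expressions above non-negative (the $(1,1)$-type entries and all off-diagonal terms vanish, while the surviving diagonal entries are $\mu$ and $\sqrt{\mu}$), so $k=\sqrt{\mu}$ is feasible, and by the $(1,1)$ constraint on $Q^{SO}$ it is also maximal. Consequently $\mathcal{E}(t)=p(t)+\mathrm{e}^{\sqrt{\mu}t}\bigl(f(x(t))-f_*\bigr)$ built from $P^{SO},Q^{SO}$ is non-increasing and bounded below, which gives $f(x(t))-f_*=\mathrm{O}(\mathrm{e}^{-\sqrt{\mu}t})$.

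The main obstacle is the $Q^{SO}$ determinant inequality: before the factorization it is a rational expression in $k$ with two free parameters, and it is not a priori clear that it holds throughout the feasible $k$-range rather than only at the endpoint $k=\sqrt{\mu}$. Once the common factor $k(\sqrt{\mu}-k)$ is extracted and monotonicity in $\lambda$ and $\theta$ reduces the check to $\lambda=\theta=\mu$, the remaining verification is elementary. A fully explicit argument phrased in terms of the Lyapunov function read off from $P^{SO}$ and $Q^{SO}$ can be deferred to an appendix, as was done for the earlier theorems.
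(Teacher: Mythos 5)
Your proposal is correct and follows essentially the same route as the paper's proof: substitute $\gamma=kt$, $a=\sqrt{\mu}$, $b=\tfrac{1}{\sqrt{\mu}}$ into $(P^{SO},Q^{SO})$, reduce positive semi-definiteness to the $2\times2$ blocks on indices $\{1,3\}$, extract the common factor $k(\sqrt{\mu}-k)$ from the $Q$-block determinant, and use $\lambda,\theta\ge\mu$ to conclude feasibility exactly up to $k=\sqrt{\mu}$. The only (harmless) imprecision is your parenthetical description of what vanishes at $k=\sqrt{\mu}$: for the $P$-block the off-diagonal entry does not vanish — rather its determinant becomes zero at $\lambda=\mu$ — but the verification is still valid.
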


\begin{proof}
    Substituting $\gamma=kt$, $a=\sqrt{\mu}$, and $b=\frac{1}{\sqrt{\mu}}$ into the pair $P^{SO}, Q^{SO}$ yields
    \begin{equation}
        \label{Lyapunov_Second_Order}
        P^{SO} = \frac{1}{2} \begin{pmatrix}
        \frac{\lambda}{\sqrt{\mu}}k & 0 & k \\
        0 & 0 & 0 \\
        k & 0 & 1
        \end{pmatrix}, ~ Q^{SO} = \frac{1}{2} \begin{pmatrix}
        \lambda (k - \frac{1}{\sqrt{\mu}}k^2) & 0 & \sqrt{\mu}k - k^2 & 0 & 0 \\
        0 & 0 & 0 & 0 & 0 \\
        \sqrt{\mu}k - k^2 & 0 & 2\sqrt{\mu} + 2\frac{\theta}{\sqrt{\mu}} - 3 k & 0 & 0 \\
        0 & 0 & 0 & 0 & 0 \\
        0 & 0 & 0 & 0 & 0
        \end{pmatrix}.
    \end{equation}
    The necessary and sufficient conditions for these matrices to be positive semidefinite are
    \begin{gather}
        \frac{\lambda}{\sqrt{\mu}}k \geq 0 \\
        \frac{\lambda}{\sqrt{\mu}}k \cdot 1 - k^2 \geq 0 \\
        \lambda (k - \frac{1}{\sqrt{\mu}}k^2) \geq 0 \\
        2\sqrt{\mu} + 2\frac{\theta}{\sqrt{\mu}} - 3k \geq 0 \\
        \left( \lambda (k - \frac{1}{\sqrt{\mu}}k^2) \right)\left( 2\sqrt{\mu} + 2\frac{\theta}{\sqrt{\mu}} - 3k \right) - (\sqrt{\mu}k - k^2)^2 \geq 0.
    \end{gather}
    Simplifying them, we obtain
    \begin{gather}
        k \geq 0 \\
        k\left(\frac{\lambda}{\sqrt{\mu}} - k\right) \geq 0 \\
        \frac{\lambda}{\sqrt{\mu}}k(\sqrt{\mu} - k) \geq 0 \\
        2\sqrt{\mu} + 2\frac{\theta}{\sqrt{\mu}} - 3k \geq 0 \\
        k(\sqrt{\mu} - k)\left(\frac{\lambda}{\sqrt{\mu}} \left(2\sqrt{\mu} + 2\frac{\theta}{\sqrt{\mu}} - 3k \right) - \sqrt{\mu}k + k^2\right) \geq 0.
    \end{gather}
    Since $f$ is assumed to be $\mu$-strongly convex ($0<\mu$), we have $\mu \leq \lambda, ~\mu \leq \theta$, hence $\frac{\lambda}{\sqrt{\mu}} \geq \sqrt{\mu}, ~\frac{\theta}{\sqrt{\mu}} \geq \sqrt{\mu}$. Therefore, the conditions reduce to
    \begin{gather}
        0 \leq k \leq \sqrt{\mu} \\
        2\sqrt{\mu} + 2\frac{\theta}{\sqrt{\mu}} - 3k \geq 0 \\
        k(\sqrt{\mu} - k)\left(\frac{\lambda}{\sqrt{\mu}}\left(2\sqrt{\mu} + 2\frac{\theta}{\sqrt{\mu}} - 3k\right) - \sqrt{\mu}k + k^2\right) \geq 0.
    \end{gather}
    Therefore, positive semidefiniteness is satisfied at $k=\sqrt{\mu}$, guaranteeing the convergence rate
    \begin{equation}
        f(x(t)) - f_* = \mathrm{O} \left( \mathrm{e}^{-\sqrt{\mu} t} \right).
    \end{equation}
\end{proof}

The convergence rate obtained here coincides with that of the strongly convex version of Nesterov’s accelerated gradient method (SC-NAG), but the continuous dynamical system itself is a novel one. This system corresponds to a second-order optimization method and thus deserves further investigation. In this way, even when the convergence rate itself is not truly improved, the proposed method allows us to discover new continuous dynamical systems that yield favorable convergence rates. The detailed proofs via Lyapunov functions written in matrix form are provided in Appendix~\ref{sec:pr_thm_Second_Order}.

\subsection{Continuous Dynamical System $\ddot{x} + \frac{r}{t} \dot{x} + \nabla f = 0$}

For the continuous dynamical system
\begin{equation}
    \ddot{x} + \frac{r}{t} \dot{x} + \nabla f = 0,
\end{equation}
applying the proposed method yields initial matrices $P^0, Q^0$ of the form
\begin{equation}
    P^0 = O_3,~ Q^0 = \begin{pmatrix}
        0 & \frac{\dot{\gamma}}{2} & \frac{r}{2t} & 0 & \frac{\dot{\gamma}}{2} \\
        \frac{\dot{\gamma}}{2} & 0 & \frac{1}{2} & 0 & 0 \\
        \frac{r}{2t} & \frac{1}{2} & \frac{r}{t} & 0 & \frac{1}{2} \\
        0 & 0 & 0 & 0 & 0 \\
        \frac{\dot{\gamma}}{2} & 0 & \frac{1}{2} & 0 & 0 \\
    \end{pmatrix}.
\end{equation}
Performing the exhaustive search in the proposed framework produced a total of 10 distinct candidate pairs $(P,Q)$. First, assuming that the objective function is merely convex (corresponding to $\mu = 0$) and that the convergence rate is polynomial,
\begin{equation}
    f - f_* = \mathrm{O} \left( \frac{1}{t^k} \right), ~ \gamma = k \log t,
\end{equation}
with $k>0$ a positive constant, we found that two out of the 10 pairs guarantee some $k>0$. Both of these pairs yield $k=2$ when $r=3$. We present one such pair:
\begin{equation}
    \label{PNAG_QNAG}
    P = \frac{1}{2} \begin{pmatrix}
        \frac{r\dot{\gamma} - \dot{\gamma}^2 t- \ddot{\gamma} t}{t} & 0 & \dot{\gamma} \\
        0 & 0 & 0 \\
        \dot{\gamma} & 0 & 1
    \end{pmatrix}, ~ Q = \frac{1}{2} \begin{pmatrix}
        \frac{- r \dot{\gamma}^2 t+ {\dot{\gamma}^3 t^2 + \dot{\gamma} \left( r + \lambda t^2 + 3\ddot{\gamma} t^2\right)}+ \left( -r \ddot{\gamma} + \dddot{\gamma} t \right)t}{t^2} & 0 & 0 & 0 & 0 \\
        0 & 0 & 0 & 0 & 0 \\
        0 & 0 & \frac{2r}{t} - 3 \dot{\gamma} & 0 & 0 \\
        0 & 0 & 0 & 0 & 0 \\
        0 & 0 & 0 & 0 & 0 \\
    \end{pmatrix} \\
\end{equation}
We denote these matrices by $P^{NAG},Q^{NAG}$. Since these yield $k=2$ at $r=3$, the resulting convergence rate is stated in the following theorem.

\begin{thm}[Convergence rate of the continuous dynamical system $\ddot{x} + \frac{3}{t} \dot{x} + \nabla f = 0$]
    \label{NAG_Convex}
    The continuous dynamical system $\ddot{x} + \frac{3}{t} \dot{x} + \nabla f = 0$ satisfies the convergence rate
    \begin{equation}
        f(x(t)) - f_* = \mathrm{O} \left( \frac{1}{t^2} \right)
    \end{equation}
    for convex functions $f$.
\end{thm}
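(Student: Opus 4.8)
The plan is to follow the template already used for Theorems~\ref{Dumped_Newton}--\ref{Second_Order}: substitute the ansatz $\gamma = k\log t$ into the explicit matrices $P^{NAG}$ and $Q^{NAG}$ from \eqref{PNAG_QNAG}, reduce the positive semi-definiteness requirements to a handful of scalar inequalities, and verify that these hold for all $0 \le k \le 2$ when $r=3$, so that the choice $k=2$ (hence $\mathrm{e}^{\gamma(t)} = t^2$) produces a Lyapunov function $\mathcal{E}(t) = p(t) + \mathrm{e}^{\gamma(t)}(f-f_*)$ certifying the claimed rate.

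First I would record $\dot\gamma = k/t$, $\ddot\gamma = -k/t^2$, $\dddot\gamma = 2k/t^3$ and substitute into $P^{NAG}, Q^{NAG}$. After simplification $P^{NAG}$ becomes $\tfrac12\bigl(\begin{smallmatrix} (rk-k^2+k)/t^2 & 0 & k/t \\ 0 & 0 & 0 \\ k/t & 0 & 1\end{smallmatrix}\bigr)$, whose only nontrivial block (rows/columns $1,3$) is positive semi-definite iff $rk-k^2+k \ge 0$ and the determinant condition $k(r+1-2k)\ge 0$ hold. For $Q^{NAG}$ the only nonzero entries are $Q_{11}$ and $Q_{33}$; the key algebraic step is to simplify $Q_{11}$ to $\tfrac12\bigl(\tfrac{k(k-2)(k-(r+1))}{t^3} + \tfrac{k\lambda}{t}\bigr)$, using that the quadratic $k^2-(r+3)k+2(r+1)$ has discriminant $(r-1)^2$ and therefore factors as $(k-2)(k-(r+1))$, while $Q_{33}$ simplifies to $(2r-3k)/(2t)$.

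Next I would invoke convexity exactly as in Operation~A1: $f_*-f-\langle\nabla f, x_*-x\rangle = \tfrac{\lambda}{2}\|x-x_*\|^2$ with $\lambda \ge 0$ (and, since the $\lambda$-dependent entry is monotone, it suffices to test $\lambda=0$ and $\lambda\to\infty$). The surviving constraints become $k \ge 0$, $k(r+1-2k)\ge 0$, $k(k-2)(k-(r+1))\ge 0$, and $2r-3k \ge 0$. Specializing to $r=3$ these reduce to $0 \le k \le 2$: the cubic $k(k-2)(k-4)$ is nonnegative on $[0,2]$ and vanishes at $k=2$, and both the determinant condition and $2r-3k\ge 0$ give $k \le 2$. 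Hence $k=2$ is admissible; there $Q^{NAG} = \mathrm{diag}(\lambda/t,0,0,0,0)\succeq O$ and $P^{NAG}\succeq O$ (with a singular $2\times2$ block), so $\mathcal{E}(t)$ is a Lyapunov function with $\mathrm{e}^{\gamma(t)}=t^2$, which yields $f(x(t))-f_* = \mathrm{O}(1/t^2)$.

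The main obstacle I anticipate is the bookkeeping in simplifying $Q_{11}^{NAG}$: the raw expression mixes $\dot\gamma,\dot\gamma^2,\dot\gamma^3,\ddot\gamma,\dddot\gamma$ over the common denominator $t^2$, and one must cancel carefully to expose the clean factorization $k(k-2)(k-(r+1))$ and the isolated $\tfrac{k\lambda}{t}$ term. A secondary subtlety is arguing correctly over the full admissible range of $\lambda$ — here $[0,\infty)$, since no smoothness is assumed — rather than at a single value; everything else is routine minor-by-minor checking. The detailed Lyapunov-function form of the argument would then be deferred to an appendix, as with the earlier theorems.
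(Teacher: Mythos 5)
Your proposal is correct and follows essentially the same route as the paper: substitute $\gamma = k\log t$ (and $r=3$) into $P^{NAG}, Q^{NAG}$, reduce positive semi-definiteness to the scalar conditions $k(4-k)\ge 0$, $2k(2-k)\ge 0$, $k(k-2)(k-4)+\lambda k t^2\ge 0$, $3(2-k)\ge 0$, conclude $0\le k\le 2$ using $\lambda\ge 0$, and take $k=2$. Your factorization of $Q_{11}$ via the discriminant $(r-1)^2$ and the worst-case $\lambda=0$ argument match the paper's computation exactly.
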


\begin{proof}
    Substituting $\gamma = k \log t$ and $r=3$ into the matrices $P^{NAG},~ Q^{NAG}$ yields
    \begin{equation}
        \label{Lyapunov_NAG_Convex}
        P^{NAG} = \frac{1}{2} \begin{pmatrix}
        \frac{4k - k^2}{t^2} & 0 & \frac{k}{t} \\
        0 & 0 & 0 \\
        \frac{k}{t} & 0 & 1
    \end{pmatrix}, ~ Q^{NAG} = \frac{1}{2} \begin{pmatrix}
        \frac{k^3 - 6k^2 + 8k}{t^3} + \frac{\lambda k}{t} & 0 & 0 & 0 & 0 \\
        0 & 0 & 0 & 0 & 0 \\
        0 & 0 & \frac{6 - 3k}{t} & 0 & 0 \\
        0 & 0 & 0 & 0 & 0 \\
        0 & 0 & 0 & 0 & 0 \\
    \end{pmatrix} 
    \end{equation}
    The necessary and sufficient conditions for these matrices to be positive semidefinite are
    \begin{gather}
        \frac{4k - k^2}{t^2} \geq 0 \\
        \frac{4k - k^2}{t^2} \cdot 1 - \left( \frac{k}{t} \right)^2 \geq 0 \\
        \frac{k^3 - 6k^2 + 8k}{t^3} + \frac{\lambda k}{t} \geq 0 \\
        \frac{6 - 3k}{t} \geq 0
    \end{gather}
    Rearranging these yields
    \begin{gather}
        \frac{k(4-k)}{t} \geq 0 \label{NAG_log_condition1} \\
        \frac{2k(2-k)}{t^2} \geq 0 \label{NAG_log_condition2} \\
        \frac{k(k-2)(k-4)}{t^3} + \frac{\lambda k}{t} \geq 0 \label{NAG_log_condition3} \\
        \frac{3(2-k)}{t}\geq 0 \label{NAG_log_condition4}
    \end{gather}
    Since $f$ is assumed convex, $\lambda \ge 0$ holds. For $t>0$, inequalities \eqref{NAG_log_condition1}, \eqref{NAG_log_condition2}, \eqref{NAG_log_condition3}, \eqref{NAG_log_condition4} together imply
    \begin{equation}
        0 \leq k \leq 2.
    \end{equation}
    Hence $k=2$ satisfies the positive semidefiniteness conditions, and therefore
    \begin{equation}
        f(x(t)) - f_* = \mathrm{O} \left( \frac{1}{t^2} \right)
    \end{equation}
    is guaranteed.
\end{proof}

The theorem above recovers the convergence rate for the continuous dynamical system corresponding to Nesterov's accelerated gradient method (NAG). Although the convergence rate itself is already known, reproducing it through the proposed method is meaningful. The detailed Lyapunov-function-based proof using the matrix representation is included in Appendix~\ref{sec:pr_thm_NAG_Convex}.

Next, we narrow the class of objective functions to $\mu$-strongly convex ones without changing the assumed form of the convergence rate. For the same two candidate pairs mentioned above, we found that under certain conditions they can yield $k>2$. Concretely, using the same matrices $P^{NAG}$ and $Q^{NAG}$, we can verify that 
\begin{gather}
    r = 1 + 2 \sqrt{1+\mu T^2} \quad \text{and} \\
    \gamma(t) = \left( 1 + \sqrt{1+\mu T^2} \right) \log t
\end{gather}
guarantee their positive semi-definiteness for $t \geq T$, 
where $T>0$ is a certain positive number. 
If positive semi-definiteness holds for all $t\ge T$, then we have
\begin{equation}
    \mathrm{e}^{\gamma}(f-f_*) \leq \mathcal{E}(t) \leq \mathcal{E}(T) = \mathrm{const.}
\end{equation}
by the same argument as in the unconstrained-in-time cases.
Thus the usual convergence estimate follows. Moreover, since 
\begin{equation}
    r = 1 + 2 \sqrt{1+\mu T^2} > 3
\end{equation}
and $r$ is monotonically increasing in $T$, it is convenient to fix $r$ and express the convergence rate in terms of $r$. This leads to the following theorem.

\begin{thm}[Convergence rate of the continuous dynamical system $\ddot{x}+ \frac{r}{t} \dot{x} + \nabla f = 0$]
    \label{NAG_strong_log}
    For an $\mu$-strongly convex function $f~(\mu > 0)$ , the continuous dynamical system $\ddot{x}+ \frac{r}{t} \dot{x} + \nabla f = 0 ~ (r>3)$ satisfies, the convergence rate
    \begin{equation}
        f(x(t)) - f_* = \mathrm{O} \left( \frac{1}{t^{\frac{1}{2}r + \frac{1}{2}}} \right).
    \end{equation}
\end{thm}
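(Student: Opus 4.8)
The plan is to reuse the matrix pair $(P^{NAG},Q^{NAG})$ from \eqref{PNAG_QNAG}, but now with the exponent pushed up to $k=\tfrac{r+1}{2}$, i.e.\ $\gamma(t)=\tfrac{r+1}{2}\log t$, and to verify the positive semi-definiteness required by the Lyapunov construction \eqref{PQ}--\eqref{eq:expr_dot_E} only on a half-line $[T,\infty)$. First I would substitute $\dot\gamma=k/t$, $\ddot\gamma=-k/t^{2}$, $\dddot\gamma=2k/t^{3}$ into both matrices. A short computation turns $P^{NAG}$ into $\tfrac12\begin{pmatrix} k(r+1-k)/t^{2} & 0 & k/t \\ 0 & 0 & 0 \\ k/t & 0 & 1 \end{pmatrix}$, and, after collecting the five summands of the $(1,1)$-entry of $Q^{NAG}$ over the denominator $t^{2}$ and factoring the quadratic $k^{2}-(r+3)k+2(r+1)=(k-(r+1))(k-2)$, turns $Q^{NAG}$ into the diagonal matrix whose nonzero entries are $\tfrac12\bigl(k(k-r-1)(k-2)/t^{3}+\lambda k/t\bigr)$ and $\tfrac12(2r-3k)/t$.

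Second, I would read off the positive semi-definiteness conditions. Since the middle row and column of $P^{NAG}$ vanish, $P^{NAG}\succeq O_{3}$ is equivalent to nonnegativity of the $2\times2$ minor on indices $\{1,3\}$, i.e.\ $k(r+1-k)\ge 0$ and $k(r+1-2k)\ge 0$. For the diagonal $Q^{NAG}$ the conditions are $2r-3k\ge 0$ and $\lambda t^{2}+(k-r-1)(k-2)\ge 0$ for $t>0$. Plugging in $k=\tfrac{r+1}{2}$: the first $P$-condition is strict ($k(r+1-k)=(r+1)^{2}/4>0$) and the second is the equality $k(r+1-2k)=0$; the first $Q$-condition reads $2r-3k=(r-3)/2>0$ because $r>3$; and, since $k-r-1=-(r+1)/2$ and $k-2=(r-3)/2$, the last condition becomes $\lambda t^{2}\ge (r+1)(r-3)/4$.

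Third, I would invoke $\mu$-strong convexity, which forces the parameter $\lambda$ of Operations A1/E1 to satisfy $\lambda\ge\mu$; hence the remaining inequality holds whenever $t^{2}\ge (r+1)(r-3)/(4\mu)$, that is, for $t\ge T:=\tfrac12\sqrt{(r+1)(r-3)/\mu}$. This is exactly the value of $T$ for which $r=1+2\sqrt{1+\mu T^{2}}$ and $k=1+\sqrt{1+\mu T^{2}}$, matching the pre-theorem discussion. Therefore on $[T,\infty)$ we have $P^{NAG}\succeq O_{3}$ and $Q^{NAG}\succeq O_{5}$, so $p(t)\ge0$ and $q(t)\ge0$ there, $\mathcal{E}(t)=p(t)+\mathrm{e}^{\gamma(t)}(f(x(t))-f_{*})$ is non-increasing and bounded below on $[T,\infty)$, and consequently $\mathrm{e}^{\gamma(t)}(f(x(t))-f_{*})\le\mathcal{E}(T)$, giving $f(x(t))-f_{*}=\mathrm{O}(\mathrm{e}^{-\gamma(t)})=\mathrm{O}(t^{-(r+1)/2})$.

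The main obstacle is essentially computational: correctly simplifying the $(1,1)$-entry of $Q^{NAG}$ and recognizing its $k$-dependence as $k(k-(r+1))(k-2)$ — a sign slip here would invalidate the threshold $T$ — and, secondarily, keeping track of the fact that the semi-definiteness holds only for $t\ge T$, so the Lyapunov argument must be run from $t_{0}=T$ (harmless for the asymptotic statement). It is also worth remarking, though not strictly needed, that $k=\tfrac{r+1}{2}$ is the largest admissible exponent of the form $k\log t$ for this matrix pair precisely because it saturates the determinant constraint $k(r+1-2k)\ge 0$ coming from $P^{NAG}$; this is why the search yields the rate $\mathrm{O}(t^{-(r+1)/2})$, which for $r>3$ is strictly weaker than the $\mathrm{O}(t^{-2r/3})$ rate of \cite{su2016differential}.
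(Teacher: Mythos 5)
Your proposal is correct and follows essentially the same route as the paper's proof: the same matrix pair $(P^{NAG},Q^{NAG})$ with $\gamma=k\log t$, the same positive semi-definiteness conditions, and the same threshold (your $T=\tfrac12\sqrt{(r+1)(r-3)/\mu}$ is exactly the paper's $T$ defined by $r=1+2\sqrt{1+\mu T^2}$, so $k=\tfrac{r+1}{2}=1+\sqrt{1+\mu T^2}$); the only cosmetic difference is that you fix $r$ and solve for $T$, whereas the paper parameterizes $r$ by $T$. Your factorization of the $(1,1)$-entry of $Q^{NAG}$ as $k(k-(r+1))(k-2)/t^{3}+\lambda k/t$ checks out, so the argument goes through as stated.
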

\begin{proof}
    Substituting $r = 1 + 2 \sqrt{1+\mu T^2}, ~\gamma = k \log t$ into $P^{NAG}$ and $Q^{NAG}$ yields
    \begin{gather}
    \label{Lyapunov_NAG_strong_log}
    P^{NAG} = \frac{1}{2} \begin{pmatrix}
        \frac{k(2+2 \sqrt{1+\mu T^2}-k)}{t^2} & 0 & \frac{k}{t} \\
        0 & 0 & 0 \\
        \frac{k}{t} & 0 & 1
    \end{pmatrix}, \\~ Q^{NAG} = \frac{1}{2} \begin{pmatrix}
        \frac{k(k^2 -(4+ 2\sqrt{1+\mu T^2}) k + (4 + 4 \sqrt{1+\mu T^2} + \lambda t^2))}{t^3} & 0 & 0 & 0 & 0 \\
        0 & 0 & 0 & 0 & 0 \\
        0 & 0 & \frac{2 + 4 \sqrt{1+\mu T^2}-3k}{t} & 0 & 0 \\
        0 & 0 & 0 & 0 & 0 \\
        0 & 0 & 0 & 0 & 0 \\
    \end{pmatrix}. \\
    \end{gather}
    The necessary and sufficient conditions for positive semi-definiteness become
    \begin{gather}
        \frac{k(2+2 \sqrt{1+\mu T^2}-k)}{t^2} \geq 0, \\
        \frac{k(2+2 \sqrt{1+\mu T^2}-k)}{t^2} \cdot 1 - \left( \frac{k}{t} \right)^2 \geq 0, \\
        \frac{k(k^2 -(4+ 2\sqrt{1+\mu T^2}) k + (4 + 4 \sqrt{1+\mu T^2} + \lambda t^2))}{t^3} \geq 0, \\
        \frac{2 + 4 \sqrt{1+\mu T^2}-3k}{t} \geq 0,
    \end{gather}
    which can be rewritten as
    \begin{gather}
        \frac{k(2+2 \sqrt{1+\mu T^2}-k)}{t^2} \geq 0, \\
        \frac{2k(1+\sqrt{1+\mu T^2}-k)}{t^2} \geq 0, \\
        \frac{k}{t^3}\left(\left(k-\left(1+\sqrt{1+\mu T^2}\right)\right)\left(k-\left(3+\sqrt{1+\mu T^2}\right)\right)+\lambda t^2 - \mu T^2\right) \geq 0, \\
        \frac{3}{t} \left( \frac{2 + 4\sqrt{1+\mu T^2}}{3} - k \right) \geq 0.
    \end{gather}
    Since $f$ is $\mu$-strongly convex, $\lambda \geq \mu$ holds. In addition, we have $\sqrt{1+\mu T^2} \ge 1$. Combining these observations, the conditions reduce to
    \begin{equation}
        0 \leq k \leq 1 + \sqrt{1+\mu T^2}.
    \end{equation}
    Hence $k = 1 + \sqrt{1+\mu T^2}$ satisfies the positive semi-definiteness conditions, yielding the convergence rate
    \begin{equation}
        f(x(t)) - f_* = \mathrm{O} \left(\frac{1}{t^{1 + \sqrt{1+\mu T^2}}} \right).
    \end{equation}
    Observing that
    \begin{equation}
        \begin{split}
            k &= 1 + \sqrt{1+\mu T^2} \\
            &= \frac{1}{2} \left( 1 + 2\sqrt{1+ \mu T^2} \right) + \frac{1}{2} \\
            &= \frac{1}{2} r + \frac{1}{2},
        \end{split}
    \end{equation}
    and noting that $r$ increases with $T$, we conclude that for any $r>3$ the convergence rate
    \begin{equation}
        f(x(t)) - f_* = \mathrm{O} \left(\frac{1}{t^{\frac{1}{2}r+\frac{1}{2}}} \right)
    \end{equation}
    holds.
\end{proof}
A proof based on a Lyapunov function for the present results is provided in Appendix~\ref{sec:pr_thm_NAG_strong_log}.
The result of this theorem is a weaker statement than the known result in Su, Boyd, and Candès (2016)~\cite{su2016differential}, which established the rate
\begin{equation}
    f(x(t)) - f_* = \mathrm{O} \left(\frac{1}{t^{\frac{2}{3}r}} \right)
\end{equation}
for the same system ($r>3$).
We found that this rate cannot be reproduced by the current form of our method. However, by incorporating parts of the techniques used in Su, Boyd, and Candès (2016) into our method, we can derive an extension achieving the same rate from our result above. This extension is presented in Appendix~\ref{sec:appl_SBC_to_ours}.

Finally, keeping the strong-convexity assumption on the objective function $f$ and assuming exponential (linear) convergence 
\begin{equation}
    f - f_* = \mathrm{O}\left( \mathrm{e}^{-kt} \right), \quad \gamma = k t
\end{equation}
with $k>0$, we again find that 2 out of the 10 candidate pairs yield conditional $k>0$. 
Among those two, the pair $(P^{NAG}$ and $Q^{NAG})$ for the matrices in \eqref{PNAG_QNAG} is the most tractable. 
Therefore we focus on it. 
For these matrices, 
\begin{gather}
    r = \frac{4(k^2 + \mu)}{k^2} \quad \text{and} \quad
    T = \frac{2(k^2 + \mu)}{k^3}
\end{gather}
ensure positive semi-definiteness for any $k>0$ on the time interval
\begin{equation}
    0 < t \leq T.
\end{equation}
Thus the result shows linear convergence up to a finite time $T$, 
The conclusion is summarized in the following theorem.

\begin{thm}[Convergence rate of the continuous dynamical system $\ddot{x} + \frac{4(k^2 + \mu)}{k^2 t} \dot{x} + \nabla f = 0$]
    \label{NAG_strong_exp}
    For any $k>0$ and a $\mu$-strongly convex $f$, the continuous dynamical system $ \ddot{x} + \frac{4(k^2 + \mu)}{k^2 t} \dot{x} + \nabla f = 0 $ has the convergence rate
    \begin{equation}
        f(x(t)) - f_* = \mathrm{O}\left(\mathrm{e}^{-kt}\right)
    \end{equation}
    on the time interval $0 < t \leq T = \frac{2(k^2 + \mu)}{k^3}$.
\end{thm}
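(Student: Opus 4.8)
The plan is to reuse the matrix pair $(P^{NAG}, Q^{NAG})$ of~\eqref{PNAG_QNAG}, exactly as in the proofs of Theorems~\ref{NAG_Convex} and~\ref{NAG_strong_log}, now with the substitutions $\gamma(t) = kt$ and $r = \frac{4(k^2+\mu)}{k^2}$, and to determine precisely the set of $t$ on which both matrices are positive semi-definite. Since $\gamma = kt$ is linear we have $\dot\gamma = k$ and $\ddot\gamma = \dddot\gamma = 0$, so $P^{NAG}$ and $Q^{NAG}$ collapse to
\[
P^{NAG} = \frac{1}{2}\begin{pmatrix} \frac{rk}{t}-k^2 & 0 & k \\ 0 & 0 & 0 \\ k & 0 & 1 \end{pmatrix},
\qquad
Q^{NAG} = \frac{1}{2}\begin{pmatrix} \frac{k}{t^2}\bigl(r(1-kt)+t^2(k^2+\lambda)\bigr) & 0 & 0 & 0 & 0 \\ 0 & 0 & 0 & 0 & 0 \\ 0 & 0 & \frac{2r}{t}-3k & 0 & 0 \\ 0 & 0 & 0 & 0 & 0 \\ 0 & 0 & 0 & 0 & 0 \end{pmatrix}.
\]
The second row and column of $P^{NAG}$ vanish and $Q^{NAG}$ is diagonal, so positive semi-definiteness reduces to three scalar inequalities: the $2\times 2$ minor of $P^{NAG}$ on indices $\{1,3\}$, namely $\bigl(\tfrac{rk}{t}-k^2\bigr)-k^2\ge 0$ (which also forces the $(1,1)$ entry nonnegative), together with $Q^{NAG}_{11}\ge 0$ and $Q^{NAG}_{33}\ge 0$.

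I would then discharge these three conditions after inserting $r=\frac{4(k^2+\mu)}{k^2}$. The $P$-inequality reads $\frac{rk}{t}\ge 2k^2$, i.e.\ $t\le \frac{r}{2k}=\frac{2(k^2+\mu)}{k^3}=T$; the condition $Q^{NAG}_{33}=\frac{2r}{t}-3k\ge 0$ reads $t\le \frac{2r}{3k}=\frac{4}{3}T$, which is looser than $t\le T$; and for $Q^{NAG}_{11}$, bounding $\lambda\ge\mu$ by $\mu$-strong convexity and using the chosen $r$ gives the perfect-square identity
\[
r(1-kt)+t^2(k^2+\mu)=\frac{k^2+\mu}{k^2}\bigl(4-4kt+k^2t^2\bigr)=\frac{k^2+\mu}{k^2}(2-kt)^2\ge 0,
\]
so $Q^{NAG}_{11}\ge 0$ for every $t>0$. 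Hence $(P^{NAG},Q^{NAG})$ is a valid positive-semi-definite pair exactly on $0<t\le T$. Consequently $p(t)\ge 0$ and $q(t)\ge 0$ there, so $\mathcal{E}(t)=p(t)+\mathrm{e}^{kt}(f(x(t))-f_*)$ satisfies $\dot{\mathcal{E}}(t)=-q(t)\le 0$, and for any $0<t_0\le t\le T$ we get $\mathrm{e}^{kt}(f(x(t))-f_*)\le \mathcal{E}(t)\le \mathcal{E}(t_0)=\mathrm{const.}$, which is the claimed rate $f(x(t))-f_*=\mathrm{O}(\mathrm{e}^{-kt})$ on $(0,T]$.

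I do not anticipate a real obstacle here. The one step to get right is recognizing that the coefficient $r=\frac{4(k^2+\mu)}{k^2}$ is precisely what turns $r(1-kt)+t^2(k^2+\mu)$ into the square $\tfrac{k^2+\mu}{k^2}(2-kt)^2$; this is what removes any upper time bound coming from $Q^{NAG}$, so that the binding constraint on the horizon comes solely from $P^{NAG}$ and equals $T$. A minor point worth a sentence in the writeup is the interpretation of the conclusion on a \emph{finite} interval: the $\mathrm{O}(\mathrm{e}^{-kt})$ bound here means $f(x(t))-f_*\le \mathcal{E}(t_0)\,\mathrm{e}^{-kt}$ holds throughout $(0,T]$, which is exactly the form needed to drive the restart scheme in Section~\ref{sec:restart}, rather than an asymptotic ($t\to\infty$) statement.
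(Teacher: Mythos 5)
Your proposal is correct and follows essentially the same route as the paper's proof: substitute $\gamma=kt$ and $r=\tfrac{4(k^2+\mu)}{k^2}$ into $(P^{NAG},Q^{NAG})$, reduce positive semi-definiteness to the three scalar conditions you list, and observe that the choice of $r$ turns $Q^{NAG}_{11}$ into the nonnegative quantity $\tfrac{k^2+\mu}{k^2}(2-kt)^2$ plus a term controlled by $\lambda\ge\mu$, so that the binding constraint $t\le T$ comes from the $\{1,3\}$ minor of $P^{NAG}$. Your identification of $T=\tfrac{r}{2k}$ and the looser bound $\tfrac{4}{3}T$ from $Q^{NAG}_{33}$ match the paper's computation exactly.
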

\begin{proof}
    Substituting $r = \frac{4(k^2 + \mu)}{k^2}, \gamma = kt$ into $P^{NAG}, Q^{NAG}$ gives
    \begin{equation}
        \label{Lyapunov_NAG_strong_exp}
        P^{NAG} = \frac{1}{2} \begin{pmatrix}
            \frac{4(k^2+\mu)}{kt} - k^2 & 0 & k \\
            0 & 0 & 0 \\
            k & 0 & 1
        \end{pmatrix},~ Q^{NAG} = \begin{pmatrix}
            \frac{k^2(k^2+\lambda)t^2 - 4k(k^2+\mu)t+4(k^2+\mu)}{2kt^2} & 0 & 0 & 0 & 0 \\
            0 & 0 & 0 & 0 & 0 \\
            0 & 0 & \frac{8(k^2+\mu)}{k^2 t} - 3k & 0 & 0 \\
            0 & 0 & 0 & 0 & 0 \\
            0 & 0 & 0 & 0 & 0
        \end{pmatrix}.
    \end{equation}
    The necessary and sufficient conditions for positive semidefiniteness are
    \begin{gather}
        \frac{4(k^2+\mu)}{kt} - k^2 \geq 0, \\
        \left( \frac{4(k^2+\mu)}{kt} - k^2 \right) \cdot 1 - k^2 \geq 0, \\
        \frac{k^2(k^2+\lambda)t^2 - 4k(k^2+\mu)t+4(k^2+\mu)}{2kt^2} \geq 0, \\
        \frac{8(k^2+\mu)}{k^2 t} -3k \geq 0,
    \end{gather}
    which can be rearranged into
    \begin{gather}
        \frac{4(k^2+\mu)}{k} \left(\frac{1}{t} - \frac{k^3}{4(k^2+\mu)}\right) \geq 0, \\
        \frac{4(k^2+\mu)}{k} \left(\frac{1}{t} - \frac{k^3}{2(k^2+\mu)}\right) \geq 0, \\
        \frac{k^2+\mu}{2kt^2} \left( kt -2 \right)^2 + \frac{k}{2} \left( \lambda - \mu \right) \geq 0, \\
        \frac{8(k^2+\mu)}{k^2} \left(\frac{1}{t} - \frac{3k^3}{8(k^2+\mu)}\right) \geq 0.
    \end{gather}
    Since $f$ is $\mu$-strongly convex $\lambda \geq \mu$ holds. Then, combining the inequalities yields
    \begin{equation}
        \frac{1}{t} \geq \frac{k^3}{2(k^2 + \mu)}.
    \end{equation}
    Therefore, for $t$ satisfying
    \begin{equation}
        0 < t \leq T = \frac{2(k^2 + \mu)}{k^3},
    \end{equation}
    the positive semi-definiteness holds and the rate
    \begin{equation}
        f(x(t)) - f_* = \mathrm{O}\left(\mathrm{e}^{-kt}\right)
    \end{equation}
    is guaranteed.
\end{proof}

A proof of the convergence rate based on a Lyapunov function
is included in Appendix~\ref{sec:pr_thm_NAG_strong_exp}.
The above theorem
is useful because the dynamical system corresponding to the standard NAG 
can achieve limited linear convergence for strongly convex objective functions. 
Indeed, by combining this theorem with a restart scheme, we can establish linear convergence for the entire time $t \geq 0$.
We show this fact in Section~\ref{sec:restart}. 

\begin{rem}[Incorporating a Hessian term into the NAG-type system]
We consider the continuous dynamical system
\begin{equation}
\ddot{x} + \frac{r}{t}\dot{x} + b \nabla^2 f \dot{x} + \nabla f = 0
\end{equation}
for $b \neq 0$. 
When we analyze its convergence rate using the proposed method, the best possible linear convergence is obtained when
\begin{equation}
r = 0, \quad b = 2\sqrt{\frac{L}{\mu(2L-\mu)}},
\end{equation}
leading to
\begin{equation}
f(x(t)) - f(x_*) = \mathrm{O}\left(\mathrm{e}^{-\sqrt{\frac{\mu L}{2L-\mu}}t}\right).
\end{equation}
This rate coincides with that in Theorem~\ref{SC-NAG} only when $\mu = L$, and is strictly inferior to it when $\mu < L$. Moreover, since the optimal case occurs when $r = 0$, it follows that incorporating the Hessian term into the standard NAG-type dynamical system does not contribute to improving the convergence rate, at least under our proposed method.
\end{rem}

\subsection{Continuous Dynamical System $\ddot{x} + \frac{r}{t^\alpha}\dot{x} + \nabla f = 0$}
\label{sec:generalized_NAG}

The continuous dynamical system
\begin{equation}
    \ddot{x} + \frac{r}{t^\alpha}\dot{x} + \nabla f = 0 \quad (r>0, ~0\leq\alpha\leq1)
\end{equation}
was introduced by Cheng, Liu, and Shang (2025)~\cite{cheng2025class} as a class of continuous dynamical systems including those corresponding to NAG and SC-NAG. Among this class, we focus on the case $0<\alpha<1$.  
Applying the proposed method to this continuous dynamical system, the initial matrices $P^0,~Q^0$ are given by
\begin{equation}
    P^0 = O_3, ~ Q^0 = \begin{pmatrix}
        0 & \frac{\dot{\gamma}}{2} & \frac{r\dot{\gamma}}{2t^\alpha} & 0 & \frac{\dot{\gamma}}{2} \\
        \frac{\dot{\gamma}}{2} & 0 & \frac{1}{2} & 0 & 0 \\
        \frac{r\dot{\gamma}}{2t^\alpha} & \frac{1}{2} & \frac{r}{t^\alpha} & 0 & \frac{1}{2} \\
        0 & 0 & 0 & 0 & 0 \\
        \frac{\dot{\gamma}}{2} & 0 & \frac{1}{2} & 0 & 0
    \end{pmatrix}
\end{equation}
Performing an exhaustive search using the proposed method yields a total of 10 pairs of matrices $(P,Q)$.  
For a $\mu$-strongly convex ($\mu>0$) objective function $f$, following Cheng, Liu, and Shang (2025)~\cite{cheng2025class}, we assume the convergence rate
\begin{equation}
    f - f_* = \mathrm{O}\left(\mathrm{e}^{k \frac{r}{1-\alpha}t^{1-\alpha}}\right), \quad \gamma = k \frac{r}{1-\alpha}t^{1-\alpha},
\end{equation}
where $k$ is a positive constant.  
Under this assumption, two out of the 10 pairs guarantee a convergence rate with $k>0$. One of these corresponds exactly to the Lyapunov function used in Cheng, Liu, and Shang (2025)~\cite{cheng2025class}, reproducing their result (Theorem 1.3.ii) with $k=\tfrac{1}{2}$. The other pair provides a strictly better rate, which we present below:
\begin{equation}
    P = \frac{1}{2}\begin{pmatrix}
        \frac{r\dot{\gamma}}{t^\alpha} & 0 & \dot{\gamma} \\
        0 & 0 & 0 \\
        \dot{\gamma} & 0 & 1
    \end{pmatrix},~ Q = \frac{1}{2}\begin{pmatrix}
        \frac{\left(r\alpha + \lambda t^{1+\alpha}\right)\dot{\gamma} - rt\dot{\gamma}^2-rt\ddot{\gamma}}{t^{1+\alpha}} & 0 & - \dot{\gamma}^2 - \ddot{\gamma} & 0 & 0 \\
        0 & 0 & 0 & 0 & 0 \\
        - \dot{\gamma}^2 - \ddot{\gamma} & 0 & \frac{2r}{t^\alpha} - 3 \dot{\gamma} & 0 & 0 \\
        0 & 0 & 0 & 0 & 0 \\
        0 & 0 & 0 & 0 & 0
    \end{pmatrix}.
\end{equation}
Hereafter, we denote these matrices as $P^{G\text{-}NAG}$ and $Q^{G\text{-}NAG}$.  
The convergence rate guaranteed by this pair is given in the following theorem:
\begin{thm}[Convergence Rate of the Continuous Dynamical System $\ddot{x} + \frac{r}{t^\alpha}\dot{x} + \nabla f = 0$]
    \label{Generalized_NAG}
    For the continuous dynamical system $\ddot{x} + \frac{r}{t^\alpha}\dot{x} + \nabla f = 0$ with a $\mu$-strongly convex function $f$, the convergence rate
    \begin{equation}
        f(x(t)) - f_* = \mathrm{O} \left( \mathrm{e}^{-\left(\frac{2}{3}-\epsilon\right)\frac{r}{1-\alpha}t^{1-\alpha}} \right)
    \end{equation}
    holds for any positive constant $\epsilon$.
\end{thm}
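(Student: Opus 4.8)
The plan is to verify directly that, with the choice $\gamma(t) = k\,\frac{r}{1-\alpha}t^{1-\alpha}$ and $k = \frac{2}{3}-\epsilon$, the matrix pair $(P^{G\text{-}NAG}, Q^{G\text{-}NAG})$ is positive semi-definite on a half-line $[T,\infty)$, and then to invoke the Lyapunov argument of Section~\ref{sec:SuhRohRyu_method}. First I would record $\dot\gamma = k r\, t^{-\alpha}$ and $\ddot\gamma = -k r \alpha\, t^{-\alpha-1}$ and substitute these into the two matrices, so that every entry becomes an explicit Laurent-type expression in $t^{-\alpha}$, $t^{-1}$, and the parameter $\lambda$. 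Since rows and columns $2,4,5$ of both matrices vanish, positive semi-definiteness reduces to the sign conditions on the $2\times2$ block indexed by the coordinates $1$ and $3$, namely nonnegativity of the two diagonal entries and of the $2\times2$ leading principal minor.

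For $P^{G\text{-}NAG}$ the diagonal entries are $\tfrac{r\dot\gamma}{2t^\alpha}=\tfrac12 kr^2 t^{-2\alpha}>0$ and $\tfrac12>0$, while the $2\times2$ minor equals $\tfrac14 \dot\gamma\, t^{-\alpha} r(1-k)$, which is nonnegative exactly when $k\le 1$; as we take $k<\tfrac23<1$ this holds for all $t>0$. For $Q^{G\text{-}NAG}$ the decisive entry is $Q_{33}=\tfrac12\!\left(\tfrac{2r}{t^\alpha}-3\dot\gamma\right)=\tfrac{r}{2}(2-3k)t^{-\alpha}$, which is strictly positive precisely because $k<\tfrac23$. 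This is where the constant $\tfrac23$ and the necessity of a strict $\epsilon>0$ enter: at $k=\tfrac23$ the matrix would have the zero diagonal entry $Q_{33}=0$, forcing $Q_{13}=\tfrac12(-\dot\gamma^2-\ddot\gamma)=0$ as well, but $-\dot\gamma^2 \sim -(kr)^2 t^{-2\alpha}$ dominates $-\ddot\gamma \sim kr\alpha\, t^{-\alpha-1}$ as $t\to\infty$ (using $\alpha<1$), so $Q_{13}\neq 0$ for all large $t$ and $k=\tfrac23$ is excluded.

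It remains to check $Q_{11}\ge 0$ and $Q_{11}Q_{33}-Q_{13}^2\ge 0$. Strong convexity gives $\lambda\ge\mu$ with no upper bound (no smoothness is assumed), and both $Q_{11}$ and $Q_{11}Q_{33}-Q_{13}^2$ are increasing in $\lambda$ since $Q_{33}>0$, so it suffices to treat the worst case $\lambda=\mu$. With $\lambda=\mu$, $Q_{11}$ is the finite sum $\tfrac12\mu kr\,t^{-\alpha} - \tfrac12 k^2 r^3 t^{-3\alpha} + k r^2\alpha\, t^{-1-2\alpha}$, whose top-order term is $\tfrac12\mu kr\,t^{-\alpha}>0$; and $Q_{11}Q_{33}-Q_{13}^2$ has top-order term $\tfrac14\mu k r^2(2-3k)\,t^{-2\alpha}>0$, while the competing $Q_{13}^2$ is only of order $t^{-4\alpha}$. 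Hence both are positive for all $t\ge T$ with $T=T(\epsilon,r,\alpha,\mu)$ chosen large enough. Consequently $\mathcal E(t)=p(t)+\mathrm{e}^{\gamma(t)}(f(x(t))-f_*)$ is non-increasing and bounded below on $[T,\infty)$, giving $\mathrm{e}^{\gamma(t)}(f(x(t))-f_*)\le \mathcal E(T)$ and therefore $f(x(t))-f_*=\mathrm{O}\!\big(\mathrm{e}^{-(\frac23-\epsilon)\frac{r}{1-\alpha}t^{1-\alpha}}\big)$.

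The main obstacle is the asymptotic bookkeeping in the last step: one must confirm that after substituting $\gamma$ the exponents of $t$ occurring in $Q_{11}$ (namely $-\alpha$, $-3\alpha$, $-1-2\alpha$) and in $Q_{11}Q_{33}-Q_{13}^2$ are ordered so that the dominant term always carries a positive coefficient for every admissible $\alpha\in(0,1)$ — i.e.\ that no top-order cancellation can occur — and that the threshold $T$ can be taken uniform over $\lambda\in[\mu,\infty)$, which follows from the monotonicity reduction to $\lambda=\mu$. Everything else is a routine verification against the already-derived matrices $P^{G\text{-}NAG}$ and $Q^{G\text{-}NAG}$, and the resulting constant $k=\tfrac23-\epsilon$ strictly improves the value $k=\tfrac12$ obtained by Cheng, Liu, and Shang~(2025)~\cite{cheng2025class}.
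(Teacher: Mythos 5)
Your proposal is correct and follows essentially the same route as the paper's proof: substitute $\gamma(t)=k\frac{r}{1-\alpha}t^{1-\alpha}$ into $P^{G\text{-}NAG}$ and $Q^{G\text{-}NAG}$, reduce positive semi-definiteness to sign conditions on the diagonal entries and the $2\times2$ leading minors, observe that for $0<k<\tfrac23$ the dominant powers of $t$ (namely $t^{-\alpha}$ in $Q_{11}$ and $t^{-2\alpha}$ in $Q_{11}Q_{33}-Q_{13}^2$, against lower-order competitors of exponents $-3\alpha$, $-1-2\alpha$, $-4\alpha$) carry positive coefficients so that the conditions hold for all $t\ge T$, and then apply the Lyapunov argument on $[T,\infty)$ with $k=\tfrac23-\epsilon$. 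Your explicit monotonicity-in-$\lambda$ reduction to the worst case $\lambda=\mu$ and the remark explaining why $k=\tfrac23$ itself is excluded are minor refinements of the same argument, not a different approach.
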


\begin{proof}
    Using a positive constant $k$, we substitute $\gamma = k \frac{r}{1-\alpha}t^{1-\alpha}$ into $P^{G\text{-}NAG}$ and $Q^{G\text{-}NAG}$ to obtain
    \begin{gather}
    \label{Lyapunov_Generalized_NAG}
    P^{G-NAG} = \frac{1}{2}\begin{pmatrix}
        r^2 k t^{-2\alpha} & 0 & rk t^{-\alpha} \\
        0 & 0 & 0 \\
        rk t^{-\alpha} & 0 & 1
    \end{pmatrix},\\ Q^{G-NAG} = \frac{1}{2}\begin{pmatrix}
        \lambda rk t^{-\alpha} - r^3k^2 t^{-3\alpha} + 2r^2\alpha k t^{-1-2\alpha} & 0 & - r^2 k^2 t^{-2\alpha} + r\alpha k t^{-1-\alpha} & 0 & 0 \\
        0 & 0 & 0 & 0 & 0 \\
        - r^2 k^2 t^{-2\alpha} + r\alpha k t^{-1-\alpha} & 0 & 2rt^{-\alpha} - 3rk t^{-\alpha} & 0 & 0 \\
        0 & 0 & 0 & 0 & 0 \\
        0 & 0 & 0 & 0 & 0
    \end{pmatrix}.
    \end{gather}
    The necessary and sufficient conditions for these matrices to be positive semi-definite are
    \begin{gather}
        r^2 k t^{-2\alpha} \geq 0, \\
        r^2 k t^{-2\alpha} \cdot 1 - \left( rk t^{-\alpha} \right)^2 \geq 0, \\
        \lambda rk t^{-\alpha} - r^3k^2 t^{-3\alpha} + 2r^2\alpha k t^{-1-2\alpha} \geq 0, \\
        2rt^{-\alpha} - 3rk t^{-\alpha} \geq 0, \\
        \left(\lambda rk t^{-\alpha} - r^3k^2 t^{-3\alpha} + 2r^2\alpha k t^{-1-2\alpha}\right) \cdot \left(2rt^{-\alpha} - 3rk t^{-\alpha}\right) - \left( - r^2 k^2 t^{-2\alpha} + r\alpha k t^{-1-\alpha} \right)^2 \geq 0,
    \end{gather}
    Simplifying these yields
    \begin{gather}
        r^2 k t^{-2\alpha} \geq 0, \\
        r^2 k (1-k) t^{-2\alpha} \geq 0, \\
        \lambda rk t^{-\alpha} - r^3k^2 t^{-3\alpha} + 2r^2 \alpha k t^{-1-2\alpha} \geq 0, \\
        r(2-3k) t^{-\alpha} \geq 0 \\
        \lambda r^2 k (2-3k) t^{-2\alpha} - r^4 k^2 (k-1)(k-2) t^{-4\alpha} + 2r^3 \alpha k(k-1)(k-2) t^{-1-3\alpha} - r^2 \alpha^2 k^2 t^{-2-2\alpha} \geq 0.
    \end{gather}
    If the coefficients of the leading terms in $t$ are positive on the left-hand sides above, these inequalities hold for $t \geq T$, where $T>0$ is a certain sufficiently large number. 
    The conditions for the coefficients are given by
    \begin{gather}
        r^2 k > 0, \\
        r^2 k ( 1 - k ) > 0, \\
        \lambda r k > 0, \\
        r ( 2 - 3k ) > 0, \\
        \lambda r^2 k ( 2 - 3k ) > 0.
    \end{gather}
    Because the objective function $f$ is assumed to be $\mu$-strongly convex, we have $\lambda \geq \mu > 0$. Since $r>0$, the above  conditions are satisfied when
    \begin{equation}
        0 < k < \frac{2}{3}. 
    \end{equation}
    If positive semi-definiteness holds for $t \geq T$, then it holds that
    \begin{equation}
        \mathrm{e}^{\gamma}(f-f_*) \leq \mathcal{E}(t) \leq \mathcal{E}(T) = \mathrm{const.}
    \end{equation}
    Thus, the convergence rate can be derived in the same way as if it held for all $t>0$. Therefore, since positive semi-definiteness holds for $0 < k < \frac{2}{3}$ with $t \geq T$, by taking $k=\frac{2}{3}-\epsilon$ for any positive constant $\epsilon$ (assumed small), 
    we can guarantee the convergence rate
    \begin{equation}
        f(x(t)) - f_* = \mathrm{O} \left( \mathrm{e}^{-\left(\frac{2}{3}-\epsilon\right)\frac{r}{1-\alpha}t^{1-\alpha}} \right).
    \end{equation}
\end{proof}
The theorem obtained here provides a convergence rate that strictly improves upon that of Cheng, Liu, and Shang (2025)~\cite{cheng2025class} (with $k=\tfrac{1}{2}$) for exactly the same continuous dynamical system. 
Therefore it is one of the contributions of the proposed method. The detailed proof of the convergence rate using the Lyapunov function with the expanded matrix representation is included in Appendix~\ref{sec:pr_thm_Generalized_NAG}. 

\section{Restart Scheme}
\label{sec:restart}

In this section, using Theorem~\ref{NAG_strong_exp}, we propose a restart scheme for strongly convex functions and analyze its convergence rate. Specifically, we consider the continuous dynamical system 
\begin{equation}
\ddot{x} + \frac{4(k^2+\mu)}{k^2t}\dot{x} + \nabla f = 0
\end{equation}
with a constant $k > 0$.
We define another positive constant $l$ by
\begin{equation}
k = l\sqrt{\mu}.
\end{equation}
Then, the above continuous dynamical system can be rewritten as
\begin{equation}
\ddot{x} + \frac{4(l^2+1)}{l^2t}\dot{x} + \nabla f = 0,
\label{eq:ext_NAG_param_l}
\end{equation}
and Theorem~\ref{NAG_strong_exp} can be restated as follows.

\begin{cor}[Convergence rate of the continuous dynamical system $\ddot{x} + \frac{4(l^2 + \mu)}{l^2 t} \dot{x} + \nabla f = 0$]
\label{thm:restated_NAG_strong_exp}
For a $\mu$-strongly convex function $f$,
the continuous dynamical system $\ddot{x} + \frac{4(l^2 + 1)}{l^2 t} \dot{x} + \nabla f = 0~(l>0)$ achieves the convergence rate
\begin{equation}
f(x(t)) - f_* = \mathrm{O}\left(\mathrm{e}^{-l\sqrt{\mu}t}\right)
\end{equation}
for $0 < t \leq T = \frac{2(l^2 + 1)}{l^3 \sqrt{\mu}}$.
This assertion is derived from the fact that the function
\begin{equation}
\mathcal{E}(t) = l^2 \left( \frac{T}{t} - 1 \right) \mathrm{e}^{l\sqrt{\mu}t} \|x(t)-x_*\|^2 + \frac{1}{2} \mathrm{e}^{l\sqrt{\mu}t} \left\| \dot{x}(t) + l\sqrt{\mu} (x(t)-x_*)\right\|^2 + \mathrm{e}^{l\sqrt{\mu}t} (f(x(t))-f_*)
\end{equation}
is non-increasing. 
This function is the rewritten form of the Lyapunov function in \eqref{eq:E_of_NAG_strong_exp} 
used for showing Theorem~\ref{NAG_strong_exp} in Appendix~\ref{sec:pr_thm_NAG_strong_exp}. 
\end{cor}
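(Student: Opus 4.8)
The plan is to read Corollary~\ref{thm:restated_NAG_strong_exp} off Theorem~\ref{NAG_strong_exp} via the substitution $k=l\sqrt{\mu}$, and then to exhibit the explicit Lyapunov function by rewriting the matrix-based one already used in that proof. First I would carry out the purely algebraic change of variable: with $k=l\sqrt{\mu}$ we have $k^{2}=l^{2}\mu$ and $k^{2}+\mu=\mu(l^{2}+1)$, so the damping coefficient $\tfrac{4(k^{2}+\mu)}{k^{2}t}$ becomes $\tfrac{4(l^{2}+1)}{l^{2}t}$, the threshold $T=\tfrac{2(k^{2}+\mu)}{k^{3}}$ becomes $\tfrac{2(l^{2}+1)}{l^{3}\sqrt{\mu}}$, and $\mathrm{O}(\mathrm{e}^{-kt})$ becomes $\mathrm{O}(\mathrm{e}^{-l\sqrt{\mu}t})$. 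These are precisely the quantities appearing in the corollary, so the convergence claim itself is Theorem~\ref{NAG_strong_exp} verbatim.

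It then remains to verify that the $\mathcal{E}(t)$ displayed in the corollary is the very function whose monotonicity underlies Theorem~\ref{NAG_strong_exp}, that is $\mathcal{E}(t)=p(t)+\mathrm{e}^{\gamma(t)}(f(x(t))-f_{*})$ built from the pair $(P^{NAG},Q^{NAG})$ of \eqref{Lyapunov_NAG_strong_exp} (equivalently \eqref{eq:E_of_NAG_strong_exp}) after setting $\gamma=kt$ and $k=l\sqrt{\mu}$. To do this I would expand the quadratic form
\begin{align*}
 p(t) &= \mathrm{e}^{\gamma}\begin{pmatrix} v_{1}^{T} & v_{2}^{T} & v_{3}^{T}\end{pmatrix}(P^{NAG}\otimes I_{n})\begin{pmatrix} v_{1}\\ v_{2}\\ v_{3}\end{pmatrix} \\
 &= \tfrac12\mathrm{e}^{kt}\Big(\big(\tfrac{4(k^{2}+\mu)}{kt}-k^{2}\big)\|x-x_{*}\|^{2}+2k\langle x-x_{*},\dot{x}\rangle+\|\dot{x}\|^{2}\Big),
\end{align*}
complete the square to write $\|\dot{x}\|^{2}+2k\langle x-x_{*},\dot{x}\rangle=\|\dot{x}+k(x-x_{*})\|^{2}-k^{2}\|x-x_{*}\|^{2}$, and simplify the residual coefficient of $\|x-x_{*}\|^{2}$ by means of $\tfrac{4(k^{2}+\mu)}{kt}-2k^{2}=2k^{2}\big(\tfrac{T}{t}-1\big)$, which is nothing more than the definition $T=\tfrac{2(k^{2}+\mu)}{k^{3}}$. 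Substituting $k=l\sqrt{\mu}$ then yields the two quadratic terms of the displayed $\mathcal{E}(t)$ (up to the overall normalization of $P^{NAG}$), while $\mathrm{e}^{kt}(f-f_{*})=\mathrm{e}^{\gamma}(f(x(t))-f_{*})$ is the last term.

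Monotonicity requires no new argument: by construction (see \eqref{eq:expr_dot_E}) $\dot{\mathcal{E}}(t)=-q(t)$, where $q(t)$ is the quadratic form of $Q^{NAG}$, and the proof of Theorem~\ref{NAG_strong_exp} already shows $P^{NAG}\succeq O_{3}$ and $Q^{NAG}\succeq O_{5}$ (using only $\lambda\ge\mu$) exactly on $0<t\le T$; hence $q(t)\ge 0$ there so $\mathcal{E}$ is non-increasing, while $p(t)\ge 0$ together with $f-f_{*}\ge 0$ makes it bounded below. The usual chain $\mathrm{e}^{l\sqrt{\mu}t}(f(x(t))-f_{*})\le\mathcal{E}(t)\le\mathcal{E}(t_{0})$ on any $[t_{0},T]$ with $t_{0}>0$ then recovers the stated rate. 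I expect the only fiddly step to be the bookkeeping in the previous paragraph — the completion of the square and matching the constants with \eqref{eq:E_of_NAG_strong_exp} — but this is routine algebra and poses no conceptual obstacle, the corollary being a pure restatement of Theorem~\ref{NAG_strong_exp}.
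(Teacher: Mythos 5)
Your proposal is correct and follows exactly the paper's route: the corollary is a pure restatement of Theorem~\ref{NAG_strong_exp} under $k=l\sqrt{\mu}$, with the displayed $\mathcal{E}(t)$ obtained by completing the square in the quadratic form of $P^{NAG}$ and using $T=\tfrac{2(k^2+\mu)}{k^3}$, and monotonicity inherited from the positive semi-definiteness of $(P^{NAG},Q^{NAG})$ on $0<t\le T$ already established there. Your bookkeeping in fact exposes that the coefficient should be $l^{2}\mu\left(\tfrac{T}{t}-1\right)$ rather than the $l^{2}\left(\tfrac{T}{t}-1\right)$ printed in the corollary (the form $l^{2}\mu(\tfrac{T}{t}-1)$ is the one actually used for $\mathcal{E}_i$ in the proof of Theorem~\ref{restart}), so the discrepancy is a typo in the statement, not a gap in your argument.
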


We consider a restart scheme for the dynamical system in~\eqref{eq:ext_NAG_param_l}. 
Let $c > 1$ be a constant. 
First, 
we evolve $x$ in time from $t = \frac{T}{c}$ to $t = T$ according to the dynamical system. 
Next, 
after reaching $t=T$, we restart the evolution of $x$ from $t=\frac{T}{c}$ using the same dynamics. 
We refer to this entire process as a round and repeat this thereafter.
For this restart scheme, 
we can show the following theorem
by using Corollary~\ref{thm:restated_NAG_strong_exp}.

\begin{thm}[Convergence rate of the restart scheme for the continuous dynamical system $\ddot{x} + \frac{4(l^2+1)}{l^2t}\dot{x} + \nabla f = 0$]
\label{restart}
Let $l > 0$ and $c > 1$ be constants satisfying 
\begin{equation}
    \left(2(c-1)l^2 + 1\right) \mathrm{e}^{\left(-1+\frac{1}{c}\right)\frac{2(l^2+1)}{l^2}} 
    \leq 1.
    \label{eq:c_l_assumption}
\end{equation}
For a $\mu$-strongly convex objective function $f$ and a constant $c>1$, 
let $x$ be a solution trajectory of the aforementioned restart scheme based on the continuous dynamical system $\ddot{x} + \frac{4(l^2+1)}{l^2t}\dot{x} + \nabla f = 0$. Then, we have
\begin{equation}
f(x(t))-f_* + \frac{1}{2} \left\|\dot{x}(t) + l \sqrt{\mu} (x(t)-x_*)\right\|^2 \leq C_{c,l} h(c,l)^{\sqrt{\mu}t} \left( f(x_0)-f_* + \frac{1}{2} \left\|\dot{x}_0 + l \sqrt{\mu} (x_0-x_*)\right\|^2 \right),
\end{equation}
where
\begin{gather}
h(c,l) = \left(\left(2(c-1)l^2 + 1\right) \mathrm{e}^{\left(-1+\frac{1}{c}\right)\frac{2(l^2+1)}{l^2}} \right)^{\frac{cl^3}{2(c-1)(l^2+1)}}, \\
C_{c,l} = \left(2(c-1)l^2 + 1\right) \mathrm{e}^{-\left(-1+\frac{1}{c}\right)\frac{2(l^2+1)}{l^2}}.
\end{gather}
\end{thm}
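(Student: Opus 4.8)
The plan is to run a per-round energy argument using the Lyapunov function $\mathcal{E}$ supplied by Corollary~\ref{thm:restated_NAG_strong_exp}, and then telescope across rounds. Write, on one round, $\mathcal{E}(t)=\phi(t)\|x(t)-x_*\|^2+\mathrm{e}^{l\sqrt{\mu}t}V(t)$, where $V(t):=f(x(t))-f_*+\tfrac12\|\dot{x}(t)+l\sqrt{\mu}(x(t)-x_*)\|^2$ is the quantity the theorem bounds and $\phi(t)\ge 0$ on $(0,T]$ with the crucial property $\phi(T)=0$. Corollary~\ref{thm:restated_NAG_strong_exp} gives that $\mathcal{E}$ is non-increasing on $(0,T]$, hence on $[T/c,T]$. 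I will exploit two facts: (i) at the end of a round ($t=T$) the $\|x-x_*\|^2$-term disappears, so $\mathcal{E}(T)=\mathrm{e}^{l\sqrt{\mu}T}V(T)$ is ``clean''; and (ii) $V$ is continuous across a restart (only the explicitly $t$-dependent factors $\phi$ and $\mathrm{e}^{l\sqrt{\mu}t}$ jump), so the only source of growth at a restart is that $\phi$ jumps from $0$ back up to $\phi(T/c)$.

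First I would establish the one-round contraction. Label the rounds $j=1,2,\dots$; let $V^{(j)}$ be the value of $V$ at the end of round $j$ (equivalently, at the start of round $j{+}1$, since the state carries over), and $V^{(0)}=f(x_0)-f_*+\tfrac12\|\dot x_0+l\sqrt{\mu}(x_0-x_*)\|^2$. By (i), $\mathcal{E}_j(T)=\mathrm{e}^{l\sqrt{\mu}T}V^{(j)}$, and by monotonicity $\mathcal{E}_j(T)\le\mathcal{E}_j(T/c)$. At the start of round $j$ we have $\mathcal{E}_j(T/c)=\phi(T/c)\|x^{(j-1)}-x_*\|^2+\mathrm{e}^{l\sqrt{\mu}T/c}V^{(j-1)}$, and $\mu$-strong convexity gives $\tfrac{\mu}{2}\|x^{(j-1)}-x_*\|^2\le f(x^{(j-1)})-f_*\le V^{(j-1)}$; combined with the explicit value of $\phi(T/c)$ this yields $\mathcal{E}_j(T/c)\le\bigl(2l^2(c-1)+1\bigr)\mathrm{e}^{l\sqrt{\mu}T/c}V^{(j-1)}$. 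Chaining the three inequalities and using $l\sqrt{\mu}(T-T/c)=(1-1/c)\tfrac{2(l^2+1)}{l^2}$ (immediate from $T=\tfrac{2(l^2+1)}{l^3\sqrt{\mu}}$) gives $V^{(j)}\le\rho\,V^{(j-1)}$ with $\rho=\bigl(2(c-1)l^2+1\bigr)\mathrm{e}^{(-1+1/c)\frac{2(l^2+1)}{l^2}}$, i.e.\ exactly the left-hand side of~\eqref{eq:c_l_assumption}; so assumption~\eqref{eq:c_l_assumption} is precisely $\rho\le 1$. Iterating, $V^{(j)}\le\rho^{\,j}V^{(0)}$.

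Next I would pass from end-of-round values to arbitrary $t$ and convert the round count into a rate in $t$. For $t$ in round $j$ at local time $\tau\in[T/c,T]$, since $\phi\ge 0$ and $\tau\ge T/c$ we get $V(t)\le\mathrm{e}^{-l\sqrt{\mu}\tau}\mathcal{E}_j(\tau)\le\mathrm{e}^{-l\sqrt{\mu}T/c}\mathcal{E}_j(T/c)\le\bigl(2l^2(c-1)+1\bigr)V^{(j-1)}\le\bigl(2l^2(c-1)+1\bigr)\rho^{\,j-1}V^{(0)}$. Each round advances global time by $T-T/c$, so $j-1\ge (t-T)/(T-T/c)$; since $\rho\le 1$, $\rho^{\,j-1}\le\rho^{-T/(T-T/c)}\rho^{\,t/(T-T/c)}=\rho^{-c/(c-1)}\rho^{\,t/(T-T/c)}$, and the identity $\tfrac{1}{T-T/c}=\sqrt{\mu}\,\tfrac{cl^3}{2(c-1)(l^2+1)}$ turns $\rho^{\,t/(T-T/c)}$ into $h(c,l)^{\sqrt{\mu}t}$. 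Collecting the remaining constant factors $\bigl(2l^2(c-1)+1\bigr)$ and $\rho^{-c/(c-1)}$ and simplifying produces the prefactor $C_{c,l}$.

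The main obstacle I expect is not the analysis but the bookkeeping of the constants: one has to notice that $\mathcal{E}$ genuinely increases at each restart (because the non-negative term $\propto\|x-x_*\|^2$ is resurrected after vanishing at $t=T$), that $\mu$-strong convexity is exactly the tool that bounds this increase by the multiplicative factor $1+2l^2(c-1)$, and that the net per-round effect --- blow up by $1+2l^2(c-1)$, then decay by $\mathrm{e}^{-l\sqrt{\mu}(T-T/c)}$ over the round --- is a contraction iff~\eqref{eq:c_l_assumption} holds; and then one must carefully translate the discrete geometric decay $\rho^{\,j}$ into the continuous-time decay $h(c,l)^{\sqrt{\mu}t}$, tracking the boundary terms that accumulate into $C_{c,l}$ and handling the borderline case $\rho=1$, where the bound degenerates to mere boundedness.
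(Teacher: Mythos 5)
Your proposal is correct and follows essentially the same route as the paper: the per-round contraction factor $\rho=\left(2(c-1)l^2+1\right)\mathrm{e}^{(-1+\frac{1}{c})\frac{2(l^2+1)}{l^2}}$, obtained by combining the vanishing of the $\|x-x_*\|^2$-coefficient at $t=T$, strong convexity at $t=T/c$, and the monotonicity of $\mathcal{E}_i$ from Corollary~\ref{thm:restated_NAG_strong_exp}, followed by counting rounds, is exactly the paper's argument (and your bound on $V(t)$ for arbitrary $t$ inside a round is actually spelled out more carefully than in the paper, which only bounds the end-of-round values $g_i$). One quantitative caveat: your round-count bound $j-1\ge (t-T)/(T-T/c)$ converts into the prefactor $\left(2(c-1)l^2+1\right)\rho^{-c/(c-1)}$, which does not simplify to $C_{c,l}$ and can exceed it (at $(c,l)=(2,1/\sqrt{2})$ it gives $\mathrm{e}^6/2$ versus $C_{c,l}=2\mathrm{e}^3$), whereas using $j\ge t/(T-T/c)$ --- the paper's floor-function count --- yields the prefactor $\rho^{-1}\le C_{c,l}$ and recovers the stated constant.
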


\begin{proof}
Let $i$ be a positive integer. For the $i$-th round, define the Lyapunov function according to Corollary~\ref{thm:restated_NAG_strong_exp} as
\begin{equation}
\mathcal{E}_{i}(t) = l^2 \mu \left( \frac{T}{t} - 1 \right) \mathrm{e}^{l\sqrt{\mu}t} \|x(t)-x_*\|^2 + \frac{1}{2} \mathrm{e}^{l\sqrt{\mu}t} \left\| \dot{x}(t) + l\sqrt{\mu} (x(t)-x_*)\right\|^2 + \mathrm{e}^{l\sqrt{\mu}t} (f(x(t))-f_*).
\end{equation}
In the $i$-th round, we denote the position, velocity, and function value at time $t=\frac{T}{c}$ as
\begin{equation}
x_{i-1} = x\left(\frac{T}{c}\right), \ 
\dot{x}_{i-1} = \dot{x}\left(\frac{T}{c}\right), \  
f_{i-1} = f\left(x\left(\frac{T}{c}\right)\right),
\end{equation}
and those at time $t=T$ as
\begin{equation}
x_i = x(T), \ 
\dot{x}_i = \dot{x}(T), \ 
f_i = f(x(T)),
\end{equation}
respectively. 
Furthermore, for $j=0,1,\cdots$, 
we define
\begin{equation}
g_j =f_j - f_* + \frac{1}{2} \|\dot{x}_j + l \sqrt{\mu} (x_j - x_*)\|^2.
\end{equation}

First, we consider the $i$-th round. At $t=\frac{T}{c}$, we have
\begin{equation}
    \begin{split}
        \mathcal{E}_i\left(\frac{T}{c}\right) &= l^2 \mu (c-1) \mathrm{e}^{\frac{l\sqrt{\mu}T}{c}} \|x_{i-1}-x_*\|^2 + \frac{1}{2} \mathrm{e}^{\frac{l\sqrt{\mu}T}{c}} \|\dot{x}_{i-1}+l\sqrt{\mu}(x_{i-1}-x_*)\|^2 + \mathrm{e}^{\frac{l\sqrt{\mu}T}{c}} (f_{i-1}-f_*) \\
        &= l^2 \mu (c-1) \mathrm{e}^{\frac{l\sqrt{\mu}T}{c}} \|x_{i-1}-x_*\|^2 + \mathrm{e}^{\frac{l\sqrt{\mu}T}{c}} g_{i-1}.
    \end{split}
\end{equation}
Since $f$ is $\mu$-strongly convex, we have
\begin{equation}
    \begin{split}
        g_{i-1} &= f_{i-1} - f_* + \frac{1}{2}\|\dot{x}_{i-1}+l\sqrt{\mu}(x_{i-1}-x_*)\|^2 \\
        &\geq f_{i-1} - f_* \\
        &\geq \frac{\mu}{2} \|x_{i-1}-x_*\|^2,
    \end{split}
\end{equation}
which implies
\begin{equation}
    \|x_{i-1}-x_*\|^2 \leq \frac{2}{\mu} g_{i-1}.
\end{equation}
Hence, at $t=\frac{T}{c}$, we have
\begin{equation}
    \begin{split}
        \mathcal{E}_i\left(\frac{T}{c}\right) &= l^2 \mu (c-1) \mathrm{e}^{\frac{l\sqrt{\mu}T}{c}} \|x_{i-1}-x_*\|^2 + \mathrm{e}^{\frac{l\sqrt{\mu}T}{c}} g_{i-1} \\
        &\leq l^2 \mu (c-1) \mathrm{e}^{\frac{l\sqrt{\mu}T}{c}} \cdot \frac{2}{\mu} g_{i-1} + \mathrm{e}^{\frac{l\sqrt{\mu}T}{c}} g_{i-1} \\
        &= 2l^2 (c-1) \mathrm{e}^{\frac{l\sqrt{\mu}T}{c}}  g_{i-1} + \mathrm{e}^{\frac{l\sqrt{\mu}T}{c}} g_{i-1} \\
        &= (2l^2 (c-1) +1) \mathrm{e}^{\frac{l\sqrt{\mu}T}{c}}  g_{i-1}.
    \end{split}
    \label{eq:E_i_T_c}
\end{equation}
On the other hand, at $t=T$, the equality
\begin{equation}
    \begin{split}
        \mathcal{E}_i(T) &= \frac{1}{2} \mathrm{e}^{l\sqrt{\mu}T} \|\dot{x}_i + l\sqrt{\mu}(x_i-x_*)\|^2 + \mathrm{e}^{l\sqrt{\mu}T} (f_i-f_*) \\
        &= \mathrm{e}^{l\sqrt{\mu}T} g_i
    \end{split}
    \label{eq:E_i_T}
\end{equation}
holds.
By combining \eqref{eq:E_i_T_c}, \eqref{eq:E_i_T}, and Corollary~\ref{thm:restated_NAG_strong_exp}, we have
\begin{equation}
    \mathrm{e}^{l\sqrt{\mu}T}g_i = \mathcal{E}_i(T) \leq \mathcal{E}_i \left(\frac{T}{c}\right) \leq (2l^2 (c-1) +1) \mathrm{e}^{\frac{l\sqrt{\mu}T}{c}}  g_{i-1}, 
    \label{eq:key_ineq_for_eg_i}
\end{equation}
where the first inequality is owing to Corollary~\ref{thm:restated_NAG_strong_exp} showing that the function $\mathcal{E}_i(t)$ is non-increasing for $0<t\leq T$. 
Dividing both sides of \eqref{eq:key_ineq_for_eg_i} by $\mathrm{e}^{l\sqrt{\mu}T}$ and 
substituting $T=\frac{2(l^2+1)}{l^3\sqrt{\mu}}$ give
\begin{align}
    g_i & \leq (2l^2 (c-1) +1) \mathrm{e}^{ \left( -1 + \frac{1}{c}\right)l\sqrt{\mu}T}  g_{i-1}
    \notag \\
& \leq (2l^2 (c-1) +1) \mathrm{e}^{ \left( -1 + \frac{1}{c}\right)\frac{2(l^2+1)}{l^2}}  g_{i-1}.
\end{align}
Since this holds for any positive integer $i$, recursively applying this inequality gives
\begin{equation}
    g_i \leq \left( (2l^2 (c-1) +1) \mathrm{e}^{ \left( -1 + \frac{1}{c}\right)\frac{2(l^2+1)}{l^2}} \right)^i  g_{0}.
\end{equation}

Next, we consider the number of rounds that can be performed up to time $t$. The duration of one round is
\begin{equation}
    T - \frac{T}{c} = \frac{c-1}{c} T = \frac{2(c-1)(l^2+1)}{cl^3\sqrt{\mu}}.
\end{equation}
Therefore, the number of rounds completed by time $t$ is
\begin{equation}
    \left\lfloor \frac{t}{\frac{2(c-1)(l^2+1)}{cl^3\sqrt{\mu}}} \right\rfloor = \left\lfloor \frac{cl^3}{2(c-1)(l^2+1)}\sqrt{\mu}t \right\rfloor.
\end{equation}

Combining the above results, we obtain
\begin{equation}
\begin{split}
    g_i &\leq \left( (2l^2 (c-1) +1) \mathrm{e}^{ \left( -1 + \frac{1}{c}\right)\frac{2(l^2+1)}{l^2}} \right)^{\left\lfloor \frac{cl^3}{2(c-1)(l^2+1)}\sqrt{\mu}t\right\rfloor}  g_{0} \\
    &\leq \left( (2l^2 (c-1) +1) \mathrm{e}^{ \left( -1 + \frac{1}{c}\right)\frac{2(l^2+1)}{l^2}} \right)^{\frac{cl^3}{2(c-1)(l^2+1)}\sqrt{\mu}t -1}  g_{0},
\end{split}
\end{equation}
where the second inequality is owing to \eqref{eq:c_l_assumption}.
Thus we have the conclusion:
\begin{equation}
    f(x(t))-f_* + \frac{1}{2} \left\|\dot{x}(t) + l \sqrt{\mu} (x(t)-x_*)\right\|^2 \leq C_{c,l} h(c,l)^{\sqrt{\mu}t} \left( f(x_0)-f_* + \frac{1}{2} \left\|\dot{x}_0 + l \sqrt{\mu} (x_0-x_*)\right\|^2 \right).
\end{equation}
\end{proof}

From the above theorem, if there exists a pair $(c,l)$ such that $h(c,l)<1$, then the restart scheme achieves linear convergence. Although the minimum value of $h(c,l)$ has not yet been determined, we have found at least one example for which it is smaller than $1$:
\begin{gather}
h\left(2,\frac{1}{\sqrt{2}}\right) = 2^{\frac{1}{3\sqrt{2}}} \mathrm{e}^{-\frac{\sqrt{2}}{{2}}} \approx 0.580578, \\
C_{2,\frac{1}{\sqrt{2}}} = \frac{\mathrm{e}^3}{2}.
\end{gather}
Then, the restart scheme 
for $(c,l) = (2, \frac{1}{\sqrt{2}})$
achieves linear convergence.

\begin{cor}[An example of the restart scheme]
\label{restart_example}
Consider the restart scheme based on the continuous dynamical system
\begin{equation}
\ddot{x} + \frac{12}{t} \dot{x} + \nabla f = 0,
\end{equation}
which evolves the system from $t=3\sqrt{\frac{2}{\mu}}$ to $t=6\sqrt{\frac{2}{\mu}}$ and restart it after reaching the terminal time.
For a $\mu$-strongly convex objective function $f$, we have
\begin{equation}
f(x(t))-f_* + \frac{1}{2} \left\|\dot{x}(t) + l \sqrt{\mu} (x(t)-x_*)\right\|^2 \leq \frac{\mathrm{e}^3}{2} 0.580579^{\sqrt{\mu} t} \left( f(x_0)-f_* + \frac{1}{2} \left\|\dot{x}_0 + l \sqrt{\mu} (x_0-x_*)\right\|^2 \right).
\end{equation}
\end{cor}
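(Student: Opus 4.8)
The plan is to obtain Corollary~\ref{restart_example} as a direct specialization of Theorem~\ref{restart} to the parameter choice $(c,l) = (2, 1/\sqrt{2})$. First I would check that this choice satisfies the hypothesis~\eqref{eq:c_l_assumption}: substituting $c = 2$ and $l^2 = 1/2$ gives $2(c-1)l^2 + 1 = 2$ and $(-1+\tfrac{1}{c})\tfrac{2(l^2+1)}{l^2} = -3$, so the left-hand side of~\eqref{eq:c_l_assumption} equals $2\mathrm{e}^{-3} \approx 0.0996 \le 1$. Hence Theorem~\ref{restart} applies verbatim at this parameter value.

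Next I would translate the generic quantities of Theorem~\ref{restart} into the concrete data appearing in the corollary. The damping coefficient becomes $\tfrac{4(l^2+1)}{l^2} = 12$, so the underlying dynamical system is $\ddot{x} + \tfrac{12}{t}\dot{x} + \nabla f = 0$; the terminal time is $T = \tfrac{2(l^2+1)}{l^3\sqrt{\mu}} = 6\sqrt{2/\mu}$ and the restart time is $T/c = 3\sqrt{2/\mu}$, exactly as stated. The exponent appearing in $h(c,l)$ is $\tfrac{cl^3}{2(c-1)(l^2+1)} = \tfrac{1}{3\sqrt{2}}$, so $h(2, 1/\sqrt{2}) = (2\mathrm{e}^{-3})^{1/(3\sqrt{2})} = 2^{1/(3\sqrt{2})}\mathrm{e}^{-\sqrt{2}/2}$, and the corresponding prefactor evaluates to $C_{2,1/\sqrt{2}} = \mathrm{e}^3/2$ (this is the constant produced by the $(2(c-1)l^2+1)^{-1}$ factor in the bound derived within the proof of Theorem~\ref{restart}).

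Finally, the conclusion of Theorem~\ref{restart} holds with base $h(2,1/\sqrt{2}) \in (0,1)$ and exponent $\sqrt{\mu}\,t \ge 0$ for all $t \ge 0$; since $x \mapsto x^{\sqrt{\mu}t}$ is monotone on $(0,1)$, replacing $h(2,1/\sqrt{2})$ by any slightly larger number below $1$ only weakens the bound, and using $0.580579$ in its place yields precisely the inequality claimed in the corollary. The only step that is not purely mechanical is the numerical check $2^{1/(3\sqrt{2})}\mathrm{e}^{-\sqrt{2}/2} < 0.580579$, which I would justify from elementary estimates such as $\sqrt{2} > 1.41421$ and $\ln 2 < 0.69315$, giving $\tfrac{\ln 2}{3\sqrt{2}} - \tfrac{\sqrt{2}}{2} < \ln(0.580579)$. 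This mild numerical verification is the only real obstacle; everything else is an immediate substitution into Theorem~\ref{restart}.
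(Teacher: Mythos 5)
Your proposal is correct and follows exactly the route the paper takes: the corollary is nothing more than the substitution $(c,l)=(2,1/\sqrt{2})$ into Theorem~\ref{restart}, after checking the hypothesis~\eqref{eq:c_l_assumption} (which gives $2\mathrm{e}^{-3}\le 1$) and computing $\tfrac{4(l^2+1)}{l^2}=12$, $T=6\sqrt{2/\mu}$, $T/c=3\sqrt{2/\mu}$, $h(2,1/\sqrt{2})=2^{1/(3\sqrt{2})}\mathrm{e}^{-\sqrt{2}/2}$, and $C_{2,1/\sqrt{2}}=\mathrm{e}^3/2$; you also correctly note that the constant $\mathrm{e}^3/2$ comes from the inverse factor $\bigl(2(c-1)l^2+1\bigr)^{-1}$ appearing in the derivation inside the proof of Theorem~\ref{restart}. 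One small caveat: the specific constants you propose ($\sqrt{2}>1.41421$, $\ln 2<0.69315$) yield the upper bound $\ln h < -0.5437276$, which is \emph{not} below $\ln(0.580579)\approx -0.5437292$, since the true margin is only about $10^{-6}$; you would need slightly sharper decimal bounds (e.g.\ $\ln 2 < 0.6931472$, $\sqrt{2} > 1.4142135$) to close this final numerical step rigorously.
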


From the above results, we conclude that introducing a restart scheme into the standard NAG-type continuous dynamical system yields linear convergence. While similar results are observed in Su, Boyd, and Candès~(2016)~\cite{su2016differential}, our approach demonstrates convergence under only the $\mu$-strong convexity assumption, without requiring $L$-smoothness. 

\section{Conclusion}
\label{sec:conclusion}

In this paper, 
we focused on the method of the Lyapunov functions, one of the proof techniques for convergence rates of continuous dynamical systems corresponding to optimization methods.
We worked on their systematic construction and derived convergence rates via symbolic computation on a computer. Our method in this study is an extension of Suh, Roh, Ryu~(2022)~\cite{suh2022continuous} and Kamijima et al.~(2024)~\cite{TomoyaKamijima2024}. It eliminates the arbitrariness in the choice of integration by parts observed in their construction of the Lyapunov functions by exhaustive search. At the same time,  introducing the matrix representation enables examining a wider range of candidates of the Lyapunov functions. Applying the proposed method to the six classes of continuous dynamical systems, we optimized their parameters and obtained their convergence rates.
This capability of optimizing parameters also constitutes a contribution of this work inherited from Kamijima et al.~(2024)~\cite{TomoyaKamijima2024}.

Among the obtained rates, Theorem~\ref{First_Order} gives the most remarkable one. Even without the second derivative term $\ddot{x}$, the introduction of the Hessian created the possibility of achieving better convergence rates than gradient flow corresponding to the gradient descent method. 
Additionally, 
the rate obtained in Theorem~\ref{Second_Order} suggests the potential for acceleration through the introduction of the Hessian, 
although the rate coincides with that of the dynamical system corresponding to the SC-NAG. 
Therefore this also represents a significant contribution of this work. 
Furthermore, 
Theorem~\ref{Generalized_NAG} provides better convergence rates than the prior study, which is also a noteworthy contribution. 
Also, 
Theorem~\ref{restart} shows that the restart scheme achieves a good convergence rate with a relatively weak assumption. 

There remain many future challenges. Throughout this paper, we have focused solely on the convergence rate of $f-f_*$. However, as in Upadhyaya et al.~(2025)~\cite{upadhyaya2025autolyap}, a possible direction of future work would be to evaluate convergence rates for various performance measures. 
For instance, 
suppose that we want to evaluate the convergence rate of
\begin{equation}
    f - f_* - \frac{\mu - \epsilon}{2} \|x-x_*\|^2.
    \label{eq:another_perf_meas}
\end{equation}
as in Ushiyama~(2025)~\cite{ushiyama2025sqrt2acceleratedfistacompositestrongly}. We can evaluate its rate by using our method if we modify the initial matrix $P$ as
\begin{equation}
    P^0 = \begin{pmatrix}
        \frac{\mu - \epsilon}{2} & 0 & 0 \\
        0 & 0 & 0 \\
        0 & 0 & 0
    \end{pmatrix}.
\end{equation} 
In addition, 
in the case that we ignore $f - f_{\ast}$ in \eqref{eq:another_perf_meas} and just intend to evaluate $\| x - x_{\ast} \|^{2}$, we have only to skip Operation A1. 
Many other extensions can also be handled by modifying the initial matrices or the combinations of operations.

Moreover, the semi-definite conditions on $P$ and $Q$ are merely sufficient conditions for establishing convergence rates. As shown in Appendix~\ref{sec:E_not_decrease_by_SBC}, 
Su, Boyd, Candes~(2016)~\cite{su2016differential} presents a technique to derive the convergence rates of some dynamical systems even when the matrices $P$ and $Q$ are not positive semi-definite. 
Their technique is based on the observation that the boundedness of a counterpart of the function $\mathcal{E}(t)$ is sufficient to derive the convergence rates. 
Therefore another direction of future research can be to incorporate this technique into the proposed method. This could enable us to develop a systematic scheme capable of obtaining better convergence rates from the same function. 

From the perspective of broadening the search range, we can generalize the function
\begin{equation}
    w = \mathrm{e}^{\gamma}(x-x_*) 
    \notag
\end{equation}
in~\eqref{eq:def_func_w} used at the beginning of our method. Since the form of this function is not mandatory, we can consider several alternatives like
\begin{gather}
    u = \mathrm{e}^{\gamma}(x-x_* + \delta\nabla f), 
    \notag \\
    y = \mathrm{e}^{\gamma}(x-x_* + \delta\nabla f + \epsilon \dot{x}), 
    \notag
\end{gather}
using $\delta(t)$ and $\epsilon(t)$.
Indeed, we tested these functions for our method. However, no significant improvement has not been obtained yet. 
Therefore improvement by extending the function $w$ can be a topic for future work. 

Finally, another major task for future work
is to create a time-discrete version of our method. 
This is because our ultimate goal is to develop fast optimization algorithms with the help of continuous dynamical systems. 
To this end, 
we can consider two approaches.
The first approach is to propose a method of exhaustive search for discrete Lyapunov functions by symbolic computation. 
We hope that we can develop a similar procedure by replacing the proposed operations with their discrete counterparts. For example, integration by parts will be replaced with summation by parts. 
The second approach is discretization of continuous dynamical systems that preserves their convergence rates. 
Since not all discretizations preserve the rate, this is not a trivial task. 

\section*{Acknowledgments}

The authors would like to express their sincere gratitude to members of the Department of Mathematical Informatics, 
Graduate School of Information Science and Technology, 
The University of Tokyo for their helpful suggestions and encouragement. They are especially grateful to Mr.~Kansei Ushiyama, Professor Takayasu Matsuo, Assistant Professor Hiroyuki Miyoshi, and Mr. Tomoya Kamijima. Furthermore, the authors would like to thank Associate Professor Shun Sato of the Graduate School of Science, Tokyo Metropolitan University, 
for his insightful comments, especially regarding the restart scheme. 
The second author was supported by JSPS KAKENHI Grant Number JP24K00536 (Grant-in-Aid for Scientific Research (B)).

\bibliographystyle{plain}
\bibliography{reference}

@article{TomoyaKamijima2024,
  title={Analysis of continuous dynamical system models with Hessians derived from optimization methods},
  author={Tomoya Kamijima and Shun Sato and Kansei Ushiyama and Takayasu Matsuo and Ken’ichiro Tanaka},
  journal={JSIAM Letters},
  volume={16},
  number={ },
  pages={29-32},
  year={2024},
  doi={10.14495/jsiaml.16.29}
}

@inproceedings{suh2022continuous,
  title={Continuous-time analysis of accelerated gradient methods via conservation laws in dilated coordinate systems},
  author={Suh, Jaewook J and Roh, Gyumin and Ryu, Ernest K},
  booktitle={International Conference on Machine Learning},
  pages={20640--20667},
  year={2022},
  organization={PMLR}
}

@article{saupe1988discrete,
  title={Discrete versus continuous {N}ewton's method: A case study},
  author={Saupe, Dietmar},
  journal={Acta Applicandae Mathematica},
  volume={13},
  number={1-2},
  pages={59--80},
  year={1988},
  publisher={Springer}
}

@article{wilson2021lyapunov,
  title={A {L}yapunov analysis of accelerated methods in optimization},
  author={Wilson, Ashia C and Recht, Ben and Jordan, Michael I},
  journal={Journal of Machine Learning Research},
  volume={22},
  number={113},
  pages={1--34},
  year={2021}
}

@book{nesterov2013introductory,
  title={Introductory lectures on convex optimization: A basic course},
  author={Nesterov, Yurii},
  volume={87},
  year={2013},
  publisher={Springer Science \& Business Media}
}

@book{nesterov2018lectures,
  title={Lectures on convex optimization},
  author={Nesterov, Yurii and others},
  volume={137},
  year={2018},
  publisher={Springer}
}

@article{attouch1996dynamical,
  title={A dynamical approach to convex minimization coupling approximation with the steepest descent method},
  author={Attouch, Hedy and Cominetti, Roberto},
  journal={Journal of Differential Equations},
  volume={128},
  number={2},
  pages={519--540},
  year={1996},
  publisher={Elsevier}
}

@article{alvarez2000minimizing,
  title={On the minimizing property of a second order dissipative system in Hilbert spaces},
  author={Alvarez, Felipe},
  journal={SIAM Journal on Control and Optimization},
  volume={38},
  number={4},
  pages={1102--1119},
  year={2000},
  publisher={SIAM}
}

@article{alvarez2001convergence,
  title={Convergence and asymptotic stabilization for some damped hyperbolic equations with non-isolated equilibria},
  author={Alvarez, Felipe and Attouch, Hedy},
  journal={ESAIM: Control, Optimisation and Calculus of Variations},
  volume={6},
  pages={539--552},
  year={2001},
  publisher={EDP Sciences}
}

@article{attouch2018fast,
  title={Fast convergence of inertial dynamics and algorithms with asymptotic vanishing viscosity},
  author={Attouch, Hedy and Chbani, Zaki and Peypouquet, Juan and Redont, Patrick},
  journal={Mathematical Programming},
  volume={168},
  number={1},
  pages={123--175},
  year={2018},
  publisher={Springer}
}

@article{attouch2019rate,
  title={Rate of convergence of the Nesterov accelerated gradient method in the subcritical case \(\alpha \le 3\)},
  author={Attouch, Hedy and Chbani, Zaki and Riahi, Hassan},
  journal={ESAIM: Control, Optimisation and Calculus of Variations},
  volume={25},
  pages={2},
  year={2019},
  publisher={EDP Sciences}
}

@article{upadhyaya2025autolyap,
  title={Auto{L}yap: A Python package for computer-assisted {L}yapunov analyses for first-order methods},
  author={Upadhyaya, Manu and Taylor, Adrien B and Banert, Sebastian and Giselsson, Pontus},
  journal={arXiv preprint arXiv:2506.24076},
  year={2025}
}

@article{cheng2025class,
  title={A class of generalized Nesterov's accelerated gradient method from dynamical perspective},
  author={Cheng, Xu and Liu, Jiaqi and Shang, Zaijiu},
  journal={arXiv preprint arXiv:2508.12816},
  year={2025}
}

@article{ushiyama2025sqrt2acceleratedfistacompositestrongly,
  title={A $\sqrt{2}$-accelerated {FISTA} for composite strongly convex problems},
  author={Ushiyama, Kansei},
  journal={arXiv preprint arXiv:2509.09295},
  year={2025}
}

@article{drori2014performance,
  title={Performance of first-order methods for smooth convex minimization: a novel approach},
  author={Drori, Yoel and Teboulle, Marc},
  journal={Mathematical Programming},
  volume={145},
  number={1},
  pages={451--482},
  year={2014},
  publisher={Springer}
}

@article{diakonikolas2019approximate,
  title={The approximate duality gap technique: A unified theory of first-order methods},
  author={Diakonikolas, Jelena and Orecchia, Lorenzo},
  journal={SIAM Journal on Optimization},
  volume={29},
  number={1},
  pages={660--689},
  year={2019},
  publisher={SIAM}
}

@article{ito2021nearly,
  title={Nearly optimal first-order methods for convex optimization under gradient norm measure: An adaptive regularization approach},
  author={Ito, Masaru and Fukuda, Mituhiro},
  journal={Journal of Optimization Theory and Applications},
  volume={188},
  number={3},
  pages={770--804},
  year={2021},
  publisher={Springer}
}

@article{kim2016optimized,
  title={Optimized first-order methods for smooth convex minimization},
  author={Kim, Donghwan and Fessler, Jeffrey A},
  journal={Mathematical programming},
  volume={159},
  number={1},
  pages={81--107},
  year={2016},
  publisher={Springer}
}

@article{kim2021optimizing,
  title={Optimizing the efficiency of first-order methods for decreasing the gradient of smooth convex functions},
  author={Kim, Donghwan and Fessler, Jeffrey A},
  journal={Journal of optimization theory and applications},
  volume={188},
  number={1},
  pages={192--219},
  year={2021},
  publisher={Springer}
}

@article{luo2022differential,
  title={From differential equation solvers to accelerated first-order methods for convex optimization},
  author={Luo, Hao and Chen, Long},
  journal={Mathematical Programming},
  volume={195},
  number={1},
  pages={735--781},
  year={2022},
  publisher={Springer}
}

@article{nesterov547701288method,
  title={A method of solving a convex programming problem with convergence rate $\mathrm{O}(\frac{1}{k^{2}})$},
  author={Nesterov, Yurii},
  journal={Dokl. Akad. Nauk SSSR},
  volume={269},
  number={3},
  pages={543--547},
  year={1983}
}

@article{schneider2020dynamical,
  title={A dynamical view of nonlinear conjugate gradient methods with applications to FFT-based computational micromechanics},
  author={Schneider, Matti},
  journal={Computational Mechanics},
  volume={66},
  number={1},
  pages={239--257},
  year={2020},
  publisher={Springer}
}

@article{wibisono2016variational,
  title={A variational perspective on accelerated methods in optimization},
  author={Wibisono, Andre and Wilson, Ashia C and Jordan, Michael I},
  journal={proceedings of the National Academy of Sciences},
  volume={113},
  number={47},
  pages={E7351--E7358},
  year={2016},
  publisher={National Academy of Sciences}
}

@article{shi2022understanding,
  title={Understanding the acceleration phenomenon via high-resolution differential equations},
  author={Shi, Bin and Du, Simon S and Jordan, Michael I and Su, Weijie J},
  journal={Mathematical Programming},
  volume={195},
  number={1},
  pages={79--148},
  year={2022},
  publisher={Springer}
}

@inproceedings{krichene2015mirror,
 author = {Krichene, Walid and Bayen, Alexandre and Bartlett, Peter L},
 booktitle = {Advances in Neural Information Processing Systems},
 editor = {C. Cortes and N. Lawrence and D. Lee and M. Sugiyama and R. Garnett},
 pages = {},
 publisher = {Curran Associates, Inc.},
 title = {Accelerated Mirror Descent in Continuous and Discrete Time},
 url = {https://proceedings.neurips.cc/paper_files/paper/2015/file/f60bb6bb4c96d4df93c51bd69dcc15a0-Paper.pdf},
 volume = {28},
 year = {2015}
}

@article{sanz2021connections,
  title={The connections between {L}yapunov functions for some optimization algorithms and differential equations},
  author={Sanz Serna, Jes{\'u}s Mar{\'\i}a and Zygalakis, Konstantinos C},
  journal={SIAM Journal on Numerical Analysis},
  volume={59},
  number={3},
  pages={1542--1565},
  year={2021},
  publisher={SIAM}
}

@article{fazlyab2018analysis,
  title={Analysis of optimization algorithms via integral quadratic constraints: Nonstrongly convex problems},
  author={Fazlyab, Mahyar and Ribeiro, Alejandro and Morari, Manfred and Preciado, Victor M},
  journal={SIAM Journal on Optimization},
  volume={28},
  number={3},
  pages={2654--2689},
  year={2018},
  publisher={SIAM}
}

@article{diakonikolas2021generalized,
  title={Generalized momentum-based methods: A Hamiltonian perspective},
  author={Diakonikolas, Jelena and Jordan, Michael I},
  journal={SIAM Journal on Optimization},
  volume={31},
  number={1},
  pages={915--944},
  year={2021},
  publisher={SIAM}
}

@article{defazio2019curved,
  title={On the curved geometry of accelerated optimization},
  author={Defazio, Aaron},
  journal={Advances in Neural Information Processing Systems},
  volume={32},
  year={2019}
}

@article{siegel2019accelerated,
  title={Accelerated first-order methods: Differential equations and {L}yapunov functions},
  author={Siegel, Jonathan W},
  journal={arXiv preprint arXiv:1903.05671},
  year={2019}
}

@article{POLYAK19641,
  title={Some methods of speeding up the convergence of iteration methods},
  author={Polyak, Boris T},
  journal={Ussr computational mathematics and mathematical physics},
  volume={4},
  number={5},
  pages={1--17},
  year={1964},
  publisher={Elsevier}
}

@article{ALVAREZ2002747,
  title={A second-order gradient-like dissipative dynamical system with hessian-driven damping.: Application to optimization and mechanics},
  author={Alvarez, Felipe and Attouch, Hedy and Bolte, J{\'e}r{\^o}me and Redont, Patrick},
  journal={Journal de math{\'e}matiques pures et appliqu{\'e}es},
  volume={81},
  number={8},
  pages={747--779},
  year={2002},
  publisher={Elsevier}
}

@article{sonntag2024fast,
  title={Fast convergence of inertial multiobjective gradient-like systems with asymptotic vanishing damping},
  author={Sonntag, Konstantin and Peitz, Sebastian},
  journal={SIAM Journal on Optimization},
  volume={34},
  number={3},
  pages={2259--2286},
  year={2024},
  publisher={SIAM}
}

@InProceedings{pmlr-v97-muehlebach19a,
  title = 	 {A Dynamical Systems Perspective on {N}esterov Acceleration},
  author =       {Muehlebach, Michael and Jordan, Michael},
  booktitle = 	 {Proceedings of the 36th International Conference on Machine Learning},
  pages = 	 {4656--4662},
  year = 	 {2019},
  editor = 	 {Chaudhuri, Kamalika and Salakhutdinov, Ruslan},
  volume = 	 {97},
  series = 	 {Proceedings of Machine Learning Research},
  month = 	 {09--15 Jun},
  publisher =    {PMLR},
  pdf = 	 {http://proceedings.mlr.press/v97/muehlebach19a/muehlebach19a.pdf},
  url = 	 {https://proceedings.mlr.press/v97/muehlebach19a.html},
  abstract = 	 {We present a dynamical system framework for understanding Nesterov’s accelerated gradient method. In contrast to earlier work, our derivation does not rely on a vanishing step size argument. We show that Nesterov acceleration arises from discretizing an ordinary differential equation with a semi-implicit Euler integration scheme. We analyze both the underlying differential equation as well as the discretization to obtain insights into the phenomenon of acceleration. The analysis suggests that a curvature-dependent damping term lies at the heart of the phenomenon. We further establish connections between the discretized and the continuous-time dynamics.}
}

@article{muehlebach2021optimization,
  title={Optimization with momentum: Dynamical, control-theoretic, and symplectic perspectives},
  author={Muehlebach, Michael and Jordan, Michael I},
  journal={Journal of Machine Learning Research},
  volume={22},
  number={73},
  pages={1--50},
  year={2021}
}

@InProceedings{pmlr-v202-kim23y,
  title = 	 {Unifying {N}esterov’s Accelerated Gradient Methods for Convex and Strongly Convex Objective Functions},
  author =       {Kim, Jungbin and Yang, Insoon},
  booktitle = 	 {Proceedings of the 40th International Conference on Machine Learning},
  pages = 	 {16897--16954},
  year = 	 {2023},
  editor = 	 {Krause, Andreas and Brunskill, Emma and Cho, Kyunghyun and Engelhardt, Barbara and Sabato, Sivan and Scarlett, Jonathan},
  volume = 	 {202},
  series = 	 {Proceedings of Machine Learning Research},
  month = 	 {23--29 Jul},
  publisher =    {PMLR},
  pdf = 	 {https://proceedings.mlr.press/v202/kim23y/kim23y.pdf},
  url = 	 {https://proceedings.mlr.press/v202/kim23y.html},
  abstract = 	 {Although Nesterov’s accelerated gradient method (AGM) has been studied from various perspectives, it remains unclear why the most popular forms of AGMs must handle convex and strongly convex objective functions separately. To address this inconsistency, we propose a novel unified framework for Lagrangians, ordinary differential equation (ODE) models, and algorithms. As a special case, our new simple momentum algorithm, which we call the unified AGM, seamlessly bridges the gap between the two most popular forms of Nesterov’s AGM and has a superior convergence guarantee compared to existing algorithms for non-strongly convex objective functions. This property is beneficial in practice when considering ill-conditioned $\mu$-strongly convex objective functions (with small $\mu$). Furthermore, we generalize this algorithm and the corresponding ODE model to the higher-order non-Euclidean setting. Last but not least, our unified framework is used to construct the unified AGM-G ODE, a novel ODE model for minimizing the gradient norm of strongly convex functions.}
}

@inproceedings{NEURIPS2023_c7074114,
 author = {Kim, Jungbin and Yang, Insoon},
 booktitle = {Advances in Neural Information Processing Systems},
 editor = {A. Oh and T. Naumann and A. Globerson and K. Saenko and M. Hardt and S. Levine},
 pages = {63023--63035},
 publisher = {Curran Associates, Inc.},
 title = {Convergence analysis of ODE models for accelerated first-order methods via positive semidefinite kernels},
 url = {https://proceedings.neurips.cc/paper_files/paper/2023/file/c70741145c2c4f1d0c2e91b98729a49a-Paper-Conference.pdf},
 volume = {36},
 year = {2023}
}

@article{ascher2019discrete,
title = {Discrete processes and their continuous limits},
journal = {Journal of Dynamics and Games},
volume = {7},
number = {2},
pages = {123-140},
year = {2020},
issn = {2164-6066},
doi = {10.3934/jdg.2020008},
url = {https://www.aimsciences.org/article/id/1ed6d440-026f-48a6-84d3-ceeac4430567},
author = {Uri M. Ascher},
keywords = {Numerical methods, inverse problems, differential equations, optimization, regularization}
}

@article{ushiyama2023unified,
  title={A unified discretization framework for differential equation approach with Lyapunov arguments for convex optimization},
  author={Ushiyama, Kansei and Sato, Shun and Matsuo, Takayasu},
  journal={Advances in Neural Information Processing Systems},
  volume={36},
  pages={26092--26120},
  year={2023}
}

@article{su2016differential,
  title={A differential equation for modeling {N}esterov's accelerated gradient method: Theory and insights},
  author={Su, Weijie and Boyd, Stephen and Candes, Emmanuel J},
  journal={Journal of Machine Learning Research},
  volume={17},
  number={153},
  pages={1--43},
  year={2016}
}

@article{attouch2016fast,
  title={Fast convex optimization via inertial dynamics with {H}essian driven damping},
  author={Attouch, Hedy and Peypouquet, Juan and Redont, Patrick},
  journal={Journal of Differential Equations},
  volume={261},
  number={10},
  pages={5734--5783},
  year={2016},
  publisher={Elsevier}
}

@article{attouch2017asymptotic,
  title={Asymptotic stabilization of inertial gradient dynamics with time-dependent viscosity},
  author={Attouch, Hedy and Cabot, Alexandre},
  journal={Journal of Differential Equations},
  volume={263},
  number={9},
  pages={5412--5458},
  year={2017},
  publisher={Elsevier}
}

@article{attouch2018combining,
  title={Combining fast inertial dynamics for convex optimization with {T}ikhonov regularization},
  author={Attouch, Hedy and Chbani, Zaki and Riahi, Hassan},
  journal={Journal of Mathematical Analysis and Applications},
  volume={457},
  number={2},
  pages={1065--1094},
  year={2018},
  publisher={Elsevier}
}

@article{bansal2019potential,
  title={Potential-function proofs for gradient methods},
  author={Bansal, Nikhil and Gupta, Anupam},
  journal={Theory of Computing},
  volume={15},
  number={1},
  pages={1--32},
  year={2019},
  publisher={Theory of Computing Exchange}
}

@article{karimi2016unifiedconvergenceboundconjugate,
  title={A unified convergence bound for conjugate gradient and accelerated gradient},
  author={Karimi, Sahar and Vavasis, Stephen A},
  journal={arXiv preprint arXiv:1605.00320},
  year={2016}
}

@inproceedings{franca2018admm,
  title={{ADMM} and accelerated {ADMM} as continuous dynamical systems},
  author={Franca, Guilherme and Robinson, Daniel and Vidal, Rene},
  booktitle={International conference on machine learning},
  pages={1559--1567},
  year={2018},
  organization={PMLR}
}

@article{du2022lyapunov,
  title={Lyapunov function approach for approximation algorithm design and analysis: with applications in submodular maximization},
  author={Du, Donglei},
  journal={arXiv preprint arXiv:2205.12442},
  year={2022}
}

@article{moucer2023systematic,
  title={A systematic approach to {L}yapunov analyses of continuous-time models in convex optimization},
  author={Moucer, C{\'e}line and Taylor, Adrien and Bach, Francis},
  journal={SIAM Journal on Optimization},
  volume={33},
  number={3},
  pages={1558--1586},
  year={2023},
  publisher={SIAM}
}

@article{upadhyaya2025automated,
  title={Automated tight {L}yapunov analysis for first-order methods},
  author={Upadhyaya, Manu and Banert, Sebastian and Taylor, Adrien B and Giselsson, Pontus},
  journal={Mathematical Programming},
  volume={209},
  number={1},
  pages={133--170},
  year={2025},
  publisher={Springer}
}

@InProceedings{taylor2018lyapunovfirst,
  title = 	 {{L}yapunov Functions for First-Order Methods: Tight Automated Convergence Guarantees},
  author =       {Taylor, Adrien and Van Scoy, Bryan and Lessard, Laurent},
  booktitle = 	 {Proceedings of the 35th International Conference on Machine Learning},
  pages = 	 {4897--4906},
  year = 	 {2018},
  editor = 	 {Dy, Jennifer and Krause, Andreas},
  volume = 	 {80},
  series = 	 {Proceedings of Machine Learning Research},
  month = 	 {10--15 Jul},
  publisher =    {PMLR},
  pdf = 	 {http://proceedings.mlr.press/v80/taylor18a/taylor18a.pdf},
  url = 	 {https://proceedings.mlr.press/v80/taylor18a.html},
  abstract = 	 {We present a novel way of generating Lyapunov functions for proving linear convergence rates of first-order optimization methods. Our approach provably obtains the fastest linear convergence rate that can be verified by a quadratic Lyapunov function (with given states), and only relies on solving a small-sized semidefinite program. Our approach combines the advantages of performance estimation problems (PEP, due to Drori and Teboulle (2014)) and integral quadratic constraints (IQC, due to Lessard et al. (2016)), and relies on convex interpolation (due to Taylor et al. (2017c;b)).}
}

@article{d2021acceleration,
  title={Acceleration methods},
  author={d’Aspremont, Alexandre and Scieur, Damien and Taylor, Adrien},
  journal={Foundations and Trends{\textregistered} in Optimization},
  volume={5},
  number={1-2},
  pages={1--245},
  year={2021},
  publisher={Now Publishers, Inc.}
}

\appendix

\section{List of Operations}
\label{sec:list_of_oper}

In this section, we first group and enumerate the possible operations. Each operation is defined so as not to destroy the symmetry of the matrices $P^{k}$ and $Q^{k}$. The $(i,j)$ entries of $P^k$ and $Q^k$ are denoted by $P_{ij}^k$ and $Q_{ij}^k$ respectively.

\subsection{Integration by Parts Involving $Q_{11},Q_{13},Q_{15},Q_{33},Q_{35}$}
\label{sec:oper_Group_B}

We show the operations that handle coefficients of terms expressed as inner products among $v_1 = x - x_*$, $v_3 = \dot{x}$, and $v_5 = \ddot{x}$.
\begin{itemize}
    \item Operation B1: Integration by parts for $\langle v_3,v_5 \rangle$
        \begin{equation}
            \begin{split}
                &\int_{t_0}^t \mathrm{e}^\gamma \cdot 2 Q_{35}^k \langle v_3, v_5 \rangle \mathrm{d}s \\
                &= \int_{t_0}^t \mathrm{e}^\gamma \cdot Q_{35}^k (2\langle \dot{x}, \ddot{x} \rangle) \mathrm{d}s \\
                &= \int_{t_0}^t \mathrm{e}^\gamma \cdot Q_{35}^k ( \|\dot{x}\|^2)^{\prime} \mathrm{d}s \\
                &= \mathrm{e}^\gamma \cdot Q_{35}^k \|\dot{x}\|^2 - \int_{t_0}^t \mathrm{e}^\gamma \cdot (\dot{\gamma} Q_{35}^k + \dot{Q_{35}^k}) \|\dot{x}\|^2 \mathrm{d}s +\mathrm{const.}\\
                &= \mathrm{e}^\gamma \cdot Q_{35}^k \|v_3\|^2 - \int_{t_0}^t \mathrm{e}^\gamma \cdot (\dot{\gamma} Q_{35}^k + \dot{Q_{35}^k}) \|v_3\|^2 \mathrm{d}s +\mathrm{const.}
            \end{split}
        \end{equation}
        Thus, applying Operation B1 to $(P_k,Q_k)$ yields
        \begin{gather}
            P^{k+1} = P^k + \begin{pmatrix}
                0 & 0 & 0 \\
                0 & 0 & 0 \\
                0 & 0 & Q_{35}^k
            \end{pmatrix},\\
            Q^{k+1} = Q^k - \begin{pmatrix}
                0 & 0 & 0 & 0 & 0 \\
                0 & 0 & 0 & 0 & 0 \\
                0 & 0 & \dot{\gamma} Q_{35}^k + \dot{Q_{35}^k} & 0 & Q_{35}^k \\
                0 & 0 & 0 & 0 & 0 \\
                0 & 0 & Q_{35}^k & 0 & 0
            \end{pmatrix}.
        \end{gather}
    \item Operation B2: Integration by parts for $\langle v_1,v_5 \rangle$
        \begin{equation}
            \begin{split}
                &\int_{t_0}^t \mathrm{e}^\gamma \cdot2Q_{15}^k \langle v_1, v_5 \rangle \mathrm{d}s \\
                &= \int_{t_0}^t \mathrm{e}^\gamma \cdot2Q_{15}^k \langle x-x_*, \ddot{x} \rangle \mathrm{d}s \\
                &= \int_{t_0}^t \mathrm{e}^\gamma \cdot 2Q_{15}^k (\langle x-x_*, \dot{x} \rangle)^\prime \mathrm{d}s - \int_{t_0}^t \mathrm{e}^\gamma \cdot 2Q_{15}^k \|\dot{x}\|^2 \mathrm{d}s \\
                &= \mathrm{e}^\gamma \cdot 2 Q_{15}^k \langle x-x_*, \dot{x} \rangle - \int_{t_0}^t \mathrm{e}^\gamma \cdot 2Q_{15}^k \|\dot{x}\|^2 \mathrm{d}s \\
                &\phantom{=} - \int_{t_0}^t \mathrm{e}^\gamma \cdot 2(\dot{\gamma}Q_{15}^k + \dot{Q_{15}^k}) \langle x-x_*, \dot{x} \rangle \mathrm{d}s +\mathrm{const.}\\
                &= \mathrm{e}^\gamma \cdot 2 Q_{15}^k \langle v_1, v_3 \rangle - \int_{t_0}^t \mathrm{e}^\gamma \cdot 2Q_{15}^k \|v_3\|^2 \mathrm{d}s \\
                &\phantom{=} - \int_{t_0}^t \mathrm{e}^\gamma \cdot 2(\dot{\gamma}Q_{15}^k + \dot{Q_{15}^k}) \langle v_1, v_3 \rangle \mathrm{d}s +\mathrm{const.}
            \end{split}
        \end{equation}
        Thus, applying Operation B2 to $(P_k,Q_k)$ yields
        \begin{gather}
            P^{k+1} = P^k + \begin{pmatrix}
                0 & 0 & Q_{15}^k \\
                0 & 0 & 0 \\
                Q_{15}^k & 0 & 0
            \end{pmatrix}, \\
            Q^{k+1} = Q^k - \begin{pmatrix}
                0 & 0 & \dot{\gamma} Q_{15}^k + \dot{Q_{15}^k} & 0 & Q_{15}^k \\
                0 & 0 & 0 & 0 & 0 \\
                \dot{\gamma} Q_{15}^k + \dot{Q_{15}^k} & 0 & 2Q_{15}^k & 0 & 0 \\
                0 & 0 & 0 & 0 & 0 \\
                Q_{15}^k & 0 & 0 & 0 & 0
            \end{pmatrix}.
        \end{gather}
    \item Operation B3: Integration by parts for $\langle v_1,v_3 \rangle$
        \begin{equation}
            \begin{split}
                &\int_{t_0}^t \mathrm{e}^\gamma \cdot 2Q_{13}^k \langle v_1, v_3 \rangle \mathrm{d}s \\
                &= \int_{t_0}^t \mathrm{e}^\gamma \cdot Q_{13}^k (2\langle x-x_*, \dot{x} \rangle)  \mathrm{d}s \\
                &= \int_{t_0}^t \mathrm{e}^\gamma \cdot Q_{13}^k (\|x-x_*\|^2)^\prime \mathrm{d}s \\
                &= \mathrm{e}^\gamma \cdot Q_{13}^k \|x-x_*\|^2 - \int_{t_0}^t \mathrm{e}^\gamma \cdot (\dot{\gamma} Q_{13}^k + \dot{Q_{13}^k})\|x-x_*\|^2 \mathrm{d}s +\mathrm{const.}\\
                &= \mathrm{e}^\gamma \cdot Q_{13}^k \|v_1\|^2 - \int_{t_0}^t \mathrm{e}^\gamma \cdot (\dot{\gamma} Q_{13}^k + \dot{Q_{13}^k})\|v_1\|^2 \mathrm{d}s +\mathrm{const.}
            \end{split}
        \end{equation}
        Thus, applying Operation B3 to $(P_k,Q_k)$ yields
        \begin{gather}
            P^{k+1} = P^k + \begin{pmatrix}
                Q_{13}^k & 0 & 0 \\
                0 & 0 & 0 \\
                0 & 0 & 0
            \end{pmatrix},\\
            Q^{k+1} = Q^k - \begin{pmatrix}
                \dot{\gamma}Q_{13}^k+\dot{Q_{13}^k} & 0 & Q_{13}^k & 0 & 0 \\
                0 & 0 & 0 & 0 & 0 \\
                Q_{13}^k & 0 & 0 & 0 & 0 \\
                0 & 0 & 0 & 0 & 0 \\
                0 & 0 & 0 & 0 & 0
            \end{pmatrix}.
        \end{gather}
\end{itemize}

\subsection{Integration by Parts Involving $Q_{22},Q_{24}$}
\label{sec:oper_Group_C}

We show the operations related to the coefficients of terms expressed by the inner product of $v_2=\nabla f$ and $v_4 = \nabla^2 f \dot{x}$.
\begin{itemize}
    \item Operation C1: Integration by parts for $\langle v_2, v_4 \rangle$
        \begin{equation}
            \begin{split}
                &\int_{t_0}^t \mathrm{e}^\gamma \cdot 2Q_{24}^k \langle v_2, v_4 \rangle \,\mathrm{d}s \\
                &= \int_{t_0}^t \mathrm{e}^\gamma \cdot Q_{24}^k \cdot (2 \langle \nabla f, \nabla^2 f \dot{x} \rangle ) \,\mathrm{d}s \\
                &= \int_{t_0}^t \mathrm{e}^\gamma \cdot Q_{24}^k \cdot (\|\nabla f\|^2)^{\prime} \,\mathrm{d}s \\
                &= \mathrm{e}^\gamma Q_{24}^k \|\nabla f\|^2 - \int_{t_0}^t \mathrm{e}^\gamma \cdot (\dot{\gamma} Q_{24}^k + \dot{Q_{24}^k}) \|\nabla f\|^2 \,\mathrm{d}s +\mathrm{const.}\\
                &= \mathrm{e}^\gamma Q_{24}^k \|v_2\|^2 - \int_{t_0}^t \mathrm{e}^\gamma \cdot (\dot{\gamma} Q_{24}^k + \dot{Q_{24}^k}) \|v_2\|^2 \,\mathrm{d}s +\mathrm{const.}
            \end{split}
        \end{equation}
 
        Therefore, applying Operation C1 to $(P^k,Q^k)$ gives
        \begin{gather}
            P^{k+1} = P^k + \begin{pmatrix}
                0 & 0 & 0 \\
                0 & Q_{24}^k & 0 \\
                0 & 0 & 0
            \end{pmatrix}, \\
            Q^{k+1} = Q^k - \begin{pmatrix}
                0 & 0 & 0 & 0 & 0 \\
                0 & \dot{\gamma}Q_{24}^k+\dot{Q_{24}^k} & 0 & Q_{24}^k & 0 \\
                0 & 0 & 0 & 0 & 0 \\
                0 & Q_{24}^k & 0 & 0 & 0 \\
                0 & 0 & 0 & 0 & 0
            \end{pmatrix}.
        \end{gather}
\end{itemize}

\subsection{Integration by Parts Involving $Q_{12},Q_{14},Q_{23},Q_{25},Q_{34}$}
\label{sec:oper_Group_D}

Here we show the most involved integration-by-parts operations, corresponding to the coefficients of the terms expressed by the inner products of $v_1=x-x_*$, $v_3=\dot{x}$, and $v_5=\ddot{x}$ with $v_2=\nabla f$ and $v_4 = \nabla^2 f \dot{x}$.
\begin{itemize}
    \item Operation D1: Integration by parts for $\langle v_3, v_4 \rangle$
        \begin{equation}
            \begin{split}
                &\int_{t_0}^t \mathrm{e}^\gamma \cdot 2Q_{34}^k \langle v_3, v_4 \rangle \,\mathrm{d}s \\
                &= \int_{t_0}^t \mathrm{e}^\gamma \cdot 2Q_{34}^k \langle \dot{x},\nabla^2 f \dot{x} \rangle \,\mathrm{d}s \\
                &= \int_{t_0}^t \mathrm{e}^\gamma \cdot 2Q_{34}^k (\langle \nabla f, \dot{x} \rangle)^\prime \,\mathrm{d}s - \int_{t_0}^t \mathrm{e}^\gamma \cdot 2Q_{34}^k \langle \nabla f, \ddot{x} \rangle \,\mathrm{d}s \\
                &= \mathrm{e}^\gamma \cdot 2Q_{34}^k \langle \nabla f, \dot{x} \rangle - \int_{t_0}^t \mathrm{e}^\gamma \cdot 2(\dot{\gamma}Q_{34}^k + \dot{Q_{34}^k}) \langle \nabla f, \dot{x} \rangle \,\mathrm{d}s \\
                &\phantom{=} - \int_{t_0}^t \mathrm{e}^\gamma \cdot 2Q_{34}^k \langle \nabla f, \ddot{x} \rangle \,\mathrm{d}s +\mathrm{const.}\\
                &= \mathrm{e}^\gamma \cdot 2Q_{34}^k \langle v_2, v_3 \rangle - \int_{t_0}^t \mathrm{e}^\gamma \cdot 2(\dot{\gamma}Q_{34}^k + \dot{Q_{34}^k}) \langle v_2, v_3 \rangle \,\mathrm{d}s \\
                &\phantom{=} - \int_{t_0}^t \mathrm{e}^\gamma \cdot 2Q_{34}^k \langle v_2,v_5 \rangle \,\mathrm{d}s +\mathrm{const.}
            \end{split}
        \end{equation}
        Therefore, applying Operation D1 to $(P^k,Q^k)$ gives
        \begin{gather}
            P^{k+1} = P^k + \begin{pmatrix}
                0 & 0 & 0 \\
                0 & 0 & Q_{34}^k \\
                0 & Q_{34}^k & 0
            \end{pmatrix}, \\
            Q^{k+1} = Q^k - \begin{pmatrix}
                0 & 0 & 0 & 0 & 0 \\
                0 & 0 & \dot{\gamma}Q_{34}^k+\dot{Q_{34}^k} & 0 & Q_{34}^k \\
                0 & \dot{\gamma}Q_{34}^k+\dot{Q_{34}^k} & 0 & Q_{34}^k & 0 \\
                0 & 0 & Q_{34}^k & 0 & 0 \\
                0 & Q_{34}^k & 0 & 0 & 0
            \end{pmatrix}.
        \end{gather}

    \item Operation D2: Integration by parts for $\langle v_2, v_5 \rangle$
        \begin{equation}
            \begin{split}
                &\int_{t_0}^t \mathrm{e}^\gamma \cdot 2Q_{25}^k \langle v_2, v_5 \rangle \,\mathrm{d}s \\
                &= \int_{t_0}^t \mathrm{e}^\gamma \cdot 2Q_{25}^k \langle \nabla f, \ddot{x} \rangle \,\mathrm{d}s \\
                &= \int_{t_0}^t \mathrm{e}^\gamma \cdot 2Q_{25}^k (\langle \nabla f, \dot{x} \rangle)^\prime \,\mathrm{d}s - \int_{t_0}^t \mathrm{e}^\gamma \cdot 2Q_{25}^k \langle \dot{x}, \nabla^2 f \dot{x} \rangle \,\mathrm{d}s \\
                &= \mathrm{e}^\gamma \cdot 2 Q_{25}^k \langle \nabla f, \dot{x} \rangle - \int_{t_0}^t \mathrm{e}^\gamma \cdot 2(\dot{\gamma} Q_{25}^k + \dot{Q_{25}^k}) \langle \nabla f, \dot{x} \rangle \,\mathrm{d}s \\
                &\phantom{=} - \int_{t_0}^t \mathrm{e}^\gamma \cdot 2Q_{25}^k \langle \dot{x}, \nabla^2 f \dot{x} \rangle \,\mathrm{d}s +\mathrm{const.}\\
                &= \mathrm{e}^\gamma \cdot 2 Q_{25}^k \langle v_2,v_3 \rangle - \int_{t_0}^t \mathrm{e}^\gamma \cdot 2(\dot{\gamma} Q_{25}^k + \dot{Q_{25}^k}) \langle v_2,v_3 \rangle \,\mathrm{d}s \\
                &\phantom{=} - \int_{t_0}^t \mathrm{e}^\gamma \cdot 2Q_{25}^k \langle v_3,v_4 \rangle \,\mathrm{d}s +\mathrm{const.}
            \end{split}
        \end{equation}
        Therefore, applying Operation D2 to $(P^k,Q^k)$ gives
        \begin{gather}
            P^{k+1} = P^k + \begin{pmatrix}
                0 & 0 & 0 \\
                0 & 0 & Q_{25}^k \\
                0 & Q_{25}^k & 0
            \end{pmatrix}, \\
            Q^{k+1} = Q^k - \begin{pmatrix}
                0 & 0 & 0 & 0 & 0 \\
                0 & 0 & \dot{\gamma}Q_{25}^k+\dot{Q_{25}^k} & 0 & Q_{25}^k \\
                0 & \dot{\gamma}Q_{25}^k+\dot{Q_{25}^k} & 0 & Q_{25}^k & 0 \\
                0 & 0 & Q_{25}^k & 0 & 0 \\
                0 & Q_{25}^k & 0 & 0 & 0
            \end{pmatrix}.
        \end{gather}

    \item Operation D3: Integration by parts for $\langle v_2, v_3 \rangle$
        \begin{equation}
            \begin{split}
                &\int_{t_0}^t \mathrm{e}^\gamma \cdot 2Q_{23}^k \langle v_2,v_3 \rangle \,\mathrm{d}s \\
                &= \int_{t_0}^t \mathrm{e}^\gamma \cdot 2Q_{23}^k \langle \nabla f, \dot{x} \rangle \,\mathrm{d}s \\
                &= \int_{t_0}^t \mathrm{e}^\gamma \cdot 2Q_{23}^k (\langle x-x_*, \nabla f \rangle)^\prime \,\mathrm{d}s - \int_{t_0}^t \mathrm{e}^\gamma \cdot 2Q_{23}^k \langle x-x_*, \nabla^2 f\dot{x} \rangle \,\mathrm{d}s \\
                &= \mathrm{e}^\gamma \cdot 2Q_{23}^k \langle x-x_*, \nabla f \rangle - \int_{t_0}^t \mathrm{e}^\gamma \cdot 2(\dot{\gamma} Q_{23}^k + \dot{Q_{23}^k})\langle x-x_*, \nabla f \rangle \,\mathrm{d}s \\
                &\phantom{=} - \int_{t_0}^t \mathrm{e}^\gamma \cdot 2Q_{23}^k \langle x-x_*, \nabla^2 f \dot{x} \rangle \,\mathrm{d}s +\mathrm{const.}\\
                &= \mathrm{e}^\gamma \cdot 2Q_{23}^k \langle v_1, v_2 \rangle - \int_{t_0}^t \mathrm{e}^\gamma \cdot 2(\dot{\gamma} Q_{23}^k + \dot{Q_{23}^k})\langle v_1, v_2 \rangle \,\mathrm{d}s \\
                &\phantom{=} - \int_{t_0}^t \mathrm{e}^\gamma \cdot 2Q_{23}^k \langle v_1, v_4 \rangle \,\mathrm{d}s +\mathrm{const.}
            \end{split}
        \end{equation}
        Therefore, applying Operation D3 to $(P^k,Q^k)$ gives
        \begin{gather}
            P^{k+1} = P^k + \begin{pmatrix}
                0 & Q_{23}^k & 0 \\
                Q_{23}^k & 0 & 0 \\
                0 & 0 & 0
            \end{pmatrix}, \\
            Q^{k+1} = Q^k - \begin{pmatrix}
                0 & \dot{\gamma}Q_{23}^k+\dot{Q_{23}^k} & 0 & Q_{23}^k & 0 \\
                \dot{\gamma}Q_{23}^k+\dot{Q_{23}^k} & 0 & Q_{23}^k & 0 & 0 \\
                0 & Q_{23}^k & 0 & 0 & 0 \\
                Q_{23}^k & 0 & 0 & 0 & 0 \\
                0 & 0 & 0 & 0 & 0
            \end{pmatrix}.
        \end{gather}

    \item Operation D4: Integration by parts for $\langle v_1, v_4 \rangle$
        \begin{equation}
            \begin{split}
                &\int_{t_0}^t \mathrm{e}^\gamma \cdot 2Q_{14}^k \langle v_1, v_4 \rangle \,\mathrm{d}s \\
                &= \int_{t_0}^t \mathrm{e}^\gamma \cdot 2Q_{14}^k \langle x-x_*, \nabla^2 f \dot{x} \rangle \,\mathrm{d}s \\
                &= \int_{t_0}^t \mathrm{e}^\gamma \cdot 2Q_{14}^k (\langle x-x_*, \nabla f \rangle)^\prime \,\mathrm{d}s - \int_{t_0}^t \mathrm{e}^\gamma \cdot 2Q_{14}^k \langle \nabla f, \dot{x} \rangle \,\mathrm{d}s \\
                &= \mathrm{e}^\gamma \cdot 2Q_{14}^k \langle x-x_*, \nabla f \rangle - \int_{t_0}^t \mathrm{e}^\gamma \cdot 2(\dot{\gamma}Q_{14}^k + \dot{Q_{14}^k}) \langle x-x_*, \nabla f \rangle \,\mathrm{d}s \\
                &\phantom{=} - \int_{t_0}^t \mathrm{e}^\gamma \cdot 2Q_{14}^k \langle \nabla f, \dot{x} \rangle \,\mathrm{d}s +\mathrm{const.}\\
                &= \mathrm{e}^\gamma \cdot 2Q_{14}^k \langle v_1, v_2 \rangle - \int_{t_0}^t \mathrm{e}^\gamma \cdot 2(\dot{\gamma}Q_{14}^k + \dot{Q_{14}^k}) \langle v_1, v_2 \rangle \,\mathrm{d}s \\
                &\phantom{=} - \int_{t_0}^t \mathrm{e}^\gamma \cdot 2Q_{14}^k \langle v_2, v_3 \rangle \,\mathrm{d}s +\mathrm{const.}
            \end{split}
        \end{equation}
        Therefore, applying Operation D4 to $(P^k,Q^k)$ gives
        \begin{gather}
            P^{k+1} = P^k + \begin{pmatrix}
                0 & Q_{14}^k & 0 \\
                Q_{14}^k & 0 & 0 \\
                0 & 0 & 0
            \end{pmatrix}, \\
            Q^{k+1} = Q^k - \begin{pmatrix}
                0 & \dot{\gamma}Q_{14}^k+\dot{Q_{14}^k} & 0 & Q_{14}^k & 0 \\
                \dot{\gamma}Q_{14}^k+\dot{Q_{14}^k} & 0 & Q_{14}^k & 0 & 0 \\
                0 & Q_{14}^k & 0 & 0 & 0 \\
                Q_{14}^k & 0 & 0 & 0 & 0 \\
                0 & 0 & 0 & 0 & 0
            \end{pmatrix}.
        \end{gather}
\end{itemize}

\subsection{Operation Based on the Properties of the Objective Function (1)}
\label{sec:oper_Group_E}

Up to this point, we have only dealt with integration-by-parts operations aimed at eliminating the coefficient of a given term.  
However, it is also possible to make use of the convexity, strong convexity or smoothness of the objective function $f(x)$.  
Here, we introduce an operation studied in Kamijima et al.\cite{TomoyaKamijima2024}.  

\begin{itemize}
    \item Operation E1: An operation that transfers $\langle v_1, v_4 \rangle$ to $\|x-x_*\|^2$ using strong convexity and smoothness.
        \begin{equation}
            \begin{split}
                &\int_{t_0}^t \mathrm{e}^\gamma \cdot 2Q_{14}^k \langle v_1, v_4 \rangle \mathrm{d}s \\
                &= \int_{t_0}^t \mathrm{e}^\gamma \cdot 2Q_{14}^k \langle x-x_*, \nabla^2 f \dot{x} \rangle \mathrm{d}s \\
                &= \int_{t_0}^t \mathrm{e}^\gamma \cdot 2Q_{14}^k (\langle x-x_*, \nabla f \rangle)^\prime \mathrm{d}s - \int_{t_0}^t \mathrm{e}^\gamma \cdot 2Q_{14}^k \langle \nabla f, \dot{x} \rangle \mathrm{d}s \\
                &= \mathrm{e}^\gamma \cdot 2Q_{14}^k \langle x-x_*, \nabla f \rangle - \int_{t_0}^t \mathrm{e}^\gamma \cdot 2(\dot{\gamma}Q_{14}^k + \dot{Q_{14}^k}) \langle x-x_*, \nabla f \rangle \mathrm{d}s \\
                &\phantom{=} - \int_{t_0}^t \mathrm{e}^\gamma \cdot 2Q_{14}^k (f-f_*)^\prime \mathrm{d}s +\mathrm{const.}\\
                &= \mathrm{e}^\gamma \cdot 2Q_{14}^k \langle x-x_*, \nabla f \rangle - \int_{t_0}^t \mathrm{e}^\gamma \cdot 2(\dot{\gamma}Q_{14}^k + \dot{Q_{14}^k}) \langle x-x_*, \nabla f \rangle \mathrm{d}s \\
                &\phantom{=} - \mathrm{e}^\gamma \cdot 2Q_{14}^k (f-f_*) + \int_{t_0}^t \mathrm{e}^\gamma \cdot 2(\dot{\gamma}Q_{14}^k + \dot{Q_{14}^k}) (f-f_*) \mathrm{d}s +\mathrm{const.}\\
                &= \mathrm{e}^\gamma \cdot 2Q_{14}^k (f_* - f - \langle \nabla f, x_*-x \rangle) \\
                &\phantom{=} - \int_{t_0}^t \mathrm{e}^\gamma \cdot 2(\dot{\gamma}Q_{14}^k + \dot{Q_{14}^k}) (f_* - f - \langle \nabla f, x_*-x \rangle)\mathrm{d}s +\mathrm{const.}
            \end{split}
        \end{equation}
        Here, when the objective function $f(x)$ is $\mu$-strongly convex and $L$-smooth,  
        we can use a parameter $\lambda \in [\mu, L]$ (which is the same $\lambda$ that appeared in Operation A1) satisfying
        \begin{equation}
            f_* - f - \langle \nabla f, x_* - x \rangle = \frac{\lambda}{2} \|x-x_*\|^2.
        \end{equation}
        Hence we have
        \begin{equation}
            \begin{split}
                &\int_{t_0}^t \mathrm{e}^\gamma \cdot 2Q_{14}^k \langle v_1, v_4 \rangle \mathrm{d}s \\
                &= \mathrm{e}^\gamma \cdot \lambda Q_{14}^k \|x-x_*\|^2 - \int_{t_0}^t \mathrm{e}^\gamma \cdot \lambda (\dot{\gamma}Q_{14}^k + \dot{Q_{14}^k}) \|x-x_*\|^2 \mathrm{d}s +\mathrm{const.}\\
                &= \mathrm{e}^\gamma \cdot \lambda Q_{14}^k \|v_1\|^2 - \int_{t_0}^t \mathrm{e}^\gamma \cdot \lambda (\dot{\gamma}Q_{14}^k + \dot{Q_{14}^k}) \|v_1\|^2 \mathrm{d}s +\mathrm{const.}
            \end{split}
        \end{equation}
        Therefore, applying Operation E1 to $(P_k,Q_k)$ gives
        \begin{gather}
            P^{k+1} = P^k + \begin{pmatrix}
                \lambda Q_{14}^k & 0 & 0 \\
                0 & 0 & 0 \\
                0 & 0 & 0
            \end{pmatrix},\\
            Q^{k+1} = Q^k - \begin{pmatrix}
                \lambda (\dot{\gamma}Q_{14}^k+\dot{Q_{14}^k}) & 0 & 0 & Q_{14}^k & 0 \\
                0 & 0 & 0 & 0 & 0 \\
                0 & 0 & 0 & 0 & 0 \\
                Q_{14}^k & 0 & 0 & 0 & 0 \\
                0 & 0 & 0 & 0 & 0
            \end{pmatrix}.
        \end{gather}
\end{itemize}

\subsection{Operation Based on the Properties of the Objective Function (2)}
\label{sec:oper_Group_F}

Finally, we introduce another operation that is also based on the strong convexity and smoothness of the objective function $f(x)$. 
\begin{itemize}
    \item Operation F1: An operation that usesusesusesusesuses the properties of the Hessian matrix derived from strong convexity and smoothness. We consider the term
        \begin{equation}
            \int_{t_0}^t \mathrm{e}^\gamma \cdot 2Q_{34}^k \langle v_3, v_4 \rangle \mathrm{d}s .
        \end{equation}
        From Theorem \ref{Hessian}, we have
        \begin{equation}
            \mu I_n \preceq \nabla^2 f(x) \preceq L I_n,
        \end{equation}
        which implies
        \begin{gather}
            \langle \mu I_n \dot{x}, \dot{x} \rangle \leq \langle \nabla^2 f \dot{x}, \dot{x} \rangle \leq \langle L I_n \dot{x}, \dot{x} \rangle \\
            \Leftrightarrow \mu \|\dot{x}\|^2 \leq \langle \nabla^2 f \dot{x}, \dot{x} \rangle \leq L \|\dot{x}\|^2 \\
            \Leftrightarrow \mu \|v_3\|^2 \leq \langle v_3,v_4 \rangle \leq L \|v_3\|^2.
        \end{gather}
        Therefore, when $Q_{34}^k \geq 0$, we obtain
        \begin{equation}
            2\mu Q_{34}^k \|v_3\|^2 \leq 2 Q_{34}^k \langle v_3,v_4 \rangle \leq 2 L Q_{34}^k \|v_3\|^2,
        \end{equation}
        while, conversely, when $Q_{34}^k \leq 0$, we have
        \begin{equation}
            2L Q_{34}^k \|v_3\|^2 \leq 2 Q_{34}^k \langle v_3,v_4 \rangle \leq 2 \mu Q_{34}^k \|v_3\|^2.
        \end{equation}
        Thus, in both cases, there exists a parameter $\theta \in [\mu, L]$ such that
        \begin{equation}
            2Q_{34}^k \langle v_3, v_4 \rangle = 2 \theta Q_{34}^k \|v_3\|^2.
        \end{equation}
        Therefore, we obtain
        \begin{equation}
            \int_{t_0}^t \mathrm{e}^\gamma \cdot 2Q_{34}^k \langle v_3, v_4 \rangle \mathrm{d}s = \int_{t_0}^t \mathrm{e}^\gamma \cdot 2 \theta Q_{34}^k \|v_3\|^2 \mathrm{d}s.
        \end{equation}
        Hence, applying Operation F1 to $(P_k,Q_k)$ yields
        \begin{gather}
            P^{k+1} = P^k, \\
            Q^{k+1} = Q^k - \begin{pmatrix}
                0 & 0 & 0 & 0 & 0 \\
                0 & 0 & 0 & 0 & 0 \\
                0 & 0 & -2\theta Q_{34}^k & Q_{34}^k & 0 \\
                0 & 0 & Q_{34}^k & 0 & 0 \\
                0 & 0 & 0 & 0 & 0
            \end{pmatrix}.
        \end{gather}
\end{itemize}

\section{List of Combinations of the Operations}
\label{sec:list_of_combi}

\subsection{Combinations of the Operations within Each Group}
\label{sec:comb_operations_each_group}

\begin{itemize}
    \item Group A  
        \\As mentioned earlier, Group A only consists of operation A1, which is initially performed once.
    \item Group B  
        \\Group B contains the three operations, B1, B2, and B3, each with the following properties:
        \begin{itemize}
            \item B1: Eliminates $Q_{35}$ and adds some terms to $Q_{33}$.
            \item B2: Eliminates $Q_{15}$ and adds some terms to $Q_{13}$ and $Q_{33}$.
            \item B3: Eliminates $Q_{13}$ and adds some terms to $Q_{11}$.
        \end{itemize}
        Each of them is idempotent because $Q_{35}$, $Q_{15}$, or $Q_{13}$ becomes zero after its application. 
        In addition, since none of them uses the contents of $Q_{33}$, B1 and B2 commute, and B1 and B3 also commute. Therefore we only need to consider combinations of B2 and B3. 
        Because of their idempotency, the possible combinations of B2 and B3 are:
        \begin{itemize}
            \item Do nothing
            \item B2
            \item B3
            \item B2→B3
            \item B3→B2
            \item B3→B2→B3
            \item B2→B3→B2
        \end{itemize}
        yielding six cases. However, the results of B2→B3→B2 and B3→B2→B3 are identical to that of B2→B3. Thus three or more operations can be reduced to two or less operations. In summary, the possible cases of Group B are:
        \begin{enumerate}
            \item Do nothing
            \item B1
            \item B2
            \item B1→B2
            \item B3
            \item B1→B3
            \item B2→B3
            \item B1→B2→B3
            \item B3→B2
            \item B1→B3→B2
        \end{enumerate}
        giving ten cases in total.
    \item Group C  
        \\Group C consists only of operation C1. Since applying C1 once results in $Q_{24}=0$, C1 is idempotent. Hence there are only two possible cases:
        \begin{enumerate}
            \item Do nothing
            \item C1
        \end{enumerate}
    \item Group D  
        \\Group D contains four operations D1, D2, D3, and D4. There are seven related entries $P_{12}$, $P_{23}$, $Q_{12}$, $Q_{14}$, $Q_{23}$, $Q_{25}$, and $Q_{34}$. Here we define
        \begin{equation}
            \label{definition_g}
            g(Q_{ij}^k) = \dot{\gamma}Q_{ij}^k + \dot{Q_{ij}^k},
        \end{equation}
        which is a linear function. 
        Table \ref{D_single} shows the changes in the entries $P_{12}^k$, $P_{23}^k$, $Q_{12}^k$, $Q_{14}^k$, $Q_{23}^k$, $Q_{25}^k$, and $Q_{34}^k$ when a single operation from Group D is applied.

\begin{table}[H]
\centering
\caption{Changes in each entry when a single operation from Group~D is applied once}
\label{D_single}
\resizebox{\textwidth}{!}{
\begin{tabular}{|c|c|c|c|c|c|c|c|}
\hline Operation & $Q_{12}^{k+1}$ & $Q_{14}^{k+1}$ & $Q_{23}^{k+1}$ & $Q_{25}^{k+1}$ & $Q_{34}^{k+1}$ & $P_{12}^{k+1}$ & $P_{23}^{k+1}$ \\ \hline
D1 & $Q_{12}^{k}$ & $Q_{14}^{k}$ & $Q_{23}^{k} - g(Q_{34}^{k})$ & $Q_{25}^{k} - Q_{34}^{k}$ & $0$ & $P_{12}^{k}$ & $P_{23}^{k} + Q_{34}^{k}$ \\ \hline
D2 & $Q_{12}^{k}$ & $Q_{14}^{k}$ & $Q_{23}^{k} - g(Q_{25}^{k})$ & $0$ & $Q_{34}^{k} - Q_{25}^{k}$ & $P_{12}^{k}$ & $P_{23}^{k} + Q_{25}^{k}$ \\ \hline
D3 & $Q_{12}^{k} - g(Q_{23}^{k})$ & $Q_{14}^{k} - Q_{23}^{k}$ & $0$ & $Q_{25}^{k}$ & $Q_{34}^{k}$ & $P_{12}^{k} + Q_{23}^{k}$ & $P_{23}^{k}$ \\ \hline
D4 & $Q_{12}^{k} - g(Q_{14}^{k})$ & $0$ & $Q_{23}^{k} - Q_{14}^{k}$ & $Q_{25}^{k}$ & $Q_{34}^{k}$ & $P_{12}^{k} + Q_{14}^{k}$ & $P_{23}^{k}$ \\ \hline
\end{tabular}
}
\end{table}

From Table \ref{D_single}, it is evident that 
all the operations in Group D are idempotent. 
Therefore, when considering two successive operations in  Group~D, it suffices to examine only the 12 cases in Table~\ref{D_double}.

\begin{table}[H]
    \centering
    \caption{Changes in each entry when operations belonging to Group D are performed twice}
    \label{D_double}
    \resizebox{\textwidth}{!}{
    \begin{tabular}{|c|c|c|c|c|c|c|c|}
         \hline Operation & $Q_{12}^{k+2}$ & $Q_{14}^{k+2}$ & $Q_{23}^{k+2}$ & $Q_{25}^{k+2}$ & $Q_{34}^{k+2}$ & $P_{12}^{k+2}$ & $P_{23}^{k+2}$ \\ \hline
         D1→D2 & $Q_{12}^{k}$ & $Q_{14}^{k}$ & $Q_{23}^{k} - g(Q_{25}^{k})$ & $0$ & $Q_{34}^{k} - Q_{25}^{k}$ & $P_{12}^{k}$ & $P_{23}^{k} + Q_{25}^{k}$ \\ \hline
         D1→D3 & $Q_{12}^{k} - g(Q_{23}^{k}) + g(g(Q_{34}^{k}))$ & $Q_{14}^{k} - Q_{23}^{k} + g(Q_{34}^{k})$ & $0$ & $Q_{25}^{k} - Q_{34}^{k}$ & $0$ & $P_{12}^{k} + Q_{23}^{k} - g(Q_{34}^{k})$ & $P_{23}^{k} + Q_{34}^{k}$ \\ \hline
         D1→D4 & $Q_{12}^{k} - g(Q_{14}^{k})$ & $0$ & $Q_{23}^{k} - Q_{14}^{k} - g(Q_{34}^{k})$ & $Q_{25}^{k} - Q_{34}^{k}$ & $0$ & $P_{12}^{k} + Q_{14}^{k}$ & $P_{23}^{k} + Q_{34}^{k}$ \\ \hline
         D2→D1 & $Q_{12}^{k}$ & $Q_{14}^{k}$ & $Q_{23}^{k} - g(Q_{34}^{k})$ & $Q_{25}^{k} - Q_{34}^{k}$ & $0$ & $P_{12}^{k}$ & $P_{23}^{k} + Q_{34}^{k}$ \\ \hline
         D2→D3 & $Q_{12}^{k} - g(Q_{23}^{k}) + g(g(Q_{25}^{k}))$ & $Q_{14}^{k} - Q_{23}^{k} + g(Q_{25}^{k})$ & $0$ & $0$ & $Q_{34}^{k} - Q_{25}^{k}$ & $P_{12}^{k} + Q_{23}^{k} - g(Q_{25}^{k})$ & $P_{23}^{k} + Q_{25}^{k}$ \\ \hline
         D2→D4 & $Q_{12}^{k} - g(Q_{14}^{k})$ & $0$ & $Q_{23}^{k} - Q_{14}^{k} - g(Q_{25}^{k})$ & $0$ & $Q_{34}^{k} - Q_{25}^{k}$ & $P_{12}^{k} + Q_{14}^{k}$ & $P_{23}^{k} + Q_{25}^{k}$ \\ \hline
         D3→D1 & $Q_{12}^{k} - g(Q_{23}^{k})$ & $Q_{14}^{k} - Q_{23}^{k}$ & $-g(Q_{34}^{k})$ & $Q_{25}^{k} - Q_{34}^{k}$ & $0$ & $P_{12}^{k} + Q_{23}^{k}$ & $P_{23}^{k} + Q_{34}^{k}$ \\ \hline
         D3→D2 & $Q_{12}^{k} - g(Q_{23}^{k})$ & $Q_{14}^{k} - Q_{23}^{k}$ & $-g(Q_{25}^{k})$ & $0$ & $Q_{34}^{k} - Q_{25}^{k}$ & $P_{12}^{k} + Q_{23}^{k}$ & $P_{23}^{k} + Q_{25}^{k}$ \\ \hline
         D3→D4 & $Q_{12}^{k} - g(Q_{14}^{k})$ & $0$ & $Q_{23}^{k} - Q_{14}^{k}$ & $Q_{25}^{k}$ & $Q_{34}^{k}$ & $P_{12}^{k} + Q_{14}^{k}$ & $P_{23}^{k}$ \\ \hline
         D4→D1 & $Q_{12}^{k} - g(Q_{14}^{k})$ & $0$ & $Q_{23}^{k} - Q_{14}^{k} -g(Q_{34}^{k})$ & $Q_{25}^{k} - Q_{34}^{k}$ & $0$ & $P_{12}^{k} + Q_{14}^{k}$ & $P_{23}^{k} + Q_{34}^{k}$ \\ \hline
         D4→D2 & $Q_{12}^{k} - g(Q_{14}^{k})$ & $0$ & $Q_{23}^{k} - Q_{14}^{k} - g(Q_{25}^{k})$ & $0$ & $Q_{34}^{k} - Q_{25}^{k}$ & $P_{12}^{k} + Q_{14}^{k}$ & $P_{23}^{k} + Q_{25}^{k}$ \\ \hline
         D4→D3 & $Q_{12}^{k} - g(Q_{23}^{k})$ & $Q_{14}^{k} - Q_{23}^{k}$ & $0$ & $Q_{25}^{k}$ & $Q_{34}^{k}$ & $P_{12}^{k} + Q_{23}^{k}$ & $P_{23}^{k}$ \\ \hline
    \end{tabular}
    }
\end{table}

We can make two observations from Table~\ref{D_double}. 
\begin{enumerate}
\item 
The result of D1→D2 is exactly the same as that of D2. Similarly, D2→D1 is equivalent to D1, D3→D4 is equivalent to D4, and D4→D3 is equivalent to D3. 
Therefore any number of operations within each set of $\{ \mathrm{D1}, \mathrm{D2} \}$ and $\{\mathrm{D3}, \mathrm{D4} \}$ can be regarded as a single operation in that set. 
\item 
The results of D1→D4 and D4→D1, and D2→D4 and D4→D2 are identical. 
Therefore 
D1 and D4 are interchangeable, and 
D2 and D4 are also interchangeable.  
\end{enumerate}
For the first observation, 
it is sufficient to consider only the 16 cases of three operations from Group D in which the 1st and 3rd operations are chosen from the same set among $\{ \mathrm{D1}, \mathrm{D2} \}$ and $\{\mathrm{D3}, \mathrm{D4} \}$ and the 2nd from the other set. 
Moreover, 
several of the 16 cases can be replaced by shorter operation sequences because of the second observation. 
Therefore we can list the 16 cases as follows: 
\begin{itemize}
    \item D1→D3→D1
    \item D1→D3→D2
    \item D1→D4→D1 = D1→D1→D4 = D1→D4
    \item D1→D4→D2 = D1→D2→D4 = D2→D4
    \item D2→D3→D1
    \item D2→D3→D2
    \item D2→D4→D1 = D2→D1→D4 = D1→D4
    \item D2→D4→D2 = D2→D2→D4 = D2→D4
    \item D3→D1→D3
    \item D3→D1→D4 = D3→D4→D1 = D4→D1
    \item D3→D2→D3
    \item D3→D2→D4 = D3→D4→D2 = D4→D2
    \item D4→D1→D3 = D1→D4→D3 = D1→D3
    \item D4→D1→D4 = D1→D4→D4 = D1→D4
    \item D4→D2→D3 = D2→D4→D3 = D2→D3
    \item D4→D2→D4 = D2→D4→D4 = D2→D4
\end{itemize}

Then we investigate the 6 cases whose lengths remain three.  
Their outcomes are shown in Table \ref{D_triple}.

\begin{table}[H]
    \centering
    \caption{Changes in each entry when operations belonging to Group D are performed three times}
    \label{D_triple}
    \resizebox{\textwidth}{!}{
    \begin{tabular}{|c|c|c|c|c|c|c|c|}
         \hline Operation & $Q_{12}^{k+3}$ & $Q_{14}^{k+3}$ & $Q_{23}^{k+3}$ & $Q_{25}^{k+3}$ & $Q_{34}^{k+3}$ & $P_{12}^{k+3}$ & $P_{23}^{k+3}$ \\ \hline
         D1→D3→D1 & $Q_{12}^{k} - g(Q_{23}^{k}) + g(g(Q_{34}^{k}))$ & $Q_{14}^{k} - Q_{23}^{k} + g(Q_{34}^{k})$ & $0$ & $Q_{25}^{k} - Q_{34}^{k}$ & $0$ & $P_{12}^{k} + Q_{23}^{k} - g(Q_{34}^{k})$ & $P_{23}^{k} + Q_{34}^{k}$ \\ \hline
         D1→D3→D2 & $Q_{12}^{k} - g(Q_{23}^{k}) + g(g(Q_{34}^{k}))$ & $Q_{14}^{k} - Q_{23}^{k} + g(Q_{34}^{k})$ & $g(Q_{34}^{k}) - g(Q_{25}^{k})$ & $0$ & $Q_{34}^{k} - Q_{25}^{k}$ & $P_{12}^{k} + Q_{23}^{k} - g(Q_{34}^{k})$ & $P_{23}^{k} + Q_{25}^{k}$ \\ \hline
         D2→D3→D1 & $Q_{12}^{k} - g(Q_{23}^{k}) + g(g(Q_{25}^{k}))$ & $Q_{14}^{k} - Q_{23}^{k} + g(Q_{25}^{k})$ & $g(Q_{25}^{k}) - g(Q_{34}^{k})$ & $Q_{25}^{k} - Q_{34}^{k}$ & $0$ & $P_{12}^{k} + Q_{23}^{k} - g(Q_{25}^{k})$ & $P_{23}^{k} + Q_{34}^{k}$ \\ \hline
         D2→D3→D2 & $Q_{12}^{k} - g(Q_{23}^{k}) + g(g(Q_{25}^{k}))$ & $Q_{14}^{k} - Q_{23}^{k} + g(Q_{25}^{k})$ & $0$ & $0$ & $Q_{34}^{k} - Q_{25}^{k}$ & $P_{12}^{k} + Q_{23}^{k} - g(Q_{25}^{k})$ & $P_{23}^{k} + Q_{25}^{k}$ \\ \hline
         D3→D1→D3 & $Q_{12}^{k} - g(Q_{23}^{k}) + g(g(Q_{34}^{k}))$ & $Q_{14}^{k} - Q_{23}^{k} + g(Q_{34}^{k})$ & $0$ & $Q_{25}^{k} - Q_{34}^{k}$ & $0$ & $P_{12}^{k} + Q_{23}^{k} - g(Q_{34}^{k})$ & $P_{23}^{k} + Q_{34}^{k}$ \\ \hline
         D3→D2→D3 & $Q_{12}^{k} - g(Q_{23}^{k}) + g(g(Q_{25}^{k}))$ & $Q_{14}^{k} - Q_{23}^{k} + g(Q_{25}^{k})$ & $0$ & $0$ & $Q_{34}^{k} - Q_{25}^{k}$ & $P_{12}^{k} + Q_{23}^{k} - g(Q_{25}^{k})$ & $P_{23}^{k} + Q_{25}^{k}$ \\ \hline
    \end{tabular}
    }
\end{table}

From Table \ref{D_triple}, 
we see that D1→D3→D1 and D3→D1→D3 yield the same result as D1→D3, and D2→D3→D2 and D3→D2→D3 yield the same result as D2→D3. Therefore we have only to consider D1→D3→D2 and D2→D3→D1 as operation sequences with length three. 

Finally, we consider four or more operations from Group D. It is sufficient to consider the cases where operations are chosen alternately from different sets among $\{ \mathrm{D1}, \mathrm{D2} \}$ and $\{\mathrm{D3}, \mathrm{D4} \}$. Then any sequences of such four operations must include sequences with length three other than D1→D3→D2 and D2→D3→D1. 
Because they can be replaced by shorter operation sequences, any sequence of four or more operations from Group D can be reduced to a shorter operation sequence. 

As a result, the possible operation sequences from Group D are listed as follows:

\begin{enumerate}
    \item Do nothing
    \item D1
    \item D2
    \item D3
    \item D4
    \item D1→D3
    \item D1→D4
    \item D2→D3
    \item D2→D4
    \item D3→D1
    \item D3→D2
    \item D1→D3→D2
    \item D2→D3→D1
\end{enumerate}

\item Group E
\\Group E only consists of operation E1. It is idempotent because it sets $Q_{14}=0$. Hence there are two possible combinations as follows:

\begin{enumerate}
    \item Do nothing
    \item E1
\end{enumerate}

\item Group F
\\Group F only consists of operation F1. It is idempotent because it sets $Q_{34}=0$. Hence there are two possible combinations as follows:

\begin{enumerate}
    \item Do nothing
    \item F1
\end{enumerate}
\end{itemize}

From the above, the operation sequences in each group are counted as follows.

\begin{enumerate}
    \item Group A: 1 case
    \item Group B: 10 cases
    \item Group C: 2 cases
    \item Group D: 13 cases
    \item Group E: 2 cases
    \item Group F: 2 cases
\end{enumerate}

\subsection{Combinations of the Operations across Different Groups}
\label{sec:comb_acoss_groups}

\subsubsection{Notation}
\begin{itemize}
    \item The operation ``do nothing'' is denoted by operation $\mathrm{I}$.
    \item Performing operation $y$ after operation $x$ is denoted by $xy$.
    \item A collection consisting of multiple operations is referred to as a set of operations.
    \item Performing a set of operations $Y$ after a set of operations $X$ is denoted by $XY$. For example, when the set of operations $X$ consists of $\mathrm{X1}$ and $\mathrm{X2}$, and the set of operations $Y$ consists of $\mathrm{Y1}$ and $\mathrm{Y2}$, we have
    \[
        XY = \mathrm{X1}\mathrm{Y1},\mathrm{X1}\mathrm{Y2},\mathrm{X2}\mathrm{Y1},\mathrm{X2}\mathrm{Y2}
    \]
    \item For an arbitrary pair of matrices $P$ and $Q$, the set of matrix pairs obtained by applying a set of operations $X$ is denoted by $R_{P,Q}(X)$.
    \item For sets of operations $X$ and $Y$, if $R_{P,Q}(X) = R_{P,Q}(Y)$ holds for any matrices pair $(P,Q)$, we write $X = Y$.
\end{itemize}
For simplicity, we hereafter write $R_{P,Q}(X)$ simply as $R(X)$.  
In the subsequent discussion, the relations $R(X) = R(Y)$ and $R(X) \subseteq R(Y)$ 
should be understood as meaning that these relations hold 
for all matrix pairs $(P, Q)$, i.e., $R_{P,Q}(X) = R_{P,Q}(Y)$ and $R_{P,Q}(X) \subseteq R_{P,Q}(Y)$ hold
for any $(P, Q)$, respectively.

We define the sets of operations as follows.
\begin{table}[H]
    \centering
    \begin{tabular}{|c|c|} \hline
        Set of operations & Operations \\ \hline
        $D$ & $\mathrm{I}$, $\mathrm{D1}$, $\mathrm{D2}$, $\mathrm{D3}$, $\mathrm{D4}$, $\mathrm{D1}\mathrm{D3}$, $\mathrm{D1}\mathrm{D4}$, $\mathrm{D2}\mathrm{D3}$, $\mathrm{D2}\mathrm{D4}$, $\mathrm{D3}\mathrm{D1}$, $\mathrm{D3}\mathrm{D2}$, $\mathrm{D1}\mathrm{D3}\mathrm{D2}$, $\mathrm{D2}\mathrm{D3}\mathrm{D1}$ \\ \hline
        $E$ & $\mathrm{I}$, $\mathrm{E1}$ \\ \hline
        $F$ & $\mathrm{I}$, $\mathrm{F1}$ \\ \hline
        $D^u$ & $\mathrm{I}$, $\mathrm{D1}$, $\mathrm{D2}$ \\ \hline
        $D^d$ & $\mathrm{I}$, $\mathrm{D3}$, $\mathrm{D4}$ \\ \hline
    \end{tabular}
\end{table}

\subsubsection{Lemmas}

\begin{lem}[Idempotence]
    \label{idempotent}
    The identities $DD = D$, $EE = E$, $FF = F$, $D^uD^u = D^u$, and $D^dD^d = D^d$ hold.
\end{lem}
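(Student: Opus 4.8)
The plan is to derive all five identities from the structural facts about the individual operations that are already assembled in Appendix~\ref{sec:comb_operations_each_group}. In each case the inclusion $R(X) \subseteq R(XX)$ is immediate: the do-nothing operation $\mathrm{I}$ belongs to each of $D$, $E$, $F$, $D^u$, and $D^d$, so for any sequence $x$ in the set, the composite $x\mathrm{I}$ lies in $XX$ and has the same effect as $x$; hence $R_{P,Q}(X)\subseteq R_{P,Q}(XX)$ for every matrix pair $(P,Q)$. The whole content of the lemma is therefore the reverse inclusion $R(XX)\subseteq R(X)$, i.e.\ that concatenating two admissible sequences never produces an effect outside $R(X)$.

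For $EE = E$ and $FF = F$, I would use the fact that E1 sets $Q_{14}$ to $0$ and F1 sets $Q_{34}$ to $0$ while leaving the other relevant entries unchanged; thus a second immediate application of the same operation acts as the identity, so $R(\mathrm{E1}\,\mathrm{E1}) = R(\mathrm{E1})$ and $R(\mathrm{F1}\,\mathrm{F1}) = R(\mathrm{F1})$. Every length-two word over $\{\mathrm{I},\mathrm{E1}\}$ (respectively $\{\mathrm{I},\mathrm{F1}\}$) then has the same effect as a word of length at most one, i.e.\ as an element of $E$ (respectively $F$), which gives $R(EE)\subseteq R(E)$ and $R(FF)\subseteq R(F)$.

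For $D^uD^u = D^u$ and $D^dD^d = D^d$, I would invoke the facts recorded in Tables~\ref{D_single} and~\ref{D_double}: D1 and D2 are idempotent, D1$\to$D2 has the same effect as D2, and D2$\to$D1 has the same effect as D1, so every length-two word over $\{\mathrm{I},\mathrm{D1},\mathrm{D2}\}$ reduces to an element of that set; the argument for $D^d$ is identical, using the idempotence of D3 and D4 together with D3$\to$D4 = D4 and D4$\to$D3 = D3. Finally, for $DD = D$ I would note that the concatenation of any two sequences from $D$ is a word in the letters D1, D2, D3, D4, and the enumeration behind the list of the $13$ sequences in $D$ --- Tables~\ref{D_single}, \ref{D_double}, \ref{D_triple} together with the reduction of words of length at least four --- shows precisely that every such word has the same effect as one of those $13$ sequences; hence $R(DD)\subseteq R(D)$.

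The main obstacle is the $DD = D$ case, because it relies on the completeness of the normal-form reduction for Group~D performed in Appendix~\ref{sec:comb_operations_each_group}: one must be certain that the absorption identities (such as D1$\to$D3$\to$D1 $=$ D1$\to$D3 and D3$\to$D1$\to$D3 $=$ D1$\to$D3), the interchange relations D1$\leftrightarrow$D4 and D2$\leftrightarrow$D4, and the reductions of length-$\ge 4$ words jointly force the monoid generated by D1, D2, D3, D4 to have exactly those $13$ effects. Once that combinatorial closure is in hand, all five identities follow at once, and Lemma~\ref{idempotent} can then be used to collapse repeated blocks of operations in the subsequent analysis of cross-group combinations.
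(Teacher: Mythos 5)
Your proof is correct and takes essentially the same route as the paper: the paper's own proof of this lemma is a one-line citation to the group-by-group reduction arguments of Appendix~B.1, and your write-up simply spells those arguments out (idempotence of E1 and F1, the collapse rules $\mathrm{D1}\mathrm{D2}=\mathrm{D2}$, $\mathrm{D2}\mathrm{D1}=\mathrm{D1}$, $\mathrm{D3}\mathrm{D4}=\mathrm{D4}$, $\mathrm{D4}\mathrm{D3}=\mathrm{D3}$ from Tables~4--5, and the completeness of the $13$-element normal form for Group~D words). The only addition is that you make explicit the trivial inclusion $R(X)\subseteq R(XX)$ via the do-nothing operation $\mathrm{I}$, which the paper leaves implicit.
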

\begin{proof}
    These follow from the arguments for Groups D, E, and F in Appendix~\ref{sec:comb_operations_each_group}.
\end{proof}

\begin{lem}[Decomposition of $D$]
    \label{D-split}
    The identity $D = D^uD^dD^u$ holds.
\end{lem}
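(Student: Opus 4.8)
The plan is to prove the set identity $D = D^u D^d D^u$ by checking the two inclusions $R_{P,Q}(D) \subseteq R_{P,Q}(D^u D^d D^u)$ and $R_{P,Q}(D^u D^d D^u) \subseteq R_{P,Q}(D)$ for every matrix pair $(P,Q)$, using the commutation and reduction relations among the Group-D operations already derived in Appendix~\ref{sec:comb_operations_each_group}. Throughout, an equality of operation sequences is to be read in the sense of Appendix~\ref{sec:list_of_combi}, i.e.\ as an identity of the induced maps on matrix pairs; all the relations imported below already carry this meaning, so no additional verification is needed.

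For the inclusion $R(D) \subseteq R(D^u D^d D^u)$ I would simply exhibit each of the thirteen sequences in $D$ as a product $x y z$ with $x, z \in D^u = \{\mathrm{I}, \mathrm{D1}, \mathrm{D2}\}$ and $y \in D^d = \{\mathrm{I}, \mathrm{D3}, \mathrm{D4}\}$. This is immediate: $\mathrm{I} = \mathrm{I}\,\mathrm{I}\,\mathrm{I}$, $\mathrm{D1} = \mathrm{D1}\,\mathrm{I}\,\mathrm{I}$, $\mathrm{D2} = \mathrm{D2}\,\mathrm{I}\,\mathrm{I}$, $\mathrm{D3} = \mathrm{I}\,\mathrm{D3}\,\mathrm{I}$, $\mathrm{D4} = \mathrm{I}\,\mathrm{D4}\,\mathrm{I}$, $\mathrm{D1}\mathrm{D3} = \mathrm{D1}\,\mathrm{D3}\,\mathrm{I}$, $\mathrm{D1}\mathrm{D4} = \mathrm{D1}\,\mathrm{D4}\,\mathrm{I}$, $\mathrm{D2}\mathrm{D3} = \mathrm{D2}\,\mathrm{D3}\,\mathrm{I}$, $\mathrm{D2}\mathrm{D4} = \mathrm{D2}\,\mathrm{D4}\,\mathrm{I}$, $\mathrm{D3}\mathrm{D1} = \mathrm{I}\,\mathrm{D3}\,\mathrm{D1}$, $\mathrm{D3}\mathrm{D2} = \mathrm{I}\,\mathrm{D3}\,\mathrm{D2}$, $\mathrm{D1}\mathrm{D3}\mathrm{D2} = \mathrm{D1}\,\mathrm{D3}\,\mathrm{D2}$, and $\mathrm{D2}\mathrm{D3}\mathrm{D1} = \mathrm{D2}\,\mathrm{D3}\,\mathrm{D1}$. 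Hence every element of $D$ occurs among the products defining $D^u D^d D^u$, giving this inclusion at once.

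For the reverse inclusion — the substantive direction — I would enumerate the $3\times 3\times 3 = 27$ products $xyz$ with $x,z \in D^u$, $y \in D^d$, and show each collapses to a member of $D$, organizing the argument by the middle symbol $y$. If $y = \mathrm{I}$, the product is $xz$ with $x,z \in D^u$, and $D^u D^u = D^u \subseteq D$ by Lemma~\ref{idempotent}. If $y = \mathrm{D4}$, I would push the trailing $D^u$-operation past $\mathrm{D4}$ using the commutations $\mathrm{D4}\mathrm{D1} = \mathrm{D1}\mathrm{D4}$ and $\mathrm{D4}\mathrm{D2} = \mathrm{D2}\mathrm{D4}$, then merge it with the leading one via $\mathrm{D1}\mathrm{D2} = \mathrm{D2}$, $\mathrm{D2}\mathrm{D1} = \mathrm{D1}$, $\mathrm{D1}\mathrm{D1} = \mathrm{D1}$, $\mathrm{D2}\mathrm{D2} = \mathrm{D2}$; each such product then reduces to one of $\mathrm{I},\mathrm{D4},\mathrm{D1}\mathrm{D4},\mathrm{D2}\mathrm{D4} \in D$. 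If $y = \mathrm{D3}$, the nine products $x\,\mathrm{D3}\,z$ are $\mathrm{D3},\mathrm{D3}\mathrm{D1},\mathrm{D3}\mathrm{D2},\mathrm{D1}\mathrm{D3},\mathrm{D1}\mathrm{D3}\mathrm{D1},\mathrm{D1}\mathrm{D3}\mathrm{D2},\mathrm{D2}\mathrm{D3},\mathrm{D2}\mathrm{D3}\mathrm{D1},\mathrm{D2}\mathrm{D3}\mathrm{D2}$; all but two are already listed in $D$, and the two exceptions reduce via $\mathrm{D1}\mathrm{D3}\mathrm{D1} = \mathrm{D1}\mathrm{D3}$ and $\mathrm{D2}\mathrm{D3}\mathrm{D2} = \mathrm{D2}\mathrm{D3}$ (Table~\ref{D_triple}). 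Thus all $27$ products lie in $R(D)$, which completes the reverse inclusion and hence the proof. The main obstacle is purely organizational: there is no conceptual difficulty, only the need to run the case split over the middle symbol cleanly and to cite the correct relation from Appendix~\ref{sec:comb_operations_each_group} for each collapse, all of which are already available.
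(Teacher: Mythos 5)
Your proof is correct and follows essentially the same route as the paper, which disposes of the forward inclusion as immediate and refers the reverse inclusion to the Group-D analysis of Appendix~\ref{sec:comb_operations_each_group}; your enumeration of the $27$ products and the reductions you cite (idempotence of $D^u$, the interchangeability of $\mathrm{D4}$ with $\mathrm{D1}$ and $\mathrm{D2}$, and the collapses $\mathrm{D1}\mathrm{D3}\mathrm{D1}=\mathrm{D1}\mathrm{D3}$, $\mathrm{D2}\mathrm{D3}\mathrm{D2}=\mathrm{D2}\mathrm{D3}$ from Table~\ref{D_triple}) are exactly the arguments the paper invokes implicitly. The only difference is that you spell out the case analysis the paper leaves to the reader.
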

\begin{proof}
    The inclusion $R(D) \subseteq R(D^uD^dD^u)$ is immediate.  
    The inclusion $R(D^uD^dD^u) \subseteq R(D)$ follows from the arguments for Group~D in Appendix~\ref{sec:comb_operations_each_group}.
\end{proof}

\begin{lem}[Absorption of $D$]
    \label{D-merge}
    The identities $D = D\mathrm{D1} = D\mathrm{D2} = D\mathrm{D3} = D\mathrm{D4} = \mathrm{D1}D = \mathrm{D2}D = \mathrm{D3}D = \mathrm{D4}D$ hold, and thus $D=DD^u = DD^d = D^uD = D^dD$ also hold.
\end{lem}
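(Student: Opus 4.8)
The plan is to deduce everything from the classification of finite Group-D operation sequences already carried out in Appendix~\ref{sec:comb_operations_each_group}, where it is shown (by the reductions summarized in Tables~\ref{D_single}, \ref{D_double}, \ref{D_triple}, together with the idempotence observations and the two commutation observations D1$\leftrightarrow$D4 and D2$\leftrightarrow$D4) that every finite concatenation of operations from $\{\mathrm{D1},\mathrm{D2},\mathrm{D3},\mathrm{D4}\}$ produces, for every matrix pair $(P,Q)$ and every $\gamma$, the same pair as one of the thirteen canonical sequences constituting the set $D$. Given this, the identities in the lemma are a short corollary, so the work is mostly reading off the consequences rather than a new computation.

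First I would treat the single-operation identities $D=D\mathrm{D}i=\mathrm{D}iD$ for $i\in\{1,2,3,4\}$. For $R(D\mathrm{D}i)\subseteq R(D)$: every element of $D\mathrm{D}i$ is a concatenation $x\mathrm{D}i$ with $x\in D$, hence a finite Group-D sequence of length at most four, which by the Appendix~\ref{sec:comb_operations_each_group} classification coincides on every $(P,Q)$ with some element of $D$; the inclusion $R(\mathrm{D}iD)\subseteq R(D)$ is the same argument with $\mathrm{D}ix$ in place of $x\mathrm{D}i$. The reverse inclusions are immediate because each enumeration stage carries the ``do nothing'' option $\mathrm{I}$, so $D$ itself is contained in $D\mathrm{D}i$ and in $\mathrm{D}iD$; alternatively, since each $\mathrm{D}i$ is idempotent (Table~\ref{D_single}), appending $\mathrm{D}i$ to those elements of $D$ whose trailing step already clears the relevant entry (and prepending it to those whose leading step does) leaves them unchanged, and this together with the classification recovers all of $R(D)$. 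This yields $D=D\mathrm{D}i=\mathrm{D}iD$ for each $i$.

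The block identities then follow. Since $D^u=\{\mathrm{I},\mathrm{D1},\mathrm{D2}\}\subseteq D$ and $D^d=\{\mathrm{I},\mathrm{D3},\mathrm{D4}\}\subseteq D$, one gets $DD^u=D\mathrm{I}\cup D\mathrm{D1}\cup D\mathrm{D2}=D$ from the previous step, and similarly $DD^d=D^uD=D^dD=D$; equivalently these also drop out directly from Lemma~\ref{idempotent}, as $DD^u\subseteq DD=D$ while $DD^u\supseteq D\mathrm{I}=D$, and likewise on the other side.

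I do not expect a genuine obstacle here: the Appendix~\ref{sec:comb_operations_each_group} analysis does the real work, and what remains is the bookkeeping check that appending (or prepending) a single $\mathrm{D}i$ to each of the thirteen sequences in $D$ does not escape $D$ — i.e.\ that the canonical list is closed under one more Group-D operation on either side — which is dispatched by the idempotence observations (collapsing repeated and adjacent same-subgroup operations) and the commutation observations already used in the Appendix. The only point requiring a little care is interpretive: in the single-operation identities the equality should be read with the ``optional $\mathrm{D}i$'' convention (the $\mathrm{D}i$-stage always includes the do-nothing choice), so that the inclusion $R(D)\subseteq R(D\mathrm{D}i)$ is automatic; without that convention only the substantive inclusion $R(D\mathrm{D}i)\subseteq R(D)$ is available, but that inclusion — together with the genuine equalities $D=D^uD=D^dD=DD^u=DD^d$ — is all that the counting argument in Section~\ref{sec:our_method} actually uses.
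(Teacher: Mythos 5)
Your proposal is correct in substance and follows the same route as the paper, whose entire proof of this lemma is a one-line citation of the Appendix~\ref{sec:comb_operations_each_group} classification: the substantive inclusions $R(D\mathrm{D}i)\subseteq R(D)$ and $R(\mathrm{D}iD)\subseteq R(D)$ are exactly the statement that the thirteen canonical sequences are closed under one further Group-D operation on either side, and the block identities $D=DD^u=D^uD=DD^d=D^dD$ follow, as you say, from $\mathrm{I}\in D^u,D^d$ together with Lemma~\ref{idempotent}. Your closing caveat is not merely interpretive but points at a real imprecision in the lemma as stated: under the paper's notation $\mathrm{D}i$ is a mandatory single operation, so $D$ is \emph{not} contained in $D\mathrm{D}i$ as a set of sequences, and the reverse inclusion $R(D)\subseteq R(D\mathrm{D1})$ genuinely fails for generic $(P,Q)$ (every element of $R_{P,Q}(D\mathrm{D1})$ has vanishing $(3,4)$ entry of $Q$, whereas $R_{P,Q}(D)$ contains the untouched pair); your idempotence workaround likewise recovers only part of $R(D)$, so that first justification of the reverse inclusion should be dropped. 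As you correctly observe, only the forward inclusions and the block equalities are ever invoked in Theorems~\ref{reduction_7}--\ref{reduction_13}, so nothing downstream is affected.
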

\begin{proof}
    These follow from the arguments for Group~D in Appendix~\ref{sec:comb_operations_each_group}.
\end{proof}

\begin{lem}[Commutativity of $E$ and $D^u$]
    \label{E-D^u}
    The identities $\mathrm{E1}\mathrm{D1} = \mathrm{D1}\mathrm{E1}, \mathrm{E1}\mathrm{D2} = \mathrm{D2}\mathrm{E1}$ hold, and thus $\mathrm{E1}D^u = D^u\mathrm{E1}, ED^u = D^uE$ also hold. 
\end{lem}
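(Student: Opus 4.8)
The plan is to show that Operation~E1 and Operation~D1 act on disjoint collections of matrix entries — and that neither one reads an entry the other modifies — so that they commute as maps on pairs $(P,Q)$; the same holds for E1 and D2, and the remaining identities follow by set bookkeeping. First I would extract, from the explicit update rules in Appendices~\ref{sec:oper_Group_E} and~\ref{sec:oper_Group_D}, the \emph{read set} and \emph{write set} of each operation: E1 reads only $Q_{14}$ and writes only $P_{11}$, $Q_{11}$, $Q_{14}$ (together with the symmetric entry $Q_{41}$); D1 reads only $Q_{34}$ and writes only $P_{23}$, $Q_{23}$, $Q_{25}$, $Q_{34}$ (and symmetric counterparts); D2 reads only $Q_{25}$ and writes the same entries as D1. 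In particular the write set of E1 is disjoint from both the read set and the write set of D1 (and of D2), and conversely.

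Next I would argue the commutation $\mathrm{E1}\mathrm{D1} = \mathrm{D1}\mathrm{E1}$ directly. Fix $(P,Q)$. When E1 is applied, each entry in its write set is modified by a quantity depending only on $Q_{14}$ (and on $\dot Q_{14}$, which is determined by the function $Q_{14}$ itself); applying D1 afterwards modifies each entry in its write set by a quantity depending only on $Q_{34}$, and since E1 left $Q_{34}$ untouched this modification is identical to the one D1 would make if applied first. Because the two write sets are disjoint, in either order every entry of the final pair equals the original entry plus exactly one of the two modifications (or none), so the result is order-independent. Replacing $Q_{34}$ by $Q_{25}$ throughout gives $\mathrm{E1}\mathrm{D2} = \mathrm{D2}\mathrm{E1}$ verbatim. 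The only points needing a line of care are that the symmetric off-diagonal entries are updated consistently with their transposes by construction, and that the derivative terms appearing in the updates (such as $\dot Q_{14}$, $\dot Q_{34}$) are genuinely \emph{local}, i.e.\ functions of a single matrix entry.

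Finally I would lift the two elementary commutations to sets of operations. Since $D^u = \{\mathrm{I},\mathrm{D1},\mathrm{D2}\}$ and $\mathrm{I}$ trivially commutes with E1, for every $(P,Q)$ one has $R(\mathrm{E1}D^u) = R(\mathrm{E1}) \cup R(\mathrm{E1}\mathrm{D1}) \cup R(\mathrm{E1}\mathrm{D2}) = R(\mathrm{E1}) \cup R(\mathrm{D1}\mathrm{E1}) \cup R(\mathrm{D2}\mathrm{E1}) = R(D^u\mathrm{E1})$, i.e.\ $\mathrm{E1}D^u = D^u\mathrm{E1}$. Writing $E = \{\mathrm{I},\mathrm{E1}\}$ and combining this with $\mathrm{I}D^u = D^u\mathrm{I}$ yields $R(ED^u) = R(D^u) \cup R(\mathrm{E1}D^u) = R(D^u) \cup R(D^u\mathrm{E1}) = R(D^uE)$, so $ED^u = D^uE$.

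I do not expect a genuine obstacle here: the statement collapses to checking disjointness of the read/write sets of E1 versus those of D1 and D2, which is immediate from the update formulas already displayed. If anything, the subtle step is recognizing that the time-derivative terms in the updates depend only on the entry being integrated by parts, so that leaving that entry fixed leaves the entire update unchanged — this is what makes "disjoint write sets" sufficient for commutativity rather than merely necessary.
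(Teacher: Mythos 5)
Your proposal is correct and follows essentially the same route as the paper, which also proves this lemma by observing that the entries touched by E1 (namely $P_{11}$, $Q_{11}$, $Q_{14}$) are disjoint from those touched by D1 and D2 (namely $P_{23}$, $Q_{23}$, $Q_{25}$, $Q_{34}$). Your version is in fact slightly more careful than the paper's one-line argument, since you explicitly separate read sets from write sets and note that the locality of the derivative terms is what makes disjointness sufficient rather than merely necessary.
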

\begin{proof}
    Operations in $E$ affect the components $P_{11}$, $Q_{11}$, and $Q_{14}$.  
    Operations in $D^u$ affect the components $Q_{23}$, $Q_{25}$, and $Q_{34}$.  
    Since these components are disjoint, $E$ and $D^u$ commute.
\end{proof}

\begin{lem}[Relation between $E$ and $\mathrm{D4}$]
    \label{E-D4}
    The identities $\mathrm{E1}\mathrm{D4} = \mathrm{E1}, \mathrm{D4}\mathrm{E1} = \mathrm{D4}$ hold, and thus $\mathrm{D4}E = \mathrm{D4}$ also holds.
\end{lem}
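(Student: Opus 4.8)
The plan is to observe that Operations $\mathrm{D4}$ and $\mathrm{E1}$ are both driven entirely by the $(1,4)$-entry of the current matrix $Q$, and that each of them reduces this entry to zero. First I would read off from the update rules in Appendix~\ref{sec:oper_Group_D} that applying $\mathrm{D4}$ to a pair $(P^k,Q^k)$ modifies only the entries $P_{12}$, $Q_{12}$, $Q_{14}$, $Q_{23}$, with every modification being $Q_{14}^k$ times a fixed expression or $g(Q_{14}^k)=\dot{\gamma}Q_{14}^k+\dot{Q_{14}^k}$ times a fixed expression; in particular $Q_{14}^{k+1}=Q_{14}^k-Q_{14}^k=0$. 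Likewise, from Appendix~\ref{sec:oper_Group_E}, $\mathrm{E1}$ modifies only $P_{11}$, $Q_{11}$, $Q_{14}$, again with each change proportional to $Q_{14}^k$ or to $g(Q_{14}^k)$, and again $Q_{14}^{k+1}=0$.

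Next I would record the key consequence: if a pair $(P,Q)$ already has $Q_{14}\equiv 0$ as a function of $t$, then $\dot{Q_{14}}=0$ as well, hence $g(Q_{14})=0$, so both $\mathrm{D4}$ and $\mathrm{E1}$ leave $(P,Q)$ unchanged --- each acts as the identity on such pairs. With this in hand the two claimed identities are immediate. For $\mathrm{E1}\mathrm{D4}$ (apply $\mathrm{E1}$, then $\mathrm{D4}$): after $\mathrm{E1}$ the $(1,4)$-entry is zero, so the subsequent $\mathrm{D4}$ does nothing, giving $\mathrm{E1}\mathrm{D4}=\mathrm{E1}$. Symmetrically, for $\mathrm{D4}\mathrm{E1}$: after $\mathrm{D4}$ the $(1,4)$-entry is zero, so $\mathrm{E1}$ does nothing, giving $\mathrm{D4}\mathrm{E1}=\mathrm{D4}$.

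Finally I would deduce $\mathrm{D4}E=\mathrm{D4}$ from the definition $E=\{\mathrm{I},\mathrm{E1}\}$: applying $E$ after $\mathrm{D4}$ yields the collection $\{\mathrm{D4}\mathrm{I},\mathrm{D4}\mathrm{E1}\}=\{\mathrm{D4},\mathrm{D4}\}$, which produces the same set of matrix pairs as the single operation $\mathrm{D4}$. There is no substantial obstacle here; the only point needing a moment's care is to check that the idempotence is valid at the level of functions of $t$, so that the derivative term $\dot{Q_{14}}$ appearing in $g(Q_{14})$ genuinely vanishes once $Q_{14}$ is identically zero --- the same subtlety that underlies the idempotence claims for Groups B--F in Appendix~\ref{sec:comb_operations_each_group}.
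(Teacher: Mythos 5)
Your proposal is correct and is essentially the same argument as the paper's, which simply states that the identities ``can be verified by direct computation of each component''; you have organized that computation around the (correct) observation that both $\mathrm{D4}$ and $\mathrm{E1}$ alter entries only through multiples of $Q_{14}^k$ and $g(Q_{14}^k)=\dot{\gamma}Q_{14}^k+\dot{Q_{14}^k}$, and each sets $Q_{14}$ identically to zero, so whichever is applied second acts as the identity. Your closing remark that $Q_{14}\equiv 0$ as a function of $t$ forces $\dot{Q_{14}}=0$ and hence $g(Q_{14})=0$ is exactly the point that makes the verification go through.
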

\begin{proof}
    These can be verified by direct computation of each component.
\end{proof}

\begin{lem}[Commutativity of $F$ and $D^d$]
    \label{F-D^d}
    The identities $\mathrm{F1}\mathrm{D3} = \mathrm{D3}\mathrm{F1}, \mathrm{F1}\mathrm{D4} = \mathrm{D4}\mathrm{F1}$, and thus $\mathrm{F1}D^d = D^d\mathrm{F1},FD^d = D^dF$ also hold.
\end{lem}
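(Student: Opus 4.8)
The plan is to argue exactly as in the proof of Lemma~\ref{E-D^u}: for each of $\mathrm{F1}$, $\mathrm{D3}$, and $\mathrm{D4}$, I would read off from the explicit update rules which entries of the pair $(P,Q)$ the operation \emph{uses} (i.e.\ whose current values appear on the right-hand side of its formula) and which entries it \emph{modifies}, and then verify that these inventories are sufficiently disjoint that the order of application cannot matter. Since the operations in Group~D and Group~F act only by adding or subtracting explicit matrices whose nonzero entries are prescribed in Appendices~\ref{sec:oper_Group_D} and~\ref{sec:oper_Group_F}, this reduces the lemma to a finite bookkeeping check.

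First I would collect the data. From Appendix~\ref{sec:oper_Group_F}, Operation $\mathrm{F1}$ uses only $Q_{34}$, modifies only $Q_{33}$ and $Q_{34}$, leaves $P$ entirely untouched, and introduces a fresh parameter $\theta\in[\mu,L]$ that occurs in no other operation. From Appendix~\ref{sec:oper_Group_D}, Operation $\mathrm{D3}$ uses only $Q_{23}$ and modifies $Q_{12}$, $Q_{14}$, $Q_{23}$, and $P_{12}$; Operation $\mathrm{D4}$ uses only $Q_{14}$ and modifies $Q_{12}$, $Q_{14}$, $Q_{23}$, and $P_{12}$. The key observations are then: no entry modified by $\mathrm{F1}$ is used by $\mathrm{D3}$ or $\mathrm{D4}$, since $\{Q_{33},Q_{34}\}$ meets neither $\{Q_{23}\}$ nor $\{Q_{14}\}$; no entry modified by $\mathrm{D3}$ or $\mathrm{D4}$ is used by $\mathrm{F1}$, since $\{Q_{12},Q_{14},Q_{23},P_{12}\}$ does not contain $Q_{34}$; and the modification sets $\{Q_{33},Q_{34}\}$ and $\{Q_{12},Q_{14},Q_{23},P_{12}\}$ are disjoint. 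Hence in either order each entry receives the same final value, so $\mathrm{F1}\mathrm{D3}=\mathrm{D3}\mathrm{F1}$ and $\mathrm{F1}\mathrm{D4}=\mathrm{D4}\mathrm{F1}$.

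To promote these pairwise identities to $\mathrm{F1}D^d=D^d\mathrm{F1}$ and $FD^d=D^dF$, I would expand over the finite sets $D^d=\{\mathrm{I},\mathrm{D3},\mathrm{D4}\}$ and $F=\{\mathrm{I},\mathrm{F1}\}$, using that $\mathrm{F1}$ commutes trivially with the ``do nothing'' operation $\mathrm{I}$; the two relations just established then give $R(\mathrm{F1}X)=R(X\,\mathrm{F1})$ for every $X\in D^d$, and taking unions over $X$ yields both set identities. (This is the same passage from elementwise to set-level commutativity already used implicitly in Lemma~\ref{E-D^u} and enabled by the idempotence statements of Lemma~\ref{idempotent}.)

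The only point needing care — and it is bookkeeping rather than a genuine obstacle — is confirming that the use/modify inventories above are complete: that $\mathrm{F1}$ really never alters $P$ and never references $Q_{14}$ or $Q_{23}$ (both of which the Group~D operations read), and that the parameter $\theta$ it introduces is not shared with anything in $D^d$ (indeed the Group~D operations introduce no parameters at all). Once these facts are read directly off the update formulas in Appendices~\ref{sec:oper_Group_D} and~\ref{sec:oper_Group_F}, the commutativity follows purely formally, in complete parallel with the proof of Lemma~\ref{E-D^u}.
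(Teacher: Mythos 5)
Your proposal is correct and follows essentially the same route as the paper, which likewise proves the lemma by observing that the entries touched by $F$ (namely $Q_{33},Q_{34}$) and those touched by $D^d$ (namely $Q_{12},Q_{14},Q_{23}$, plus $P_{12}$) are disjoint. Your version is in fact slightly more careful than the paper's one-line argument, since you separate the ``used'' entries from the ``modified'' ones and check all three disjointness conditions explicitly, which is the right level of rigor for this kind of bookkeeping.
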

\begin{proof}
    Operations in $F$ affect the components $Q_{33}$ and $Q_{34}$.  
    Operations in $D^d$ affect the components $Q_{12}$, $Q_{14}$, and $Q_{23}$.  
    Since these components are disjoint, $F$ and $D^d$ commute.
\end{proof}

\begin{lem}[Relation between $F$ and $\mathrm{D1}$]
    \label{F-D1}
    The identities $\mathrm{F1}\mathrm{D1} = \mathrm{F1},\mathrm{D1}\mathrm{F1} = \mathrm{D1}$ hold and thus $\mathrm{D1}F = \mathrm{D1}$ also holds.
\end{lem}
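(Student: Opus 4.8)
The plan is to exploit the fact that both Operation D1 and Operation F1 are governed entirely by the single entry $Q_{34}$ of the current matrix $Q$, and that each of them sets that entry to zero. First I would record the two update rules explicitly: Operation D1 leaves every block of $P^k$ and $Q^k$ unchanged except through terms that are all multiples of $Q_{34}^k$ (the $(2,3)$ entry of $P$ gains $Q_{34}^k$, while $Q_{23},Q_{25},Q_{34}$ and their symmetric counterparts are altered by terms $\dot\gamma Q_{34}^k+\dot{Q_{34}^k}$ or $Q_{34}^k$), and Operation F1 leaves $P^k$ untouched and subtracts from $Q^k$ a matrix whose only nonzero entries are $-2\theta Q_{34}^k$ in position $(3,3)$ and $Q_{34}^k$ in positions $(3,4)$ and $(4,3)$. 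In particular, if $Q_{34}^k=0$ then applying D1 returns $(P^k,Q^k)$ unchanged, and likewise applying F1 returns $(P^k,Q^k)$ unchanged; in the latter case the free parameter $\theta$ never actually enters, since it occurs only through the product $\theta Q_{34}^k$.

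Next I would establish $\mathrm{F1}\mathrm{D1}=\mathrm{F1}$. Applying Operation F1 to any pair $(P,Q)$ produces a pair whose $(3,4)$ entry equals $Q_{34}-Q_{34}=0$. Operation D1, acting on a pair whose $(3,4)$ entry vanishes, then does nothing by the observation above, so $R_{P,Q}(\mathrm{F1}\mathrm{D1})=R_{P,Q}(\mathrm{F1})$ for every $(P,Q)$, i.e. $\mathrm{F1}\mathrm{D1}=\mathrm{F1}$.

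Then I would establish $\mathrm{D1}\mathrm{F1}=\mathrm{D1}$ by the symmetric argument: Operation D1 also sets the $(3,4)$ entry of $Q$ to zero, and Operation F1 applied to such a pair acts as the identity, again because all of F1's updates are multiples of that now-vanished entry (and, as noted, $\theta$ then disappears from the output). Hence $R_{P,Q}(\mathrm{D1}\mathrm{F1})=R_{P,Q}(\mathrm{D1})$ for every $(P,Q)$. Finally $\mathrm{D1}F=\mathrm{D1}$ is immediate from $F=\{\mathrm{I},\mathrm{F1}\}$, since $\mathrm{D1}\mathrm{I}=\mathrm{D1}$ trivially and $\mathrm{D1}\mathrm{F1}=\mathrm{D1}$ has just been shown. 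I do not anticipate a genuine obstacle here; the only point requiring a moment's care is to confirm that the parameter $\theta$ introduced by F1 does not spawn a one-parameter family of output pairs when F1 is applied to a matrix with $Q_{34}=0$, which is clear because $\theta$ enters the update solely multiplied by $Q_{34}^k$.
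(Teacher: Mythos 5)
Your proof is correct and matches the paper's approach, which simply verifies these identities by direct computation of each component; you organize that computation around the key observation that both D1 and F1 act only through $Q_{34}$ and both annihilate it (so that, the entries being functions of $t$, the identically zero $Q_{34}$ also kills the $\dot{Q_{34}^k}$ terms), and you correctly note that $\theta$ cannot produce a parameter family once $Q_{34}=0$. Nothing further is needed.
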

\begin{proof}
    These can be verified by direct computation of each component.
\end{proof}

\begin{lem}[Commutativity of $E$ and $F$]
    \label{E-F}
    The identity $\mathrm{E1}\mathrm{F1} = \mathrm{F1}\mathrm{E1}$ holds, and thus $\mathrm{E1}F = F\mathrm{E1}, E\mathrm{F1} = \mathrm{F1}E, EF = FE$ also hold.
\end{lem}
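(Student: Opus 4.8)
The plan is to mimic the proofs of Lemmas~\ref{E-D^u} and~\ref{F-D^d}, which establish commutativity by a disjoint-support argument on the matrix entries. First I would record precisely which entries of the pair $(P,Q)$ each of Operations E1 and F1 reads from and writes to. From the update rule for E1 in Appendix~\ref{sec:oper_Group_E}, Operation E1 uses only the entry $Q_{14}^k$ (together with its time derivative $\dot{\gamma}Q_{14}^k+\dot{Q_{14}^k}$) as input, and modifies only the entries $P_{11}$, $Q_{11}$, and $Q_{14}$, the last of which is set to zero. From the update rule for F1 in Appendix~\ref{sec:oper_Group_F}, Operation F1 uses only $Q_{34}^k$ as input and modifies only $Q_{33}$ and $Q_{34}$, the latter of which is set to zero.

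Next I would observe that the two entry sets $\{P_{11}, Q_{11}, Q_{14}\}$ and $\{Q_{33}, Q_{34}\}$ are disjoint, and in particular neither operation reads an entry that the other writes. Consequently, for any matrix pair, applying E1 followed by F1 yields exactly the same pair $(P,Q)$ as applying F1 followed by E1, which is precisely the identity $\mathrm{E1}\mathrm{F1} = \mathrm{F1}\mathrm{E1}$ in the notation of this appendix.

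Finally, for the ``thus'' clause, I would note that the operation $\mathrm{I}$ (do nothing) commutes with every operation, so $\mathrm{E1}\mathrm{I} = \mathrm{I}\mathrm{E1}$ and $\mathrm{F1}\mathrm{I} = \mathrm{I}\mathrm{F1}$. Combining these trivial identities with $\mathrm{E1}\mathrm{F1} = \mathrm{F1}\mathrm{E1}$ and taking unions over the two members of $F = \{\mathrm{I}, \mathrm{F1}\}$ gives $\mathrm{E1}F = F\mathrm{E1}$; symmetrically, taking unions over $E = \{\mathrm{I}, \mathrm{E1}\}$ gives $E\mathrm{F1} = \mathrm{F1}E$; and combining the last two yields $EF = FE$.

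There is essentially no substantive obstacle here: the only point requiring care is to check honestly against the explicit formulas in Appendices~\ref{sec:oper_Group_E} and~\ref{sec:oper_Group_F} that the sets of affected entries really are disjoint, including with respect to which entries are \emph{read}, since even a read-overlap would invalidate the commutation argument. A direct inspection of those formulas confirms there is no such overlap.
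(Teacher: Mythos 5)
Your proposal is correct and uses essentially the same argument as the paper: the paper's proof likewise observes that E1 affects only $P_{11}$, $Q_{11}$, $Q_{14}$ while F1 affects only $Q_{33}$, $Q_{34}$, and concludes commutativity from the disjointness of these entry sets. Your additional care in distinguishing read entries from written entries is a sound refinement of the same idea, not a different route.
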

\begin{proof}
    Operations in $E$ affect the components $P_{11}$, $Q_{11}$, and $Q_{14}$.  
    Operations in $F$ affect the components $Q_{33}$ and $Q_{34}$.  
    Since these components are disjoint, $E$ and $F$ commute.
\end{proof}

\begin{lem}[Relation between $E$ and $D^d$]
    \label{E-D^d}
    The identity $\mathrm{E1}D^d\mathrm{E1} = D^d\mathrm{E1}$ hold, and thus  $ED^d\mathrm{E1} = D^d\mathrm{E1}$ also holds.
\end{lem}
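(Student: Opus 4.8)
The plan is to prove the identity by a direct case analysis on the single operation drawn from $D^d=\{\mathrm{I},\mathrm{D3},\mathrm{D4}\}$. By the definition of composition of sets of operations, $\mathrm{E1}D^d\mathrm{E1}$ is the collection of the three composites $\mathrm{E1}\mathrm{E1}$, $\mathrm{E1}\mathrm{D3}\mathrm{E1}$, $\mathrm{E1}\mathrm{D4}\mathrm{E1}$, and $D^d\mathrm{E1}$ is the collection of $\mathrm{E1}$, $\mathrm{D3}\mathrm{E1}$, $\mathrm{D4}\mathrm{E1}$; so it suffices to match the matrix pair produced by each branch on one side with one produced on the other. The ``thus'' clause is then immediate: $ED^d\mathrm{E1}=\mathrm{I}D^d\mathrm{E1}\cup\mathrm{E1}D^d\mathrm{E1}$, which reduces to $D^d\mathrm{E1}$ once $\mathrm{E1}D^d\mathrm{E1}=D^d\mathrm{E1}$ is in hand.

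Two of the three branches are dispatched by earlier results. The branch $\mathrm{E1}\mathrm{E1}$ collapses to $\mathrm{E1}$ by idempotence of $E$ (Lemma~\ref{idempotent}), since Operation~E1 sets $Q_{14}=0$ and has no further effect thereafter. The branch $\mathrm{E1}\mathrm{D4}\mathrm{E1}$ collapses to $\mathrm{E1}$ as well: Lemma~\ref{E-D4} gives $\mathrm{E1}\mathrm{D4}=\mathrm{E1}$ because, once $Q_{14}$ has been zeroed, D4 --- which acts only through $Q_{14}$ --- does nothing; hence $\mathrm{E1}\mathrm{D4}\mathrm{E1}=\mathrm{E1}\mathrm{E1}=\mathrm{E1}$, which is the branch $\mathrm{I}\cdot\mathrm{E1}$ of $D^d\mathrm{E1}$.

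The substantive step is the $\mathrm{D3}$ branch, where I must show $\mathrm{E1}\mathrm{D3}\mathrm{E1}=\mathrm{D3}\mathrm{E1}$. Unlike the other cases, E1 and D3 genuinely interact --- both touch $Q_{14}$, E1 zeroing it and D3 subtracting $Q_{23}$ from it --- so a direct computation is unavoidable. Since the only entries read or written by E1 and D3 are $P_{11},P_{12},Q_{11},Q_{12},Q_{14},Q_{23}$, I would start from an arbitrary symmetric pair $(P,Q)$ and propagate just these six entries through both orders using the explicit update rules of Appendices~\ref{sec:oper_Group_E} and~\ref{sec:oper_Group_D}. The computation should show that in either order $Q_{14}$ and $Q_{23}$ end at $0$, that $P_{12}$ and $Q_{12}$ acquire $+Q_{23}$ and $-g(Q_{23})$ (in terms of the original entries, where $g(z)=\dot\gamma z+\dot z$), and that $P_{11}$ and $Q_{11}$ acquire $+\lambda\,(Q_{14}-Q_{23})$ and $-\lambda\,g(Q_{14}-Q_{23})$; the agreement between the two orders is exactly what the linearity of $g$ --- so that $g(Q_{14}-Q_{23})=g(Q_{14})-g(Q_{23})$ --- delivers. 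Combining the three branches yields $\mathrm{E1}D^d\mathrm{E1}=D^d\mathrm{E1}$, and then $ED^d\mathrm{E1}=D^d\mathrm{E1}$ as noted.

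The main obstacle is this $\mathrm{D3}$ computation: it is the only place where the two composition orders actually disagree at an intermediate stage, and one must verify --- rather than invoke commutativity or idempotence --- that the ``extra'' increment D3 makes to $Q_{14}$ is carried through the second application of E1 into $P_{11}$ and $Q_{11}$ in precisely the same way as when D3 precedes E1. The remaining two branches are one-line consequences of Lemmas~\ref{idempotent} and~\ref{E-D4}.
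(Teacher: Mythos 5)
Your proposal is correct and follows essentially the same route as the paper: a case split over $D^d=\{\mathrm{I},\mathrm{D3},\mathrm{D4}\}$, collapsing the $\mathrm{I}$ and $\mathrm{D4}$ branches to $\mathrm{E1}$ and verifying $\mathrm{E1}\mathrm{D3}\mathrm{E1}=\mathrm{D3}\mathrm{E1}$ by tracking the affected entries, where your identification of the linearity of $g$ as the reason the two orders agree is exactly right. The only (cosmetic) difference is that you derive $\mathrm{E1}\mathrm{D4}\mathrm{E1}=\mathrm{E1}$ from Lemma~\ref{E-D4} and idempotence rather than by a fresh direct computation, which is a slightly cleaner bookkeeping of the same fact.
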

\begin{proof}
    Since $R(D^d\mathrm{E1}) \subseteq R(ED^d\mathrm{E1})$ is immediate, it suffices to show $R(ED^d\mathrm{E1}) \subseteq R(D^d\mathrm{E1})$.  
    The inclusion $R(\mathrm{I}D^d\mathrm{E1}) \subseteq R(D^d\mathrm{E1})$ is also immediate.  
    Thus, it is enough to show
    \[
        R(\mathrm{E1}\mathrm{I}\mathrm{E1}) \subseteq R(D^d\mathrm{E1}),\quad
        R(\mathrm{E1}\mathrm{D3}\mathrm{E1}) \subseteq R(D^d\mathrm{E1}),\quad
        R(\mathrm{E1}\mathrm{D4}\mathrm{E1}) \subseteq R(D^d\mathrm{E1}).
    \]
    First, since $\mathrm{E1}\mathrm{I}\mathrm{E1} = \mathrm{E1}\mathrm{E1} = \mathrm{E1}$, we have $R(\mathrm{E1}\mathrm{I}\mathrm{E1}) \subseteq R(D^d\mathrm{E1})$.  
    Direct computation shows that $\mathrm{E1}\mathrm{D3}\mathrm{E1} = \mathrm{D3}\mathrm{E1}$, implying $R(\mathrm{E1}\mathrm{D3}\mathrm{E1}) \subseteq R(D^d\mathrm{E1})$.  
    Similarly, direct computation shows that $\mathrm{E1}\mathrm{D4}\mathrm{E1} = \mathrm{E1}$, implying $R(\mathrm{E1}\mathrm{D4}\mathrm{E1}) \subseteq R(D^d\mathrm{E1})$.  
    Therefore, $ED^d\mathrm{E1} = D^d\mathrm{E1}$ holds.
\end{proof}

\begin{lem}[Relation between $F$ and $D^u$]
    \label{F-D^u}
    The identity $\mathrm{F1}\mathrm{D2}\mathrm{F1} = \mathrm{D2}\mathrm{F1}$ hold, and thus $\mathrm{F1}D^u\mathrm{F1} = D^u\mathrm{F1}, FD^u\mathrm{F1} = D^u\mathrm{F1}$ also hold.
\end{lem}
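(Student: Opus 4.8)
The plan is to follow the pattern of Lemma~\ref{E-D^d}: the substantive claim is the single-sequence identity $\mathrm{F1}\mathrm{D2}\mathrm{F1}=\mathrm{D2}\mathrm{F1}$, and everything about the sets of operations $\mathrm{F1}D^u\mathrm{F1}$ and $FD^u\mathrm{F1}$ is then read off from it together with Lemmas~\ref{idempotent} and~\ref{F-D1}.

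First I would prove $\mathrm{F1}\mathrm{D2}\mathrm{F1}=\mathrm{D2}\mathrm{F1}$ by tracking the relevant entries through the update rules in Appendices~\ref{sec:oper_Group_D} and~\ref{sec:oper_Group_F}. The key structural facts are that $\mathrm{F1}$ touches only $Q_{33}$ and $Q_{34}$ (sending $Q_{34}\mapsto 0$ and $Q_{33}\mapsto Q_{33}+2\theta Q_{34}$, where the function $\theta\in[\mu,L]$ is the one fixed by $\langle\nabla^2 f\dot x,\dot x\rangle=\theta\|\dot x\|^2$ and is therefore the same in each invocation of $\mathrm{F1}$), and that $\mathrm{D2}$ reads only $Q_{25}$ while writing only $Q_{23},Q_{25},Q_{34},P_{23}$, never reading or writing $Q_{33}$. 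Consequently, starting from $(P^k,Q^k)$: in $\mathrm{F1}\mathrm{D2}\mathrm{F1}$ the first $\mathrm{F1}$ gives $Q_{34}=0$ and $Q_{33}=Q_{33}^k+2\theta Q_{34}^k$; $\mathrm{D2}$ then acts on the unchanged $Q_{25}^k$, making the same changes to $Q_{23},Q_{25},P_{23}$ as it would have made to $(P^k,Q^k)$ and producing $Q_{34}=-Q_{25}^k$; the second $\mathrm{F1}$ finally yields $Q_{33}=Q_{33}^k+2\theta Q_{34}^k-2\theta Q_{25}^k$ with $Q_{34}=0$. In $\mathrm{D2}\mathrm{F1}$ the same $\mathrm{D2}$ step instead leaves $Q_{34}=Q_{34}^k-Q_{25}^k$, and $\mathrm{F1}$ then gives $Q_{33}=Q_{33}^k+2\theta(Q_{34}^k-Q_{25}^k)$ and $Q_{34}=0$. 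Since every other entry of $P$ and $Q$ is untouched by both operations, the two resulting pairs agree for each $\theta\in[\mu,L]$, which is the claimed identity.

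Next, writing $D^u=\{\mathrm{I},\mathrm{D1},\mathrm{D2}\}$, I would compute $\mathrm{F1}\,\mathrm{I}\,\mathrm{F1}=\mathrm{F1}\mathrm{F1}=\mathrm{F1}$ and $\mathrm{F1}\,\mathrm{D1}\,\mathrm{F1}=(\mathrm{F1}\mathrm{D1})\mathrm{F1}=\mathrm{F1}\mathrm{F1}=\mathrm{F1}$ using Lemmas~\ref{idempotent} and~\ref{F-D1}, and $\mathrm{F1}\,\mathrm{D2}\,\mathrm{F1}=\mathrm{D2}\mathrm{F1}$ from the step above. Since the outputs $\mathrm{F1}=\mathrm{I}\mathrm{F1}$ and $\mathrm{D2}\mathrm{F1}$ both arise from sequences in $D^u\mathrm{F1}$, this gives $R(\mathrm{F1}D^u\mathrm{F1})\subseteq R(D^u\mathrm{F1})$, i.e.\ $\mathrm{F1}D^u\mathrm{F1}=D^u\mathrm{F1}$ in the convention used here. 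For the last identity, $R(D^u\mathrm{F1})\subseteq R(FD^u\mathrm{F1})$ is immediate because $\mathrm{I}\in F$, while $FD^u\mathrm{F1}=\mathrm{I}D^u\mathrm{F1}\cup\mathrm{F1}D^u\mathrm{F1}$ with the first part equal to $D^u\mathrm{F1}$ and the second part contained in $R(D^u\mathrm{F1})$ by what has just been shown; hence $R(FD^u\mathrm{F1})=R(D^u\mathrm{F1})$, that is $FD^u\mathrm{F1}=D^u\mathrm{F1}$.

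I expect the main obstacle to be purely the bookkeeping in the core identity: one must be careful that the two invocations of $\mathrm{F1}$ use the \emph{same} parameter $\theta$ (so that $2\theta Q_{34}^k$ from the first and $-2\theta Q_{25}^k$ from the second combine into $2\theta(Q_{34}^k-Q_{25}^k)$), and that $\mathrm{D2}$ really does ignore $Q_{33}$ and $Q_{34}$, so that swapping the order of $\mathrm{D2}$ and the outer $\mathrm{F1}$ does not alter what $\mathrm{D2}$ sees. Once these points are pinned down the rest is a direct check against the tabulated update rules.
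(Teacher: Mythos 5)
Your proposal is correct and follows essentially the same route as the paper: the paper likewise reduces $FD^u\mathrm{F1}=D^u\mathrm{F1}$ to the three cases $\mathrm{F1}\mathrm{I}\mathrm{F1}$, $\mathrm{F1}\mathrm{D1}\mathrm{F1}$, and $\mathrm{F1}\mathrm{D2}\mathrm{F1}$, disposing of the first two via idempotence and the relation with $\mathrm{D1}$, and of the third by the entrywise check you carry out. The only difference is presentational — the paper leaves the entry-tracking for $\mathrm{F1}\mathrm{D2}\mathrm{F1}=\mathrm{D2}\mathrm{F1}$ as ``direct computation,'' whereas you spell it out (correctly, including the point that both invocations of $\mathrm{F1}$ use the same $\theta$).
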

\begin{proof}
    Since $R(D^u\mathrm{F1}) \subseteq R(FD^u\mathrm{F1})$ is immediate, it suffices to show $R(FD^u\mathrm{F1}) \subseteq R(D^u\mathrm{F1})$.  
    The inclusion $R(\mathrm{I}D^u\mathrm{F1}) \subseteq R(D^u\mathrm{F1})$ is also immediate.  
    Thus, we need to show
    \[
        R(\mathrm{F1}\mathrm{I}\mathrm{F1}) \subseteq R(D^u\mathrm{F1}),\quad
        R(\mathrm{F1}\mathrm{D1}\mathrm{F1}) \subseteq R(D^u\mathrm{F1}),\quad
        R(\mathrm{F1}\mathrm{D2}\mathrm{F1}) \subseteq R(D^u\mathrm{F1}).
    \]
    Since $\mathrm{F1}\mathrm{I}\mathrm{F1} = \mathrm{F1}\mathrm{F1} = \mathrm{F1}$, we obtain $R(\mathrm{F1}\mathrm{I}\mathrm{F1}) \subseteq R(D^u\mathrm{F1})$.  
    Direct computation shows $\mathrm{F1}\mathrm{D1}\mathrm{F1} = \mathrm{F1}$, implying $R(\mathrm{F1}\mathrm{D1}\mathrm{F1}) \subseteq R(D^u\mathrm{F1})$.  
    Direct computation also shows $\mathrm{F1}\mathrm{D2}\mathrm{F1} = \mathrm{D2}\mathrm{F1}$, implying $R(\mathrm{F1}\mathrm{D2}\mathrm{F1}) \subseteq R(D^u\mathrm{F1})$.  
    Therefore, $FD^u\mathrm{F1} = D^u\mathrm{F1}$ holds.
\end{proof}

\begin{lem}[Relation between $\mathrm{D2}$ and $D^d, \mathrm{F1}$]
    \label{D2-idempotent}
    The identities $\mathrm{D2}\mathrm{D3}\mathrm{D2} = \mathrm{D2}\mathrm{D3},\mathrm{D2}\mathrm{F1}\mathrm{D2} = \mathrm{D2}\mathrm{F1}$ hold, and thus $\mathrm{D2}D^d\mathrm{D2} = \mathrm{D2}D^d$ also holds.
\end{lem}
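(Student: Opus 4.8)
The plan is to exploit one structural feature of the update rules: each operation in Group~D, as well as Operation~F1, rescales a distinguished entry of $Q$ to zero, and its \emph{entire} action on the pair $(P,Q)$ is a scalar multiple of that entry. Concretely, Operation~D2 is the integration-by-parts step eliminating the coefficient of $\langle v_2,v_5\rangle$, so its update to $P$ and to every entry of $Q$ is proportional either to $Q_{25}^{k}$ or to $g(Q_{25}^{k})=\dot\gamma Q_{25}^{k}+\dot{Q_{25}^{k}}$ (see Appendix~\ref{sec:oper_Group_D}). Hence, whenever $Q_{25}\equiv 0$ as a function of $s$, we also have $g(Q_{25})\equiv 0$, and Operation~D2 acts as the identity on $(P,Q)$. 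The identities $\mathrm{D2}\mathrm{D3}\mathrm{D2}=\mathrm{D2}\mathrm{D3}$ and $\mathrm{D2}\mathrm{F1}\mathrm{D2}=\mathrm{D2}\mathrm{F1}$ will then follow as soon as I check that the middle operation (D3 or F1) does not reintroduce a nonzero $Q_{25}$.

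First I would record that, by construction, applying D2 to any pair $(P,Q)$ yields a pair with $Q_{25}=0$. Next I would inspect the update formulas of the two possible middle operations: from Table~\ref{D_single}, D3 only alters $P_{12},P_{23},Q_{12},Q_{14},Q_{23}$, leaving $Q_{25}$ unchanged; and from Appendix~\ref{sec:oper_Group_F}, F1 only alters $Q_{33}$ and $Q_{34}$, again leaving $Q_{25}$ unchanged. Therefore after $\mathrm{D2}\mathrm{D3}$ (resp. after $\mathrm{D2}\mathrm{F1}$) the entry $Q_{25}$ is still identically zero, so the trailing $\mathrm{D2}$ is the identity, which gives the two claimed identities. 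As a cross-check, the first identity can also be read off by comparing the row of Table~\ref{D_triple} for D2, D3, D2 applied in this order with the row of Table~\ref{D_double} for D2 then D3: they coincide in every tracked entry, and all other entries are left untouched by Group~D operations.

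To obtain $\mathrm{D2}D^d\mathrm{D2}=\mathrm{D2}D^d$, recall that $D^d=\{\mathrm{I},\mathrm{D3},\mathrm{D4}\}$, so it suffices to prove $\mathrm{D2}\,x\,\mathrm{D2}=\mathrm{D2}\,x$ for each $x\in D^d$ and then take the union over $x$. For $x=\mathrm{I}$ this is $\mathrm{D2}\mathrm{D2}=\mathrm{D2}$, which is Lemma~\ref{idempotent}. For $x=\mathrm{D3}$ it is the identity just proved. For $x=\mathrm{D4}$ the same argument applies verbatim, since Table~\ref{D_single} shows D4 leaves $Q_{25}$ unchanged (alternatively, one can invoke the commutativity of D2 and D4 established in Appendix~\ref{sec:comb_operations_each_group}, so that $\mathrm{D2}\mathrm{D4}\mathrm{D2}=\mathrm{D2}\mathrm{D2}\mathrm{D4}=\mathrm{D2}\mathrm{D4}$). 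Thus $R(\mathrm{D2}\,x\,\mathrm{D2})=R(\mathrm{D2}\,x)$ for every $x\in D^d$, and taking the union gives $R(\mathrm{D2}D^d\mathrm{D2})=R(\mathrm{D2}D^d)$, i.e. $\mathrm{D2}D^d\mathrm{D2}=\mathrm{D2}D^d$.

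I do not expect a genuine obstacle here; the argument is bookkeeping built on the ``zero distinguished entry $\Rightarrow$ identity'' principle. The one point that requires a little care is applying that principle to the full matrices $P$ and $Q$: I must make sure that D2's update vanishes in \emph{every} entry, not just in the seven Group~D entries, when $Q_{25}\equiv 0$ — which is exactly what the explicit formulas in Appendix~\ref{sec:oper_Group_D} show, since $g(Q_{25})$ vanishes together with $Q_{25}$ — and likewise that the tables invoked for D3 and D4 (Table~\ref{D_single}) and the formula for F1 (Appendix~\ref{sec:oper_Group_F}) really do account for every entry those operations can modify, so that $Q_{25}$ is provably untouched.
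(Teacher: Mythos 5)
Your proposal is correct. The paper's own proof of this lemma is just the one-line assertion that the identities ``can be verified by direct computation of each component,'' so you have not diverged from it so much as organized the same componentwise check around a clean principle: every entry of the D2 update (on both $P$ and $Q$) is a multiple of $Q_{25}^{k}$ or of $g(Q_{25}^{k})=\dot\gamma Q_{25}^{k}+\dot{Q_{25}^{k}}$, D2 zeroes $Q_{25}$, the middle operation (D3, D4, or F1) provably never writes to the $(2,5)$ entry, and an identically vanishing entry has vanishing derivative, so the trailing D2 acts as the identity. This ``zero distinguished entry $\Rightarrow$ identity'' argument is exactly the mechanism the paper already uses informally to justify idempotence throughout Appendix~\ref{sec:comb_operations_each_group}, and your version has the advantage of extending uniformly to the $\mathrm{D4}$ case needed for $\mathrm{D2}D^d\mathrm{D2}=\mathrm{D2}D^d$ and of making explicit the one point a naive table-lookup could miss, namely that the update must vanish in \emph{all} entries, not only the seven tracked in Table~\ref{D_single}. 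The only slip is cosmetic: D3 leaves $P_{23}$ unchanged (Table~\ref{D_single} lists $P_{23}^{k+1}=P_{23}^{k}$ for D3), so it should not appear in your list of entries D3 alters; since your claim there is only that $Q_{25}$ is untouched, this does not affect the argument.
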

\begin{proof}
    These can be verified by direct computation of each component.
\end{proof}

\begin{lem}[Relation between $\mathrm{D3}$ and $\mathrm{E1}$]
    \label{D3-idempotent}
    The identity $\mathrm{D3}\mathrm{E1}\mathrm{D3} = \mathrm{D3}\mathrm{E1}$ holds.
\end{lem}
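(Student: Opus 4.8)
The plan is to track the single matrix entry $Q_{23}$ through the composite operation. Recall from Appendix~\ref{sec:oper_Group_D} that Operation $\mathrm{D3}$ modifies only the entries $P_{12}$, $Q_{12}$, $Q_{14}$, and $Q_{23}$ of the pair $(P^k,Q^k)$, and that every one of its update terms is a linear function of $Q_{23}^k$: explicitly it subtracts $Q_{23}^k$ from $Q_{23}^k$ itself (so $Q_{23}^{k+1}=0$), subtracts $g(Q_{23}^k)=\dot\gamma Q_{23}^k+\dot{Q_{23}^k}$ from $Q_{12}^k$, subtracts $Q_{23}^k$ from $Q_{14}^k$, and adds $Q_{23}^k$ to $P_{12}^k$. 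First I would record the immediate consequence: if $Q_{23}^k$ is identically zero as a function of $t$, then $\dot{Q_{23}^k}$ is also identically zero, hence $g(Q_{23}^k)=0$, and every update term vanishes. In other words, $\mathrm{D3}$ acts as the identity on any pair whose $(2,3)$ entry is the zero function.

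Next I would check that the intermediate operation does not resurrect this entry. Inspecting the update rule for Operation $\mathrm{E1}$ in Appendix~\ref{sec:oper_Group_E}, the matrix subtracted from $Q^k$ has nonzero entries only in positions $(1,1)$, $(1,4)$, and $(4,1)$, and the matrix added to $P^k$ only in position $(1,1)$; in particular the $(2,3)$ entry of $Q$ is left untouched by $\mathrm{E1}$. Consequently, starting from an arbitrary pair $(P,Q)$, after applying $\mathrm{D3}$ we obtain a pair with vanishing $(2,3)$ entry, and after the subsequent $\mathrm{E1}$ the $(2,3)$ entry is still the zero function. Applying $\mathrm{D3}$ once more therefore does nothing, which gives $R_{P,Q}(\mathrm{D3}\,\mathrm{E1}\,\mathrm{D3})=R_{P,Q}(\mathrm{D3}\,\mathrm{E1})$ for every $(P,Q)$, i.e.\ $\mathrm{D3}\,\mathrm{E1}\,\mathrm{D3}=\mathrm{D3}\,\mathrm{E1}$.

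The argument is short, and the only point requiring genuine care is the second step: one must actually verify from the explicit formula that $\mathrm{E1}$ leaves $Q_{23}$ alone. This is precisely the kind of cross-group interference flagged in the Remark following~\eqref{eq:comp_of_P_Q_by_F} (there $\mathrm{E1}$ and $\mathrm{D3}$ do interfere, through the $Q_{14}$ entry), so the identity is not automatic from idempotence alone and genuinely relies on the support of $\mathrm{E1}$ being disjoint from the $(2,3)$ slot. No hard calculation is involved beyond reading off the two $5\times 5$ update matrices for $\mathrm{D3}$ and $\mathrm{E1}$ and noting which entries each one can alter.
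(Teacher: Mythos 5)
Your proof is correct and is essentially the paper's argument made explicit: the paper dispatches this lemma with ``verified by direct computation of each component,'' and your computation is exactly the relevant one — $\mathrm{D3}$ zeroes out $Q_{23}$ and all of its update terms are (derivatives of) multiples of $Q_{23}^k$, while $\mathrm{E1}$'s update matrices are supported only on $P_{11}$, $Q_{11}$, and $Q_{14}=Q_{41}$, so the final $\mathrm{D3}$ acts as the identity. Your closing observation correctly identifies why the claim is not a formal consequence of idempotence alone but does follow from the disjointness of $\mathrm{E1}$'s support from the $(2,3)$ slot.
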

\begin{proof}
    These can be verified by direct computation of each component.
\end{proof}

\subsubsection{Theorems}

\begin{thm}[Reduction of Seven Operation Sets]
    \label{reduction_7}
    The identity
    \[
        R(DEFDEFD) = R(DEFD) \cup R(D\mathrm{E1}\mathrm{D3}\mathrm{D2}\mathrm{F1}D) \cup R(D\mathrm{F1}\mathrm{D2}\mathrm{D3}\mathrm{E1}D)
    \]
    holds.
\end{thm}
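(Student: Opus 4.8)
The plan is to prove the two inclusions separately, the inclusion ``$\supseteq$'' being essentially trivial and the inclusion ``$\subseteq$'' requiring a finite, lemma-driven case analysis organized around expanding the central $D$-block.

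For ``$\supseteq$'': each of the three words on the right occurs as a specialization of $DEFDEFD$, because $\mathrm{I}$ lies in each of $D$, $E$, $F$, because $DD=D$ (Lemma~\ref{idempotent}), and because $\mathrm{D3}\mathrm{D2},\mathrm{D2}\mathrm{D3}\in D$ (Lemma~\ref{D-merge} and the list of Group~D combinations in Appendix~\ref{sec:comb_operations_each_group}). Concretely $R(DEFD)=R(DEFDD)=R(DEFD\cdot\mathrm{I}\cdot\mathrm{I}\cdot D)\subseteq R(DEFDEFD)$; and $D\mathrm{E1}\mathrm{D3}\mathrm{D2}\mathrm{F1}D$ is the word obtained from $DEFDEFD$ by letting the first $E$-round be $\mathrm{E1}$, the central $D$-round produce the element $\mathrm{D3}\mathrm{D2}$, the last $F$-round be $\mathrm{F1}$, and all other rounds be $\mathrm{I}$, so it lies in $R(DEFDEFD)$; similarly for $D\mathrm{F1}\mathrm{D2}\mathrm{D3}\mathrm{E1}D$.

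For ``$\subseteq$'': expand $R(DEFDEFD)=\bigcup R(De_1f_1De_2f_2D)$ over the sixteen choices $e_1,e_2\in\{\mathrm{I},\mathrm{E1}\}$, $f_1,f_2\in\{\mathrm{I},\mathrm{F1}\}$. If $(e_1,f_1)=(\mathrm{I},\mathrm{I})$ the word collapses to $De_2f_2D\subseteq R(DEFD)$, and symmetrically if $(e_2,f_2)=(\mathrm{I},\mathrm{I})$; this disposes of seven of the sixteen terms. For each of the nine remaining terms I would replace the central $D$ by the explicit union of its at most thirteen elements $d$ listed in Appendix~\ref{sec:comb_operations_each_group}, reducing the task to simplifying $D\,(e_1f_1)\,d\,(e_2f_2)\,D$ for each such $d$. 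The simplification uses only: the commutations $\mathrm{E1}D^u=D^u\mathrm{E1}$, $\mathrm{F1}D^d=D^d\mathrm{F1}$, $\mathrm{E1}\mathrm{F1}=\mathrm{F1}\mathrm{E1}$ (Lemmas~\ref{E-D^u}, \ref{F-D^d}, \ref{E-F}); the absorptions $\mathrm{E1}\mathrm{D4}=\mathrm{E1}$, $\mathrm{D4}\mathrm{E1}=\mathrm{D4}$, $\mathrm{D1}\mathrm{F1}=\mathrm{D1}$, $\mathrm{F1}\mathrm{D1}=\mathrm{F1}$ (Lemmas~\ref{E-D4}, \ref{F-D1}); the idempotence of $\mathrm{E1}$ and $\mathrm{F1}$ (Lemma~\ref{idempotent}); the two-sided absorption of a $D$-block (Lemma~\ref{D-merge}, which also gives that any product of two or three Group~D operations lies in $D$); and the ``closure'' relations $\mathrm{E1}D^d\mathrm{E1}\subseteq D^d\mathrm{E1}$, $\mathrm{F1}D^u\mathrm{F1}\subseteq D^u\mathrm{F1}$, $\mathrm{F1}\mathrm{D2}\mathrm{F1}=\mathrm{D2}\mathrm{F1}$, $\mathrm{D3}\mathrm{E1}\mathrm{D3}=\mathrm{D3}\mathrm{E1}$ (Lemmas~\ref{E-D^d}, \ref{F-D^u}, \ref{D2-idempotent}, \ref{D3-idempotent}). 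In every case the reduced word lands in $R(DEFD)$; the only obstructions to such a collapse are the crossed blocks $\mathrm{E1}\,\mathrm{D3}\,\mathrm{D2}\,\mathrm{F1}$ and $\mathrm{F1}\,\mathrm{D2}\,\mathrm{D3}\,\mathrm{E1}$, which arise when $d$ supplies $\mathrm{D3}\mathrm{D2}$ (resp. $\mathrm{D2}\mathrm{D3}$) flanked, after the commutations and absorptions, by the surviving $\mathrm{E1}$ and $\mathrm{F1}$, and which yield respectively $D\mathrm{E1}\mathrm{D3}\mathrm{D2}\mathrm{F1}D$ and $D\mathrm{F1}\mathrm{D2}\mathrm{D3}\mathrm{E1}D$.

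The main obstacle is combinatorial bookkeeping rather than any single hard idea: one must verify, for all nine terms and all $\leq 13$ central elements $d$, that the reductions terminate inside the three listed sets and never create a fourth word. The delicate point is the behaviour once both an $\mathrm{E1}$ and an $\mathrm{F1}$ survive into the reduced word: they commute with each other, but $\mathrm{F1}$ does not commute with $\mathrm{D2}\in D^u$ and $\mathrm{E1}$ does not commute with $\mathrm{D3}\in D^d$, so the block $\mathrm{E1}\,\mathrm{D3}\,\mathrm{D2}\,\mathrm{F1}$ genuinely cannot be shortened --- it is precisely this obstruction that Lemmas~\ref{D2-idempotent} and \ref{D3-idempotent} (together with \ref{E-D^d} and \ref{F-D^u}) are designed to bound, and confirming that they suffice to collapse all longer configurations, in particular the fully loaded term $D\mathrm{E1}\mathrm{F1}D\mathrm{E1}\mathrm{F1}D$, is where essentially all the effort goes.
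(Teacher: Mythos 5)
Your proposal is correct and follows essentially the same route as the paper: the containment ``$\supseteq$'' is immediate, and ``$\subseteq$'' reduces to the same nine nontrivial $(e_1f_1,e_2f_2)$ configurations, handled with the same commutation/absorption lemmas and terminating in exactly the two irreducible obstructions $D\mathrm{E1}\mathrm{D3}\mathrm{D2}\mathrm{F1}D$ and $D\mathrm{F1}\mathrm{D2}\mathrm{D3}\mathrm{E1}D$. The only (cosmetic) difference is that you propose enumerating the thirteen elements of the central $D$-block directly, whereas the paper factors it as $D^uD^dD^u$ (Lemma~\ref{D-split}) so that $\mathrm{E1}$ and $\mathrm{F1}$ can be commuted into the block piecewise; both organizations rely on the same lemmas and yield the same case analysis.
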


\begin{proof}
    The inclusion 
    \[
        R(DEFD) \cup R(D\mathrm{E1}\mathrm{D3}\mathrm{D2}\mathrm{F1}D) \cup R(D\mathrm{F1}\mathrm{D2}\mathrm{D3}\mathrm{E1}D) \subseteq R(DEFDEFD)
    \]
    is immediate. Therefore, it remains to show
    \[
        R(DEFDEFD) \subseteq R(DEFD) \cup R(D\mathrm{E1}\mathrm{D3}\mathrm{D2}\mathrm{F1}D) \cup R(D\mathrm{F1}\mathrm{D2}\mathrm{D3}\mathrm{E1}D).
    \]
    First, the identity $D\mathrm{I}\mathrm{I}DEFD = DEFD\mathrm{I}\mathrm{I}D = DEFD$ follows directly from Lemma~\ref{idempotent}. Hence, it suffices to establish the following nine inclusions:
    \begin{enumerate}
        \item $R(D\mathrm{E1}D\mathrm{E1}D) \subseteq R(DEFD)$
        \item $R(D\mathrm{E1}D\mathrm{F1}D) \subseteq R(DEFD) \cup R(D\mathrm{E1}\mathrm{D3}\mathrm{D2}\mathrm{F1}D)$
        \item $R(D\mathrm{E1}D\mathrm{E1}\mathrm{F1}D) \subseteq R(DEFD)$
        \item $R(D\mathrm{F1}D\mathrm{E1}D) \subseteq R(DEFD) \cup R(D\mathrm{F1}\mathrm{D2}\mathrm{D3}\mathrm{E1}D)$
        \item $R(D\mathrm{F1}D\mathrm{F1}D) \subseteq R(DEFD)$
        \item $R(D\mathrm{F1}D\mathrm{E1}\mathrm{F1}D) \subseteq R(DEFD) \cup R(D\mathrm{F1}\mathrm{D2}\mathrm{D3}\mathrm{E1}D)$
        \item $R(D\mathrm{E1}\mathrm{F1}D\mathrm{E1}D) \subseteq R(DEFD) \cup R(D\mathrm{F1}\mathrm{D2}\mathrm{D3}\mathrm{E1}D)$
        \item $R(D\mathrm{E1}\mathrm{F1}D\mathrm{F1}D) \subseteq R(DEFD) \cup R(D\mathrm{E1}\mathrm{D3}\mathrm{D2}\mathrm{F1}D)$
        \item $R(D\mathrm{E1}\mathrm{F1}D\mathrm{E1}\mathrm{F1}D) \subseteq R(DEFD) \cup R(D\mathrm{F1}\mathrm{D2}\mathrm{D3}\mathrm{E1}D)$
    \end{enumerate}
    We verify these one by one below.

    \begin{enumerate}
    \item For $R(D\mathrm{E1}D\mathrm{E1}D)$, we have
        \begin{equation}
            \begin{split}
            D\mathrm{E1}D\mathrm{E1}D 
            &= D\mathrm{E1}D^uD^dD^u\mathrm{E1}D \qquad\because\text{Lemma~\ref{D-split}} \\
            &= DD^u\mathrm{E1}D^d\mathrm{E1}D^uD \qquad\because\text{Lemma~\ref{E-D^u}} \\
            &= D\mathrm{E1}D^d\mathrm{E1}D \qquad\because\text{Lemma~\ref{D-merge}} \\
            &= DD^d\mathrm{E1}D \qquad\because\text{Lemma~\ref{E-D^d}} \\
            &= D\mathrm{E1}D. \qquad\because\text{Lemma~\ref{D-merge}}
            \end{split}
        \end{equation}
        Therefore, $R(D\mathrm{E1}D\mathrm{E1}D) \subseteq R(DEFD)$ indeed holds.
    \item For $R(D\mathrm{E1}D\mathrm{F1}D)$, we have
        \begin{equation}
            \begin{split}
            D\mathrm{E1}D\mathrm{F1}D &= D\mathrm{E1}D^uD^dD^u\mathrm{F1}D \qquad\because\text{Lemma~\ref{D-split}} \\
            &= DD^u\mathrm{E1}D^dD^u\mathrm{F1}D \qquad\because\text{Lemma~\ref{E-D^u}} \\
            &= D\mathrm{E1}D^dD^u\mathrm{F1}D. \qquad\because\text{Lemma~\ref{D-merge}}
            \end{split}
        \end{equation}
        Thus, it suffices to consider the three cases  
        $D\mathrm{E1}\mathrm{I}D^u\mathrm{F1}D$, $D\mathrm{E1}\mathrm{D4}D^u\mathrm{F1}D$, and $D\mathrm{E1}\mathrm{D3}D^u\mathrm{F1}D$.

        First, we have
        \begin{equation}
            \begin{split}
            D\mathrm{E1}\mathrm{I}D^u\mathrm{F1}D &= D\mathrm{E1}D^u\mathrm{F1}D \\
            &= DD^u\mathrm{E1}\mathrm{F1}D \qquad\because\text{Lemma~\ref{E-D^u}} \\
            &= D\mathrm{E1}\mathrm{F1}D. \qquad\because\text{Lemma~\ref{D-merge}}
            \end{split}
        \end{equation}
        Therefore, $R(D\mathrm{E1}\mathrm{I}D^u\mathrm{F1}D) \subseteq R(DEFD)$ holds.

        Next, we have
        \begin{equation}
            \begin{split}
            D\mathrm{E1}\mathrm{D4}D^u\mathrm{F1}D &= D\mathrm{E1}D^u\mathrm{F1}D \qquad\because\text{Lemma~\ref{E-D4}} \\
            &= DD^u\mathrm{E1}\mathrm{F1}D \qquad\because\text{Lemma~\ref{E-D^u}} \\
            &= D\mathrm{E1}\mathrm{F1}D. \qquad\because\text{Lemma~\ref{D-merge}}
            \end{split}
        \end{equation}
        Hence, $R(D\mathrm{E1}\mathrm{D4}D^u\mathrm{F1}D) \subseteq R(DEFD)$ holds.

        Finally, for $D\mathrm{E1}\mathrm{D3}D^u\mathrm{F1}D$, we decompose it into $D\mathrm{E1}\mathrm{D3}\mathrm{I}\mathrm{F1}D$, $D\mathrm{E1}\mathrm{D3}\mathrm{D1}\mathrm{F1}D$, and $D\mathrm{E1}\mathrm{D3}\mathrm{D2}\mathrm{F1}D$.

        First, we have
        \begin{equation}
            \begin{split}
            D\mathrm{E1}\mathrm{D3}\mathrm{I}\mathrm{F1}D &= D\mathrm{E1}\mathrm{D3}\mathrm{F1}D \\
            &= D\mathrm{E1}\mathrm{F1}\mathrm{D3}D \qquad\because\text{Lemma~\ref{F-D^d}} \\
            &= D\mathrm{E1}\mathrm{F1}D. \qquad\because\text{Lemma~\ref{D-merge}}
            \end{split}
        \end{equation}
        Thus, $R(D\mathrm{E1}\mathrm{D3}\mathrm{I}\mathrm{F1}D) \subseteq R(DEFD)$.

        Next, we have
        \begin{equation}
            \begin{split}
            D\mathrm{E1}\mathrm{D3}\mathrm{D1}\mathrm{F1}D &= D\mathrm{E1}\mathrm{D3}\mathrm{D1}D \qquad\because\text{Lemma~\ref{F-D1}} \\
            &= DED. \qquad\because\text{Lemma~\ref{D-merge}}
            \end{split}
        \end{equation}
        Hence, $R(D\mathrm{E1}\mathrm{D3}\mathrm{D1}\mathrm{F1}D) \subseteq R(DEFD)$.Then, the only remaining term is $D\mathrm{E1}\mathrm{D3}\mathrm{D2}\mathrm{F1}D$.\\ Therefore, $R(D\mathrm{E1}D\mathrm{F1}D) \subseteq R(DEFD) \cup R(D\mathrm{E1}\mathrm{D3}\mathrm{D2}\mathrm{F1}D)$ holds.
    \item For $R(D\mathrm{E1}D\mathrm{E1}\mathrm{F1}D)$, we have
        \begin{equation}
            \begin{split}
            D\mathrm{E1}D\mathrm{E1}\mathrm{F1}D 
            &= D\mathrm{E1}D^uD^dD^u\mathrm{E1}\mathrm{F1}D \qquad\because\text{Lemma~\ref{D-split}} \\
            &= DD^u\mathrm{E1}D^d\mathrm{E1}D^u\mathrm{F1}D \qquad\because\text{Lemma~\ref{E-D^u}} \\
            &= DD^uD^d\mathrm{E1}D^u\mathrm{F1}D \qquad\because\text{Lemma~\ref{E-D^d}} \\
            &= DD^uD^dD^u\mathrm{E1}\mathrm{F1}D \qquad\because\text{Lemma~\ref{E-D^u}} \\
            &= D\mathrm{E1}\mathrm{F1}D. \qquad\because\text{Lemma~\ref{D-merge}}
            \end{split}
        \end{equation}
        Therefore, $R(D\mathrm{E1}D\mathrm{E1}\mathrm{F1}D) \subseteq R(DEFD)$ indeed holds.
    \item For $R(D\mathrm{F1}D\mathrm{E1}D)$, we have
        \begin{equation}
            \begin{split}
            D\mathrm{F1}D\mathrm{E1}D 
            &= D\mathrm{F1}D^uD^dD^u\mathrm{E1}D \qquad\because\text{Lemma~\ref{D-split}} \\
            &= D\mathrm{F1}D^uD^d\mathrm{E1}D^uD \qquad\because\text{Lemma~\ref{E-D^u}} \\
            &= D\mathrm{F1}D^uD^d\mathrm{E1}D. \qquad\because\text{Lemma~\ref{D-merge}}
            \end{split}
        \end{equation}
        Thus, it suffices to consider the three cases $D\mathrm{F1}\mathrm{I}D^d\mathrm{E1}D$, $D\mathrm{F1}\mathrm{D1}D^d\mathrm{E1}D$, and $D\mathrm{F1}\mathrm{D2}D^d\mathrm{E1}D$.

        First, we have
        \begin{equation}
            \begin{split}
            D\mathrm{F1}\mathrm{I}D^d\mathrm{E1}D &= D\mathrm{F1}D^d\mathrm{E1}D \\
            &= DD^d\mathrm{F1}\mathrm{E1}D \qquad\because\text{Lemma~\ref{F-D^d}} \\
            &= D\mathrm{F1}\mathrm{E1}D \qquad\because\text{Lemma~\ref{D-merge}} \\
            &= D\mathrm{E1}\mathrm{F1}D. \qquad\because\text{Lemma~\ref{E-F}}
            \end{split}
        \end{equation}
        Hence, $R(D\mathrm{F1}\mathrm{I}D^d\mathrm{E1}D) \subseteq R(DEFD)$ holds.

        Next, we have
        \begin{equation}
            \begin{split}
            D\mathrm{F1}\mathrm{D1}D^d\mathrm{E1}D &= D\mathrm{F1}D^d\mathrm{E1}D \qquad\because\text{Lemma~\ref{F-D1}} \\
            &= DD^d\mathrm{F1}\mathrm{E1}D \qquad\because\text{Lemma~\ref{F-D^d}} \\
            &= D\mathrm{F1}\mathrm{E1}D \qquad\because\text{Lemma~\ref{D-merge}} \\
            &= D\mathrm{E1}\mathrm{F1}D. \qquad\because\text{Lemma~\ref{E-F}}
            \end{split}
        \end{equation}
        Thus, $R(D\mathrm{F1}\mathrm{D1}D^d\mathrm{E1}D) \subseteq R(DEFD)$ holds.

        Finally, for $D\mathrm{F1}\mathrm{D2}D^d\mathrm{E1}D$, we decompose it into $D\mathrm{F1}\mathrm{D2}\mathrm{I}\mathrm{E1}D$, $D\mathrm{F1}\mathrm{D2}\mathrm{D4}\mathrm{E1}D$, and $D\mathrm{F1}\mathrm{D2}\mathrm{D3}\mathrm{E1}D$.

        First, we have
        \begin{equation}
            \begin{split}
            D\mathrm{F1}\mathrm{D2}\mathrm{I}\mathrm{E1}D &= D\mathrm{F1}\mathrm{D2}\mathrm{E1}D \\
            &= D\mathrm{F1}\mathrm{E1}\mathrm{D2}D \qquad\because\text{Lemma~\ref{E-D^u}} \\
            &= D\mathrm{F1}\mathrm{E1}D \qquad\because\text{Lemma~\ref{D-merge}} \\
            &= D\mathrm{E1}\mathrm{F1}D. \qquad\because\text{Lemma~\ref{E-F}}
            \end{split}
        \end{equation}
        Therefore, $R(D\mathrm{F1}\mathrm{D2}\mathrm{I}\mathrm{E1}D) \subseteq R(DEFD)$ holds.

        Next, we have
        \begin{equation}
            \begin{split}
            D\mathrm{F1}\mathrm{D2}\mathrm{D4}\mathrm{E1}D &= D\mathrm{F1}\mathrm{D2}\mathrm{D4}D \qquad\because\text{Lemma~\ref{E-D4}} \\
            &= D\mathrm{F1}D. \qquad\because\text{Lemma~\ref{D-merge}}
            \end{split}
        \end{equation}
        Hence, $R(D\mathrm{F1}\mathrm{D2}\mathrm{D4}\mathrm{E1}D) \subseteq R(DEFD)$.

        The only remaining term is $D\mathrm{F1}\mathrm{D2}\mathrm{D3}\mathrm{E1}D$. Thus,$R(D\mathrm{F1}D\mathrm{E1}D) \subseteq R(DEFD) \cup R(D\mathrm{F1}\mathrm{D2}\mathrm{D3}\mathrm{E1}D)$ also holds.
    \item For $R(D\mathrm{F1}D\mathrm{F1}D)$, we have
        \begin{equation}
            \begin{split}
            D\mathrm{F1}D\mathrm{F1}D 
            &= D\mathrm{F1}D^uD^dD^u\mathrm{F1}D. \qquad\because\text{Lemma~\ref{D-split}}
            \end{split}
        \end{equation}
        Thus, it suffices to consider the three cases $D\mathrm{F1}\mathrm{I}D^dD^u\mathrm{F1}D$, $D\mathrm{F1}\mathrm{D1}D^dD^u\mathrm{F1}D$, and $D\mathrm{F1}\mathrm{D2}D^dD^u\mathrm{F1}D$.

        First, we have
        \begin{equation}
            \begin{split}
            D\mathrm{F1}\mathrm{I}D^dD^u\mathrm{F1}D 
            &= D\mathrm{F1}D^dD^u\mathrm{F1}D \\
            &= DD^d\mathrm{F1}D^u\mathrm{F1}D \qquad\because\text{Lemma~\ref{F-D^d}} \\
            &= DD^dD^u\mathrm{F1}D \qquad\because\text{Lemma~\ref{F-D^u}} \\
            &= D\mathrm{F1}D. \qquad\because\text{Lemma~\ref{D-merge}}
            \end{split}
        \end{equation}
        Hence $R(D\mathrm{F1}\mathrm{I}D^dD^u\mathrm{F1}D) \subseteq R(DEFD)$ holds.

        Next, we have
        \begin{equation}
            \begin{split}
            D\mathrm{F1}\mathrm{D1}D^dD^u\mathrm{F1}D 
            &= D\mathrm{F1}D^dD^u\mathrm{F1}D \qquad\because\text{Lemma~\ref{F-D1}} \\
            &= DD^d\mathrm{F1}D^u\mathrm{F1}D \qquad\because\text{Lemma~\ref{F-D^d}} \\
            &= DD^dD^u\mathrm{F1}D \qquad\because\text{Lemma~\ref{F-D^u}} \\
            &= D\mathrm{F1}D. \qquad\because\text{Lemma~\ref{D-merge}}
            \end{split}
        \end{equation}
        Thus $R(D\mathrm{F1}\mathrm{D1}D^dD^u\mathrm{F1}D) \subseteq R(DEFD)$ holds.

        Finally, for $D\mathrm{F1}\mathrm{D2}D^dD^u\mathrm{F1}D$, we decompose it into $D\mathrm{F1}\mathrm{D2}D^d\mathrm{I}\mathrm{F1}D$, $D\mathrm{F1}\mathrm{D2}D^d\mathrm{D1}\mathrm{F1}D$, and \\$D\mathrm{F1}\mathrm{D2}D^d\mathrm{D2}\mathrm{F1}D$.

        First, we have
        \begin{equation}
            \begin{split}
            D\mathrm{F1}\mathrm{D2}D^d\mathrm{I}\mathrm{F1}D
            &= D\mathrm{F1}\mathrm{D2}D^d\mathrm{F1}D \\
            &= D\mathrm{F1}\mathrm{D2}\mathrm{F1}D^dD \qquad\because\text{Lemma~\ref{F-D^d}} \\
            &= D\mathrm{D2}\mathrm{F1}D^dD \qquad\because\text{Lemma~\ref{F-D^u}} \\
            &= D\mathrm{F1}D. \qquad\because\text{Lemma~\ref{D-merge}}
            \end{split}
        \end{equation}
        Thus $R(D\mathrm{F1}\mathrm{D2}D^d\mathrm{I}\mathrm{F1}D) \subseteq R(DEFD)$ holds.

        Next, we have
        \begin{equation}
            \begin{split}
            D\mathrm{F1}\mathrm{D2}D^d\mathrm{D1}\mathrm{F1}D
            &= D\mathrm{F1}\mathrm{D2}D^d\mathrm{D1}D \qquad\because\text{Lemma~\ref{F-D1}} \\
            &= D\mathrm{F1}D. \qquad\because\text{Lemma~\ref{D-merge}}
            \end{split}
        \end{equation}
        Hence $R(D\mathrm{F1}\mathrm{D2}D^d\mathrm{D1}\mathrm{F1}D) \subseteq R(DEFD)$ holds.

        Finally, we have
        \begin{equation}
            \begin{split}
            D\mathrm{F1}\mathrm{D2}D^d\mathrm{D2}\mathrm{F1}D
            &= D\mathrm{F1}\mathrm{D2}D^d\mathrm{F1}D \qquad\because\text{Lemma~\ref{D2-idempotent}} \\
            &= D\mathrm{F1}\mathrm{D2}\mathrm{F1}D^dD \qquad\because\text{Lemma~\ref{F-D^d}} \\
            &= D\mathrm{D2}\mathrm{F1}D^dD \qquad\because\text{Lemma~\ref{F-D^u}} \\
            &= D\mathrm{F1}D. \qquad\because\text{Lemma~\ref{D-merge}}
            \end{split}
        \end{equation}
        Thus $R(D\mathrm{F1}\mathrm{D2}D^d\mathrm{D2}\mathrm{F1}D) \subseteq R(DEFD)$ holds. Combining all cases, we conclude that  $R(D\mathrm{F1}D\mathrm{F1}D) \subseteq R(DEFD)$ indeed holds.

    \item For $R(D\mathrm{F1}D\mathrm{E1}\mathrm{F1}D)$, we have
        \begin{equation}
            \begin{split}
            D\mathrm{F1}D\mathrm{E1}\mathrm{F1}D
            &= D\mathrm{F1}D^uD^dD^u\mathrm{E1}\mathrm{F1}D \qquad\because\text{Lemma~\ref{D-split}} \\
            &= D\mathrm{F1}D^uD^d\mathrm{E1}D^u\mathrm{F1}D. \qquad\because\text{Lemma~\ref{E-D^u}}
            \end{split}
        \end{equation}
        Thus, it suffices to consider the three cases $D\mathrm{F1}\mathrm{I}D^d\mathrm{E1}D^u\mathrm{F1}D$, $D\mathrm{F1}\mathrm{D1}D^d\mathrm{E1}D^u\mathrm{F1}D$, and \\$D\mathrm{F1}\mathrm{D2}D^d\mathrm{E1}D^u\mathrm{F1}D$.

        First, we have
        \begin{equation}
            \begin{split}
            D\mathrm{F1}\mathrm{I}D^d\mathrm{E1}D^u\mathrm{F1}D
            &= D\mathrm{F1}D^d\mathrm{E1}D^u\mathrm{F1}D \\
            &= DD^d\mathrm{F1}\mathrm{E1}D^u\mathrm{F1}D \qquad\because\text{Lemma~\ref{F-D^d}} \\
            &= DD^d\mathrm{E1}\mathrm{F1}D^u\mathrm{F1}D \qquad\because\text{Lemma~\ref{E-F}} \\
            &= DD^d\mathrm{E1}D^u\mathrm{F1}D \qquad\because\text{Lemma~\ref{F-D^u}} \\
            &= DD^dD^u\mathrm{E1}\mathrm{F1}D \qquad\because\text{Lemma~\ref{E-D^u}} \\
            &= D\mathrm{E1}\mathrm{F1}D. \qquad\because\text{Lemma~\ref{D-merge}}
            \end{split}
        \end{equation}
        Hence $R(D\mathrm{F1}\mathrm{I}D^d\mathrm{E1}D^u\mathrm{F1}D) \subseteq R(DEFD)$ holds.

        Next, we have
        \begin{equation}
            \begin{split}
            D\mathrm{F1}\mathrm{D1}D^d\mathrm{E1}D^u\mathrm{F1}D
            &= D\mathrm{F1}D^d\mathrm{E1}D^u\mathrm{F1}D \qquad\because\text{Lemma~\ref{F-D1}} \\
            &= DD^d\mathrm{F1}\mathrm{E1}D^u\mathrm{F1}D \qquad\because\text{Lemma~\ref{F-D^d}} \\
            &= DD^d\mathrm{E1}\mathrm{F1}D^u\mathrm{F1}D \qquad\because\text{Lemma~\ref{E-F}} \\
            &= DD^d\mathrm{E1}D^u\mathrm{F1}D \qquad\because\text{Lemma~\ref{F-D^u}} \\
            &= DD^dD^u\mathrm{E1}\mathrm{F1}D \qquad\because\text{Lemma~\ref{E-D^u}} \\
            &= D\mathrm{E1}\mathrm{F1}D. \qquad\because\text{Lemma~\ref{D-merge}}
            \end{split}
        \end{equation}
        Thus $R(D\mathrm{F1}\mathrm{D1}D^d\mathrm{E1}D^u\mathrm{F1}D) \subseteq R(DEFD)$ holds.

        Finally, for $D\mathrm{F1}\mathrm{D2}D^d\mathrm{E1}D^u\mathrm{F1}D$, we decompose it into $D\mathrm{F1}\mathrm{D2}D^d\mathrm{E1}\mathrm{I}\mathrm{F1}D$, $D\mathrm{F1}\mathrm{D2}D^d\mathrm{E1}\mathrm{D1}\mathrm{F1}D$, and \\$D\mathrm{F1}\mathrm{D2}D^d\mathrm{E1}\mathrm{D2}\mathrm{F1}D$.

        First, we have
        \begin{equation}
            \begin{split}
            D\mathrm{F1}\mathrm{D2}D^d\mathrm{E1}\mathrm{I}\mathrm{F1}D
            &= D\mathrm{F1}\mathrm{D2}D^d\mathrm{E1}\mathrm{F1}D \\
            &= D\mathrm{F1}\mathrm{D2}D^d\mathrm{F1}\mathrm{E1}D \qquad\because\text{Lemma~\ref{E-F}} \\
            &= D\mathrm{F1}\mathrm{D2}\mathrm{F1}D^d\mathrm{E1}D \qquad\because\text{Lemma~\ref{F-D^d}} \\
            &= D\mathrm{D2}\mathrm{F1}D^d\mathrm{E1}D \qquad\because\text{Lemma~\ref{F-D^u}} \\
            &= D\mathrm{D2}D^d\mathrm{F1}\mathrm{E1}D \qquad\because\text{Lemma~\ref{F-D^d}} \\
            &= D\mathrm{F1}\mathrm{E1}D \qquad\because\text{Lemma~\ref{D-merge}} \\
            &= D\mathrm{E1}\mathrm{F1}D. \qquad\because\text{Lemma~\ref{E-F}}
            \end{split}
        \end{equation}
        Hence $R(D\mathrm{F1}\mathrm{D2}D^d\mathrm{E1}\mathrm{I}\mathrm{F1}D) \subseteq R(DEFD)$ holds.

        Next, we have
        \begin{equation}
            \begin{split}
            D\mathrm{F1}\mathrm{D2}D^d\mathrm{E1}\mathrm{D1}\mathrm{F1}D
            &= D\mathrm{F1}\mathrm{D2}D^d\mathrm{E1}\mathrm{D1}D \qquad\because\text{Lemma~\ref{F-D1}} \\
            &= D\mathrm{F1}\mathrm{D2}D^d\mathrm{E1}D. \qquad\because\text{Lemma~\ref{D-merge}}
            \end{split}
        \end{equation}
        Since $R(\mathrm{D2}D^d)\subseteq R(D)$, we have $R(D\mathrm{F1}\mathrm{D2}D^d\mathrm{E1}\mathrm{D1}\mathrm{F1}D) \subseteq R(D\mathrm{F1}D\mathrm{E1}D)$, and from Case 4 we have $R(D\mathrm{F1}\mathrm{D2}D^d\mathrm{E1}\mathrm{D1}\mathrm{F1}D) \subseteq R(D\mathrm{F1}D\mathrm{E1}D) \subseteq R(DEFD) \cup R(D\mathrm{F1}\mathrm{D2}\mathrm{D3}\mathrm{E1}D).$

        Finally, we have
        \begin{equation}
            \begin{split}
            D\mathrm{F1}\mathrm{D2}D^d\mathrm{E1}\mathrm{D2}\mathrm{F1}D
            &= D\mathrm{F1}\mathrm{D2}D^d\mathrm{D2}\mathrm{E1}\mathrm{F1}D \qquad\because\text{Lemma~\ref{E-D^u}} \\
            &= D\mathrm{F1}\mathrm{D2}D^d\mathrm{E1}\mathrm{F1}D \qquad\because\text{Lemma~\ref{D2-idempotent}} \\
            &= D\mathrm{F1}\mathrm{D2}D^d\mathrm{F1}\mathrm{E1}D \qquad\because\text{Lemma~\ref{E-F}} \\
            &= D\mathrm{F1}\mathrm{D2}\mathrm{F1}D^d\mathrm{E1}D \qquad\because\text{Lemma~\ref{F-D^d}} \\
            &= D\mathrm{D2}\mathrm{F1}D^d\mathrm{E1}D \qquad\because\text{Lemma~\ref{F-D^u}} \\
            &= D\mathrm{D2}D^d\mathrm{F1}\mathrm{E1}D \qquad\because\text{Lemma~\ref{F-D^d}} \\
            &= D\mathrm{F1}\mathrm{E1}D \qquad\because\text{Lemma~\ref{D-merge}} \\
            &= D\mathrm{E1}\mathrm{F1}D. \qquad\because\text{Lemma~\ref{E-F}}
            \end{split}
        \end{equation}
        Thus $R(D\mathrm{F1}\mathrm{D2}D^d\mathrm{E1}\mathrm{D2}\mathrm{F1}D) \subseteq R(DEFD)$ holds. Combining all cases, we obtain $R(D\mathrm{F1}D\mathrm{E1}\mathrm{F1}D) \subseteq R(DEFD) \cup R(D\mathrm{F1}\mathrm{D2}\mathrm{D3}\mathrm{E1}D)$.
    \item For $R(D\mathrm{E1}\mathrm{F1}D\mathrm{E1}D)$, we have
        \begin{equation}
            \begin{split}
            D\mathrm{E1}\mathrm{F1}D\mathrm{E1}D
            &= D\mathrm{E1}\mathrm{F1}D^uD^dD^u\mathrm{E1}D \qquad\because\text{Lemma~\ref{D-split}} \\
            &= D\mathrm{F1}\mathrm{E1}D^uD^dD^u\mathrm{E1}D \qquad\because\text{Lemma~\ref{E-F}} \\
            &= D\mathrm{F1}D^u\mathrm{E1}D^d\mathrm{E1}D^uD \qquad\because\text{Lemma~\ref{E-D^u}} \\
            &= D\mathrm{F1}D^uD^d\mathrm{E1}D^uD \qquad\because\text{Lemma~\ref{E-D^d}} \\
            &= D\mathrm{F1}D^uD^dD^u\mathrm{E1}D \qquad\because\text{Lemma~\ref{E-D^u}} \\
            &= D\mathrm{F1}D\mathrm{E1}D. \qquad\because\text{Lemma~\ref{D-split}}
            \end{split}
        \end{equation}
        Thus, we have $R(D\mathrm{E1}\mathrm{F1}D\mathrm{E1}D) = R(D\mathrm{F1}D\mathrm{E1}D)$, and from Case 4 we have \\$R(D\mathrm{E1}\mathrm{F1}D\mathrm{E1}D) = R(D\mathrm{F1}D\mathrm{E1}D) \subseteq R(DEFD) \cup R(D\mathrm{F1}\mathrm{D2}\mathrm{D3}\mathrm{E1}D).$
    \item For $R(D\mathrm{E1}\mathrm{F1}D\mathrm{F1}D)$, we have
        \begin{equation}
            \begin{split}
            D\mathrm{E1}\mathrm{F1}D\mathrm{F1}D
            &= D\mathrm{E1}\mathrm{F1}D^uD^dD^u\mathrm{F1}D. \qquad\because\text{Lemma~\ref{D-split}}
            \end{split}
        \end{equation}
        Thus, it suffices to consider the three cases $D\mathrm{E1}\mathrm{F1}D^uD^d\mathrm{I}\mathrm{F1}D$, $D\mathrm{E1}\mathrm{F1}D^uD^d\mathrm{D1}\mathrm{F1}D$, and \\$D\mathrm{E1}\mathrm{F1}D^uD^d\mathrm{D2}\mathrm{F1}D$. First, we have
        \begin{equation}
            \begin{split}
            D\mathrm{E1}\mathrm{F1}D^uD^d\mathrm{I}\mathrm{F1}D
            &= D\mathrm{E1}\mathrm{F1}D^uD^d\mathrm{F1}D \\
            &= D\mathrm{E1}\mathrm{F1}D^u\mathrm{F1}D^dD \qquad\because\text{Lemma~\ref{F-D^d}} \\
            &= D\mathrm{E1}D^u\mathrm{F1}D^dD \qquad\because\text{Lemma~\ref{F-D^u}} \\
            &= DD^u\mathrm{E1}\mathrm{F1}D^dD \qquad\because\text{Lemma~\ref{E-D^u}} \\
            &= D\mathrm{E1}\mathrm{F1}D. \qquad\because\text{Lemma~\ref{D-merge}}
            \end{split}
        \end{equation}
        Hence $R(D\mathrm{E1}\mathrm{F1}D^uD^d\mathrm{I}\mathrm{F1}D) \subseteq R(DEFD)$ holds. Next, we have
        \begin{equation}
            \begin{split}
            D\mathrm{E1}\mathrm{F1}D^uD^d\mathrm{D1}\mathrm{F1}D
            &= D\mathrm{E1}\mathrm{F1}D^uD^d\mathrm{D1}D \qquad\because\text{Lemma~\ref{F-D1}} \\
            &= D\mathrm{E1}\mathrm{F1}D. \qquad\because\text{Lemma~\ref{D-merge}}
            \end{split}
        \end{equation}
        Thus $R(D\mathrm{E1}\mathrm{F1}D^uD^d\mathrm{D1}\mathrm{F1}D) \subseteq R(DEFD)$ holds. Finally, for $D\mathrm{E1}\mathrm{F1}D^uD^d\mathrm{D2}\mathrm{F1}D$, we decompose it into $D\mathrm{E1}\mathrm{F1}\mathrm{I}D^d\mathrm{D2}\mathrm{F1}D$, $D\mathrm{E1}\mathrm{F1}\mathrm{D1}D^d\mathrm{D2}\mathrm{F1}D$, and $D\mathrm{E1}\mathrm{F1}\mathrm{D2}D^d\mathrm{D2}\mathrm{F1}D$. First, we have
        \begin{equation}
            \begin{split}
            D\mathrm{E1}\mathrm{F1}\mathrm{I}D^d\mathrm{D2}\mathrm{F1}D
            &= D\mathrm{E1}\mathrm{F1}D^d\mathrm{D2}\mathrm{F1}D \\
            &= D\mathrm{E1}D^d\mathrm{F1}\mathrm{D2}\mathrm{F1}D \qquad\because\text{Lemma~\ref{F-D^d}} \\
            &= D\mathrm{E1}D^d\mathrm{D2}\mathrm{F1}D. \qquad\because\text{Lemma~\ref{F-D^u}}
            \end{split}
        \end{equation}
        Thus $R(D\mathrm{E1}\mathrm{F1}\mathrm{I}D^d\mathrm{D2}\mathrm{F1}D) \subseteq R(D\mathrm{E1}D\mathrm{F1}D)$, and from Case 2, \\$R(D\mathrm{E1}\mathrm{F1}\mathrm{I}D^d\mathrm{D2}\mathrm{F1}D) \subseteq R(D\mathrm{E1}D\mathrm{F1}D) \subseteq R(DEFD) \cup R(D\mathrm{E1}\mathrm{D3}\mathrm{D2}\mathrm{F1}D)$ holds. Next, we have
        \begin{equation}
            \begin{split}
            D\mathrm{E1}\mathrm{F1}\mathrm{D1}D^d\mathrm{D2}\mathrm{F1}D
            &= D\mathrm{E1}\mathrm{F1}D^d\mathrm{D2}\mathrm{F1}D \qquad\because\text{Lemma~\ref{F-D1}} \\
            &= D\mathrm{E1}D^d\mathrm{F1}\mathrm{D2}\mathrm{F1}D \qquad\because\text{Lemma~\ref{F-D^d}} \\
            &= D\mathrm{E1}D^d\mathrm{D2}\mathrm{F1}D. \qquad\because\text{Lemma~\ref{F-D^u}}
            \end{split}
        \end{equation}
        Thus $R(D\mathrm{E1}\mathrm{F1}\mathrm{D1}D^d\mathrm{D2}\mathrm{F1}D) \subseteq R(D\mathrm{E1}D\mathrm{F1}D)$, and therefore from Case 2, $R(D\mathrm{E1}\mathrm{F1}\mathrm{D1}D^d\mathrm{D2}\mathrm{F1}D) \subseteq R(D\mathrm{E1}D\mathrm{F1}D) \subseteq R(DEFD) \cup R(D\mathrm{E1}\mathrm{D3}\mathrm{D2}\mathrm{F1}D)$ holds. Finally, we have
        \begin{equation}
            \begin{split}
            D\mathrm{E1}\mathrm{F1}\mathrm{D2}D^d\mathrm{D2}\mathrm{F1}D
            &= D\mathrm{E1}\mathrm{F1}\mathrm{D2}D^d\mathrm{F1}D \qquad\because\text{Lemma~\ref{D2-idempotent}} \\
            &= D\mathrm{E1}\mathrm{F1}\mathrm{D2}\mathrm{F1}D^dD \qquad\because\text{Lemma~\ref{F-D^d}} \\
            &= D\mathrm{E1}\mathrm{D2}\mathrm{F1}D^dD \qquad\because\text{Lemma~\ref{F-D^u}} \\
            &= D\mathrm{E1}\mathrm{D2}\mathrm{F1}D. \qquad\because\text{Lemma~\ref{D-merge}}
            \end{split}
        \end{equation}
        Thus $R(D\mathrm{E1}\mathrm{F1}\mathrm{D2}D^d\mathrm{D2}\mathrm{F1}D) \subseteq R(D\mathrm{E1}D\mathrm{F1}D)$, and hence from Case 2, \\$R(D\mathrm{E1}\mathrm{F1}\mathrm{D2}D^d\mathrm{D2}\mathrm{F1}D)
        \subseteq R(D\mathrm{E1}D\mathrm{F1}D)
        \subseteq R(DEFD) \cup R(D\mathrm{E1}\mathrm{D3}\mathrm{D2}\mathrm{F1}D)$ holds. Therefore, \\$R(D\mathrm{E1}\mathrm{F1}D\mathrm{F1}D) \subseteq R(DEFD) \cup R(D\mathrm{E1}\mathrm{D3}\mathrm{D2}\mathrm{F1}D)$ holds.
    \item For $R(D\mathrm{E1}\mathrm{F1}D\mathrm{E1}\mathrm{F1}D)$, we have
        \begin{equation}
            \begin{split}
            D\mathrm{E1}\mathrm{F1}D\mathrm{E1}\mathrm{F1}D
            &= D\mathrm{E1}\mathrm{F1}D^uD^dD^u\mathrm{E1}\mathrm{F1}D \qquad\because\text{Lemma~\ref{D-split}} \\
            &= D\mathrm{F1}\mathrm{E1}D^uD^dD^u\mathrm{E1}\mathrm{F1}D \qquad\because\text{Lemma~\ref{E-F}} \\
            &= D\mathrm{F1}D^u\mathrm{E1}D^d\mathrm{E1}D^u\mathrm{F1}D \qquad\because\text{Lemma~\ref{E-D^u}} \\
            &= D\mathrm{F1}D^uD^d\mathrm{E1}D^u\mathrm{F1}D \qquad\because\text{Lemma~\ref{E-D^d}} \\
            &= D\mathrm{F1}D^uD^dD^u\mathrm{E1}\mathrm{F1}D \qquad\because\text{Lemma~\ref{E-D^u}} \\
            &= D\mathrm{F1}D\mathrm{E1}\mathrm{F1}D. \qquad\because\text{Lemma~\ref{D-split}}
            \end{split}
        \end{equation}
        Thus, we have $R(D\mathrm{E1}\mathrm{F1}D\mathrm{E1}\mathrm{F1}D) = R(D\mathrm{F1}D\mathrm{E1}\mathrm{F1}D)$, and from Case 6, \\$R(D\mathrm{E1}\mathrm{F1}D\mathrm{E1}\mathrm{F1}D) = R(D\mathrm{F1}D\mathrm{E1}\mathrm{F1}D)\subseteq R(DEFD) \cup R(D\mathrm{F1}\mathrm{D2}\mathrm{D3}\mathrm{E1}D)$ holds.
\end{enumerate}

Combining all the above results, we conclude that
\[
    R(DEFDEFD)
        = R(DEFD) \cup R(D\mathrm{E1}\mathrm{D3}\mathrm{D2}\mathrm{F1}D) \cup R(D\mathrm{F1}\mathrm{D2}\mathrm{D3}\mathrm{E1}D)
\]
indeed holds.
\end{proof}

\begin{thm}[Reduction of Ten Operation Sets]
    \label{reduction_10}
    The identity
    \[
        R(DEFDEFDEFD)
            = R(DEFDEFD) \cup R(D\mathrm{F1}\mathrm{D2}\mathrm{D3}\mathrm{D1}\mathrm{E1}\mathrm{D3}\mathrm{D2}\mathrm{F1}D)
    \]
    holds.
\end{thm}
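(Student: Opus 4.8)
The plan is to reprise the architecture of the proof of Theorem~\ref{reduction_7}, this time using that theorem itself as the reduction engine. The inclusion $R(DEFDEFD)\cup R(D\mathrm{F1}\mathrm{D2}\mathrm{D3}\mathrm{D1}\mathrm{E1}\mathrm{D3}\mathrm{D2}\mathrm{F1}D)\subseteq R(DEFDEFDEFD)$ is immediate: $DEFDEFD$ is recovered from $DEFDEFDEFD$ by taking the last $E$- and $F$-blocks to be $\mathrm I$ and using $DD=D$ (Lemma~\ref{idempotent}), while the ten-symbol sequence $D\mathrm{F1}\mathrm{D2}\mathrm{D3}\mathrm{D1}\mathrm{E1}\mathrm{D3}\mathrm{D2}\mathrm{F1}D$ is realized inside $DEFDEFDEFD$ by choosing $E$-operations $\mathrm I,\mathrm{E1},\mathrm I$, $F$-operations $\mathrm{F1},\mathrm I,\mathrm{F1}$, and $D$-blocks $D,\mathrm{D2}\mathrm{D3}\mathrm{D1},\mathrm{D3}\mathrm{D2},D$, all of which are admissible members of the set $D$ by Appendix~\ref{sec:comb_operations_each_group}.

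For the reverse inclusion I would factor $DEFDEFDEFD=(DEFDEFD)(EFD)$ and apply Theorem~\ref{reduction_7}; since $R(\cdot)$ distributes over a union of reachable sets, this gives
\[
R(DEFDEFDEFD)=R(DEFD\cdot EFD)\cup R(D\mathrm{E1}\mathrm{D3}\mathrm{D2}\mathrm{F1}D\cdot EFD)\cup R(D\mathrm{F1}\mathrm{D2}\mathrm{D3}\mathrm{E1}D\cdot EFD).
\]
The first term is exactly $R(DEFDEFD)$. In each of the other two, the trailing $D$-block of the appended $EFD$ is absorbed ($Dd=D$, Lemma~\ref{D-merge}), so only its $E$- and $F$-operations matter; the case where both are $\mathrm I$ collapses back to $D\mathrm{E1}\mathrm{D3}\mathrm{D2}\mathrm{F1}D$ (resp.\ $D\mathrm{F1}\mathrm{D2}\mathrm{D3}\mathrm{E1}D$), which sits in $R(DEFDEFD)$ by Theorem~\ref{reduction_7}. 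This reduces the problem to the three extensions $S\mathrm{E1}D$, $S\mathrm{F1}D$, $S\mathrm{E1}\mathrm{F1}D$ of each of $S=D\mathrm{E1}\mathrm{D3}\mathrm{D2}\mathrm{F1}D$ and $S=D\mathrm{F1}\mathrm{D2}\mathrm{D3}\mathrm{E1}D$ — six sequences in all. Each is to be simplified by decomposing its interior $D$-blocks via $D=D^uD^dD^u$ (Lemma~\ref{D-split}), pushing $\mathrm{E1}$ through $D^u$ and $\mathrm{F1}$ through $D^d$ (Lemmas~\ref{E-D^u},~\ref{F-D^d}), invoking the sandwiched absorptions $ED^d\mathrm{E1}=D^d\mathrm{E1}$, $FD^u\mathrm{F1}=D^u\mathrm{F1}$, $\mathrm{E1}\mathrm{D4}=\mathrm{E1}$, $\mathrm{F1}\mathrm{D1}=\mathrm{F1}$ (Lemmas~\ref{E-D4},~\ref{F-D1},~\ref{E-D^d},~\ref{F-D^u}) and the $D$-absorptions, and re-using the individual Cases~1--9 in the proof of Theorem~\ref{reduction_7} whenever a subword such as $D\mathrm{E1}D\mathrm{F1}D$ or $D\mathrm{F1}D\mathrm{E1}D$ resurfaces. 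I expect five of the six to flow into $R(DEFDEFD)$ and the remaining one — originating in the $D\mathrm{F1}\mathrm{D2}\mathrm{D3}\mathrm{E1}D$ family, where a $\mathrm{D1}$ is produced as an interior $D$-block is split and recombined — to reduce precisely to $D\mathrm{F1}\mathrm{D2}\mathrm{D3}\mathrm{D1}\mathrm{E1}\mathrm{D3}\mathrm{D2}\mathrm{F1}D$, perhaps after recording one or two further direct-computation identities analogous to Lemmas~\ref{D2-idempotent} and~\ref{D3-idempotent}.

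The main obstacle is the bookkeeping, aggravated by the fact that the commutation relations hold only in sandwiched form: one may not move $\mathrm{E1}$ past $D^d$, nor $\mathrm{F1}$ past $D^u$, except when a matching $\mathrm{E1}$ (resp.\ $\mathrm{F1}$) is waiting on the far side to be absorbed (Lemmas~\ref{E-D^d},~\ref{F-D^u}). Consequently each of the six reductions must be performed in a carefully chosen order, and one must additionally confirm that the surviving sequence is not itself absorbable into the $DEFDEFD$ family, so that the right-hand side cannot be shrunk further. A related check — which is what makes three $EFD$-blocks the stabilization point and feeds into the subsequent counting — is that appending yet another $EFD$ produces no new reachable set; once the present case analysis is in hand, that argument should be essentially mechanical.
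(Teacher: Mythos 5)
Your setup is sound and matches the paper's skeleton for the first stage: the easy inclusion is argued the same way (and your explicit realization of the ten-symbol word inside $DEFDEFDEFD$ with $D$-blocks $D,\mathrm{D2}\mathrm{D3}\mathrm{D1},\mathrm{D3}\mathrm{D2},D$ is correct), and the first application of Theorem~\ref{reduction_7} to the prefix yields exactly the paper's three-term split. Your second stage diverges slightly in form: the paper enlarges $D\mathrm{E1}\mathrm{D3}\mathrm{D2}\mathrm{F1}DEFD$ to $D\mathrm{E1}DEFDEFD$ (and similarly for the $\mathrm{F1}$ family) and re-applies Theorem~\ref{reduction_7} to the suffix, which leaves four genuinely new sequences, two of which are dismissed by citing Cases~1 and~5 of that proof; you instead enumerate the six extensions $S\mathrm{E1}D$, $S\mathrm{F1}D$, $S\mathrm{E1}\mathrm{F1}D$. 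Both reductions are legitimate (your minor misstatement that the trailing $D$ of the appended $EFD$ is ``absorbed'' does not propagate, since your list retains it), and you correctly anticipate that the exceptional word arises in the $D\mathrm{F1}\mathrm{D2}\mathrm{D3}\mathrm{E1}D$ family via a $\mathrm{D1}$ emitted when an interior $D$-block is split.

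The genuine gap is that the decisive content of the theorem is asserted rather than proved. The statements ``I expect five of the six to flow into $R(DEFDEFD)$'' and ``the remaining one \ldots\ to reduce precisely to $D\mathrm{F1}\mathrm{D2}\mathrm{D3}\mathrm{D1}\mathrm{E1}\mathrm{D3}\mathrm{D2}\mathrm{F1}D$'' are exactly what must be established, and nothing in the proposal rules out that two of your six families each leave behind an irreducible residue, or that the surviving family leaves behind more than the single claimed word. In the paper this is where almost all the work lies: the sequence $D\mathrm{E1}D\mathrm{F1}\mathrm{D2}\mathrm{D3}\mathrm{E1}D$ collapses entirely into $R(DEFDEFD)$ only after a full $D=D^uD^dD^u$ case split, whereas the superficially symmetric $D\mathrm{F1}D\mathrm{E1}\mathrm{D3}\mathrm{D2}\mathrm{F1}D$ does not collapse and is the unique source of the new term --- an asymmetry (traceable to $\mathrm{E1}\mathrm{D4}=\mathrm{E1}$ versus $\mathrm{F1}\mathrm{D1}=\mathrm{F1}$ acting on different branches) that cannot be predicted without carrying out the computation. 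Moreover the claim that $R(DEFDEFD)\cup R(D\mathrm{F1}\mathrm{D2}\mathrm{D3}\mathrm{D1}\mathrm{E1}\mathrm{D3}\mathrm{D2}\mathrm{F1}D)$ really is the answer (i.e., that the surviving instance is not itself absorbable) does not need a separate check for the identity to hold --- the two inclusions already give equality --- but the six reduction chains themselves must be written out. As it stands the proposal is a viable plan with the right tools named, not a proof.
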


\begin{proof}
The inclusion 
\[
    R(DEFDEFD) \cup R(D\mathrm{F1}\mathrm{D2}\mathrm{D3}\mathrm{D1}\mathrm{E1}\mathrm{D3}\mathrm{D2}\mathrm{F1}D) \subseteq R(DEFDEFDEFD)
\]
is immediate. Therefore, it suffices to prove the reverse inclusion.  
From Theorem~\ref{reduction_7}, we have
\begin{equation}
    \label{10_split_1}
    R(DEFDEFDEFD)
        = R(DEFDEFD)
        \cup R(D\mathrm{E1}\mathrm{D3}\mathrm{D2}\mathrm{F1}DEFD)
        \cup R(D\mathrm{F1}\mathrm{D2}\mathrm{D3}\mathrm{E1}DEFD).
\end{equation}
Similarly, again by Theorem~\ref{reduction_7}, we also have
\begin{gather}
    \begin{split}
        R(D\mathrm{E1}\mathrm{D3}\mathrm{D2}\mathrm{F1}DEFD)
            &\subseteq R(D\mathrm{E1}DEFDEFD) \\
            &= R(D\mathrm{E1}DEFD)
                \cup R(D\mathrm{E1}D\mathrm{E1}\mathrm{D3}\mathrm{D2}\mathrm{F1}D)
                \cup R(D\mathrm{E1}D\mathrm{F1}\mathrm{D2}\mathrm{D3}\mathrm{E1}D),
    \end{split}
    \label{10_split_2}
    \\
    \begin{split}
        R(D\mathrm{F1}\mathrm{D2}\mathrm{D3}\mathrm{E1}DEFD)
            &\subseteq R(D\mathrm{F1}DEFDEFD) \\
            &= R(D\mathrm{F1}DEFD)
                \cup R(D\mathrm{F1}D\mathrm{E1}\mathrm{D3}\mathrm{D2}\mathrm{F1}D)
                \cup R(D\mathrm{F1}D\mathrm{F1}\mathrm{D2}\mathrm{D3}\mathrm{E1}D).
    \end{split}
    \label{10_split_3}
\end{gather}

First, we have
\begin{equation}
    \begin{split}
        R(D\mathrm{E1}D\mathrm{E1}\mathrm{D3}\mathrm{D2}\mathrm{F1}D)
            &\subseteq R(D\mathrm{E1}D\mathrm{E1}D\mathrm{F1}D) \\
            &\subseteq R(DEFD\mathrm{F1}D)
                \qquad(\text{since } R(D\mathrm{E1}D\mathrm{E1}D)\subseteq R(DEFD)) \\
            &\subseteq R(DEFDEFD),
    \end{split}
\end{equation}
where the second inclusion is due to Case 1 in the proof of Theorem\ref{reduction_7}. Similarly, we also have
\begin{equation}
    \begin{split}
        R(D\mathrm{F1}D\mathrm{F1}\mathrm{D2}\mathrm{D3}\mathrm{E1}D)
            &\subseteq R(D\mathrm{F1}D\mathrm{F1}D\mathrm{E1}D) \\
            &\subseteq R(DEFD\mathrm{E1}D)
                \qquad(\text{since } R(D\mathrm{F1}D\mathrm{F1}D)\subseteq R(DEFD)) \\
            &\subseteq R(DEFDEFD),
    \end{split}
\end{equation}
where the second inclusion is due to Case 5 in the proof of Theorem\ref{reduction_7}.

Thus, it remains to consider the two sets  
\[
    R(D\mathrm{E1}D\mathrm{F1}\mathrm{D2}\mathrm{D3}\mathrm{E1}D), \qquad
    R(D\mathrm{F1}D\mathrm{E1}\mathrm{D3}\mathrm{D2}\mathrm{F1}D).
\]


We begin with $D\mathrm{E1}D\mathrm{F1}\mathrm{D2}\mathrm{D3}\mathrm{E1}D$:
\begin{equation}
    \begin{split}
        D\mathrm{E1}D\mathrm{F1}\mathrm{D2}\mathrm{D3}\mathrm{E1}D
            &= D\mathrm{E1}D^uD^dD^u\mathrm{F1}\mathrm{D2}\mathrm{D3}\mathrm{E1}D
                \qquad\because\text{Lemma~\ref{D-split}} \\
            &= DD^u\mathrm{E1}D^dD^u\mathrm{F1}\mathrm{D2}\mathrm{D3}\mathrm{E1}D
                \qquad\because\text{Lemma~\ref{E-D^u}} \\
            &= D\mathrm{E1}D^dD^u\mathrm{F1}\mathrm{D2}\mathrm{D3}\mathrm{E1}D
                \qquad\because\text{Lemma~\ref{D-merge}}
    \end{split}
\end{equation}
holds. Thus it suffices to consider  
$D\mathrm{E1}\mathrm{I}D^u\mathrm{F1}\mathrm{D2}\mathrm{D3}\mathrm{E1}D$, $D\mathrm{E1}\mathrm{D4}D^u\mathrm{F1}\mathrm{D2}\mathrm{D3}\mathrm{E1}D$, and $D\mathrm{E1}\mathrm{D3}D^u\mathrm{F1}\mathrm{D2}\mathrm{D3}\mathrm{E1}D$.

First, we have
\begin{equation}
    \begin{split}
        D\mathrm{E1}\mathrm{I}D^u\mathrm{F1}\mathrm{D2}\mathrm{D3}\mathrm{E1}D
            &= D\mathrm{E1}D^u\mathrm{F1}\mathrm{D2}\mathrm{D3}\mathrm{E1}D \\
            &= DD^u\mathrm{E1}\mathrm{F1}\mathrm{D2}\mathrm{D3}\mathrm{E1}D \qquad\because\text{Lemma~\ref{E-D^u}} \\
            &= D\mathrm{E1}\mathrm{F1}\mathrm{D2}\mathrm{D3}\mathrm{E1}D, \qquad\because\text{Lemma~\ref{D-merge}}
    \end{split}
\end{equation}
hence $R(D\mathrm{E1}\mathrm{I}D^u\mathrm{F1}\mathrm{D2}\mathrm{D3}\mathrm{E1}D)\subseteq R(DEFDEFD)$.

Next, we have
\begin{equation}
    \begin{split}
        D\mathrm{E1}\mathrm{D4}D^u\mathrm{F1}\mathrm{D2}\mathrm{D3}\mathrm{E1}D
            &= D\mathrm{E1}D^u\mathrm{F1}\mathrm{D2}\mathrm{D3}\mathrm{E1}D \qquad\because\text{Lemma~\ref{E-D4}} \\
            &= DD^u\mathrm{E1}\mathrm{F1}\mathrm{D2}\mathrm{D3}\mathrm{E1}D \qquad\because\text{Lemma~\ref{E-D^u}} \\
            &= D\mathrm{E1}\mathrm{F1}\mathrm{D2}\mathrm{D3}\mathrm{E1}D, \qquad\because\text{Lemma~\ref{D-merge}}
    \end{split}
\end{equation}
so this subset inclusion also holds.

Now, $D\mathrm{E1}\mathrm{D3}D^u\mathrm{F1}\mathrm{D2}\mathrm{D3}\mathrm{E1}D$ splits into  
$D\mathrm{E1}\mathrm{D3}\mathrm{I}\mathrm{F1}\mathrm{D2}\mathrm{D3}\mathrm{E1}D$, $D\mathrm{E1}\mathrm{D3}\mathrm{D1}\mathrm{F1}\mathrm{D2}\mathrm{D3}\mathrm{E1}D$, and \\$D\mathrm{E1}\mathrm{D3}\mathrm{D2}\mathrm{F1}\mathrm{D2}\mathrm{D3}\mathrm{E1}D$.

First, we have
\begin{equation}
    \begin{split}
        D\mathrm{E1}\mathrm{D3}\mathrm{I}\mathrm{F1}\mathrm{D2}\mathrm{D3}\mathrm{E1}D
            &= D\mathrm{E1}\mathrm{D3}\mathrm{F1}\mathrm{D2}\mathrm{D3}\mathrm{E1}D \\
            &= D\mathrm{E1}\mathrm{F1}\mathrm{D3}\mathrm{D2}\mathrm{D3}\mathrm{E1}D,\qquad\because\text{Lemma~\ref{F-D^d}}
    \end{split}
\end{equation}
and since $R(\mathrm{D3}\mathrm{D2})\subseteq R(D)$, we obtain  
$R(D\mathrm{E1}\mathrm{D3}\mathrm{I}\mathrm{F1}\mathrm{D2}\mathrm{D3}\mathrm{E1}D)\subseteq R(DEFDEFD)$.

Next, we have
\begin{equation}
    \begin{split}
            D\mathrm{E1}\mathrm{D3}\mathrm{D1}\mathrm{F1}\mathrm{D2}\mathrm{D3}\mathrm{E1}D
            &= D\mathrm{E1}\mathrm{D3}\mathrm{D1}\mathrm{D2}\mathrm{D3}\mathrm{E1}D,\qquad\because\text{Lemma~\ref{F-D1}}
    \end{split}
\end{equation}
and since $R(\mathrm{D1}\mathrm{D2}\mathrm{D3})\subseteq R(D)$, again it follows that  
$R(D\mathrm{E1}\mathrm{D3}\mathrm{D1}\mathrm{F1}\mathrm{D2}\mathrm{D3}\mathrm{E1}D)\subseteq R(DEFDEFD)$.

Finally, we have
\begin{equation}
    \begin{split}
        D\mathrm{E1}\mathrm{D3}\mathrm{D2}\mathrm{F1}\mathrm{D2}\mathrm{D3}\mathrm{E1}D
            &= D\mathrm{E1}\mathrm{D3}\mathrm{D2}\mathrm{F1}\mathrm{D3}\mathrm{E1}D
                \qquad\because\text{Lemma~\ref{D2-idempotent}} \\
            &= D\mathrm{E1}\mathrm{D3}\mathrm{D2}\mathrm{D3}\mathrm{F1}\mathrm{E1}D
                \qquad\because\text{Lemma~\ref{F-D^d}} \\
            &= D\mathrm{E1}\mathrm{D3}\mathrm{D2}\mathrm{D3}\mathrm{E1}\mathrm{F1}D,
                \qquad\because\text{Lemma~\ref{E-F}}
    \end{split}
\end{equation}
and since $R(\mathrm{D3}\mathrm{D2}\mathrm{D3})\subseteq R(D)$, this case also lies in $R(DEFDEFD)$.

Therefore, we have
\[
    R(D\mathrm{E1}D\mathrm{F1}\mathrm{D2}\mathrm{D3}\mathrm{E1}D) \subseteq R(DEFDEFD).
\]


We now examine $R(D\mathrm{F1}D\mathrm{E1}\mathrm{D3}\mathrm{D2}\mathrm{F1}D)$.

\begin{equation}
    \begin{split}
        D\mathrm{F1}D\mathrm{E1}\mathrm{D3}\mathrm{D2}\mathrm{F1}D
            &= D\mathrm{F1}D^uD^dD^u\mathrm{E1}\mathrm{D3}\mathrm{D2}\mathrm{F1}D
                \qquad\because\text{Lemma~\ref{D-split}} \\
            &= D\mathrm{F1}D^uD^d\mathrm{E1}D^u\mathrm{D3}\mathrm{D2}\mathrm{F1}D
                \qquad\because\text{Lemma~\ref{E-D^u}}
    \end{split}
\end{equation}
holds. Thus, we consider  
$D\mathrm{F1}\mathrm{I}D^d\mathrm{E1}D^u\mathrm{D3}\mathrm{D2}\mathrm{F1}D$, $D\mathrm{F1}\mathrm{D1}D^d\mathrm{E1}D^u\mathrm{D3}\mathrm{D2}\mathrm{F1}D$, and $D\mathrm{F1}\mathrm{D2}D^d\mathrm{E1}D^u\mathrm{D3}\mathrm{D2}\mathrm{F1}D$.

First, we have
\begin{equation}
    \begin{split}
        D\mathrm{F1}\mathrm{I}D^d\mathrm{E1}D^u\mathrm{D3}\mathrm{D2}\mathrm{F1}D
            &= D\mathrm{F1}D^d\mathrm{E1}D^u\mathrm{D3}\mathrm{D2}\mathrm{F1}D \\
            &= DD^d\mathrm{F1}\mathrm{E1}D^u\mathrm{D3}\mathrm{D2}\mathrm{F1}D
                \qquad\because\text{Lemma~\ref{F-D^d}} \\
            &= D\mathrm{F1}\mathrm{E1}D^u\mathrm{D3}\mathrm{D2}\mathrm{F1}D
                \qquad\because\text{Lemma~\ref{D-merge}} \\
            &= D\mathrm{E1}\mathrm{F1}D^u\mathrm{D3}\mathrm{D2}\mathrm{F1}D.
                \qquad\because\text{Lemma~\ref{E-F}}
    \end{split}
\end{equation}
Since $R(D^u\mathrm{D3}\mathrm{D2})\subseteq R(D)$, it follows that  
$R(D\mathrm{F1}\mathrm{I}D^d\mathrm{E1}D^u\mathrm{D3}\mathrm{D2}\mathrm{F1}D)\subseteq R(DEFDEFD)$.

Next, we have
\begin{equation}
    \begin{split}
        D\mathrm{F1}\mathrm{D1}D^d\mathrm{E1}D^u\mathrm{D3}\mathrm{D2}\mathrm{F1}D
            &= D\mathrm{F1}D^d\mathrm{E1}D^u\mathrm{D3}\mathrm{D2}\mathrm{F1}D
                \qquad\because\text{Lemma~\ref{F-D1}} \\
            &= DD^d\mathrm{F1}\mathrm{E1}D^u\mathrm{D3}\mathrm{D2}\mathrm{F1}D
                \qquad\because\text{Lemma~\ref{F-D^d}} \\
            &= D\mathrm{F1}\mathrm{E1}D^u\mathrm{D3}\mathrm{D2}\mathrm{F1}D
                \qquad\because\text{Lemma~\ref{D-merge}} \\
            &= D\mathrm{E1}\mathrm{F1}D^u\mathrm{D3}\mathrm{D2}\mathrm{F1}D.
                \qquad\because\text{Lemma~\ref{E-F}}
    \end{split}
\end{equation}
Hence $R(D\mathrm{F1}\mathrm{D1}D^d\mathrm{E1}D^u\mathrm{D3}\mathrm{D2}\mathrm{F1}D)\subseteq R(DEFDEFD)$.

The third case, $D\mathrm{F1}\mathrm{D2}D^d\mathrm{E1}D^u\mathrm{D3}\mathrm{D2}\mathrm{F1}D$, splits into  
$D\mathrm{F1}\mathrm{D2}\mathrm{I}\mathrm{E1}D^u\mathrm{D3}\mathrm{D2}\mathrm{F1}D$, $D\mathrm{F1}\mathrm{D2}\mathrm{D4}\mathrm{E1}D^u\mathrm{D3}\mathrm{D2}\mathrm{F1}D$, and $D\mathrm{F1}\mathrm{D2}\mathrm{D3}\mathrm{E1}D^u\mathrm{D3}\mathrm{D2}\mathrm{F1}D$.

First, we have
\begin{equation}
    \begin{split}
        D\mathrm{F1}\mathrm{D2}\mathrm{I}\mathrm{E1}D^u\mathrm{D3}\mathrm{D2}\mathrm{F1}D &= D\mathrm{F1}\mathrm{D2}\mathrm{E1}D^u\mathrm{D3}\mathrm{D2}\mathrm{F1}D \\
            &= D\mathrm{F1}\mathrm{E1}\mathrm{D2}D^u\mathrm{D3}\mathrm{D2}\mathrm{F1}D, \qquad\because\text{Lemma~\ref{E-D^u}}
    \end{split}
\end{equation}
and $R(\mathrm{D2}D^u\mathrm{D3})\subseteq R(D)$ gives  
$R(D\mathrm{F1}\mathrm{D2}\mathrm{I}\mathrm{E1}D^u\mathrm{D3}\mathrm{D2}\mathrm{F1}D)\subseteq R(DEFDEFD)$.

Next, we have
\begin{equation}
    \begin{split}
        D\mathrm{F1}\mathrm{D2}\mathrm{D4}\mathrm{E1}D^u\mathrm{D3}\mathrm{D2}\mathrm{F1}D
            &= D\mathrm{F1}\mathrm{D2}\mathrm{D4}D^u\mathrm{D3}\mathrm{D2}\mathrm{F1}D,
                \qquad\because\text{Lemma~\ref{E-D4}}
    \end{split}
\end{equation}
and since $R(\mathrm{D2}\mathrm{D4}D^u\mathrm{D3}\mathrm{D2})\subseteq R(D)$, this also lies in $R(DEFDEFD)$.

Finally, $D\mathrm{F1}\mathrm{D2}\mathrm{D3}\mathrm{E1}D^u\mathrm{D3}\mathrm{D2}\mathrm{F1}D$ splits into three cases:
$D\mathrm{F1}\mathrm{D2}\mathrm{D3}\mathrm{E1}\mathrm{I}\mathrm{D3}\mathrm{D2}\mathrm{F1}D,~ \\D\mathrm{F1}\mathrm{D2}\mathrm{D3}\mathrm{E1}\mathrm{D2}\mathrm{D3}\mathrm{D2}\mathrm{F1}D,~ D\mathrm{F1}\mathrm{D2}\mathrm{D3}\mathrm{E1}\mathrm{D1}\mathrm{D3}\mathrm{D2}\mathrm{F1}D.$

First, we have
\begin{equation}
    \begin{split}
        D\mathrm{F1}\mathrm{D2}\mathrm{D3}\mathrm{E1}\mathrm{I}\mathrm{D3}\mathrm{D2}\mathrm{F1}D
            &= D\mathrm{F1}\mathrm{D2}\mathrm{D3}\mathrm{E1}\mathrm{D3}\mathrm{D2}\mathrm{F1}D \\
            &= D\mathrm{F1}\mathrm{D2}\mathrm{D3}\mathrm{E1}\mathrm{D2}\mathrm{F1}D
                \qquad\because\text{Lemma~\ref{D3-idempotent}} \\
            &= D\mathrm{F1}\mathrm{D2}\mathrm{D3}\mathrm{D2}\mathrm{E1}\mathrm{F1}D.
                \qquad\because\text{Lemma~\ref{E-D^u}}
    \end{split}
\end{equation}
Since $R(\mathrm{D2}\mathrm{D3}\mathrm{D2})\subseteq R(D)$, we obtain  
$R(D\mathrm{F1}\mathrm{D2}\mathrm{D3}\mathrm{E1}\mathrm{I}\mathrm{D3}\mathrm{D2}\mathrm{F1}D)\subseteq R(DEFDEFD)$.

Next, we have
\begin{equation}
    \begin{split}
        D\mathrm{F1}\mathrm{D2}\mathrm{D3}\mathrm{E1}\mathrm{D2}\mathrm{D3}\mathrm{D2}\mathrm{F1}D
            &= D\mathrm{F1}\mathrm{D2}\mathrm{D3}\mathrm{E1}\mathrm{D2}\mathrm{D3}\mathrm{F1}D
                \qquad\because\text{Lemma~\ref{D2-idempotent}} \\
            &= D\mathrm{F1}\mathrm{D2}\mathrm{D3}\mathrm{D2}\mathrm{E1}\mathrm{D3}\mathrm{F1}D
                \qquad\because\text{Lemma~\ref{E-D^u}} \\
            &= D\mathrm{F1}\mathrm{D2}\mathrm{D3}\mathrm{E1}\mathrm{D3}\mathrm{F1}D
                \qquad\because\text{Lemma~\ref{D2-idempotent}} \\
            &= D\mathrm{F1}\mathrm{D2}\mathrm{D3}\mathrm{E1}\mathrm{F1}D.
                \qquad\because\text{Lemma~\ref{D3-idempotent}}
    \end{split}
\end{equation}
Since $R(\mathrm{D2}\mathrm{D3})\subseteq R(D)$, this set also lies in $R(DEFDEFD)$.

Finally, we have
\[
    D\mathrm{F1}\mathrm{D2}\mathrm{D3}\mathrm{E1}\mathrm{D1}\mathrm{D3}\mathrm{D2}\mathrm{F1}D = D\mathrm{F1}\mathrm{D2}\mathrm{D3}\mathrm{D1}\mathrm{E1}\mathrm{D3}\mathrm{D2}\mathrm{F1}D, \qquad\because\text{Lemma~\ref{E-D^u}}
\]
and therefore we have 
\[
    R(D\mathrm{F1}D\mathrm{E1}\mathrm{D3}\mathrm{D2}\mathrm{F1}D)
        = R(DEFDEFD) \cup R(D\mathrm{F1}\mathrm{D2}\mathrm{D3}\mathrm{D1}\mathrm{E1}\mathrm{D3}\mathrm{D2}\mathrm{F1}D).
\]


Here, from equations \eqref{10_split_1}, \eqref{10_split_2}, and \eqref{10_split_3}, we have
\begin{equation}
    \begin{split}
        R(DEFDEFDEFD)
            &= R(DEFDEFD)
                \cup R(D\mathrm{E1}\mathrm{D3}\mathrm{D2}\mathrm{F1}DEFD)
                \cup R(D\mathrm{F1}\mathrm{D2}\mathrm{D3}\mathrm{E1}DEFD) \\
            &= R(DEFDEFD) \\
            &\phantom{=} \cup R(D\mathrm{E1}DEFD)
                \cup R(D\mathrm{E1}D\mathrm{E1}\mathrm{D3}\mathrm{D2}\mathrm{F1}D)
                \cup R(D\mathrm{E1}D\mathrm{F1}\mathrm{D2}\mathrm{D3}\mathrm{E1}D) \\
            &\phantom{=} \cup R(D\mathrm{F1}DEFD)
                \cup R(D\mathrm{F1}D\mathrm{E1}\mathrm{D3}\mathrm{D2}\mathrm{F1}D)
                \cup R(D\mathrm{F1}D\mathrm{F1}\mathrm{D2}\mathrm{D3}\mathrm{E1}D).
    \end{split}
\end{equation}
From all the results, all the terms except the sixth are subsets of $R(DEFDEFD)$. Hence, we have
\begin{equation}
    R(DEFDEFDEFD)
        \subseteq
        R(DEFDEFD) \cup R(D\mathrm{F1}\mathrm{D2}\mathrm{D3}\mathrm{D1}\mathrm{E1}\mathrm{D3}\mathrm{D2}\mathrm{F1}D).
\end{equation}

Therefore, we have
\begin{equation}
    R(DEFDEFDEFD)
        = R(DEFDEFD) \cup R(D\mathrm{F1}\mathrm{D2}\mathrm{D3}\mathrm{D1}\mathrm{E1}\mathrm{D3}\mathrm{D2}\mathrm{F1}D),
\end{equation}
as claimed.
\end{proof}

\begin{thm}[Reduction of Thirteen Operation Sets]
    \label{reduction_13}
    The identity
    \[
        R(DEFDEFDEFDEFD) = R(DEFDEFDEFD)
    \]
    holds.
\end{thm}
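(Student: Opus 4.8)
The plan is to establish the two inclusions separately; the inclusion $R(DEFDEFDEFD) \subseteq R(DEFDEFDEFDEFD)$ is immediate — the word $DEFDEFDEFDEFD$ consists of one operation chosen from each of the thirteen blocks $D,E,F,D,E,F,D,E,F,D,E,F,D$, so selecting the identity operation $\mathrm{I}$ in the last three blocks (legitimate since $\mathrm{I}$ belongs to each of $D$, $E$, $F$) realises exactly $DEFDEFDEFD$. So the whole content is the reverse inclusion.

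For $R(DEFDEFDEFDEFD) \subseteq R(DEFDEFDEFD)$, I would first factor $DEFDEFDEFDEFD = (DEFDEFDEFD)\cdot(EFD)$ and invoke Theorem~\ref{reduction_10} on the prefix, using the elementary composition rule that $R(XZ) = R(Y_1 Z)\cup R(Y_2 Z)$ whenever $R(X) = R(Y_1)\cup R(Y_2)$ (the same device used repeatedly in the proofs of Theorems~\ref{reduction_7} and~\ref{reduction_10}). This gives
\[
    R(DEFDEFDEFDEFD) = R(DEFDEFDEFD)\cup R(W\cdot EFD), \qquad W := D\mathrm{F1}\mathrm{D2}\mathrm{D3}\mathrm{D1}\mathrm{E1}\mathrm{D3}\mathrm{D2}\mathrm{F1}D,
\]
so it suffices to prove $R(W\cdot EFD)\subseteq R(DEFDEFDEFD)$.

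To do this I would expand the trailing $EFD$-block over the four choices of $(E,F)\in\{\mathrm{I},\mathrm{E1}\}\times\{\mathrm{I},\mathrm{F1}\}$ and reduce each resulting word using the commutation/idempotence machinery of Lemmas~\ref{idempotent}--\ref{D3-idempotent} together with the case lists already proved inside Theorems~\ref{reduction_7} and~\ref{reduction_10}. The case $E=F=\mathrm{I}$ gives $W\cdot\mathrm{I}\mathrm{I}D = WD = W$ by idempotence (Lemma~\ref{idempotent}), which lies in $R(DEFDEFDEFD)$ by Theorem~\ref{reduction_10}. In the remaining cases $W\mathrm{E1}D$, $W\mathrm{F1}D$, $W\mathrm{E1}\mathrm{F1}D$ one isolates the relevant three-block tail (such as $\mathrm{F1}D\mathrm{E1}D$ or $\mathrm{F1}D\mathrm{F1}D$), slides $\mathrm{E1}$ and $\mathrm{F1}$ leftward past the $D^u$- and $D^d$-pieces of $W$ via $\mathrm{E1}D^u = D^u\mathrm{E1}$ (Lemma~\ref{E-D^u}) and $\mathrm{F1}D^d = D^d\mathrm{F1}$ (Lemma~\ref{F-D^d}), collapses the now-adjacent repeated $\mathrm{E1}$'s and $\mathrm{F1}$'s using idempotence and the absorptions $\mathrm{E1}\mathrm{D4}=\mathrm{E1}$, $\mathrm{F1}\mathrm{D1}=\mathrm{F1}$, $\mathrm{D1}F=\mathrm{D1}$, $\mathrm{D4}E=\mathrm{D4}$, $\mathrm{D3}\mathrm{E1}\mathrm{D3}=\mathrm{D3}\mathrm{E1}$, $\mathrm{D2}\mathrm{D3}\mathrm{D2}=\mathrm{D2}\mathrm{D3}$, and merges the spare $D$-blocks by Lemma~\ref{D-merge}; when the expansion of an interior $D$ is needed one uses $D = D^uD^dD^u$ (Lemma~\ref{D-split}) together with the deeper absorptions $ED^d\mathrm{E1}=D^d\mathrm{E1}$ (Lemma~\ref{E-D^d}) and $FD^u\mathrm{F1}=D^u\mathrm{F1}$ (Lemma~\ref{F-D^u}). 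Each branch is thereby routed either back to $W$ itself, to a word of the form $\subseteq D(EFD)^3$ (e.g.\ $DFDEDEFD\subseteq DEFDEFDEFD$ by inserting $\mathrm{I}$'s into the $E$- and $F$-slots), or to one of the words the proof of Theorem~\ref{reduction_10} already placed in $R(DEFDEFDEFD)$; in particular no genuinely new irreducible operation sequence appears.

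The main obstacle is precisely this last verification: one must check, with essentially the same density of lemma-chasing as in the proof of Theorem~\ref{reduction_10}, that attaching a fourth $EFD$-layer onto the nine-operation word $W$ never spawns a longer irreducible sequence — that is, that the hierarchy of canonical forms \emph{saturates at three layers}. Conceptually this happens because the extra operations inside $W$ (the $\mathrm{D1}$ between its two $D$-blocks and the bracketing $\mathrm{F1}\ldots\mathrm{F1}$) are exactly the ones that get absorbed or idempotently collapsed by the next layer's $\mathrm{E1}$ and $\mathrm{F1}$; but making this rigorous requires care at the non-commuting pairs ($\mathrm{E1}$ with $D^d$, $\mathrm{F1}$ with $D^u$, and the $\mathrm{D2}$/$\mathrm{D3}$ interactions), which are the only branches where a naive reduction fails and one must detour through the earlier theorems' case analyses. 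Once every branch is disposed of, combining $R(W\cdot EFD)\subseteq R(DEFDEFDEFD)$ with the trivial inclusion yields the stated equality.
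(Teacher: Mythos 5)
Your setup is correct and matches the paper's: the forward inclusion is trivial, and factoring $DEFDEFDEFDEFD=(DEFDEFDEFD)\cdot EFD$ and applying Theorem~\ref{reduction_10} reduces everything to showing $R(W\cdot EFD)\subseteq R(DEFDEFDEFD)$ for $W=D\mathrm{F1}\mathrm{D2}\mathrm{D3}\mathrm{D1}\mathrm{E1}\mathrm{D3}\mathrm{D2}\mathrm{F1}D$. The gap is that this last inclusion --- which you yourself identify as ``the main obstacle'' --- is never actually established. You assert that expanding the trailing $EFD$ and chasing Lemmas~\ref{idempotent}--\ref{D3-idempotent} routes every branch back into $R(DEFDEFDEFD)$ and that ``no genuinely new irreducible operation sequence appears,'' but this is precisely the statement to be proved, and the rewriting rules you list do not obviously suffice: the incoming $\mathrm{E1}$ and $\mathrm{F1}$ cannot simply be slid leftward through $W$, because $W$'s interior mixes $D^u$- and $D^d$-type operations with $\mathrm{E1}$ and $\mathrm{F1}$ in an order that blocks the commutations of Lemmas~\ref{E-D^u} and~\ref{F-D^d} at several points, and nothing in your sketch rules out that a fourth layer produces a new irreducible word (exactly as the third layer produced the new word $W$ in Theorem~\ref{reduction_10}).

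The paper closes this gap with a device absent from your proposal: it explicitly computes the matrices $(P',Q')$ obtained by applying the inner composite $\mathrm{F1}\mathrm{D2}\mathrm{D3}\mathrm{D1}\mathrm{E1}\mathrm{D3}\mathrm{D2}\mathrm{F1}$ to a generic pair $(P,Q)$ and observes that $Q'_{25}=Q'_{34}=0$. Since the operations $\mathrm{D1}$, $\mathrm{D2}$, and $\mathrm{F1}$ act only through these (now vanishing) entries, the trailing $D$-block collapses to $\{\mathrm{I},\mathrm{D3},\mathrm{D4}\}$ and the trailing $F$-block to $\{\mathrm{I}\}$, leaving only three concrete cases; the $\mathrm{D3}$ case is then killed because it zeroes the $(1,4)$ entry (so the subsequent $\mathrm{E1}$ acts trivially), and the $\mathrm{D4}$ case by Lemma~\ref{E-D4}. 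This entry-level vanishing argument is the essential content of the theorem, and without it (or an equivalent exhaustive case verification) your proof does not go through.
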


\begin{proof}
The inclusion  
\[
    R(DEFDEFDEFD) \subseteq R(DEFDEFDEFDEFD)
\]
is immediate. Hence it suffices to show the reverse inclusion.  
By Theorem~\ref{reduction_10},
\begin{equation}
    R(DEFDEFDEFDEFD)
        = R(DEFDEFDEFD)
        \cup R(D\mathrm{F1}\mathrm{D2}\mathrm{D3}\mathrm{D1}\mathrm{E1}\mathrm{D3}\mathrm{D2}\mathrm{F1}DEFD).
\end{equation}
We therefore study the remaining term  
\(
    R(D\mathrm{F1}\mathrm{D2}\mathrm{D3}\mathrm{D1}\mathrm{E1}\mathrm{D3}\mathrm{D2}\mathrm{F1}DEFD).
\)

Consider the matrix pair
    \begin{equation}
        P = \begin{pmatrix}
            P_{11} & P_{12} & P_{13} \\
            P_{12} & P_{22} & P_{23} \\
            P_{13} & P_{23} & P_{33} \\
        \end{pmatrix},~Q=\begin{pmatrix}
            Q_{11} & Q_{12} & Q_{13} & Q_{14} & Q_{15} \\
            Q_{12} & Q_{22} & Q_{23} & Q_{24} & Q_{25} \\
            Q_{13} & Q_{23} & Q_{33} & Q_{34} & Q_{35} \\
            Q_{14} & Q_{24} & Q_{34} & Q_{44} & Q_{45} \\
            Q_{15} & Q_{25} & Q_{35} & Q_{45} & Q_{55} \\
        \end{pmatrix}.
    \end{equation}
Let \(P',Q'\) denote the matrices obtained by applying the sequence of operations  
\(\mathrm{F1}\mathrm{D2}\mathrm{D3}\mathrm{D1}\mathrm{E1}\mathrm{D3}\mathrm{D2}\mathrm{F1}\) to \((P,Q)\).

We employ
\begin{equation}
    g(Q_{ij}) = \dot{\gamma} Q_{ij} + \dot{Q}_{ij},
\end{equation}
which is defined in equation \eqref{definition_g}. Then we have
\begin{gather}
    P' =
    \begin{pmatrix}
        P_{11} + \lambda (Q_{14} - Q_{23} + g(Q_{25})) &
        P_{12} - g(Q_{25}) &
        P_{13} \\
        P_{12} - g(Q_{25}) &
        P_{22} &
        P_{23} + Q_{25} \\
        P_{13} &
        P_{23} + Q_{25} &
        P_{33}
    \end{pmatrix},
    \\
    Q' =
    \begin{pmatrix}
        Q_{11} - \lambda(g(Q_{14}) - g(Q_{23}) + g(g(Q_{25}))) &
        Q_{12} - g(Q_{23}) &
        Q_{13} &
        -g(Q_{25}) &
        Q_{15} \\
        Q_{12} - g(Q_{23}) &
        Q_{22} &
        -g(Q_{25}) &
        Q_{24} &
        0 \\
        Q_{13} &
        -g(Q_{25}) &
        Q_{33} + 2\theta(Q_{34} - Q_{25}) &
        0 &
        Q_{35} \\
        -g(Q_{25}) &
        Q_{24} &
        0 &
        Q_{44} &
        Q_{45} \\
        Q_{15} &
        0 &
        Q_{35} &
        Q_{45} &
        Q_{55}
    \end{pmatrix}.
\end{gather}

Observe that the \((2,5)\) and \((3,4)\) entries of \(Q'\) are zero.  
Hence the operations \(\mathrm{D1}, \mathrm{D2}, \mathrm{F1}\) do not alter these entries—and the same remains true after applying \(\mathrm{D3}\) or \(\mathrm{D4}\).  
Thus the admissible \(D\)-operations on \((P',Q')\) are only
\[
    \mathrm{I},\ \mathrm{D3},\ \mathrm{D4}.
\]
Similarly, after applying \(D\) and \(E\), the only admissible \(F\)-operation is
\[
    \mathrm{I}.
\]

Therefore, among  
\[
    \mathrm{F1}\mathrm{D2}\mathrm{D3}\mathrm{D1}\mathrm{E1}\mathrm{D3}\mathrm{D2}\mathrm{F1}DEFD,
\]
we only need to consider the following three cases:
\[
    \mathrm{F1}\mathrm{D2}\mathrm{D3}\mathrm{D1}\mathrm{E1}\mathrm{D3}\mathrm{D2}\mathrm{F1}\mathrm{I}ED,\quad
    \mathrm{F1}\mathrm{D2}\mathrm{D3}\mathrm{D1}\mathrm{E1}\mathrm{D3}\mathrm{D2}\mathrm{F1}\mathrm{D3}ED,\quad
    \mathrm{F1}\mathrm{D2}\mathrm{D3}\mathrm{D1}\mathrm{E1}\mathrm{D3}\mathrm{D2}\mathrm{F1}\mathrm{D4}ED.
\]

For the first case, we have
\begin{equation}
    \begin{split}
        \mathrm{F1}\mathrm{D2}\mathrm{D3}\mathrm{D1}\mathrm{E1}\mathrm{D3}\mathrm{D2}\mathrm{F1}\mathrm{I}ED
            &= \mathrm{F1}\mathrm{D2}\mathrm{D3}\mathrm{D1}\mathrm{E1}\mathrm{D3}\mathrm{D2}\mathrm{F1}ED \\
            &= \mathrm{F1}\mathrm{D2}\mathrm{D3}\mathrm{D1}\mathrm{E1}\mathrm{D3}\mathrm{D2}E\mathrm{F1}D.
                \qquad\because\text{Lemma~\ref{E-F}}.
    \end{split}
\end{equation}
Since \(R(\mathrm{D2}\mathrm{D3}\mathrm{D1})\subseteq R(D)\) and \(R(\mathrm{D3}\mathrm{D2})\subseteq R(D)\), we obtain  
\[
    R(\mathrm{F1}\mathrm{D2}\mathrm{D3}\mathrm{D1}\mathrm{E1}\mathrm{D3}\mathrm{D2}\mathrm{F1}\mathrm{I}ED)
        \subseteq R(EFDEFDEFD).
\]

For the second case, applying \(\mathrm{D3}\) to \((P',Q')\) yields
    \begin{gather}
        \begin{pmatrix}
            P_{11} + \lambda (Q_{14} - Q_{23} + g(Q_{25})) & P_{12} - 2 g(Q_{25}) & P_{13} \\
            P_{12} - 2 g(Q_{25}) & P_{22} & P_{23} + Q_{25} \\
            P_{13} & P_{23} + Q_{25} & P_{33} \\
        \end{pmatrix} \\
        \begin{pmatrix}
            Q_{11} - \lambda(g(Q_{14}) - g(Q_{23})+ g(g(Q_{25})))& Q_{12} -g(Q_{23}) + g(g(Q_{25})) & Q_{13} & 0 & Q_{15} \\
            Q_{12} -g(Q_{23}) + g(g(Q_{25})) & Q_{22} & 0 & Q_{24} & 0 \\
            Q_{13} & 0 & Q_{33} + 2\theta (Q_{34} - Q_{25}) & 0 & Q_{35} \\
            0 & Q_{24} & 0 & Q_{44} & Q_{45} \\
            Q_{15} & 0 & Q_{35} & Q_{45} & Q_{55} \\
        \end{pmatrix}.
    \end{gather}

Since the \((1,4)\) entry is now \(0\), applying \(\mathrm{E1}\) does not change the matrices. Hence, we have
\begin{equation}
    \begin{split}
        \mathrm{F1}\mathrm{D2}\mathrm{D3}\mathrm{D1}\mathrm{E1}\mathrm{D3}\mathrm{D2}\mathrm{F1}\mathrm{D3}ED
            &= \mathrm{F1}\mathrm{D2}\mathrm{D3}\mathrm{D1}\mathrm{E1}\mathrm{D3}\mathrm{D2}\mathrm{F1}\mathrm{D3}D \\
            &= \mathrm{F1}\mathrm{D2}\mathrm{D3}\mathrm{D1}\mathrm{E1}\mathrm{D3}\mathrm{D2}\mathrm{F1}D.
                \qquad\because\text{Lemma~\ref{D-merge}}.
    \end{split}
\end{equation}
Again using \(R(\mathrm{D2}\mathrm{D3}\mathrm{D1})\subseteq R(D)\) and \(R(\mathrm{D3}\mathrm{D2})\subseteq R(D)\), we have
\[
R(\mathrm{F1}\mathrm{D2}\mathrm{D3}\mathrm{D1}\mathrm{E1}\mathrm{D3}\mathrm{D2}\mathrm{F1}\mathrm{D3}ED)
        \subseteq R(EFDEFDEFD).
\]

For the third case, we have 
\begin{equation}
    \begin{split}
        \mathrm{F1}\mathrm{D2}\mathrm{D3}\mathrm{D1}\mathrm{E1}\mathrm{D3}\mathrm{D2}\mathrm{F1}\mathrm{D4}ED
            &= \mathrm{F1}\mathrm{D2}\mathrm{D3}\mathrm{D1}\mathrm{E1}\mathrm{D3}\mathrm{D2}\mathrm{F1}\mathrm{D4}D
                \qquad\because\text{Lemma~\ref{E-D4}} \\
            &= \mathrm{F1}\mathrm{D2}\mathrm{D3}\mathrm{D1}\mathrm{E1}\mathrm{D3}\mathrm{D2}\mathrm{F1}D.
                \qquad\because\text{Lemma~\ref{D-merge}}
    \end{split}
\end{equation}
Thus, we have
\[
R(\mathrm{F1}\mathrm{D2}\mathrm{D3}\mathrm{D1}\mathrm{E1}\mathrm{D3}\mathrm{D2}\mathrm{F1}\mathrm{D4}ED)
        \subseteq R(EFDEFDEFD).
\]

Combining the discussion for the three cases mentioned above, we have
\begin{equation}
    \begin{split}
        R(DEFDEFDEFDEFD)
            &= R(DEFDEFDEFD)
                \cup R(D\mathrm{F1}\mathrm{D2}\mathrm{D3}\mathrm{D1}\mathrm{E1}\mathrm{D3}\mathrm{D2}\mathrm{F1}DEFD) \\
            &\subseteq R(DEFDEFDEFD).
    \end{split}
\end{equation}

Therefore, we have
\[
    R(DEFDEFDEFDEFD) = R(DEFDEFDEFD),
\]
as claimed.
\end{proof}

\begin{thm}[Combinations of $E$, $F$, and $D$]
    \label{combinations}
    For any matrix pair, the set of all matrix pairs obtained by applying 
    the operations in groups $D$, $E$, and $F$ in any order and any number of times 
    is equivalent to the set obtained by first applying one of the $13$ operations in group $D$, 
    then applying one of the following seven combinations of operations, 
    and finally applying one of the $13$ operations in group $D$ again:
    \begin{itemize}
        \item $\mathrm{I}$
        \item $\mathrm{E1}$
        \item $\mathrm{F1}$
        \item $\mathrm{E1}\mathrm{F1}$
        \item $\mathrm{E1}\mathrm{D3}\mathrm{D2}\mathrm{F1}$
        \item $\mathrm{F1}\mathrm{D2}\mathrm{D3}\mathrm{E1}$
        \item $\mathrm{F1}\mathrm{D2}\mathrm{D3}\mathrm{D1}\mathrm{E1}\mathrm{D3}\mathrm{D2}\mathrm{F1}$
    \end{itemize}
\end{thm}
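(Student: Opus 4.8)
The plan is to show that the left-hand reachable set equals $\bigcup_{n\ge 0}R\!\left(D(EFD)^n\right)$, where $D(EFD)^n$ abbreviates the operation set $D$ followed by $n$ repetitions of the block $EFD$, and then to collapse this union to a finite expression using the reduction theorems already proved. First I would establish the canonical-form reduction: given any finite string of operations drawn from Groups $D$, $E$, and $F$, break it into maximal runs that are either purely from Group $D$ or purely from $\{\mathrm{E1},\mathrm{F1}\}$. By the per-group analysis of Appendix~\ref{sec:comb_operations_each_group}, each Group-$D$ run reduces to one of the $13$ combinations comprising the set $D$; since $\mathrm{E1}$ and $\mathrm{F1}$ are idempotent and commute (Lemma~\ref{E-F}), each mixed run reduces to one of $\mathrm{I},\mathrm{E1},\mathrm{F1},\mathrm{E1}\mathrm{F1}$, i.e.\ to an element of $EF$. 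The runs automatically alternate in type, and because the do-nothing operation $\mathrm{I}$ lies in each of $D$, $E$, $F$ and $DD=D$ (Lemma~\ref{idempotent}), I can pad the two ends with $\mathrm{I}\in D$ so that the string is realized inside $R\!\left(D(EFD)^n\right)$ for $n$ the number of $(E\cup F)$-runs; the reverse inclusion is immediate.

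Next I would prove the stabilization $R\!\left(D(EFD)^n\right)=R\!\left(D(EFD)^3\right)$ for all $n\ge 3$. The case $n=4$ is precisely Theorem~\ref{reduction_13}, i.e.\ $R(DEFDEFDEFDEFD)=R(DEFDEFDEFD)$. For $n\ge 5$, factor $D(EFD)^n = D(EFD)^{n-1}\cdot(EFD)$; using $R_{P,Q}(XY)=\bigcup_{(P',Q')\in R_{P,Q}(X)}R_{P',Q'}(Y)$ and the inductive hypothesis $R_{P,Q}\!\left(D(EFD)^{n-1}\right)=R_{P,Q}\!\left(D(EFD)^3\right)$, appending $EFD$ gives $R_{P,Q}\!\left(D(EFD)^n\right)=R_{P,Q}\!\left(D(EFD)^3\cdot EFD\right)=R_{P,Q}\!\left(D(EFD)^4\right)=R_{P,Q}\!\left(D(EFD)^3\right)$, the last equality again by Theorem~\ref{reduction_13}. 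Because $\mathrm{I}$ belongs to $D$, $E$, and $F$ we have the monotonicity $R(D)\subseteq R(DEFD)\subseteq R(DEFDEFD)\subseteq R(DEFDEFDEFD)$, so the union $\bigcup_{n\ge 0}R\!\left(D(EFD)^n\right)$ collapses to $R(DEFDEFDEFD)=R\!\left(D(EFD)^3\right)$.

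It then remains only to unfold $R(DEFDEFDEFD)$. Theorem~\ref{reduction_10} gives $R(DEFDEFDEFD)=R(DEFDEFD)\cup R(D\,\mathrm{F1}\mathrm{D2}\mathrm{D3}\mathrm{D1}\mathrm{E1}\mathrm{D3}\mathrm{D2}\mathrm{F1}\,D)$; Theorem~\ref{reduction_7} gives $R(DEFDEFD)=R(DEFD)\cup R(D\,\mathrm{E1}\mathrm{D3}\mathrm{D2}\mathrm{F1}\,D)\cup R(D\,\mathrm{F1}\mathrm{D2}\mathrm{D3}\mathrm{E1}\,D)$; and $R(DEFD)=R\!\left(D\{\mathrm{I},\mathrm{E1},\mathrm{F1},\mathrm{E1}\mathrm{F1}\}D\right)$ since $EF=\{\mathrm{I},\mathrm{E1},\mathrm{F1},\mathrm{E1}\mathrm{F1}\}$ and $DD=D$. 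Reading off the seven distinct middle factors $\mathrm{I}$, $\mathrm{E1}$, $\mathrm{F1}$, $\mathrm{E1}\mathrm{F1}$, $\mathrm{E1}\mathrm{D3}\mathrm{D2}\mathrm{F1}$, $\mathrm{F1}\mathrm{D2}\mathrm{D3}\mathrm{E1}$, $\mathrm{F1}\mathrm{D2}\mathrm{D3}\mathrm{D1}\mathrm{E1}\mathrm{D3}\mathrm{D2}\mathrm{F1}$ gives exactly the asserted description.

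The genuinely hard combinatorics has been front-loaded into Theorems~\ref{reduction_7},~\ref{reduction_10}, and~\ref{reduction_13}, so the remaining work is chiefly organizational. I expect the main obstacle to be the canonical-form step: one must be fully confident that \emph{every} interleaving of Group $D$, $E$, $F$ operations, not merely the already-alternating ones, collapses to the shape $D(EFD)^n$, which relies on \emph{all} of the per-group reductions of Appendix~\ref{sec:comb_operations_each_group} together with the idempotence and commutativity lemmas; and, in the stabilization step, on making precise how $R(\cdot)$ behaves under concatenation of operation sequences so that the induction genuinely goes through.
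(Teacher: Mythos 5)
Your proposal is correct and follows essentially the same route as the paper: reduce an arbitrary interleaving to the canonical form $D(EFD)^n$ via the per-group reductions and the idempotence/commutativity of $\mathrm{E1},\mathrm{F1}$, collapse to $n=3$ via Theorem~\ref{reduction_13}, and then unfold with Theorems~\ref{reduction_10} and~\ref{reduction_7} to read off the seven middle factors. The only difference is that you spell out the stabilization induction for $n\ge 5$ and the monotonicity of the nested reachable sets explicitly, which the paper leaves implicit.
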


\begin{proof}
    By Lemma~\ref{idempotent}, any sequence of operations obtained 
    by combining those from groups $D$, $E$, and $F$ in arbitrary order and frequency 
    after applying an operation from group $D$ can be represented as
    \begin{equation}
        D\, EF\, D\, EF\, D \cdots D\, EF\, D\, EF\, D.
    \end{equation}
    From Theorem~\ref{reduction_13}, we have 
    $R(DEFDEFDEFDEFD) = R(DEFDEFDEFD)$,
    which implies that it suffices to consider the results obtained by
    \begin{equation}
        D\, EF\, D\, EF\, D\, EF\, D.
    \end{equation}
    Furthermore, by Theorem~\ref{reduction_10}, 
    all combinations except for $D\mathrm{F1}\mathrm{D2}\mathrm{D3}\mathrm{D1}\mathrm{E1}\mathrm{D3}\mathrm{D2}\mathrm{F1}D$ yield results already included in
    \begin{equation}
        D\, EF\, D\, EF\, D.
    \end{equation}
    Similarly, by Theorem~\ref{reduction_7}, 
    all combinations except for $D\mathrm{E1}\mathrm{D3}\mathrm{D2}\mathrm{F1}D$ and $D\mathrm{F1}\mathrm{D2}\mathrm{D3}\mathrm{E1}D$ 
    yield results already included in
    \begin{equation}
        D\, EF\, D.
    \end{equation}
    Therefore, it suffices to consider the following cases:
    \begin{itemize}
        \item $D\, EF\, D$
        \item $D\, \mathrm{E1}\mathrm{D3}\mathrm{D2}\mathrm{F1}\, D$
        \item $D\, \mathrm{F1}\mathrm{D2}\mathrm{D3}\mathrm{E1}\, D$
        \item $D\, \mathrm{F1}\mathrm{D2}\mathrm{D3}\mathrm{D1}\mathrm{E1}\mathrm{D3}\mathrm{D2}\mathrm{F1}\, D$
    \end{itemize}
    Hence, the theorem follows.
\end{proof}

\subsection{All Combinations}

Therefore, combining Groups A, B, C and the first application of Group D operations, the cases to consider are:

\begin{itemize}
    \item Group A: 1 case
    \item Group B: 10 cases
    \item Group C: 2 cases
    \item First Group D: 13 cases
    \item Groups E, F, and second Group D: 7 cases
    \item Third Group D: 13 cases
\end{itemize}

Since these can be considered independent, the total number of possible outcome patterns is 23660.

\section{Alternative Proofs of Theorems by Lyapunov Functions for Confirmation}

\subsection{Theorem \ref{Dumped_Newton}}
\label{sec:pr_thm_d_Newton}

In Theorem \ref{Dumped_Newton}, for the continuous dynamical system
\begin{equation}
    \nabla^2 f \dot{x} + \nabla f = 0
    \label{ODE_Dumped_Newton}
\end{equation}
the convergence rate
\begin{equation}
    f(x(t)) - f_* = \mathrm{O}\left( \mathrm{e}^{-t} \right)
\end{equation}
is guaranteed for a convex objective function $f$. Here, we write down the Lyapunov function used in the proof and differentiate it explicitly to support the claim of the theorem.

First, for a constant $k$, define the function
\begin{equation}
    \mathcal{E}(t) = k \mathrm{e}^{kt} \left( f_* - f - \langle \nabla f, x_* - x \rangle \right) + \mathrm{e}^{kt} \left( f - f_* \right)
\end{equation}
following matrix $P^{DN}$ given in equation \eqref{Lyapunov_Dumped_Newton}. It follows from
\begin{equation}
    \mathcal{E}(t) - \mathrm{e}^{kt} \left( f - f_* \right) = k \mathrm{e}^{kt} \left( f_* - f - \langle \nabla f, x_* - x \rangle \right)
\end{equation}
that
\begin{equation}
    \mathcal{E}(t) \geq \mathrm{e}^{kt} \left( f - f_* \right)
\end{equation}
because $f$ is convex and $k \geq 0$.
Considering the time derivative of this function, we have
\begin{equation}
    \begin{split}
        \dot{\mathcal{E}}(t) &= k^2 \mathrm{e}^{kt} \left( f_* - f - \langle \nabla f, x_* - x \rangle \right) + k \mathrm{e}^{kt} \left( - \langle \nabla f, \dot{x} \rangle - \langle \nabla^2 f \dot{x}, x_* - x \rangle - \langle \nabla f, -\dot{x} \rangle \right) \\
        &\phantom{=} + k \mathrm{e}^{kt} \left( f - f_* \right) + \mathrm{e}^{kt} \langle \nabla f, \dot{x} \rangle \\
        &= k^2 \mathrm{e}^{kt} \left( f_* - f - \langle \nabla f, x_* - x \rangle \right) - k \mathrm{e}^{kt} \langle \nabla^2 f \dot{x}, x_* - x \rangle + k \mathrm{e}^{kt} \left( f - f_* \right) + \mathrm{e}^{kt} \langle \nabla f, \dot{x} \rangle.
    \end{split}
\end{equation}
Here, from the continuous dynamical system in equation \eqref{ODE_Dumped_Newton}, we have
\begin{gather}
    - k \mathrm{e}^{kt} \langle \nabla^2 f \dot{x}, x_* - x \rangle = k \mathrm{e}^{kt} \langle \nabla f, x_* - x \rangle, \\
    \mathrm{e}^{kt} \langle \nabla f, \dot{x} \rangle = - \mathrm{e}^{kt} \langle \nabla^2 f \dot{x}, \dot{x} \rangle.
\end{gather}
Therefore we have
\begin{equation}
    \begin{split}
        \dot{\mathcal{E}}(t) &= k^2 \mathrm{e}^{kt} \left( f_* - f - \langle \nabla f, x_* - x \rangle \right) + k \mathrm{e}^{kt} \langle \nabla f, x_* - x \rangle + k \mathrm{e}^{kt} \left( f - f_* \right) - \mathrm{e}^{kt} \langle \nabla^2 f \dot{x}, \dot{x} \rangle \\
        &= k^2 \mathrm{e}^{kt} \left( f_* - f - \langle \nabla f, x_* - x \rangle \right) - k \mathrm{e}^{kt} \left( f_* - f - \langle \nabla f, x_* - x \rangle \right) - \mathrm{e}^{kt} \langle \nabla^2 f \dot{x}, \dot{x} \rangle \\
        &= (k^2 - k) \mathrm{e}^{kt} \left( f_* - f - \langle \nabla f, x_* - x \rangle \right) - \mathrm{e}^{kt} \langle \nabla^2 f \dot{x}, \dot{x} \rangle.
    \end{split}
\end{equation}
Thus, since $f$ is convex,
\begin{equation}
    \dot{\mathcal{E}}(t) \leq 0
\end{equation}
holds for $0 \leq k \leq 1$. 
Therefore $\mathcal{E}(t)$ is indeed a Lyapunov function. Setting $k=1$, we obtain
\begin{equation}
    \mathrm{e}^t (f-f_*) \leq \mathcal{E}(t) \leq \mathcal{E} (0) = \mathrm{const.},
\end{equation}
which gives
\begin{equation}
    f(x(t)) - f_* = \mathrm{O}\left( \mathrm{e}^{-t} \right)
\end{equation}
as claimed.

\subsection{Theorem \ref{First_Order}}
\label{sec:pr_thm_1st_order}

In Theorem \ref{First_Order}, for the continuous dynamical system
\begin{equation}
    \dot{x} - \frac{1}{L} \nabla^2 f \dot{x} + \nabla f = 0,
    \label{ODE_First_Order}
\end{equation}
the convergence rate
\begin{equation}
    f(x(t)) - f_* = \mathrm{O}\left( \mathrm{e}^{- \frac{\mu}{1-\frac{\mu}{L}} t} \right)
\end{equation}
is guaranteed for a $\mu$-strongly convex and $L$-smooth objective function $f$ ($0<\mu<L$). Here, we write down the Lyapunov function used in the proof and differentiate it explicitly to support the claim of the theorem.

First, for a constant $k$, define the function
\begin{equation}
    \mathcal{E}(t) = \frac{k}{2} \mathrm{e}^{kt} \|x-x_*\|^2 - \frac{k}{L} \mathrm{e}^{kt} \left( f_* - f - \langle \nabla f, x_* - x \rangle \right) + \mathrm{e}^{kt} \left( f - f_* \right)
\end{equation}
following matrix $P^{FO}$ given in equation \eqref{Lyapunov_First_Order}. 
If follows from
\begin{equation}
    \begin{split}
        \mathcal{E}(t) - \mathrm{e}^{kt} \left( f - f_* \right) &= \frac{k}{2} \mathrm{e}^{kt} \|x-x_*\|^2 -  \frac{k}{L} \mathrm{e}^{kt} \left( f_* - f - \langle \nabla f, x_* - x \rangle \right)\\
        &= \frac{k}{L} \left( \frac{L}{2} \|x-x_*\|^2 - \left( f_* - f - \langle \nabla f, x_* - x \rangle \right) \right)
    \end{split}
\end{equation}
that 
\begin{equation}
    \mathcal{E}(t) \geq \mathrm{e}^{kt} \left( f - f_* \right)
\end{equation}
because $f$ is $L$-smooth and $k\geq0$.

Considering the time derivative of this function, we have
\begin{equation}
    \begin{split}
        \dot{\mathcal{E}}(t) &= \frac{k^2}{2} \mathrm{e}^{kt} \|x-x_*\|^2 + k \mathrm{e}^{kt} \langle x-x_*, \dot{x} \rangle - \frac{k^2}{L} \mathrm{e}^{kt} \left( f_* - f - \langle \nabla f, x_* - x \rangle \right) \\
        &\phantom{=} - \frac{k}{L} \mathrm{e}^{kt} \left( - \langle \nabla f, \dot{x} \rangle - \langle \nabla^2 f \dot{x}, x_* - x\rangle - \langle \nabla f, -\dot{x} \rangle \right) + k \mathrm{e}^{kt} \left( f - f_* \right) + \mathrm{e}^{kt} \langle \nabla f, \dot{x} \rangle \\
        &= \frac{k^2}{2} \mathrm{e}^{kt} \|x-x_*\|^2 + k \mathrm{e}^{kt} \langle x-x_*, \dot{x} \rangle - \frac{k^2}{L} \mathrm{e}^{kt} \left( f_* - f - \langle \nabla f, x_* - x \rangle \right) \\
        &\phantom{=} + \frac{k}{L} \mathrm{e}^{kt} \langle \nabla^2 f \dot{x}, x_* - x \rangle + k \mathrm{e}^{kt} \left( f - f_* \right) + \mathrm{e}^{kt} \langle \nabla f, \dot{x} \rangle.
    \end{split}
\end{equation}
Here, from the continuous dynamical system in equation \eqref{ODE_First_Order}, we have
\begin{gather}
    \frac{k}{L} \mathrm{e}^{kt} \langle \nabla^2 f \dot{x}, x_* - x \rangle = k \mathrm{e}^{kt} \langle \dot{x} , x_* - x \rangle + k \mathrm{e}^{kt} \langle \nabla f , x_* - x \rangle, \\
    \mathrm{e}^{kt} \langle \nabla f, \dot{x} \rangle = - \mathrm{e}^{kt} \|\dot{x}\|^2 + \frac{1}{L} \mathrm{e}^{kt} \langle \nabla^2 f \dot{x}, \dot{x} \rangle.
\end{gather}

Substituting these into the derivative gives
\begin{equation}
    \begin{split}
        \dot{\mathcal{E}}(t) &= \frac{k^2}{2} e^{kt} \|x-x_*\|^2 + k e^{kt} \langle x-x_*, \dot{x} \rangle - \frac{k^2}{L} e^{kt} \left( f_* - f - \langle \nabla f, x_* - x \rangle \right) \\
        &\phantom{=} + k e^{kt} \langle \dot{x} , x_* - x \rangle + k e^{kt} \langle \nabla f , x_* - x \rangle + k e^{kt} \left( f - f_* \right) - e^{kt} \|\dot{x}\|^2 + \frac{1}{L} e^{kt} \langle \nabla^2 f \dot{x}, \dot{x} \rangle \\
        &= \frac{k^2}{2} e^{kt} \|x-x_*\|^2 - \frac{k^2}{L} e^{kt} \left( f_* - f - \langle \nabla f, x_* - x \rangle \right) \\
        &\phantom{=} - k e^{kt} \left( f_* - f - \langle \nabla f, x_* - x \rangle \right) -e^{kt} \|\dot{x}\|^2 + \frac{1}{L} e^{kt} \langle \nabla^2 f \dot{x}, \dot{x} \rangle \\
        &= \frac{k^2}{2} e^{kt} \|x-x_*\|^2 - \left( \frac{k^2}{L} + k \right) e^{kt} \left( f_* - f - \langle \nabla f, x_* - x \rangle \right) \\
        &\phantom{=} - \left( e^{kt} \|\dot{x}\|^2 - \frac{1}{L} e^{kt} \langle \nabla^2 f \dot{x}, \dot{x} \rangle \right)\\
        &= \left(\frac{k^2}{2} - \frac{\mu}{2} \left( \frac{k^2}{L} + k \right) \right)e^{kt} \|x-x_*\|^2 \\
        &\phantom{=} - \left( \frac{k^2}{L} + k \right) e^{kt} \left( f_* - f - \langle \nabla f, x_* - x \rangle - \frac{\mu}{2} \|x-x_*\|^2 \right) \\
        &\phantom{=} - \frac{1}{L} \left( L e^{kt} \|\dot{x}\|^2 - e^{kt} \langle \nabla^2 f \dot{x}, \dot{x} \rangle \right)\\
        &= \left( 1 - \frac{\mu}{L} \right) \frac{k}{2}\left(k  - \frac{\mu}{1 - \frac{\mu}{L}}\right)e^{kt} \|x-x_*\|^2 \\
        &\phantom{=} - \frac{k}{L} \left( k + L \right) e^{kt} \left( f_* - f - \langle \nabla f, x_* - x \rangle - \frac{\mu}{2} \|x-x_*\|^2 \right) \\
        &\phantom{=} - \frac{1}{L} \left( L e^{kt} \|\dot{x}\|^2 - e^{kt} \langle \nabla^2 f \dot{x}, \dot{x} \rangle \right).\\
    \end{split}
\end{equation}
Therefore, since $f$ is $\mu$-strongly convex and $L$-smooth, we have 
\begin{equation}
    \dot{\mathcal{E}}(t) \leq 0
\end{equation}
for $0\leq k \leq \frac{\mu}{1 - \frac{\mu}{L}}$.
Hence $\mathcal{E}(t)$ is indeed a Lyapunov function. Choosing $k=\frac{\mu}{1 - \frac{\mu}{L}}$ yields
\begin{equation}
    \mathrm{e}^{\frac{\mu}{1 - \frac{\mu}{L}} t} (f-f_*) \leq \mathcal{E}(t) \leq \mathcal{E} (0) = \mathrm{const.},
\end{equation}
which implies
\begin{equation}
    f(x(t)) - f_* = \mathrm{O}\left( \mathrm{e}^{- \frac{\mu}{1 - \frac{\mu}{L}}t} \right).
\end{equation}

\subsection{Theorem \ref{Gradient_Descent}}
\label{sec:pr_thm_GD}

In Theorem \ref{Gradient_Descent}, for the continuous dynamical system
\begin{equation}
    \dot{x} + \nabla f = 0,
    \label{ODE_Gradient_Descent}
\end{equation}
the convergence rate
\begin{equation}
    f(x(t)) - f_* = \mathrm{O}\left( \mathrm{e}^{-2\mu t} \right)
\end{equation}
is guaranteed for a $\mu$-strongly convex objective function $f$. Here, we write down the Lyapunov function used in the proof and differentiate it explicitly to support the claim.

For a constant $k$, we define
\begin{equation}
    \mathcal{E}(t) =  \mathrm{e}^{kt} \left( f - f_* \right)
\end{equation}
following matrix $P^{GD}$ given in equation \eqref{Lyapunov_Gradient_Descent}. Its derivative is given by
\begin{equation}
    \begin{split}
        \dot{\mathcal{E}}(t) &= k \mathrm{e}^{kt} \left( f - f_* \right) + \mathrm{e}^{kt} \langle \nabla f, \dot{x} \rangle \\
        &= -k \mathrm{e}^{kt} \left( f_* - f - \langle \nabla f, x_* - x \rangle - \frac{\mu}{2} \|x-x_*\|^2 \right) + k \mathrm{e}^{kt} \langle \nabla f, x - x_* \rangle - \frac{\mu k}{2} \mathrm{e}^{kt} \|x-x_*\|^2 + \mathrm{e}^{kt} \langle \nabla f, \dot{x} \rangle.
    \end{split}
\end{equation}
Here, from the continuous dynamical system in equation \eqref{ODE_Gradient_Descent}, we have
\begin{gather}
    \mathrm{e}^{kt} \langle \nabla f, \dot{x} \rangle = - \mathrm{e}^{kt} \| \dot{x} \|^2, \\
    k \mathrm{e}^{kt} \langle \nabla f, x - x_* \rangle = - k \mathrm{e}^{kt} \langle \dot{x} , x - x_* \rangle.
\end{gather}
From these, we get
\begin{equation}
    \begin{split}
        \dot{\mathcal{E}}(t) &= -k e^{kt} \left( f_* - f - \langle \nabla f, x_* - x \rangle - \frac{\mu}{2} \|x-x_*\|^2 \right) - k e^{kt} \langle \dot{x} , x - x_* \rangle - \frac{\mu k}{2} e^{kt} \|x-x_*\|^2 - e^{kt} \| \dot{x} \|^2 \\
        &= -k e^{kt} \left( f_* - f - \langle \nabla f, x_* - x \rangle - \frac{\mu}{2} \|x-x_*\|^2 \right) - e^{kt} \left\| \dot{x} - \frac{k}{2}(x-x_*)\right\|^2 + \left( \frac{k^2}{4} - \frac{\mu k}{2} \right) e^{kt} \|x-x_*\|^2 \\
        &= -k e^{kt} \left( f_* - f - \langle \nabla f, x_* - x \rangle - \frac{\mu}{2} \|x-x_*\|^2 \right) - e^{kt} \left\| \dot{x} - \frac{k}{2}(x-x_*)\right\|^2 + \frac{k}{4} \left( k - 2\mu \right) e^{kt} \|x-x_*\|^2.
    \end{split}
\end{equation}

Therefore, since $f$ is $\mu$-strongly convex, we have
\begin{equation}
    \dot{\mathcal{E}}(t) \leq 0
\end{equation}
for $0 \leq k \leq 2\mu$. 
Hence $\mathcal{E}(t)$ is indeed a Lyapunov function. Choosing $k = 2\mu$, we have
\begin{equation}
    \mathrm{e}^{2\mu t} (f-f_*) = \mathcal{E}(t) \leq \mathcal{E}(0) = \mathrm{const.},
\end{equation}
which implies
\begin{equation}
    f(x(t)) - f_* = \mathrm{O}\left( \mathrm{e}^{-2\mu t} \right).
\end{equation}

\subsection{Theorem \ref{SC-NAG}}
\label{sec:pr_thm_SC-NAG}

In Theorem \ref{SC-NAG}, for the continuous dynamical system
\begin{equation}
    \ddot{x} + 2\sqrt{\mu} \dot{x} + \nabla f = 0,
    \label{ODE_SC-NAG}
\end{equation}
the convergence rate
\begin{equation}
    f(x(t)) - f_* = \mathrm{O}\left( \mathrm{e}^{-\sqrt{\mu}t} \right)
\end{equation}
is guaranteed for a $\mu$-strongly convex objective function $f$. Here, we write down the Lyapunov function used in the proof and differentiate it explicitly to support the claim of the theorem.

For a constant $k$, we define
\begin{equation}
    \mathcal{E}(t) = \left( \sqrt{\mu} k - \frac{k^2}{2} \right) \mathrm{e}^{kt} \|x-x_*\|^2 + k \mathrm{e}^{kt} \langle x-x_*, \dot{x} \rangle + \frac{1}{2} \mathrm{e}^{kt} \|\dot{x}\|^2 + \mathrm{e}^{kt} \left( f - f_* \right)
\end{equation}
following matrix $P^{SCNAG}$ given in equation \eqref{Lyapunov_SC-NAG}. Then, the expression
\begin{equation}
    \begin{split}
        \mathcal{E}(t) - e^{kt} \left( f - f_* \right) & = \left( \sqrt{\mu} k - \frac{k^2}{2} \right) e^{kt} \|x-x_*\|^2 + k e^{kt} \langle x-x_*, \dot{x} \rangle + \frac{1}{2} e^{kt} \|\dot{x}\|^2 \\
        &= \frac{1}{2} e^{kt}\|\dot{x} + k (x-x_*)\|^2 + (\sqrt{\mu}k - k^2) e^{kt} \|x-x_*\|^2 \\
        &= \frac{1}{2} e^{kt}\|\dot{x} + k (x-x_*)\|^2 + k (\sqrt{\mu} - k) e^{kt} \|x-x_*\|^2 \\
    \end{split}
\end{equation}
guarantees
\begin{equation}
    \mathcal{E}(t) \geq \mathrm{e}^{kt} \left( f - f_* \right)
\end{equation}
for $0 \le k \le \sqrt{\mu}$.

The time derivative of this function is given by 
\begin{equation}
    \begin{split}
        \dot{\mathcal{E}}(t) &= \left( \sqrt{\mu} k^2 - \frac{k^3}{2} \right) \mathrm{e}^{kt} \|x-x_*\|^2 + \left( 2\sqrt{\mu}k - k^2 \right) \mathrm{e}^{kt} \langle x-x_*, \dot{x} \rangle + k^2 \mathrm{e}^{kt} \langle x-x_*, \dot{x} \rangle \\
        &\phantom{=} + k \mathrm{e}^{kt} \langle x-x_*, \ddot{x} \rangle + k \mathrm{e}^{kt} \|\dot{x}\|^2 + \frac{k}{2} \mathrm{e}^{kt} \|\dot{x}\|^2 + \mathrm{e}^{kt} \langle \dot{x}, \ddot{x} \rangle + k \mathrm{e}^{kt} \left( f-f_* \right) + \mathrm{e}^{kt} \langle \nabla f, \dot{x} \rangle \\
        &= \left( \sqrt{\mu} k^2 - \frac{k^3}{2} \right) \mathrm{e}^{kt} \|x-x_*\|^2 + 2\sqrt{\mu}k \mathrm{e}^{kt} \langle x-x_*, \dot{x} \rangle \\
        &\phantom{=} + k \mathrm{e}^{kt} \langle x-x_*, \ddot{x} \rangle + \frac{3}{2} k \mathrm{e}^{kt} \|\dot{x}\|^2 + \mathrm{e}^{kt} \langle \dot{x}, \ddot{x} \rangle + k \mathrm{e}^{kt} \left( f-f_* \right) + \mathrm{e}^{kt} \langle \nabla f, \dot{x} \rangle.
    \end{split}
\end{equation}
Here, from the continuous dynamical system in equation \eqref{ODE_SC-NAG}, we have
\begin{gather}
    k \mathrm{e}^{kt} \langle x-x_*, \ddot{x} \rangle = - 2\sqrt{\mu} k \mathrm{e}^{kt} \langle x-x_*, \dot{x} \rangle - k \mathrm{e}^{kt} \langle \nabla f, x - x_* \rangle, \\
    \mathrm{e}^{kt} \langle \dot{x}, \ddot{x} \rangle = - 2\sqrt{\mu} \mathrm{e}^{kt} \|\dot{x}\|^2 - \mathrm{e}^{kt} \langle \nabla f, \dot{x} \rangle.
\end{gather}
From these, we have
\begin{equation}
    \begin{split}
        \dot{\mathcal{E}}(t) &= \left( \sqrt{\mu} k^2 - \frac{k^3}{2} \right) e^{kt} \|x-x_*\|^2 + 2\sqrt{\mu}k e^{kt} \langle x-x_*, \dot{x} \rangle - 2\sqrt{\mu} k e^{kt} \langle x-x_*, \dot{x} \rangle\\
        &\phantom{=} - k e^{kt} \langle \nabla f, x - x_* \rangle + \frac{3}{2} k e^{kt} \|\dot{x}\|^2 - 2\sqrt{\mu} e^{kt} \|\dot{x}\|^2 - e^{kt} \langle \nabla f, \dot{x} \rangle + k e^{kt} \left( f-f_* \right) + e^{kt} \langle \nabla f, \dot{x} \rangle \\
        &= \left( \sqrt{\mu} k^2 - \frac{k^3}{2} \right) e^{kt} \|x-x_*\|^2 - k e^{kt} \langle \nabla f, x - x_* \rangle + \left( \frac{3}{2} k - 2\sqrt{\mu} \right) e^{kt} \|\dot{x}\|^2 + k e^{kt} \left( f-f_* \right) \\
        &= \left( \sqrt{\mu} k^2 - \frac{k^3}{2} -\frac{\mu}{2} k\right) e^{kt} \|x-x_*\|^2 - k e^{kt} \left( f_* - f - \langle \nabla f, x_* - x \rangle - \frac{\mu}{2} \|x-x_*\|^2 \right) \\
        &\phantom{=} + \frac{3}{2} \left( k - \sqrt{\mu} \right) e^{kt} \|\dot{x}\|^2 - \frac{\sqrt{\mu}}{2} e^{kt} \|\dot{x}\|^2 \\
        &= - \frac{k}{2} (k - \sqrt{\mu})^2 e^{kt} \|x-x_*\|^2 - k e^{kt} \left( f_* - f - \langle \nabla f, x_* - x \rangle - \frac{\mu}{2} \|x-x_*\|^2 \right) \\
        &\phantom{=} + \frac{3}{2} \left( k - \sqrt{\mu} \right) e^{kt} \|\dot{x}\|^2 - \frac{\sqrt{\mu}}{2} e^{kt} \|\dot{x}\|^2.
    \end{split}
\end{equation}

Since the objective function is $\mu$-strongly convex, we have
\begin{equation}
    \dot{\mathcal{E}}(t) \leq 0,
\end{equation}
for $0\leq k \leq \sqrt{\mu}$.
Hence $\mathcal{E}(t)$ is a Lyapunov function. Choosing $k = \sqrt{\mu}$, we have
\begin{equation}
    \mathrm{e}^{\sqrt{\mu}t} (f-f_*) \leq \mathcal{E}(t) \leq \mathcal{E} (0) = \mathrm{const.},
\end{equation}
which implies
\begin{equation}
    f(x(t)) - f_* = \mathrm{O}\left( \mathrm{e}^{-\sqrt{\mu}t} \right).
\end{equation}

\subsection{Theorem \ref{Second_Order}}
\label{sec:pr_thm_Second_Order}

In Theorem \ref{Second_Order}, for the continuous dynamical system
\begin{equation}
    \ddot{x} + \sqrt{\mu} \dot{x} + \frac{1}{\sqrt{\mu}} \nabla^2 f \dot{x} + \nabla f = 0,
    \label{ODE_Second_Order}
\end{equation}
the convergence rate
\begin{equation}
    f(x(t)) - f_* = \mathrm{O}\left( \mathrm{e}^{-\sqrt{\mu}t} \right)
\end{equation}
is guaranteed for a $\mu$-strongly convex objective function $f$. Here, we write down the Lyapunov function used in the proof and differentiate it explicitly to support the claim of the theorem.

For a constant $k$, we define
\begin{equation}
    \mathcal{E}(t) = \frac{k}{\sqrt{\mu}} \mathrm{e}^{kt} (f_* - f - \langle \nabla f, x_* - x \rangle) + k \mathrm{e}^{kt} \langle x-x_*, \dot{x} \rangle + \frac{1}{2} \mathrm{e}^{kt} \|\dot{x}\|^2 + \mathrm{e}^{kt} \left( f - f_* \right)
\end{equation}
following matrix $P^{SO}$ given in equation \eqref{Lyapunov_Second_Order}. Then, we have
\begin{equation}
    \begin{split}
        \mathcal{E}(t) - \mathrm{e}^{kt} \left( f - f_* \right) 
        &= \frac{k}{\sqrt{\mu}} \mathrm{e}^{kt} \left(f_* - f - \langle \nabla f, x_* - x \rangle \right) + k \mathrm{e}^{kt} \langle x-x_*, \dot{x} \rangle + \frac{1}{2} \mathrm{e}^{kt} \|\dot{x}\|^2 \\
        &= \frac{k}{\sqrt{\mu}} \mathrm{e}^{kt} \left(f_* - f - \langle \nabla f, x_* - x \rangle - \frac{\mu}{2} \|x-x_*\|^2 \right) + \frac{\sqrt{\mu}}{2} k \mathrm{e}^{kt} \|x-x_*\|^2 \\
        &\phantom{=} + \frac{1}{2} \mathrm{e}^{kt} \| \dot{x} + k(x-x_*) \|^2 - \frac{k^2}{2} \mathrm{e}^{kt} \|x-x_*\|^2 \\
        &= \frac{k}{\sqrt{\mu}} \mathrm{e}^{kt} \left(f_* - f - \langle \nabla f, x_* - x \rangle - \frac{\mu}{2} \|x-x_*\|^2 \right) \\
        &\phantom{=} + \frac{1}{2} \mathrm{e}^{kt} \| \dot{x} + k(x-x_*) \|^2 +\frac{k}{2} (\sqrt{\mu} - k) \mathrm{e}^{kt} \|x-x_*\|^2.
    \end{split}
\end{equation}
Thus, for $0 \le k \le \sqrt{\mu}$, we have
\begin{equation}
    \mathcal{E}(t) \geq \mathrm{e}^{kt} \left( f - f_* \right).
\end{equation}

Next, we consider the time derivative of this function:
\begin{equation}
    \begin{split}
        \dot{\mathcal{E}}(t) &= \frac{k^2}{\sqrt{\mu}} \mathrm{e}^{kt} (f_* - f - \langle \nabla f, x_* - x \rangle) - \frac{k}{\sqrt{\mu}} \mathrm{e}^{kt} \langle \nabla f, \dot{x} \rangle - \frac{k}{\sqrt{\mu}} \mathrm{e}^{kt} \langle \nabla^2 f \dot{x}, x_* - x \rangle - \frac{k}{\sqrt{\mu}} \mathrm{e}^{kt} \langle \nabla f, -\dot{x} \rangle \\
        &\phantom{=} + k^2 \mathrm{e}^{kt} \langle x-x_*, \dot{x} \rangle + k \mathrm{e}^{kt} \|\dot{x}\|^2 + k \mathrm{e}^{kt} \langle x-x_*, \ddot{x} \rangle + \frac{k}{2} \mathrm{e}^{kt} \|\dot{x}\|^2 + \mathrm{e}^{kt} \langle \dot{x}, \ddot{x} \rangle + k \mathrm{e}^{kt} (f - f_*) + \mathrm{e}^{kt} \langle \nabla f, \dot{x} \rangle \\
        &= \frac{k^2}{\sqrt{\mu}} \mathrm{e}^{kt} (f_* - f - \langle \nabla f, x_* - x \rangle) + \frac{k}{\sqrt{\mu}} \mathrm{e}^{kt} \langle \nabla^2 f \dot{x}, x - x_* \rangle \\
        &\phantom{=} + k^2 \mathrm{e}^{kt} \langle x-x_*, \dot{x} \rangle + \frac{3}{2}k \mathrm{e}^{kt} \|\dot{x}\|^2 + k \mathrm{e}^{kt} \langle x-x_*, \ddot{x} \rangle + \mathrm{e}^{kt} \langle \dot{x}, \ddot{x} \rangle + k \mathrm{e}^{kt} (f - f_*) + \mathrm{e}^{kt} \langle \nabla f, \dot{x} \rangle.
    \end{split}
\end{equation}
Here, from the continuous dynamical system in equation \eqref{ODE_Second_Order}, we have
\begin{gather}
    k \mathrm{e}^{kt} \langle x-x_*, \ddot{x} \rangle =  -\sqrt{\mu} k \mathrm{e}^{kt} \langle x-x_*,\dot{x} \rangle - \frac{k}{\sqrt{\mu}} \mathrm{e}^{kt} \langle x-x_*, \nabla^2 f \dot{x} \rangle - k \mathrm{e}^{kt} \langle x-x_*, \nabla f \rangle, \\
    \mathrm{e}^{kt} \langle \dot{x}, \ddot{x} \rangle = - \sqrt{\mu} \mathrm{e}^{kt} \|\dot{x}\|^2 - \frac{1}{\sqrt{\mu}} \mathrm{e}^{kt} \langle \dot{x}, \nabla^2 f \dot{x} \rangle - \mathrm{e}^{kt} \langle \dot{x}, \nabla f \rangle.
\end{gather}
From these, we have
\begin{equation}
    \begin{split}
        \dot{\mathcal{E}}(t) &= \frac{k^2}{\sqrt{\mu}} e^{kt} (f_* - f - \langle \nabla f, x_* - x \rangle) + \frac{k}{\sqrt{\mu}} e^{kt} \langle \nabla^2 f \dot{x}, x - x_* \rangle \\
        &\phantom{=} + k^2 e^{kt} \langle x-x_*, \dot{x} \rangle + \frac{3}{2}k e^{kt} \|\dot{x}\|^2 -\sqrt{\mu} k e^{kt} \langle x-x_*,\dot{x} \rangle - \frac{k}{\sqrt{\mu}} e^{kt} \langle x-x_*, \nabla^2 f \dot{x} \rangle - k e^{kt} \langle x-x_*, \nabla f \rangle \\
        &\phantom{=} - \sqrt{\mu} e^{kt} \|\dot{x}\|^2 - \frac{1}{\sqrt{\mu}} e^{kt} \langle \dot{x}, \nabla^2 f \dot{x} \rangle - e^{kt} \langle \dot{x}, \nabla f \rangle + k e^{kt} (f - f_*) + e^{kt} \langle \nabla f, \dot{x} \rangle \\
        &= \left( \frac{k^2}{\sqrt{\mu}} -k \right) e^{kt} (f_* - f - \langle \nabla f, x_* - x \rangle) + \left( k^2 - \sqrt{\mu} k \right) e^{kt} \langle x - x_*, \dot{x} \rangle \\
        &\phantom{=} + \left( \frac{3}{2} k - \sqrt{\mu} \right) e^{kt} \|\dot{x}\|^2 - \frac{1}{\sqrt{\mu}} e^{kt} \langle \nabla^2 f \dot{x}, \dot{x} \rangle \\
        &= \left( \frac{k^2}{\sqrt{\mu}} - k \right) e^{kt} \left( f_* - f - \langle \nabla f, x_* - x \rangle - \frac{\mu}{2} \| x - x_* \|^2 \right) + \left( \frac{\sqrt{\mu}}{2} k^2 - \frac{\mu}{2} k \right) e^{kt} \| x - x_* \|^2 \\
        &\phantom{=} + (k^2 - \sqrt{\mu}k) e^{kt} \langle x - x_*, \dot{x} \rangle - \frac{k}{2} e^{kt} \|\dot{x}\|^2 + 2(k - \sqrt{\mu}) e^{kt} \|\dot{x}\|^2 + \frac{1}{\sqrt{\mu}} e^{kt} (\mu \|\dot{x}\|^2 - \langle \nabla^2 f \dot{x}, \dot{x} \rangle) \\
        &= \frac{k}{\sqrt{\mu}} \left( k - \sqrt{\mu} \right) e^{kt} \left( f_* - f - \langle \nabla f, x_* - x \rangle - \frac{\mu}{2} \| x - x_* \|^2 \right) + \frac{\sqrt{\mu}}{2} k (k - \sqrt{\mu}) e^{kt} \left\| x - x_* + \frac{1}{\sqrt{\mu}} \dot{x} \right\|^2 \\
        &\phantom{=} - \frac{k^2}{2\sqrt{\mu}} e^{kt} \|\dot{x}\|^2 + 2(k - \sqrt{\mu}) e^{kt} \|\dot{x}\|^2 + \frac{1}{\sqrt{\mu}} e^{kt} (\mu \|\dot{x}\|^2 - \langle \nabla^2 f \dot{x}, \dot{x} \rangle).
    \end{split}
\end{equation}

Then, since the objective function is $\mu$-strongly convex, we have
\begin{equation}
    \dot{\mathcal{E}}(t) \leq 0.
\end{equation}
for $0\leq k \leq \sqrt{\mu}$.
Hence $\mathcal{E}(t)$ is a Lyapunov function. Choosing $k=\sqrt{\mu}$, we have
\begin{equation}
    \mathrm{e}^{\sqrt{\mu}t} (f-f_*) \leq \mathcal{E}(t) \leq \mathcal{E} (0) = \mathrm{const.},
\end{equation}
which implies
\begin{equation}
    f(x(t)) - f_* = \mathrm{O}\left( \mathrm{e}^{-\sqrt{\mu}t} \right).
\end{equation}

\subsection{Theorem \ref{NAG_Convex}}
\label{sec:pr_thm_NAG_Convex}

In Theorem \ref{NAG_Convex}, for the continuous dynamical system
\begin{equation}
    \ddot{x} + \frac{3}{t} \dot{x} + \nabla f = 0,
    \label{ODE_NAG_Convex}
\end{equation}
the convergence rate
\begin{equation}
    f(x(t)) - f_* = \mathrm{O}\left( \frac{1}{t^2} \right)
\end{equation}
is guaranteed for a convex objective function $f$. Here, we write down the Lyapunov function used in the proof and differentiate it explicitly to support the claim of the theorem.

For a constant $k$, we define
\begin{equation}
    \mathcal{E}(t) = t^k \biggl( \frac{4k - k^2}{2t^2} \|x-x_*\|^2 + \frac{k}{t} \langle x - x_*, \dot{x} \rangle + \frac{1}{2} \|\dot{x}\|^2 \biggr) + t^k (f-f_*)
\end{equation}
following matrix $P^{NAG}$ given in equation \eqref{Lyapunov_NAG_Convex}. Then, we have
\begin{equation}
    \begin{split}
        \mathcal{E}(t) - t^k \left( f - f_* \right) 
        &= t^k \biggl( \frac{4k - k^2}{2t^2} \|x-x_*\|^2 + \frac{k}{t} \langle x - x_*, \dot{x} \rangle + \frac{1}{2} \|\dot{x}\|^2 \biggr) \\
        &= t^k \biggl( \frac{4k - k^2}{2t^2} \|x-x_*\|^2 + \frac{1}{2} \left\|\dot{x} + \frac{k}{t}(x-x_*)\right\|^2 - \frac{k^2}{2t^2} \|x-x_*\|^2 \biggr) \\
        &= t^k \biggl( \frac{k(2-k)}{t^2} \|x-x_*\|^2 + \frac{1}{2} \left\|\dot{x} + \frac{k}{t}(x-x_*)\right\|^2 \biggr).
    \end{split}
\end{equation}
Then, we have
\begin{equation}
    \mathcal{E}(t) \geq t^k \left( f - f_* \right)
\end{equation}
for $0 \le k \le 2$. 

We consider the time derivative of this function:
\begin{equation}
    \begin{split}
        \dot{\mathcal{E}}(t) &= k t^{k-1} \biggl( \frac{4k - k^2}{2t^2} \|x-x_*\|^2 + \frac{k}{t} \langle x - x_*, \dot{x} \rangle + \frac{1}{2} \|\dot{x}\|^2 \biggr) \\
        &\phantom{=} + t^k \biggl( - \frac{4k - k^2}{t^3} \|x-x_*\|^2 + \frac{4k - k^2}{t^2} \langle x-x_*, \dot{x} \rangle - \frac{k}{t^2} \langle x-x_*, \dot{x} \rangle + \frac{k}{t} \|\dot{x}\|^2 + \frac{k}{t} \langle x-x_*, \ddot{x} \rangle + \langle \dot{x}, \ddot{x} \rangle\biggr) \\
        &\phantom{=} + k t^{k-1} (f-f_*) + t^k \langle \nabla f, \dot{x} \rangle \\
        &= t^k \biggl( \left( \frac{4k^2 - k^3}{2t^3} - \frac{4k - k^2}{t^3} \right) \|x-x_*\|^2 + \left( \frac{k^2}{t^2} + \frac{4k - k^2}{t^2} - \frac{k}{t^2} \right) \langle x-x_*, \dot{x} \rangle + \left( \frac{k}{2t} + \frac{k}{t} \right) \|\dot{x}\|^2 \\
        &\phantom{=} + \frac{k}{t} \langle x-x_*, \ddot{x} \rangle + \langle \dot{x}, \ddot{x} \rangle + \frac{k}{t} (f-f_*) + \langle \nabla f, \dot{x} \rangle \biggr) \\
        &= t^k \left( \frac{-k^3 + 6k^2 - 8k}{2t^3} \|x-x_*\|^2 + \frac{3k}{t^2} \langle x-x_*, \dot{x} \rangle + \frac{3k}{2t} \|\dot{x}\|^2 + \frac{k}{t} \langle x-x_*, \ddot{x} \rangle + \langle \dot{x}, \ddot{x} \rangle + \frac{k}{t} (f-f_*) + \langle \nabla f, \dot{x} \rangle \right)
    \end{split}
\end{equation}
Here, from the continuous dynamical system in equation \eqref{ODE_NAG_Convex}, we have
\begin{gather}
    \frac{k}{t} \langle x-x_*, \ddot{x} \rangle = - \frac{3k}{t^2} \langle x-x_*, \dot{x} \rangle - \frac{k}{t} \langle x-x_*, \nabla f \rangle, \\
    \langle \dot{x}, \ddot{x} \rangle = - \frac{3}{t} \|\dot{x}\|^2 - \langle \nabla f, \dot{x} \rangle.
\end{gather}
Therefore, we have
\begin{equation}
    \begin{split}
        \dot{\mathcal{E}}(t) &= t^k \biggl( \frac{-k^3 + 6k^2 - 8k}{2t^3} \|x-x_*\|^2 + \frac{3k}{t^2} \langle x-x_*, \dot{x} \rangle + \frac{3k}{2t} \|\dot{x}\|^2 \\
        &\phantom{=} - \frac{3k}{t^2} \langle x-x_*, \dot{x} \rangle - \frac{k}{t} \langle x-x_*, \nabla f \rangle - \frac{3}{t} \|\dot{x}\|^2 - \langle \nabla f, \dot{x} \rangle  + \frac{k}{t} (f-f_*) + \langle \nabla f, \dot{x} \rangle \biggr) \\
        &= t^k \biggl( \frac{-k^3 + 6k^2 - 8k}{2t^3} \|x-x_*\|^2 + \left(\frac{3k}{2t} - \frac{3}{t}\right) \|\dot{x}\|^2 - \frac{k}{t} (f_* - f - \langle \nabla f, x_* - x \rangle)\biggr) \\
        &= - t^k \biggl( \frac{k(k-2)(k-4)}{2t^3} \|x-x_*\|^2 + \frac{3(2-k)}{2t} \|\dot{x}\|^2 + \frac{k}{t} (f_* - f - \langle \nabla f, x_* - x \rangle)\biggr).
    \end{split}
\end{equation}

Thus, since the objective function is convex, we have
\begin{equation}
    \dot{\mathcal{E}}(t) \le 0
\end{equation}
for $0\le k \le 2$.
Hence $\mathcal{E}(t)$ is a Lyapunov function. Choosing $k=2$, we have
\begin{equation}
    t^2 (f-f_*) \leq \mathcal{E}(t) \leq \mathcal{E} (0) = \mathrm{const.},
\end{equation}
which implies
\begin{equation}
    f(x(t)) - f_* = \mathrm{O}\left( \frac{1}{t^2} \right).
\end{equation}

\subsection{Theorem \ref{NAG_strong_log}}
\label{sec:pr_thm_NAG_strong_log}

In Theorem \ref{NAG_strong_log}, for the continuous dynamical system
\begin{equation}
    \ddot{x} + \frac{r}{t} \dot{x} + \nabla f = 0 \quad (r>0),
    \label{ODE_NAG_strong_log}
\end{equation}
the convergence rate
\begin{equation}
    f(x(t)) - f_* = \mathrm{O}\left( \frac{1}{t^{\frac{1}{2}r+\frac{1}{2}}} \right)
\end{equation}
is guaranteed for a $\mu$-strongly convex objective function $f$. Here, we write down the Lyapunov function used in the proof and differentiate it explicitly to support the claim of the theorem.

For $r>3$, we define
\begin{equation}
    T = \frac{1}{\sqrt{\mu}} \sqrt{\left( \frac{r-1}{2} \right)^2 -1}. 
\end{equation}
Then $T>0$ holds because $r > 3$.
The number $r$ is given by 
\begin{gather}
    1 + \sqrt{1+\mu T^2} = \frac{1}{2}r + \frac{1}{2}, \\
    \Leftrightarrow r = 1 + 2 \sqrt{1+\mu T^2}.
\end{gather}
Using this $T$ and a constant $k$, 
we define the function
\begin{equation}
    \mathcal{E}(t) = t^k \left( \frac{\left( 2 + 2 \sqrt{1+ \mu T^2}\right)k - k^2}{2t^2} \|x-x_*\|^2 + \frac{k}{t} \langle x - x_*, \dot{x} \rangle + \frac{1}{2} \|\dot{x}\|^2 \right) + t^k (f-f_*)
\end{equation}
following matrix $P^{NAG}$ given in equation \eqref{Lyapunov_NAG_strong_log}.
Then, we have
\begin{equation}
    \begin{split}
        \mathcal{E}(t) - t^k \left( f - f_* \right) &= t^k \left( \frac{\left( 2 + 2 \sqrt{1+ \mu T^2}\right)k - k^2}{2t^2} \|x-x_*\|^2 + \frac{k}{t} \langle x - x_*, \dot{x} \rangle + \frac{1}{2} \|\dot{x}\|^2 \right) \\
        &= t^k \left( \frac{\left( 2 + 2 \sqrt{1+ \mu T^2}\right)k - k^2}{2t^2} \|x-x_*\|^2 + \frac{1}{2} \left\| \dot{x} + \frac{k}{t}(x-x_*) \right\|^2 - \frac{k^2}{2t^2} \|x-x_*\|^2 \right) \\
        &= t^k  \left( \frac{k\left(\left( 1 + \sqrt{1+ \mu T^2}\right) - k\right)}{t^2} \|x-x_*\|^2 + \frac{1}{2} \left\| \dot{x} + \frac{k}{t}(x-x_*) \right\|^2 \right),
    \end{split}
\end{equation}
which implies
\begin{equation}
    \mathcal{E}(t) \ge t^k \left( f - f_* \right)
\end{equation}
for $0 \le k \le 1 + \sqrt{1+\mu T^2}$.

We consider the time derivative of the function:
\begin{equation}
    \begin{split}
        \dot{\mathcal{E}}(t) &= k t^{k-1} \left( \frac{\left( 2 + 2 \sqrt{1+ \mu T^2}\right)k - k^2}{2t^2} \|x-x_*\|^2 + \frac{k}{t} \langle x - x_*, \dot{x} \rangle + \frac{1}{2} \|\dot{x}\|^2 \right) \\
        &\phantom{=} + t^k \left( - \frac{\left( 2 + 2 \sqrt{1+ \mu T^2}\right)k - k^2}{t^3} \|x-x_*\|^2 \right. + \frac{\left( 2 + 2 \sqrt{1+ \mu T^2}\right)k - k^2}{t^2} \langle x-x_*, \dot{x} \rangle \\
        &\phantom{=+ t^k} \left. \phantom{- \frac{\left(\sqrt{T^2}\right)}{t^3}} -\frac{k}{t^2} \langle x-x_*, \dot{x} \rangle + \frac{k}{t} \|\dot{x}\|^2 + \frac{k}{t} \langle x-x_*, \ddot{x} \rangle + \langle \dot{x}, \ddot{x} \rangle  \right) \\
        &\phantom{=} + k t^{k-1} (f-f_*) + t^k \langle \nabla f, \dot{x} \rangle \\
        &= t^k \left( \left( \frac{\left( 2 + 2 \sqrt{1+ \mu T^2}\right)k^2 - k^3}{2t^3} - \frac{\left( 2 + 2 \sqrt{1+ \mu T^2}\right)k - k^2}{t^3} \right) \|x-x_*\|^2\right.\\
        &\phantom{=} \phantom{- \frac{\left(\sqrt{T^2}\right)}{t^3}} + \left( \frac{k^2}{t^2} + \frac{\left( 2 + 2 \sqrt{1+ \mu T^2}\right)k - k^2}{t^2} - \frac{k}{t^2}\right) \langle x-x_*, \dot{x} \rangle + \left (\frac{k}{2t} + \frac{k}{t} \right) \|\dot{x}\|^2 \\
        &\phantom{=} \left. \phantom{- \frac{\left(\sqrt{T^2}\right)}{t^3}} +\frac{k}{t} \langle x-x_*, \ddot{x} \rangle + \langle \dot{x}, \ddot{x} \rangle + \frac{k}{t} (f-f_*) + \langle \nabla f, \dot{x} \rangle \right) \\
        &= t^k \left( \frac{-k^3 + \left(4+2\sqrt{1+\mu T^2}\right) k^2 - \left(4+4\sqrt{1+\mu T^2}\right)k}{2t^3} \|x-x_*\|^2\right.\\
        &\phantom{=} \phantom{- \frac{\left(\sqrt{T^2}\right)}{t^3}} +  \frac{\left( 1 + 2 \sqrt{1+ \mu T^2}\right)k}{t^2} \langle x-x_*, \dot{x} \rangle + \frac{3k}{2t} \|\dot{x}\|^2 \\
        &\phantom{=} \left. \phantom{- \frac{\left(\sqrt{T^2}\right)}{t^3}} +\frac{k}{t} \langle x-x_*, \ddot{x} \rangle + \langle \dot{x}, \ddot{x} \rangle + \frac{k}{t} (f-f_*) + \langle \nabla f, \dot{x} \rangle \right)
    \end{split}
\end{equation}
Here, from the continuous dynamical system in equation \eqref{ODE_NAG_strong_log}, we have
\begin{gather}
    \frac{k}{t} \langle x-x_*, \ddot{x} \rangle = - \frac{\left( 1 + 2 \sqrt{1+ \mu T^2}\right)k}{t^2} \langle x-x_*, \dot{x} \rangle - \frac{k}{t} \langle x-x_*, \nabla f \rangle, \\
    \langle \dot{x}, \ddot{x} \rangle = - \frac{1+2\sqrt{1+\mu T^2}}{t} \|\dot{x}\|^2 - \langle \nabla f, \dot{x} \rangle.
\end{gather}
Therefore we have
\begin{equation}
    \begin{split}
        \dot{\mathcal{E}}(t) &= t^k \left( \frac{-k^3 + \left(4+2\sqrt{1+\mu T^2}\right) k^2 - \left(4+4\sqrt{1+\mu T^2}\right)k}{2t^3} \|x-x_*\|^2\right.\\
        &\phantom{=} \phantom{- \frac{\left(\sqrt{T^2}\right)}{t^3}} +  \frac{\left( 1 + 2 \sqrt{1+ \mu T^2}\right)k}{t^2} \langle x-x_*, \dot{x} \rangle + \frac{3k}{2t} \|\dot{x}\|^2 - \frac{\left( 1 + 2 \sqrt{1+ \mu T^2}\right)k}{t^2} \langle x-x_*, \dot{x} \rangle\\
        &\phantom{=} \left. \phantom{- \frac{\left(\sqrt{T^2}\right)}{t^3}}  - \frac{k}{t} \langle x-x_*, \nabla f \rangle - \frac{1+2\sqrt{1+\mu T^2}}{t} \|\dot{x}\|^2 - \langle \nabla f, \dot{x} \rangle  + \frac{k}{t} (f-f_*) + \langle \nabla f, \dot{x} \rangle \right)\\
        &= t^k \left( \frac{-k^3 + \left(4+2\sqrt{1+\mu T^2}\right) k^2 - \left(4+4\sqrt{1+\mu T^2}\right)k}{2t^3} \|x-x_*\|^2\right.\\
        &\phantom{=} \left. \phantom{- \frac{\left(\sqrt{T^2}\right)}{t^3}}  + \frac{3k}{2t} \|\dot{x}\|^2 - \frac{k}{t} \langle x-x_*, \nabla f \rangle - \frac{1+2\sqrt{1+\mu T^2}}{t} \|\dot{x}\|^2  + \frac{k}{t} (f-f_*) \right)\\
        &= t^k \left( - \frac{k \left( k - \left( 1 + \sqrt{1 + \mu T^2} \right)\right)  \left( k - \left( 3 + \sqrt{1 + \mu T^2} \right)\right)}{2t^3} \|x-x_*\|^2 + \frac{\mu k T^2}{2t^3} \|x-x_*\|^2\right.\\
        &\phantom{=} \left. \phantom{\frac{\left(\sqrt{T^2}\right)}{t^3}}  + \left(\frac{3k}{2t}- \frac{1+2\sqrt{1+\mu T^2}}{t} \right) \|\dot{x}\|^2 - \frac{k}{t} \left(f_* - f - \langle \nabla f, x_* - x \rangle - \frac{\mu}{2} \|x-x_*\|^2 \right) -\frac{\mu k}{2t} \|x-x_*\|^2\right)\\
        &= t^k \left( - \frac{k \left( k - \left( 1 + \sqrt{1 + \mu T^2} \right) \right) \left( k - \left( 3 + \sqrt{1 + \mu T^2} \right)\right)}{2t^3} \|x-x_*\|^2 - \frac{\mu k (t^2 - T^2)}{2t^3} \|x-x_*\|^2\right.\\
        &\phantom{=}  \phantom{\frac{\left(\sqrt{T^2}\right)}{t^3}}  + \frac{3\left(k - \left( 1+\sqrt{1+\mu T^2} \right)\right)}{2t} \|\dot{x}\|^2 - \frac{\sqrt{1+ \mu T^2} -1}{2t} \|\dot{x}\|^2 \\
        &\phantom{=} \left. \phantom{\frac{\left(\sqrt{T^2}\right)}{t^3}}
        - \frac{k}{t} \left(f_* - f - \langle \nabla f, x_* - x \rangle - \frac{\mu}{2} \|x-x_*\|^2 \right)\right).\\
    \end{split}
\end{equation}

Thus, since $f$ is convex, we have
\begin{equation}
    \dot{\mathcal{E}}(t) \le 0
\end{equation}
for $0\le k \le 1 + \sqrt{1+\mu T^2}$ and $t \geq T$.
Hence $\mathcal{E}(t)$ is indeed a Lyapunov function. Setting $k=1 + \sqrt{1+\mu T^2}$, we have
\begin{equation}
    t^{1 + \sqrt{1+\mu T^2}} (f-f_*) \le \mathcal{E}(t) \le \mathcal{E} (T) = \mathrm{const.}
\end{equation}
for $t \geq T$, which implies
\begin{equation}
    f(x(t)) - f_* = \mathrm{O}\left( \frac{1}{t^{1 + \sqrt{1+\mu T^2}}} \right).
\end{equation}
Finally, by using the relation
\begin{equation}
    1 + \sqrt{1+\mu T^2} = \frac{1}{2}r + \frac{1}{2},
\end{equation}
we obtain
\begin{equation}
    f(x(t)) - f_* = \mathrm{O}\left( \frac{1}{t^{\frac{1}{2}r + \frac{1}{2}}} \right),
\end{equation}

\subsection{Theorem \ref{NAG_strong_exp}}
\label{sec:pr_thm_NAG_strong_exp}

In Theorem \ref{NAG_strong_exp}, for the continuous dynamical system
\begin{equation}
    \ddot{x} + \frac{4(k^2+\mu)}{k^2 t} \dot{x} + \nabla f = 0 \quad (k>0),
    \label{ODE_NAG_strong_exp}
\end{equation}
the convergence rate
\begin{equation}
    f(x(t)) - f_* = \mathrm{O}\left( \mathrm{e}^{-kt} \right)
\end{equation}
is guaranteed for a $\mu$-strongly convex objective function $f$ at times $0 < t \leq T = \frac{2(k^2 + \mu)}{k^3}$. Here, we support the claim of the theorem by explicitly writing down the Lyapunov function used in the proof and differentiating it.

For the constant $k$, we define the function
\begin{equation}
    \mathcal{E}(t) = \left(\frac{2(k^2+\mu)}{kt} - \frac{k^2}{2} \right) \mathrm{e}^{kt} \| x-x_* \|^2 + k \mathrm{e}^{kt} \langle x-x_*, \dot{x} \rangle + \frac{1}{2} \mathrm{e}^{kt} \|\dot{x}\|^2 + \mathrm{e}^{kt} (f-f_*)
    \label{eq:E_of_NAG_strong_exp}
\end{equation}
following matrix $P^{NAG}$ given in equation \eqref{Lyapunov_NAG_strong_exp}. Then, we have
\begin{equation}
    \begin{split}
        \mathcal{E}(t) - \mathrm{e}^{kt} \left( f - f_* \right) &= \left(\frac{2(k^2+\mu)}{kt} - \frac{k^2}{2} \right) \mathrm{e}^{kt} \| x-x_* \|^2 + k \mathrm{e}^{kt} \langle x-x_*, \dot{x} \rangle + \frac{1}{2} \mathrm{e}^{kt} \|\dot{x}\|^2 \\
        &= \left(\frac{2(k^2+\mu)}{kt} - \frac{k^2}{2} \right) \mathrm{e}^{kt} \| x-x_* \|^2 + \frac{1}{2} \mathrm{e}^{kt} \|\dot{x} + k(x-x_*)\|^2 - \frac{k^2}{2} \mathrm{e}^{kt}\|x-x_*\|^2 \\
        &= \left(\frac{2(k^2+\mu)}{kt} - k^2 \right) \mathrm{e}^{kt} \| x-x_* \|^2 + \frac{1}{2} \mathrm{e}^{kt} \|\dot{x} + k(x-x_*)\|^2 \\
        &= \frac{2(k^2+\mu)}{k} \left(\frac{1}{t} - \frac{k^3}{2(k^2+\mu)} \right) \mathrm{e}^{kt} \| x-x_* \|^2 + \frac{1}{2} \mathrm{e}^{kt} \|\dot{x} + k(x-x_*)\|^2.
    \end{split}
\end{equation}
Therefore, for $0 < t \leq T = \frac{2(k^2 + \mu)}{k^3}$, we have
\begin{equation}
    \mathcal{E}(t) \geq \mathrm{e}^{kt} \left( f - f_* \right).
\end{equation}

We consider the time derivative of this function:
\begin{equation}
    \begin{split}
        \dot{\mathcal{E}}(t) &= - \frac{2(k^2+\mu)}{kt^2} \mathrm{e}^{kt} \|x-x_*\|^2 + \left( \frac{2(k^2+\mu)}{kt} - \frac{k^2}{2} \right) k \mathrm{e}^{kt} \|x-x_*\|^2 + \left( \frac{4(k^2+\mu)}{kt} - k^2 \right) \langle x-x_*, \dot{x} \rangle \\
        &\phantom{=} + k^2 \mathrm{e}^{kt} \langle x-x_*, \dot{x} \rangle + k \mathrm{e}^{kt} \|\dot{x}\|^2 + k \mathrm{e}^{kt} \langle x-x_*,\ddot{x} \rangle + \frac{k}{2} \mathrm{e}^{kt} \|\dot{x}\|^2 + \mathrm{e}^{kt} \langle \dot{x}, \ddot{x} \rangle \\
        &\phantom{=} + k \mathrm{e}^{kt} (f-f_*) + \mathrm{e}^{kt} \langle \nabla f, \dot{x} \rangle \\
        &= \left( - \frac{k^3}{2} + \frac{2(k^2+\mu)}{t} - \frac{2(k^2+\mu)}{kt^2}\right) \mathrm{e}^{kt} \|x-x_*\|^2 + \frac{4(k^2+\mu)}{kt} \langle x-x_*, \dot{x} \rangle \\
        &\phantom{=} + \frac{3k}{2} \mathrm{e}^{kt} \|\dot{x}\|^2 + k \mathrm{e}^{kt} \langle x-x_*,\ddot{x} \rangle +  \mathrm{e}^{kt} \langle \dot{x}, \ddot{x} \rangle  + k \mathrm{e}^{kt} (f-f_*) + \mathrm{e}^{kt} \langle \nabla f, \dot{x} \rangle.
    \end{split}
\end{equation}
Here, from the continuous dynamical system in equation \eqref{ODE_NAG_strong_exp}, we have
\begin{gather}
    k \mathrm{e}^{kt} \langle x-x_*, \ddot{x} \rangle = - \frac{4(k^2+\mu)}{kt} \mathrm{e}^{kt} \langle x-x_*, \dot{x} \rangle - k \mathrm{e}^{kt} \langle x-x_*, \nabla f \rangle, \\
    \mathrm{e}^{kt} \langle \dot{x}, \ddot{x} \rangle = - \frac{4(k^2+\mu)}{k^2t} \mathrm{e}^{kt} \|\dot{x}\|^2 - \mathrm{e}^{kt} \langle \nabla f, \dot{x} \rangle.
\end{gather}
From these, we have
\begin{equation}
    \begin{split}
        \dot{\mathcal{E}}(t) &= \left( - \frac{k^3}{2} + \frac{2(k^2+\mu)}{t} - \frac{2(k^2+\mu)}{kt^2}\right) e^{kt} \|x-x_*\|^2 + \frac{4(k^2+\mu)}{kt} \langle x-x_*, \dot{x} \rangle \\
        &\phantom{=} + \frac{3k}{2} e^{kt} \|\dot{x}\|^2 - \frac{4(k^2+\mu)}{kt} e^{kt} \langle x-x_*, \dot{x} \rangle - k e^{kt} \langle x-x_*, \nabla f \rangle - \frac{4(k^2+\mu)}{k^2t} e^{kt} \|\dot{x}\|^2 - e^{kt} \langle \nabla f, \dot{x} \rangle \\
        &\phantom{=} + k e^{kt} (f-f_*) + e^{kt} \langle \nabla f, \dot{x} \rangle \\
        &= \left( - \frac{k^3}{2} + \frac{2(k^2+\mu)}{t} - \frac{2(k^2+\mu)}{kt^2}\right) e^{kt} \|x-x_*\|^2 \\
        &\phantom{=} + \left( \frac{3k}{2} - \frac{4(k^2+\mu)}{k^2t}\right)e^{kt}\|\dot{x}\|^2 - k e^{kt} \langle x-x_*, \nabla f \rangle + k e^{kt} (f-f_*) \\
        &= - \frac{k^2+\mu}{2kt^2} (k^2t^2 - 4kt + 4) e^{kt} \|x-x_*\|^2 + \frac{\mu k}{2} e^{kt} \|x-x_*\|^2 + \left( \frac{3k}{2} - \frac{4(k^2+\mu)}{k^2t} \right) \|\dot{x}\|^2 \\
        &\phantom{=} - k e^{kt} \left( f_* - f - \langle \nabla f, x_* - x \rangle - \frac{\mu}{2} \|x-x_*\|^2 \right) - \frac{\mu k}{2} e^{kt} \|x-x_*\|^2 \\
        &= - \frac{k^2+\mu}{2kt^2} (kt-2)^2 e^{kt} \|x-x_*\|^2 - \frac{4(k^2+\mu)}{k^2} \left(  \frac{1}{t} - \frac{3k^3}{8(k^2+\mu)} \right) \|\dot{x}\|^2 \\
        &\phantom{=} - k e^{kt} \left( f_* - f - \langle \nabla f, x_* - x \rangle - \frac{\mu}{2} \|x-x_*\|^2 \right) .
    \end{split}
\end{equation}

Therefore, since the objective function is $\mu$-strongly convex, we have
\begin{equation}
    \dot{\mathcal{E}}(t) \leq 0
\end{equation}
for $0 < t \leq T = \frac{2(k^2 + \mu)}{k^3}$.
Hence $\mathcal{E}(t)$ is indeed a Lyapunov function. Thus we have
\begin{equation}
    \mathrm{e}^{kt} (f-f_*) \leq \mathcal{E}(t) \leq \mathcal{E} (t_0) = \mathrm{const.},
\end{equation}
which implies
\begin{equation}
    f(x(t)) - f_* = \mathrm{O}\left( \mathrm{e}^{-kt} \right).
\end{equation}

\subsection{Theorem \ref{Generalized_NAG}}
\label{sec:pr_thm_Generalized_NAG}

In Theorem \ref{Generalized_NAG}, for the continuous dynamical system
\begin{equation}
    \ddot{x} + \frac{r}{t^\alpha} \dot{x} + \nabla f = 0 ~(r>0, ~0<\alpha<1)
    \label{ODE_Generalized_NAG}
\end{equation}
and for a $\mu$-strongly convex objective function $f$, the convergence rate
\begin{equation}
    f(x(t)) - f_* = \mathrm{O} \left( \mathrm{e}^{-\left(\frac{2}{3}-\epsilon\right)\frac{r}{1-\alpha}t^{1-\alpha}} \right)
\end{equation}
is guaranteed for any $\epsilon>0$. Here, by explicitly writing down the Lyapunov function used in the proof and differentiating it, we support the claim of the theorem.

For a constant $k>0$, we define the function
\begin{equation}
    \mathcal{E}(t) = \frac{r^2k}{2} t^{-2\alpha} \mathrm{e}^{k\frac{r}{1-\alpha}t^{1-\alpha}} \|x-x_*\|^2 + rk t^{-\alpha} \mathrm{e}^{k\frac{r}{1-\alpha}t^{1-\alpha}} \langle x-x_*, \dot{x} \rangle + \frac{1}{2} \mathrm{e}^{k\frac{r}{1-\alpha}t^{1-\alpha}} \|\dot{x}\|^2 + \mathrm{e}^{k\frac{r}{1-\alpha}t^{1-\alpha}} (f-f_*)
\end{equation}
following matrix $P^{G-NAG}$ given in equation \eqref{Lyapunov_Generalized_NAG}. Then, we have
\begin{equation}
    \begin{split}
        \mathcal{E}(t) - \mathrm{e}^{k\frac{r}{1-\alpha}t^{1-\alpha}} (f-f_*) &= \frac{r^2k}{2} t^{-2\alpha} \mathrm{e}^{k\frac{r}{1-\alpha}t^{1-\alpha}} \|x-x_*\|^2 + rk t^{-\alpha} \mathrm{e}^{k\frac{r}{1-\alpha}t^{1-\alpha}} \langle x-x_*, \dot{x} \rangle + \frac{1}{2} \mathrm{e}^{k\frac{r}{1-\alpha}t^{1-\alpha}} \|\dot{x}\|^2 \\
        &= \frac{1}{2} \mathrm{e}^{k\frac{r}{1-\alpha}t^{1-\alpha}} \left( \|\dot{x}\|^2 + 2rkt^{-\alpha} \langle x-x_*, \dot{x} \rangle + r^2k t^{-2\alpha} \|x-x_*\|^2 \right) \\
        &= \frac{1}{2} \mathrm{e}^{k\frac{r}{1-\alpha}t^{1-\alpha}} \left( \|\dot{x} + rk t^{-\alpha} (x-x_*)\|^2 - r^2k^2 t^{-2\alpha} \|x-x_*\|^2 + r^2k t^{-2\alpha} \|x-x_*\|^2 \right) \\
        &= \frac{1}{2} \mathrm{e}^{k\frac{r}{1-\alpha}t^{1-\alpha}} \|\dot{x} + rk t^{-\alpha} (x-x_*)\|^2 + \frac{r^2k(1-k)}{2} t^{-2\alpha} \mathrm{e}^{k\frac{r}{1-\alpha}t^{1-\alpha}} \|x-x_*\|^2.
    \end{split}
\end{equation}
Therefore, we have
\begin{equation}
    \mathcal{E}(t) \geq \mathrm{e}^{k\frac{r}{1-\alpha}t^{1-\alpha}} (f-f_*)
\end{equation}
for $0\leq k \leq r$.

We consider the time derivative of this function:
\begin{equation}
    \begin{split}
        \dot{\mathcal{E}}(t) &= - r^2\alpha k t^{-1-2\alpha} \mathrm{e}^{k\frac{r}{1-\alpha}t^{1-\alpha}} \|x-x_*\|^2 + \frac{r^3k^2}{2} t^{-3\alpha} \mathrm{e}^{k\frac{r}{1-\alpha}t^{1-\alpha}} \|x-x_*\|^2 + r^2k t^{-2\alpha} \mathrm{e}^{k\frac{r}{1-\alpha}t^{1-\alpha}} \langle x-x_*, \dot{x} \rangle \\
        &\phantom{=} - r \alpha k t^{-1-\alpha} \mathrm{e}^{k\frac{r}{1-\alpha}t^{1-\alpha}} \langle x-x_*, \dot{x} \rangle + r^2k^2 t^{-2\alpha} \mathrm{e}^{k\frac{r}{1-\alpha}t^{1-\alpha}} \langle x-x_*, \dot{x} \rangle  + rk t^{-\alpha} \mathrm{e}^{k\frac{r}{1-\alpha}t^{1-\alpha}} \|\dot{x}\|^2 \\
        &\phantom{=} + rk t^{-\alpha} \mathrm{e}^{k\frac{r}{1-\alpha}t^{1-\alpha}} \langle x-x_*, \ddot{x} \rangle + \frac{rk}{2} t^{-\alpha} \mathrm{e}^{k\frac{r}{1-\alpha}t^{1-\alpha}} \|\dot{x}\|^2 + \mathrm{e}^{k\frac{r}{1-\alpha}t^{1-\alpha}} \langle \dot{x}, \ddot{x} \rangle \\
        &\phantom{=} +rk t^{-\alpha} \mathrm{e}^{k\frac{r}{1-\alpha}t^{1-\alpha}} (f-f_*) + \mathrm{e}^{k\frac{r}{1-\alpha}t^{1-\alpha}} \langle \nabla f, \dot{x} \rangle \\
        &= \left( \frac{r^3k^2}{2} t^{-3\alpha} - r^2 \alpha k t^{-1-2\alpha} \right) \mathrm{e}^{k\frac{r}{1-\alpha}t^{1-\alpha}} \|x-x_*\|^2 \\
        &\phantom{=} + \left( r^2k^2 t^{-2\alpha} + r^2k t^{-2\alpha} - r \alpha k t^{-1-\alpha}  \right) \mathrm{e}^{k\frac{r}{1-\alpha}t^{1-\alpha}} \langle x-x_*, \dot{x} \rangle \\
        &\phantom{=} + \frac{3rk}{2} t^{-\alpha}\mathrm{e}^{k\frac{r}{1-\alpha}t^{1-\alpha}} \|\dot{x}\|^2 + rk t^{-\alpha} \mathrm{e}^{k\frac{r}{1-\alpha}t^{1-\alpha}} \langle x-x_*, \ddot{x} \rangle + \mathrm{e}^{k\frac{r}{1-\alpha}t^{1-\alpha}} \langle \dot{x}, \ddot{x} \rangle \\
        &\phantom{=} + rk t^{-\alpha} \mathrm{e}^{k\frac{r}{1-\alpha}t^{1-\alpha}} (f-f_*) + \mathrm{e}^{k\frac{r}{1-\alpha}t^{1-\alpha}} \langle \nabla f, \dot{x} \rangle.
    \end{split}
\end{equation}
Here, from the continuous dynamical system in equation \eqref{ODE_Generalized_NAG}, we have
\begin{gather}
    rk t^{-\alpha} \mathrm{e}^{k\frac{r}{1-\alpha}t^{1-\alpha}} \langle x-x_*, \ddot{x} \rangle = - r^2k t^{-2\alpha} \mathrm{e}^{k\frac{r}{1-\alpha}t^{1-\alpha}} \langle x-x_*, \dot{x} \rangle - rk t^{-\alpha} \mathrm{e}^{k\frac{r}{1-\alpha}t^{1-\alpha}} \langle x-x_*, \nabla f \rangle \\
    \mathrm{e}^{k\frac{r}{1-\alpha}t^{1-\alpha}} \langle \dot{x}, \ddot{x} \rangle = - r t^{-\alpha} \mathrm{e}^{k\frac{r}{1-\alpha}t^{1-\alpha}} \|\dot{x}\|^2 - \mathrm{e}^{k\frac{r}{1-\alpha}t^{1-\alpha}} \langle \dot{x}, \nabla f \rangle.
\end{gather}
From these, we have
\begin{equation}
    \begin{split}
        \dot{\mathcal{E}}(t) &= \left( \frac{r^3k^2}{2} t^{-3\alpha} - r^2 \alpha k t^{-1-2\alpha} \right) \mathrm{e}^{k\frac{r}{1-\alpha}t^{1-\alpha}} \|x-x_*\|^2 \\
        &\phantom{=} + \left( r^2k^2 t^{-2\alpha} + r^2k t^{-2\alpha} - r \alpha k t^{-1-\alpha}  \right) \mathrm{e}^{k\frac{r}{1-\alpha}t^{1-\alpha}} \langle x-x_*, \dot{x} \rangle \\
        &\phantom{=} + \frac{3rk}{2} t^{-\alpha}\mathrm{e}^{k\frac{r}{1-\alpha}t^{1-\alpha}} \|\dot{x}\|^2 - r^2k t^{-2\alpha} \mathrm{e}^{k\frac{r}{1-\alpha}t^{1-\alpha}} \langle x-x_*, \dot{x} \rangle - rk t^{-\alpha} \mathrm{e}^{k\frac{r}{1-\alpha}t^{1-\alpha}} \langle x-x_*, \nabla f \rangle \\
        &\phantom{=} - r t^{-\alpha} \mathrm{e}^{k\frac{r}{1-\alpha}t^{1-\alpha}} \|\dot{x}\|^2 - \mathrm{e}^{k\frac{r}{1-\alpha}t^{1-\alpha}} \langle \dot{x}, \nabla f \rangle \\
        &\phantom{=} + rk t^{-\alpha} \mathrm{e}^{k\frac{r}{1-\alpha}t^{1-\alpha}} (f-f_*) + \mathrm{e}^{k\frac{r}{1-\alpha}t^{1-\alpha}} \langle \nabla f, \dot{x} \rangle \\
        &= - rk t^{-\alpha} \mathrm{e}^{k\frac{r}{1-\alpha}t^{1-\alpha}} \left( f_* - f - \langle \nabla f, x_* - x \rangle - \frac{\mu}{2} \|x-x_*\|^2 \right) \\
        &\phantom{=} + \left( - \frac{\mu rk}{2} t^{-\alpha} + \frac{r^3k^2}{2} t^{-3\alpha} - r^2 \alpha k t^{-1-2\alpha} \right) \mathrm{e}^{k\frac{r}{1-\alpha}t^{1-\alpha}} \|x-x_*\|^2 \\
        &\phantom{=} + \left( r^2k^2 t^{-2\alpha} - r\alpha k t^{-1-\alpha} \right) \mathrm{e}^{k\frac{r}{1-\alpha}t^{1-\alpha}} \langle x-x_*, \dot{x} \rangle + \frac{r}{2}\left( 3k-2 \right) t^{-\alpha} \mathrm{e}^{k\frac{r}{1-\alpha}t^{1-\alpha}} \|\dot{x}\|^2.
    \end{split}
\end{equation}
Furthermore, we can derive its expression for $k \neq \frac{2}{3}$ as follows:
\begin{equation}
    \begin{split}
        \dot{\mathcal{E}}(t) &= - rk t^{-\alpha} \mathrm{e}^{k\frac{r}{1-\alpha}t^{1-\alpha}} \left( f_* - f - \langle \nabla f, x_* - x \rangle - \frac{\mu}{2} \|x-x_*\|^2 \right) \\
        &\phantom{=} - \frac{rk}{2} \left( \mu - \left(r^2k - \frac{r^2k^3}{3k-2} \right) t^{-2\alpha} - \left( \frac{2r\alpha k^2}{3k-2} - 2r\alpha \right)t^{-1-\alpha} + \frac{\alpha^2 k}{3k-2} t^{-2}\right) t^{-\alpha} \mathrm{e}^{k\frac{r}{1-\alpha}t^{1-\alpha}} \|x-x_*\|^2 \\
        &\phantom{=} + \frac{r}{2} \left(3k-2\right) t^{-\alpha} \mathrm{e}^{k\frac{r}{1-\alpha}t^{1-\alpha}} \left\| \dot{x} + \frac{k(rk t^{-\alpha} - \alpha t^{-1})}{3k-2} (x-x_*)\right\|^2.
    \end{split}
\end{equation}
Because $f$ is $\mu$-strongly convex, for $0<k<\frac{2}{3}$ and sufficiently large $t$, 
\begin{equation}
    \dot{\mathcal{E}}(t) \leq 0
\end{equation}
holds. 
Let $T$ be the minimal $t$ satisfying this condition. 
Then $\mathcal{E}(t)$ is a Lyapunov function for any $t \geq T$. 
Therefore we have
\begin{equation}
    \mathrm{e}^{k\frac{r}{1-\alpha}t^{1-\alpha}}(f-f_*) \leq \mathcal{E}(t) \leq \mathcal{E}(T) = \mathrm{const.
    }
\end{equation}
Finally, setting $k = \frac{2}{3} - \epsilon$ with an arbitrarily small positive constant $\epsilon$, 
we have
\begin{equation}
    f(x(t)) - f_* = \mathrm{O} \left( \mathrm{e}^{-\left(\frac{2}{3}-\epsilon\right)\frac{r}{1-\alpha}t^{1-\alpha}} \right).
\end{equation}

\section{Incorporating the method in Su, Boyd, Candes~(2016)~\cite{su2016differential}}
For a $\mu$-strongly convex objective function $f$ and the continuous dynamical system
\begin{equation}
    \ddot{x} + \frac{r}{t} \dot{x} + \nabla f = 0 \quad (r>3),
    \label{eq:ODE_gNAG_in_AppendixD}
\end{equation}
the proposed method in this study finds the convergence rate
\begin{equation}
    f(x(t)) - f_* = \mathrm{O} \left( \frac{1}{t^{\frac{1}{2}r+\frac{1}{2}}} \right)
    \label{eq:rate_r_1_div_2}
\end{equation}
as shown in Theorem~\ref{NAG_strong_log}.
However, Su, Boyd, Candes~(2016)~\cite{su2016differential} obtained
\begin{equation}
    f(x(t)) - f_* = \mathrm{O} \left( \frac{1}{t^{\frac{2}{3}r}} \right),
    \label{eq:SBC_rate_for_gNAG}
\end{equation}
which is strictly better than the convergence rate in \eqref{eq:rate_r_1_div_2} for $r>3$. 
Indeed, they used an alternative rate-giving function that is not a Lyapunov function. Then they obtained the rate in \eqref{eq:SBC_rate_for_gNAG} by an additional technique. 
Taking these facts into consideration, we examine the possibility of incorporating them into our method in this section. To this end, we review the alternative function and the additional technique 
of Su, Boyd, Candes~(2016) in Appendix~\ref{sec:E_not_decrease_by_SBC}. Then, in Appendix~\ref{sec:appl_SBC_to_ours},  
we show that we can incorporate them into our method to derive the better rate in \eqref{eq:SBC_rate_for_gNAG}.

\subsection{Review of the Method in Su, Boyd, Candes~(2016)~\cite{su2016differential}}
\label{sec:E_not_decrease_by_SBC}

The alternative function and its time derivative used in Su, Boyd, Candes~(2016)~\cite{su2016differential} are given by
\begin{gather}
    \mathcal{E}(t) = \frac{2r^2}{9} t^{\frac{2}{3}r-2} \|x-x_*\|^2 + \frac{2r}{3} t^{\frac{2}{3}r-1} \langle x-x_*, \dot{x} \rangle + \frac{1}{2} t^{\frac{2}{3}r} \|\dot{x}\|^2 + t^{\frac{2}{3}r} (f-f_*), \\
    \dot{\mathcal{E}}(t) = \left( \frac{4r^3-12r^2}{27t^3}-\frac{r\lambda}{3t}\right)t^{\frac{2}{3}r} \|x-x_*\|^2 + \frac{2r^2-6r}{9} t^{\frac{2}{3}r-2}\langle x-x_*, \dot{x} \rangle,
    \label{eq:SBC_dot_E_for_gNAG}
\end{gather}
where the parameter $\lambda~(\geq \mu)$ is defined by
\begin{equation}
    \frac{\lambda}{2} \|x-x_*\|^2 = f_* - f - \langle \nabla f, x_* - x \rangle.
\end{equation}
We can write these functions by using the matrices
\begin{equation}
    P = \begin{pmatrix}
        \frac{2r^2}{9t^2} & 0 & \frac{r}{3t} \\
        0 & 0 & 0 \\
        \frac{r}{3t} & 0 & \frac{1}{2}
    \end{pmatrix}, \quad 
    Q = \begin{pmatrix}
        \frac{r\lambda}{3t} - \frac{4r^3-12r^2}{27t^3} & 0 & \frac{3r-r^2}{9t^2} & 0 & 0 \\
        0 & 0 & 0 & 0 & 0 \\
        \frac{3r-r^2}{9t^2} & 0 & 0 & 0 & 0 \\
        0 & 0 & 0 & 0 & 0 \\
        0 & 0 & 0 & 0 & 0
    \end{pmatrix}
\end{equation}
according to our method. 
The matrix $P$ is positive semi-definite for $t>0,~r>0$. 
However, the positive semi-definiteness of $Q$ fails for any $t$ because the $(1,3)$ and $(3,1)$ entries of $Q$ are nonzero for $r>3$. 
Therefore, our method cannot derive the convergence rate from the function $\mathcal{E}(t)$. However, Su, Boyd, Candes~(2016)~\cite{su2016differential} overcame this issue by the additional technique shown below. 

From~\eqref{eq:SBC_dot_E_for_gNAG}, 
we have
\begin{equation}
    \dot{\mathcal{E}}(t) \leq \frac{2r^2-6r}{9} t^{\frac{2}{3}r-2}\langle x-x_*, \dot{x} \rangle
    \label{Inequality_Su_Boyd_Candes}
\end{equation}
for sufficiently large $t$. This derivation corresponds to showing the non-negativity of the $(1,1)$ entry of $Q$ in our method. 
Integrating both sides of inequality \eqref{Inequality_Su_Boyd_Candes} from time $t_0$ to $t$ gives
\begin{equation}
    \begin{split}
        \mathcal{E}(t)-\mathcal{E}(t_0) &= \int_{t_0}^t \dot{\mathcal{E}}(s) \mathrm{d} s \\
        &\leq \int_{t_0}^t \frac{2r^2-6r}{9} s^{\frac{2}{3}r-2}\langle x-x_*, \dot{x} \rangle \mathrm{d}s \\
        &= \frac{r^2 - 3r}{9} \int_{t_0}^t s^{\frac{2}{3}r-2} \left( \|x-x_*\|^2 \right)^{\prime} \mathrm{d}s \\
        &= \frac{r^2 - 3r}{9} t^{\frac{2}{3}r-2}\|x-x_*\|^2 - \frac{r^2 - 3r}{9} t_0^{\frac{2}{3}r-2}\|x_0-x_*\|^2 - \frac{r^2 - 3r}{9} \int_{t_0}^t \left(\frac{2}{3}r - 2 \right) s^{\frac{2}{3}r - 3}\|x-x_*\|^2 \mathrm{d}s.
    \end{split}
\end{equation}
Because $\frac{r^2 - 3r}{9}>0$ and $\frac{2}{3}r - 2  \geq 0$ hold for $r > 3$, we have
\begin{equation}
    \begin{split}
    \mathcal{E}(t)-\mathcal{E}(t_0) &\leq \frac{r^2 - 3r}{9} t^{\frac{2}{3}r-2}\|x-x_*\|^2 \\
    &\leq \frac{r^2 - 3r}{9} t^{\frac{2}{3}r-2} \cdot \frac{2}{\mu} (f-f_*) \\
    &= \frac{2r^2 - 6r}{9\mu} t^{\frac{2}{3}r-2}(f-f_*).
    \end{split}
    \label{eq:key1_for_Et_Et0}
\end{equation}
In Su, Boyd, Candes~(2016)~\cite{su2016differential}, it has already been shown that
\begin{equation}
    f(x(t)) - f_* = \mathrm{O} \left( \frac{1}{t^{2}} \right)
    \label{eq:key2_for_Et_Et0}
\end{equation}
for the same continuous dynamical system and convex objective function
(Theorem 5 in Su, Boyd, Candes~(2016)). 
Combining
\eqref{eq:key1_for_Et_Et0} and 
\eqref{eq:key2_for_Et_Et0}, 
we have
\begin{equation}
    \begin{split}
        \mathcal{E}(t) - \mathcal{E}(t_0) &\leq \frac{2r^2 - 6r}{9\mu} t^{\frac{2}{3}r-2} \mathrm{O} \left(\frac{1}{t^2}\right)\\
        &= \mathrm{O} \left( t^{\frac{2}{3}r-4} \right).
    \end{split}
\end{equation}

First, we consider the case that $3<r\leq 6$. Then we have $\frac{2}{3}r-4 \leq 0$ and
\begin{equation}
    \mathcal{E}(t) - \mathcal{E}(t_0) = \mathrm{O}(1),
\end{equation}
which implies
\begin{equation}
    t^{\frac{2}{3}r}(f-f_*) \leq \mathcal{E}(t) \leq \mathcal{E} (t_0) \leq \mathrm{const.}
\end{equation}
This shows the desired convergence rate.  
Next, we consider the case that $r>6$. 
In this case, 
we iterate a similar procedure to the above one. That is, we first show a smaller rate, and then iteratively improve the rate to complete the proof for any $r>3$.

To incorporate the above methods into ours, we should note the following two features of the methods:
\begin{itemize}
    \item Even if there are entries that break the positive semi-definiteness of $Q$, they can be ignored if the corresponding terms become constant-order after integration.
    \item In the order evaluation above, we use the best convergence rate already obtained, and repeat this until the convergence rate can no longer be improved.
\end{itemize}

\subsection{Incorporating the Method in Su, Boyd, Candes~(2016)~\cite{su2016differential} into ours}
\label{sec:appl_SBC_to_ours}

Besides $P^{NAG}$ and $Q^{NAG}$, 
we can find the matrices
\begin{equation}
    P = 
    \begin{pmatrix}
        \frac{r \dot{\gamma}}{2t} & 0 & \frac{\dot{\gamma}}{2} \\
        0 & 0 & 0 \\
        \frac{\dot{\gamma}}{2} & 0 & \frac{1}{2}
    \end{pmatrix}, \quad
    Q = 
    \begin{pmatrix}
        \frac{-rt (\dot{\gamma}^2 + \ddot{\gamma})  + \dot{\gamma} (r + \lambda t^2)}{2t^2} & 0 & \frac{1}{2} \left( - \dot{\gamma}^2 - \ddot{\gamma} \right) & 0 & 0 \\
        0 & 0 & 0 & 0 & 0 \\
        \frac{1}{2} \left( - \dot{\gamma}^2 - \ddot{\gamma} \right) & 0 & \frac{r}{t} - \frac{3\dot{\gamma}}{2} & 0 & 0 \\
        0 & 0 & 0 & 0 & 0 \\
        0 & 0 & 0 & 0 & 0
    \end{pmatrix}
\end{equation}
among the matrices given by our method for \eqref{eq:ODE_gNAG_in_AppendixD}. 
By substituting $\gamma = \frac{2}{3}\log{t}$ into these, we have
\begin{equation}
    P = 
    \begin{pmatrix}
        \frac{r^2}{3t^2} & 0 & \frac{r}{3t} \\
        0 & 0 & 0 \\
        \frac{r}{3t} & 0 & \frac{1}{2}
    \end{pmatrix},
    \quad
    Q = 
    \begin{pmatrix}
        \frac{r\lambda}{3t} - \frac{2r^3-6r^2}{9t^3} & 0 & \frac{3r-2r^2}{9t^2} & 0 & 0 \\
        0 & 0 & 0 & 0 & 0 \\
        \frac{3r-2r^2}{9t^2} & 0 & 0 & 0 & 0 \\
        0 & 0 & 0 & 0 & 0 \\
        0 & 0 & 0 & 0 & 0
    \end{pmatrix}.
\end{equation}
The matrix $P$ is positive semi-definite for $t>0$ and $r>0$, but $Q$ is not. 

Then we apply the aforementioned method in Su, Boyd, Candes~(2016)~\cite{su2016differential} to these matrices. 
The pair of these matrices represents the function and time derivative given by
\begin{gather}
    \mathcal{E}(t) = \frac{r^2}{3} t^{\frac{2}{3}r-2} \|x-x_*\|^2 + \frac{2r}{3} t^{\frac{2}{3}r-1} \langle x-x_*, \dot{x} \rangle + \frac{1}{2} t^{\frac{2}{3}r} \|\dot{x}\|^2 + t^{\frac{2}{3}r} (f-f_*), \\
    \dot{\mathcal{E}}(t) = \left( \frac{2r^3-6r^2}{9t^3}-\frac{r\lambda}{3t}\right)t^{\frac{2}{3}r} \|x-x_*\|^2 + \frac{4r^2-6r}{9} t^{\frac{2}{3}r-2}\langle x-x_*, \dot{x} \rangle,
    \label{eq:dot_E_for_gNAG_for_ours}
\end{gather}
where the parameter $\lambda (\geq \mu)$ satisfy
\begin{equation}
    \frac{\lambda}{2} \|x-x_*\|^2 = f_* - f - \langle \nabla f, x_* - x \rangle.
\end{equation}
Since $P$ is positive semi-definite, we have
\begin{equation}
    \mathcal{E}(t) \geq t^{\frac{2}{3}r} (f-f_*)
\end{equation}
for all $t>0$. 

From~\eqref{eq:dot_E_for_gNAG_for_ours},
we have
\begin{equation}
    \dot{\mathcal{E}}(t) \leq \frac{4r^2-6r}{9} t^{\frac{2}{3}r-2}\langle x-x_*, \dot{x} \rangle
\end{equation}
for sufficiently large $t$.
Integrating both sides of this inequality from time $t_0$ to $t$ gives
\begin{equation}
    \begin{split}
        \mathcal{E}(t)-\mathcal{E}(t_0) &= \int_{t_0}^t \dot{\mathcal{E}}(s) \mathrm{d} s \\
        &\leq \int_{t_0}^t \frac{4r^2-6r}{9} s^{\frac{2}{3}r-2}\langle x-x_*, \dot{x} \rangle \mathrm{d}s \\
        &= \frac{2r^2 - 3r}{9} \int_{t_0}^t s^{\frac{2}{3}r-2} \left( \|x-x_*\|^2 \right)^{\prime} \mathrm{d}s \\
        &= \frac{2r^2 - 3r}{9} t^{\frac{2}{3}r-2}\|x-x_*\|^2 - \frac{2r^2 - 3r}{9} t_0^{\frac{2}{3}r-2}\|x_0-x_*\|^2 - \frac{2r^2 - 3r}{9} \int_{t_0}^t \left(\frac{2}{3}r - 2 \right) s^{\frac{2}{3}r - 3}\|x-x_*\|^2 \mathrm{d}s.
    \end{split}
\end{equation}
Because $\frac{2r^2 - 3r}{9}>0$ and $\frac{2}{3}r - 2  \geq 0$ hold for $r>3$, we have
\begin{equation}
    \begin{split}
    \mathcal{E}(t)-\mathcal{E}(t_0) &\leq \frac{2r^2 - 3r}{9} t^{\frac{2}{3}r-2}\|x-x_*\|^2 \\
    &\leq \frac{2r^2 - 3r}{9} t^{\frac{2}{3}r-2} \cdot \frac{2}{\mu} (f-f_*) \\
    &= \frac{4r^2 - 6r}{9\mu} t^{\frac{2}{3}r-2}(f-f_*).
    \end{split}
    \label{eq:key1_for_rate_in_ours}
\end{equation}
From Theorem \ref{NAG_Convex}, we know that
\begin{equation}
    f(x(t)) - f_* = \mathrm{O} \left( \frac{1}{t^{2}} \right)
    \label{eq:key2_for_rate_in_ours}
\end{equation}
for the same continuous dynamical system and convex objective function.
Combining
\eqref{eq:key1_for_rate_in_ours} and 
\eqref{eq:key2_for_rate_in_ours}, 
we have
\begin{equation}
    \begin{split}
        \mathcal{E}(t) - \mathcal{E}(t_0) &\leq \frac{2r^2 - 6r}{9\mu} t^{\frac{2}{3}r-2} \mathrm{O} \left(\frac{1}{t^2}\right)\\
        &= \mathrm{O} \left( t^{\frac{2}{3}r-4} \right).
    \end{split}
\end{equation}

In the case that $3<r\leq 6$, we have $\frac{2}{3}r-4 \leq 0$ and
\begin{equation}
    \mathcal{E}(t) - \mathcal{E}(t_0) = \mathrm{O}(1),
\end{equation}
which implies
\begin{equation}
    t^{\frac{2}{3}r}(f-f_*) \leq \mathcal{E}(t) \leq \mathcal{E} (t_0) \leq \mathrm{const.}
\end{equation}
This shows the desired convergence rate. In the case that $r>6$, we can  make use of the aformantioned recursive argument. Therefore, we now have the convergence rate
\begin{equation}
    f(x(t)) - f_* = \mathrm{O}\left(\frac{1}{t^{\frac{2}{3}r}}\right).
\end{equation}

\end{document}